\documentclass[10pt,reqno]{amsart}
\usepackage{amsfonts,amsrefs,latexsym,amsmath, amssymb, mathrsfs, verbatim}
\usepackage{url,color}
\usepackage{bm}
\usepackage{pifont}
\usepackage{upgreek}
\usepackage{fancyhdr}
\usepackage{hyperref}
\usepackage{calligra}
\usepackage{marvosym}
\usepackage[percent]{overpic}
\usepackage{pict2e}
\usepackage{float}

\usepackage[hypcap=false]{caption}
\usepackage{xcolor}
\usepackage{wasysym}
\usepackage{accents}

\textwidth 6.5 in
\oddsidemargin 0 in
\evensidemargin 0 in
\textheight 8.8 in
\topmargin - .4 in

\pagestyle{fancy}
\headheight 27pt

\rhead[]{\thepage}
\chead[\textsf{The Relativistic Euler Equations}]{\textsf{L.\ Abbrescia and J.\ Speck}}
\lhead[\thepage]{}
\rfoot[]{}
\cfoot{}
\lfoot{}

\newtheorem{theorem}{Theorem}[section]
\newtheorem{proposition}{Proposition}[section]
\newtheorem{lemma}[proposition]{Lemma}
\newtheorem{corollary}[proposition]{Corollary}
\newtheorem{conjecture}{Conjecture}

\theoremstyle{definition}
\newtheorem{definition}{Definition}[section]
\newtheorem{remark}{Remark}[section]
\newtheorem{convention}{Convention}[section]

\DeclareMathAlphabet{\mathcalligra}{T1}{calligra}{m}{n}
\DeclareFontShape{T1}{calligra}{m}{n}{<->s*[2.2]callig15}{}

%Fix annoying error

\newcommand{\gfour}{\mathbf{g}}

\newcommand{\hfour}{\mathbf{h}}

\newcommand{\Transport}{\mathbf{B}}
\newcommand{\Sigmatnormal}{\mathbf{N}}

\newcommand{\fourvelocity}{\mathbf{u}}

\newcommand{\normalizer}{\mathfrak{n}}

\newcommand{\Ent}{s}
\newcommand{\Enth}{H}
\newcommand{\Temp}{\uptheta}
\newcommand{\speed}{c}
\newcommand{\Lnenth}{h}
\newcommand{\TempoverEnth}{q}

\newcommand{\eik}{U}

%Vorticity variables
\newcommand{\wavearray}{\vec{\Psi}}

%Entropy gradient variables
\newcommand{\GradEnt}{S}
\newcommand{\modDivGradEnt}{\mathcal{D}}

%Vorticity variables
\newcommand{\uperpvort}{\mbox{\upshape vort}}

\newcommand{\vort}{\varpi}
\newcommand{\modVortVort}{\mathcal{C}}

\newcommand{\smoothfunction}{\mathrm{f}}

\newcommand{\nullform}{\mathfrak{Q}}

\newcommand{\Dfour}{\mathbf{D}}

\newcommand{\muX}{\breve X}
\newcommand{\Yvf}[1]{Y_{(#1)}}

\newcommand{\Fullset}{\mathscr{Z}}
\newcommand{\Tanset}{\mathscr{P}}
\newcommand{\Angularset}{\mathscr{Y}}

\newcommand{\Singletan}{P}

\newcommand{\mytr}{{\mbox{\upshape tr}}}
\newcommand{\gtorus}{g \mkern-8mu / }

\newcommand{\Ricfour}{\mathbf{Ric}}

\newcommand{\angLap}{ {\Delta \mkern-12mu / \, } }
\newcommand{\angD}{ {\nabla \mkern-14mu / \,} }
\newcommand{\angDarg}[1]{{\angD_{\mkern-3mu #1}}}

%% Riemann Invariants
\newcommand{\RRiemann}{\mathcal{R}_{(+)}}
\newcommand{\LRiemann}{\mathcal{R}_{(-)}}

\newcommand{\rmd}{\mathrm{d}}

\newcommand{\voltorus}{{\rmd \varpi_{\gtorus} \, }}

\newcommand{\enmomem}{\mathbf{Q}}

\newcommand{\multipliervectorfield}{\breve{T}}

\newcommand{\deformarg}[3]{{^{(#1)} {\pmb{\pi}_{#2 #3}}}}

\newcommand{\spacetimeintegralcontrolwave}{\mathbb{K}}

\newcommand{\fullymodquant}[1]{{ ^{(#1)} \mkern-3mu \mathscr{X}}}

\newcommand{\Ntop}{N_{\textnormal{top}}}

\newcommand{\Tboot}{T_{\textnormal{Boot}}}

% null vectorfields
\newcommand{\Lunit}{L}
\newcommand{\uLunit}{\underline{L}}
\newcommand{\Lgeo}{L_{(\textnormal{geo})}}

\newcommand{\newuL}{\breve{\uLunit}}
\newcommand{\intcurvenewuL}{\breve{\underline{\gamma}}}

% plane symmetric objects
\newcommand{\RRiemannPS}{\mathcal{R}_{(+)}^{\textnormal{PS}}}
\newcommand{\LRiemannPS}{\mathcal{R}_{(-)}^{\textnormal{PS}}}
\newcommand{\dataRRiemannPS}{\mathring{\mathcal{R}}_{(+)}^{\textnormal{PS}}}
\newcommand{\LRiemannPSdata}{\mathring{\mathcal{R}}_{(-)}^{\textnormal{PS}}}
\newcommand{\muPS}{\upmu^{\textnormal{PS}}}
\newcommand{\PSLmusourcetermfunction}{G}
\newcommand{\antiderivativePSLmusourcetermfunction}{\mathfrak{A}}
\newcommand{\blowupdelta}{\mathring{\updelta}_*}
\newcommand{\blowuptime}{T_{\textnormal{Shock}}}
\newcommand{\minimumspeed}{\mathfrak{c}}
\newcommand{\maximumspeed}{\mathfrak{C}}
\newcommand{\timeisafunctionofeikonalinsingularcurve}{\mathfrak{t}_{\textnormal{Sing}}}
\newcommand{\timeisafunctionofeikonalcauchyhorizon}{\mathfrak{t}_{\textnormal{CH}}}

%%% nondegeneratedata
\newcommand{\rightu}{\eik_1}
\newcommand{\leftu}{\eik_2}
\newcommand{\compactsupportu}{\eik_3}
\newcommand{\interestingu}{\eik_{\mbox{\tiny \Radioactivity}}}
\newcommand{\PSdatamuHessianTaylorcoefficient}{\mathfrak{b}}
\newcommand{\PSLmunottoonegativeparameter}{\mathfrak{p}}

\newcommand{\PSthirdorderTaylorremaindercoefficientfunction}{\mathscr{R}}
\newcommand{\PSinitialkeymufunctioncoefficients}{\Lambda}

%%%spacetimesets
\newcommand{\nullhyparg}[1]{\mathcal{P}_{#1}}
\newcommand{\nullhyptwoarg}[2]{\mathcal{P}_{#1}^{#2}}
\newcommand{\singularcurve}{\mathcal{B}}
\newcommand{\cauchyhor}{\underline{\mathcal{C}}}
\newcommand{\crease}{\partial_-\mathcal{B}}
\newcommand{\classicaldev}{\mathcal{M}_*}
\newcommand{\classicaldevsingular}{\mathcal{M}_{\textnormal{Sing}}}
\newcommand{\classicaldevregular}{\mathcal{M}_{\textnormal{Reg}}}

% miscellaneous
\newcommand{\p}{\partial}
\newcommand{\R}{\mathbb{R}}
\newcommand{\geop}[1]{\frac{\p}{\p #1}}
\newcommand{\positivebigO}{\mathcal{O}_{\textnormal{pos}}}

\newcommand{\seed}{\mathring{\varphi}}

\newcommand{\singcurvevectorfield}{Q}

\newcommand{\spacetimemanifold}{\mathcal{M}}

\newcommand{\mink}{\bm{\mathfrak{m}}}

\newcommand{\solutionarray}{\mathbf{V}}
\newcommand{\variations}{\dot{\mathbf{V}}}

\newcommand{\genericvf}{\mathbf{Z}}

\newcommand{\RIfunction}{F}

%Table of contents

\setcounter{tocdepth}{2}

%\makeindex[name=persons,title=Index of names,columns=3]

\begin{document}

\title{The relativistic Euler equations: ESI notes on their geo-analytic structures and implications for shocks in $1D$ and multi-dimensions}
\author[LA,JS]{Leonardo Abbrescia$^{* \dagger}$ and Jared Speck$^{** \dagger\dagger}$}

\thanks{$^{*}$Vanderbilt University, Nashville, TN, USA.
\texttt{leonardo.abbrescia@vanderbilt.edu}}

\thanks{$^{**}$Vanderbilt University, Nashville, TN, USA.
\texttt{jared.speck@vanderbilt.edu}}

\thanks{$^\dagger$ LA gratefully acknowledges support from an NSF Postdoctoral Fellowship.}

\thanks{$^{\dagger\dagger}$JS gratefully acknowledges support from NSF grant \# DMS-2054184 and NSF CAREER grant \# DMS-1914537.
}

\begin{abstract}
In this article, we provide notes that complement the lectures on
the relativistic Euler equations and shocks that were given by the second author at
the program \emph{Mathematical Perspectives of Gravitation Beyond the Vacuum Regime},
which was hosted by the Erwin Schr\"{o}dinger International Institute for Mathematics and Physics  
in Vienna in February, 2022.
We set the stage by introducing a standard first-order formulation of the relativistic Euler equations and 
providing a brief overview of local well-posedness in Sobolev spaces. 
Then, using Riemann invariants, we provide
the first detailed construction of a localized subset of
the maximal globally hyperbolic developments 
of an open set of initially smooth, shock-forming isentropic solutions in $1D$, 
with a focus on describing the singular boundary and 
the Cauchy horizon that emerges from the singularity.
Next, we provide an overview of the new second-order formulation of the $3D$ relativistic Euler equations 
derived in \cite{mDjS2019}, its rich geometric and analytic structures, 
their implications for the mathematical theory of shock waves,
and their connection to the setup we use in our $1D$ analysis of shocks.
We then highlight some key prior results on the study of shock formation and related problems.
Furthermore, we provide an overview of how the formulation of the flow derived in \cite{mDjS2019} 
can be used to study shock formation in multiple spatial dimensions.
Finally, we discuss various open problems tied to shocks.

\bigskip

\noindent \textbf{Keywords}: 
acoustical metric,
Cauchy horizon,
eikonal function,
genuinely nonlinear,
hyperbolic conservation law,
maximal globally hyperbolic development,
nonlinear geometric optics,
null form,
Riemann invariant,
shocks, 
singularities, 
singular boundary,
stable blowup, 
wave breaking

\bigskip

\noindent \textbf{Mathematics Subject Classification (2020)} Primary: 35L67; Secondary: 35M11, 35L65, 35L81, 76L05, 76Y05 
\end{abstract}

\maketitle

\centerline{\today}

\tableofcontents
\setcounter{tocdepth}{1}

\newpage

\section{Introduction}
\label{S:INTRO}
In this article, we provide notes that complement the lectures on
the relativistic Euler equations and shocks that were given by the second author at
the program \emph{Mathematical Perspectives of Gravitation Beyond the Vacuum Regime},
which was hosted by the Erwin Schr\"{o}dinger International Institute for Mathematics and Physics  
in Vienna in February, 2022. Broadly speaking, our main goals are the following: 
\begin{enumerate}
\item To briefly introduce the equations.
 \item To provide an overview of the new formulation of the equations
	derived in \cite{mDjS2019}.
	\item To describe the implications of the formulation from \cite{mDjS2019} 
		for the study of multi-dimensional shocks.
\end{enumerate}

Although our main interest is solutions in $3D$ \emph{without} symmetry assumptions,
we highlight Sect.\,\ref{S:1DMAXIMALDEVELOPMENT}, in which we rigorously study a family of initially smooth, simple isentropic shock-forming 
\emph{plane-symmetric} solutions and give a complete description of a portion of their  maximal (classical) globally hyperbolic developments 
(MGHD for short), up to the boundary. 
To the best of our knowledge,
this is the first article 
on \emph{any} quasilinear hyperbolic system in $1D$
that provides the detailed construction 
of the MGHD within the vicinity of a shock singularity.
Sect.\,\ref{S:1DMAXIMALDEVELOPMENT} in particular
provides an introduction to some of the main ideas behind the study of shock formation
in a gentle, semi-explicit $1D$ setting, where energy estimates are not needed.
Roughly, the MGHD is the largest possible classical solution + globally hyperbolic 
region that is launched by the initial data,
and when it exists and is unique
(for some solutions to some hyperbolic PDEs, it is not unique -- see Sect.\,\ref{SS:MAXIMALDEVELOPMENT}!), 
it is the holy grail object, at least from the point of view of classical solution.

We stress outright that, although the relativistic Euler equations are posed on an ambient Lorentzian manifold $(\spacetimemanifold,\gfour)$, from the point of view of the causal structure of the fluid,
the correct notion of ``globally hyperbolic''
is not with respect to the spacetime metric $\gfour$, but rather 
with respect to the acoustical metric $\hfour$, introduced in
Def.\,\ref{D:ACOUSTICALMETRIC}. To avoid confusion, we will refer to this as $\hfour$-global hyperbolicity and the corresponding $\hfour$-MGHD.

Many of the techniques and geometric insights behind the modern approach to studying
shocks have roots in mathematical General Relativity,
notably the celebrated proof by Christodoulou--Klainerman
\cite{dCsK1993} of the nonlinear stability of Minkowski spacetime as a solution to Einstein's equations.
A key unifying theme between the global existence result \cite{dCsK1993} and the results on shocks that we discuss here is that 
\emph{nonlinear geometric optics}, implemented via an \emph{eikonal function} $\eik$, plays a central role in the study of the flow.
We will discuss eikonal functions in detail
in Sects.\,\ref{S:1DMAXIMALDEVELOPMENT} and \ref{S:SHOCKFORMATIONAWAYFROMSYMMETRY}.

We highlight a key advantage of the formulation of the flow derived in \cite{mDjS2019}.
\begin{quote}
	For general solutions (in $3D$, i.e., three spatial dimensions), 
	it allows one to implement a sharp version of nonlinear geometric optics.
	Nonlinear geometric optics is crucial for the study of multi-dimensional shocks,
	and it has other applications, such as low-regularity well-posedness 
	(see Sect.\,\ref{SSS:LWPFIRSTORDER}).
\end{quote}

In line with points (1)--(3) above, this article is meant to give an introduction to the mathematical techniques and methods needed for the PDE analysis through the lens of nonlinear geometric optics. A comprehensive topical review of the rich history of shock singularities in the framework of hyperbolic conservation laws would encompass a work of strictly larger order of magnitude. We refer the reader to 
\cites{cD2010, bressan2011nonlinear,gHsKjSwW2016} and the references therein for a good start.

\subsection{Motivation, context, and the structure of the article}
\label{SS:MOTIVATIONCONTEXTSTRUCTURE}

\subsubsection{Mathematical and physical motivations} \label{SSS:MATHEMATICALANDPHYSICALMOTIVATIONS}
The relativistic Euler equations are an important matter model in mathematical General Relativity. 
For example, relativistic fluids are often used in cosmology to model the average energy-matter
content of spacetime, and they in particular play a central role in the Standard Model of cosmology. 

For a non-exhaustive yet comprehensive overview of these physical applications, we refer the reader to the texts 
\cites{bO1983, rW1984,sW2008}, and the references therein. There has also been exciting rigorous mathematical progress on the
global structure of solutions when the spacetime is expanding
\cites{iRjS2013,oliynyk2016future,fajman2021stabilizing,fajman2023stability,jS2012}. We also highlight 
\cite{dC2007b} and \cite{dC2007}*{Chapter~1} for a modern introduction to 
the relativistic Euler equations and their connections to the laws of mechanics and thermodynamics.

Despite the above remarks, 
the rigorous mathematical theory of multi-dimensional solutions is far from complete. Even if one considers the relativistic Euler equations 
on a given spacetime background (i.e., without coupling to Einstein's equations),
there are many mathematically rich phenomena that have not been fully understood.
Chief among these is the fundamental issue, 
going back to Riemann's foundational work \cite{bR1860}
on non-relativistic compressible fluids in $1D$,
that initially smooth solutions can develop shock singularities in finite time.
Shocks are singularities such that various fluid variables' gradients blow up in finite time
(in a precise, controlled fashion, as it turns out), though the fluid variables themselves (such as the velocity and density)
remain bounded. This phenomenon is also known as \emph{wave breaking}.
The relatively mild nature of the singularity gives rise to the hope that one might 
-- at least for short times --
be able to uniquely continue the solution weakly
past the ``first singularity,\footnote{In multi-dimensions, the correct notion of
``first singularity'' is not a point in spacetime, but rather a co-dimension $2$ submanifold of points;
see Sect.\,\ref{SS:MAXIMALDEVELOPMENT}. \label{FN:FIRSTSINGULARITY}}'' 
subject to suitable selection criteria, typically 
in the form of Rankine--Hugoniot-type jump conditions across a shock hypersurface 
(which is not known in advance) and an entropy-type condition. 
This weak continuation problem is known as the \emph{shock development problem}, and in full generality, it remains open, for both the $3D$ 
relativistic Euler equations and their non-relativistic analog, i.e., the $3D$ compressible Euler equations;
see Sect.\,\ref{SS:SHOCKDEVELOPMENT} for further discussion. The long-term goal is certainly the following:
\begin{quote}
	Develop a rigorous \emph{global-in-time-and-space} 
	theory of existence and uniqueness for open sets of multi-dimensional solutions
	that are allowed to transition from classical to weak due to shock formation,
	and describe the interactions of different 
	shock hypersurfaces, Cauchy horizons (see Sect.\,\ref{SS:1DBOUNDARYOFMAXIMALDEVELOPMENT}),
	and other crucial qualitative features of the flow.
\end{quote}
This goal is far out of reach as of present. If one could accomplish it even in a single perturbative regime, 
that would represent truly remarkable progress.

While there have been many works on shock formation for the multi-dimensional non-relativistic compressible
Euler equations, 
such as \cites{dCsM2014,jS2016b,jSgHjLwW2016,jLjS2018,jLjS2021,tBsSvV2022,tBsSvV2020,tBsSvV2019a,tBsI2022,lAjS2020,lAjS20XX,lAjS2022},
there have been relatively few works on multi-dimensional shock formation for the relativistic Euler equations. 
The most notable work in the relativistic case is Christodoulou's breakthrough monograph \cite{dC2007}, 
in which he proved shock formation and studied some aspects of the $\hfour$-MGHD for 
open sets of irrotational and isentropic solutions
in $3D$ that are compactly supported perturbations of non-vacuum constant fluid states. 
Unlike the non-relativistic case, there are currently no rigorous results that prove multi-dimensional shock formation
for the relativistic Euler equations in the presence of vorticity and entropy.
However, in Sect.\,\ref{S:SHOCKFORMATIONAWAYFROMSYMMETRY}, we provide a blueprint for how one can use the equations of
\cite{mDjS2019} and the analytic framework of \cites{dC2007,jS2016b,jSgHjLwW2016,jLjS2018,jLjS2021,lAjS2020,lAjS2022}
to prove stable shock formation in the $3D$ relativistic case for open sets of solutions with vorticity and entropy.

The remainder of the article is organized as follows:
\begin{itemize}
	\item In Sect.\,\ref{S:EQUATIONSANDLOCALWELLPOSEDNESS}, we set up the study of the relativistic Euler equations and provide an overview
		of basic ingredients that play a role in the proof of local well-posedness in Sobolev spaces.
		In particular, we introduce the \emph{acoustical metric} $\hfour$, which is the solution-dependent
		Lorentzian metric that drives the propagation of sound waves;
		the acoustical metric is fundamental for implementing nonlinear geometric optics and studying multi-dimensional shocks.
		While we state the equations in the case of an arbitrary spacetime background $(\spacetimemanifold,\gfour)$,
		the vast majority of the article concerns the case in which $\spacetimemanifold = \mathbb{R}^{1+3}$ 
		and $\gfour = \mink$ is the Minkowski metric on $\spacetimemanifold$. 
		We again point out that nearly all notions of hyperbolicity will refer to the acoustical metric $\hfour$ and not $\gfour$ or $\mink$;
		see also Sect.\,\ref{SS:FIRSTCOMMENTSONMOREGENERALSPACETIMES} and Remark~\ref{R:SECONDCOMMENTONMOREGENERALSPACETIMES}. 
		
	\item In Sect.\,\ref{S:1DMAXIMALDEVELOPMENT}, to help readers gain intuition for the subsequent discussion, 
		we study the flow in one spatial dimension, which is equivalent to studying plane-symmetric solutions in $3D$. 
		In particular, in Theorem~\ref{T:MAINTHEOREM1DSINGULARBOUNDARYANDCREASE},
		we provide a detailed analysis of the $\hfour$-MGHD for a large set of shock-forming relativistic Euler solutions in $1D$.
		Some aspects of the theorem, notably the results concerning the formation of Cauchy horizons, 
		have not been proved in detail elsewhere in the literature. Although 
		Theorem~\ref{T:MAINTHEOREM1DSINGULARBOUNDARYANDCREASE} specifically concerns the relativistic Euler equations,
		the techniques could be applied to a wide variety of strictly hyperbolic genuinely nonlinear
		PDEs in one spatial dimension.
	\item In Sect.\,\ref{S:NEWFORMULATIONOFFLOW}, we provide an overview of 
		the new formulation of the $3D$ relativistic Euler equations derived in \cite{mDjS2019}.
	\item In Sect.\,\ref{S:PRIORWORKSSHOCKFORMATION}, we describe some previous works on shock formation and related problems.
	\item In Sect.\,\ref{S:SHOCKFORMATIONAWAYFROMSYMMETRY}, 
		based in part on our prior experience \cites{jS2016b,jSgHjLwW2016,jLjS2018,jLjS2021,lAjS2020,lAjS2022}
		in studying shock formation in multi-dimensional non-relativistic compressible fluids,
		we provide an overview of how the new formulation of the flow from \cite{mDjS2019}
		can be used to study shock formation for open sets of solutions to the $3D$ 
		relativistic Euler equations.
		As of present, rigorous fluid shock formation 
		results in multi-dimensions with vorticity and entropy 
		have been proved only for the non-relativistic compressible Euler equations.
		Nonetheless, from the perspective of shock formation, the relativistic Euler equations and 
		the non-relativistic compressible Euler equations enjoy many structural commonalities,
		and we expect that the overview we provide in Sect.\,\ref{S:SHOCKFORMATIONAWAYFROMSYMMETRY} could be turned
		(with substantial effort) into a complete proof.
\item In Sect.\,\ref{S:OPENPROBLEMS}, we describe various open problems.
\end{itemize}

\section{Standard formulations of the relativistic Euler equations and local well-posedness}
\label{S:EQUATIONSANDLOCALWELLPOSEDNESS}
In this section, we introduce some standard first-order formulations of the relativistic Euler equations.
We also introduce the acoustical metric $\hfour$ and set up a corresponding 
geometric version of the energy method that applies to a first-order formulation of the flow.
Finally, we state Prop.\,\ref{P:LWPFIRSTORDER}, which is a standard result on local well-posedness,
and briefly discuss how its proof is connected to the energy method.

\subsection{The basic setup}
\label{SS:BASICSETUP}
We start by discussing standard formulations of the relativistic Euler equations on
an arbitrary four-dimensional Lorentzian manifold $(\spacetimemanifold,\gfour)$, 
where $\gfour$ is the spacetime metric of signature $(-,+,+,+)$.
Our discussion in this section is motivated by Christodoulou's
presentation in \cite{dC2007}*{Chapter~1}.
We are somewhat terse in our presentation here, and thus we refer readers to \cite{dC2007}*{Chapter~1} for additional details.

\begin{convention}[Moving indices with $\gfour$ and $\mink$]
	\label{C:MOVEINDICESWITHSPACETIMEMETRIC}
	In this section, we will lower and raise indices with the spacetime metric $\gfour$ 
	and its inverse $\gfour^{-1}$
	respectively, i.e., $\fourvelocity_{\alpha} := \gfour_{\alpha \beta} \fourvelocity^{\beta}$
	and $\upxi^{\alpha} := (\gfour^{-1})^{\alpha \beta} \upxi_{\beta}$. Similar comments apply to the bulk of the article, 
	once we fix $\spacetimemanifold = \R^{1+3}$ and $\gfour = \mink$ as the Minkowski metric on $\R^{1+3}$; 
	see Sect.\,\ref{SSS:INDEXCONVENTIONS}. 
	This will become especially important later on, when we introduce the acoustical metric $\hfour$, i.e., 
	we do \underline{not} raise or lower indices with $\hfour$ or $\hfour^{-1}$.
\end{convention}

\subsubsection{The basic fluid variables and relations}
\label{SSS:INTROBASICFLUIDVARIABLESANDRELATIONS}
We now introduce the basic fluid variables that will play a role in our study of the relativistic Euler equations:
$\uprho : \spacetimemanifold \rightarrow [0,\infty)$ denotes the fluid's (proper) energy density,
$p : \spacetimemanifold \rightarrow [0,\infty)$ denotes the fluid pressure,
$\Ent : \spacetimemanifold \rightarrow [0,\infty)$ denotes the entropy per particle (``entropy'' for short from now on),
$n : \spacetimemanifold \rightarrow [0,\infty)$ denotes the proper number density,
$\Temp : \spacetimemanifold \rightarrow [0,\infty)$ denotes the temperature,
\begin{align} \label{E:ENTHDEF}
\Enth 
	& := \frac{\uprho + p}{n} 
\end{align}
denotes the enthalpy per particle,
and $\fourvelocity^{\alpha}$ denotes the fluid's four-velocity, 
which is a future-directed vectorfield on $\spacetimemanifold$ normalized by:
\begin{align} \label{E:FLUIDFOURVELOCITYNORMALIZED}
	\gfour(\fourvelocity,\fourvelocity)
	& = - 1.
\end{align}

In the rest of the paper,
\begin{align} \label{E:FIXEDPOSITIVEENTHALPYCONSTANT}
		\overline{\Enth} & > 0
\end{align}
denotes an arbitrary fixed, positive value of $\Enth$; we find it convenient to use $\Enth$ to normalize
various constructions.

We find it convenient to work with the natural log of the enthalpy.
\begin{definition}[Logarithmic enthalpy]
	\label{D:LOGENTHALPY}
	We define
	the (dimensionless) logarithmic enthalpy $\Lnenth$ as follows:
	\begin{align} \label{E:LOGENTHALPY}
		\Lnenth
		&:= \ln \left(\Enth/\overline{\Enth} \right).
	\end{align}
\end{definition}

\begin{definition}[The quantity $\TempoverEnth$]
	\label{D:TEMPDIVIDEDBYENTH}
	We define the quantity $\TempoverEnth$ as follows:
	\begin{align} \label{E:TEMPDIVIDEDBYENTH}
		\TempoverEnth
		&:= \frac{\Temp}{\Enth}.
	\end{align}
\end{definition}

\subsubsection{A first statement of the relativistic Euler equations}
\label{SSS:FIRSTSTATEMENTOFEQUATIONS}
Relative to arbitrary coordinates, 
the relativistic Euler equations on $(\spacetimemanifold,\gfour)$ can be expressed as
the following system of \emph{quasilinear hyperbolic conservation laws}:
\begin{subequations}
\begin{align} \label{E:INTRODIVOFREENERGYMOMENTUMTENSORIS0}
	\nabla_{\kappa} \mathbf{T}^{\alpha \kappa}
	& = 0,
	& (\alpha & = 0,1,2,3),
		\\
	\nabla_{\kappa} (n \fourvelocity^{\kappa})
	& = 0,
	&&
	\label{E:INTROCONSERVATIONOFPARTICLENUMBER}
\end{align}
\end{subequations}	
where $\nabla$ is the Levi-Civita connection of the spacetime metric $\gfour$ and:
\begin{align} \label{E:FLUIDENERGYMOMENTUMTENSOR}
	\mathbf{T}^{\alpha \beta}
	& := (\uprho + p) \fourvelocity^{\alpha} \fourvelocity^{\beta}
			+
			p (\gfour^{-1})^{\alpha \beta},
	& (\alpha, \beta & = 0,1,2,3),
\end{align}
is the energy-momentum tensor of the fluid.
It turns out that under additional assumptions discussed in Sect.\,\ref{SSS:EOSANDTHERMODYNAMICS},
equation \eqref{E:FLUIDFOURVELOCITYNORMALIZED} can be viewed as a constraint that is preserved by
the flow of the PDEs~\eqref{E:INTRODIVOFREENERGYMOMENTUMTENSORIS0}; see
equation~\eqref{E:EVOUTIONOFMINKOWSKINORMOFFOURVELCOITY} and the discussion surrounding it.
We also emphasize that later on, we will discuss several other formulations of the flow.

\subsubsection{Equation of state and thermodynamic relations}
\label{SSS:EOSANDTHERMODYNAMICS}
The system \eqref{E:FLUIDFOURVELOCITYNORMALIZED} 
+ 
\eqref{E:INTRODIVOFREENERGYMOMENTUMTENSORIS0}--\eqref{E:INTROCONSERVATIONOFPARTICLENUMBER} 
+
\eqref{E:FLUIDENERGYMOMENTUMTENSOR}
is not
closed because there are too many fluid variables and not enough equations. 
The standard path to closing the system
begins with an \emph{equation of state}, that is, an assumed
functional relationship of the form $p = p(\uprho,\Ent)$.
The basic hyperbolicity of the equations will require that
\emph{speed of sound} $\speed$, 
defined by:
\begin{align} \label{E:SPEEDOFSOUND}
	\speed 
	& := 
	\sqrt{\frac{\partial p}{\partial \uprho} \left|\right._{\Ent}},
\end{align}
should be real and non-negative,
where $\frac{\partial p}{\partial \uprho} \left|\right._{\Ent}$
denotes the partial derivative of $p$ (i.e., of the equation of state) 
with respect to $\uprho$ at fixed $\Ent$.
The vanishing of $\speed$ causes a severe degeneracy in the system,
and we will therefore restrict our attention to 
solution regimes in which:
\begin{align} \label{E:SPEEDOFSOUNDASSUMEDBOUNDS}
	0 & < \speed \leq 1,
\end{align}
where we have normalized our setup so that the speed of light is unity
(i.e., the second inequality in \eqref{E:SPEEDOFSOUNDASSUMEDBOUNDS} implies that the speed
of sound is less than or equal to the speed of light).
We also restrict our attention to solutions such that:
\begin{align} \label{E:POSITIVITYOFSOMEFLUIDVARIABLES}
	\uprho 
	& > 0,
		\,
	p > 0,
		\,
	n > 0,
		\,
	\Temp > 0,
		\,
	\Enth > 0.
\end{align}

The laws of thermodynamics demand that the fluid variables satisfy the following functional relations:
\begin{align} \label{E:THERMOLAWS}
\Enth
& =
\frac{\partial \uprho}{\partial n}\left|_{\Ent}\right.,
&
\Temp 
& = \frac{1}{n} \frac{\partial \uprho}{\partial \Ent} \left|\right._n,
&
\mathrm{d} \Enth 
& = \frac{\mathrm{d} p}{n} + \Temp \mathrm{d} \Ent,
\end{align}
where 
$\frac{\partial}{\partial n}\left|_{\Ent}\right.$
denotes partial differentiation with respect to $n$ at fixed $\Ent$
and
$\frac{\partial}{\partial \Ent} \left|\right._n$
denotes partial differentiation with respect to $\Ent$ at fixed $n$.

\subsection{Minkowski metric assumption and Minkowski-rectangular coordinates}
\label{SS:MINKOWSKIASSUMPTIONS}
In the rest of the paper, unless we explicitly state otherwise,
it should be understood that the spacetime manifold $\spacetimemanifold$ is equal to $\mathbb{R}^{1+3}$
and that $\gfour$ is equal to the Minkowski metric, which for clarity we denote by $\mink$.
We fix a standard global Minkowski-rectangular coordinate system $\lbrace x^{\alpha} \rbrace_{\alpha=0,1,2,3}$ on $\spacetimemanifold$,
relative to which $\mink = \mbox{\upshape diag}(-1,1,1,1)$. 
We use the notation $\lbrace \partial_{\alpha} \rbrace_{\alpha=0,1,2,3}$ to denote the partial derivative vectorfields in this
coordinate system. We also use the alternate notation $t := x^0$ and $\partial_t := \partial_0$.

\subsubsection{Index conventions}
\label{SSS:INDEXCONVENTIONS}
	From now until the end of the paper, we use the following conventions for indices.
	\begin{itemize}
	\item (\textbf{Lowercase Greek index conventions}) Lowercase Greek spacetime indices 
	$\alpha$, $\beta$, etc.\ correspond to the Minkowski-rectangular coordinates
	and vary over $0,1,2,3$. All lowercase Greek indices are lowered and raised with the Minkowski metric $\mink$ 
	and its inverse $\mink^{-1}$.
	Throughout the article, if $\upxi$ is a type $\binom{m}{n}$ spacetime tensorfield,
	then unless we indicate otherwise, in our identities and estimates,
	\textbf{$\left\lbrace \upxi_{\beta_1 \cdots \beta_n}^{\alpha_1 \cdots \alpha_m} \right\rbrace_{
	\alpha_1, \cdots \alpha_m, \beta_1, \cdots, \beta_n = 0,1,2,3}$ denotes its 
	components with respect to the Minkowski-rectangular coordinates}. 
	\item (\textbf{Lowercase Latin index conventions})
	Lowercase Latin spatial indices $a$, $b$, etc.\ correspond to the Minkowski-rectangular spatial coordinates and vary over $1,2,3$.
	Much like in the previous point, 
	if $\upxi$ is a type $\binom{m}{n}$ $\Sigma_t$-tangent tensorfield,
	then \textbf{$\left\lbrace \upxi_{b_1 \cdots b_n}^{a_1 \cdots a_m} \right\rbrace_{a_1,\cdots,a_m,b_1,\cdots,b_n = 1,2,3}$ 
	denotes its components with respect to the Minkowski-rectangular spatial coordinates}. 
	\item (\textbf{Uppercase Latin index conventions})
	Uppercase Latin spatial indices $A,B$, etc.\ correspond to the spatial coordinates 
	$(x^2,x^3)$ and vary over $2,3$. 
	This will be important in Sect.\,\ref{S:SHOCKFORMATIONAWAYFROMSYMMETRY}, where
	$(x^2,x^3)$ will correspond to perturbations away from plane-symmetry.
\item (\textbf{Einstein summation}) 
	We use Einstein's summation convention in that repeated indices are summed,
	e.g., $\Lunit^A X^A := \Lunit^2 X^2 + \Lunit^3 X^3$.
	\end{itemize}

\subsection{A well-posed first-order formulation}
\label{SS:WELLPOSEDFIRSTORDERFORMULATION}
In this section, we provide a first-order formulation of the flow in
which the unknowns are taken to be
$\left(\Lnenth,\fourvelocity^0,\fourvelocity^1,\fourvelocity^2,\fourvelocity^3,\Ent \right)$.
Thanks to the assumptions state in Sect.\,\ref{SSS:FIRSTSTATEMENTOFEQUATIONS}, 
the remaining fluid variables can be expressed\footnote{By \eqref{E:FLUIDFOURVELOCITYNORMALIZED},
one can also express $\fourvelocity^0 = \sqrt{1 + \fourvelocity_a \fourvelocity^a}$. \label{FN:U0REDUNDANT}} 
as functions of the unknowns.

\begin{definition}[Derivative of a scalar function with respect to a vectorfield]
\label{D:DIRECTIONALDERIVATIVEOFSCALARFUNCTION}
	Throughout the paper, if $\genericvf$ is a vectorfield and $f$ is a scalar function,
	then $\genericvf f := \genericvf^{\kappa} \partial_{\kappa}f$ denotes the derivative of $f$
	in the direction of $\genericvf$.
\end{definition}

With the help of \eqref{E:THERMOLAWS},
one can compute that for $C^1$ solutions, 
\eqref{E:INTRODIVOFREENERGYMOMENTUMTENSORIS0}--\eqref{E:INTROCONSERVATIONOFPARTICLENUMBER}
is equivalent to the following first-order formulation of the flow:
\begin{subequations}
\begin{align}
	\fourvelocity^{\kappa} \partial_{\kappa} \Lnenth
	+
	\speed^2 \partial_{\kappa} \fourvelocity^{\kappa}
	& = 0,
	&&
		\label{E:ENTHALPYEVOLUTION} 
			\\
	\fourvelocity^{\kappa} \partial_{\kappa} \fourvelocity^{\alpha}
	+ 
	\Pi^{\alpha \kappa} \partial_{\kappa} \Lnenth
	-
	\TempoverEnth
	(\pmb{\mathfrak{m}}^{-1})^{\alpha \kappa} \partial_{\kappa} \Ent
	& = 0,
	&
	(\alpha & = 0,1,2,3),
	\label{E:VELOCITYEVOLUTIONWITHPROJECTION}
		\\
	\fourvelocity^{\kappa} \partial_{\kappa} \Ent
	& = 0,
	&&
	\label{E:ENTROPYEVOLUTION}
		\\
	\mink(\fourvelocity,\fourvelocity) 
	& = - 1,
	&&
	\label{E:AGAINFLUIDFOURVELOCITYNORMALIZED}
\end{align}
\end{subequations}
where:
\begin{align} \label{E:PROJECTTIONONTOORTHOGONALCOMPLEMENTOFFOURVELOCITY}
\Pi^{\alpha \beta} 
& 
:= (\mink^{-1})^{\alpha \beta} 
+ 
\fourvelocity^{\alpha} \fourvelocity^{\beta},
& 
(\alpha, \beta & = 0,1,2,3),
\end{align}
denotes projection onto the $\mink$-orthogonal complement of $\fourvelocity$. 
In particular,
\begin{align} \label{E:PROJECTIONANNIHILATESFOURVELOCITY}
		\Pi^{\alpha \beta} \fourvelocity_{\beta}
		& = 0,
		&
		(\alpha & = 0,1,2,3).
\end{align}
Equations \eqref{E:VELOCITYEVOLUTIONWITHPROJECTION}--\eqref{E:AGAINFLUIDFOURVELOCITYNORMALIZED}
+
\eqref{E:PROJECTTIONONTOORTHOGONALCOMPLEMENTOFFOURVELOCITY}
are a first-order quasilinear hyperbolic system and are locally well-posed in suitable Sobolev spaces;
see Prop.\,\ref{P:LWPFIRSTORDER}.
using \eqref{E:ENTROPYEVOLUTION}
We also note by contracting \eqref{E:VELOCITYEVOLUTIONWITHPROJECTION} against $\fourvelocity_{\beta}$ and
using \eqref{E:ENTROPYEVOLUTION}, we deduce -- without using \eqref{E:AGAINFLUIDFOURVELOCITYNORMALIZED} --
the following identity:
\begin{align} 
\begin{split} \label{E:EVOUTIONOFMINKOWSKINORMOFFOURVELCOITY}
	\fourvelocity^{\kappa} \partial_{\kappa}
	\left\lbrace
		\mink(\fourvelocity,\fourvelocity)
		+
		1
	\right\rbrace
	& = \fourvelocity^{\kappa} (\partial_{\kappa} \fourvelocity_{\lambda}) \fourvelocity^{\lambda}
		\\
	& =
	-
	\left\lbrace
		\mink(\fourvelocity,\fourvelocity)
		+
		1
	\right\rbrace
	\fourvelocity^{\kappa} \partial_{\kappa} \Lnenth.
\end{split}
\end{align}
Using \eqref{E:EVOUTIONOFMINKOWSKINORMOFFOURVELCOITY}, one can easily show via Gr\"{o}nwall's inequality
that if $\mink(\fourvelocity,\fourvelocity) + 1$ vanishes at $t=0$, then
$\mink(\fourvelocity,\fourvelocity) + 1$ vanishes on any slab $[0,T] \times \mathbb{R}^3$
of classical existence on which the solution is $C^1$.
Hence, \eqref{E:AGAINFLUIDFOURVELOCITYNORMALIZED} can be viewed as a constraint on the initial data that
is preserved by the flow.

\begin{remark}[Additional terms in the context of curved spacetime backgrounds]
\label{R:ADDITIONALTERMS}
When posed on a general smooth spacetime background $(\spacetimemanifold,\gfour)$,
there are additional terms on
RHS~\eqref{E:ENTHALPYEVOLUTION}--\eqref{E:VELOCITYEVOLUTIONWITHPROJECTION}
of the schematic form $\bf{\Gamma} \cdot \smoothfunction(\solutionarray)$,
where $\bf{\Gamma}$ schematically denotes Christoffel symbols of
$\gfour$, 
$\solutionarray$ denotes the array of fluid variable unknowns (see definition~\ref{E:SOLUTIONARRAY}),
and $\smoothfunction$ schematically denotes a smooth function.
In the context of fluid shock formation, the dominant terms in the flow
are Riccati-type nonlinearities in the fluid obtained after differentiating the equations one time,
i.e., they are quadratic in the first derivatives of the fluid;
see Remark~\ref{SS:1DRICCATIBLOWUP}.
Note that upon differentiating the term
$\bf{\Gamma} \cdot \smoothfunction(\solutionarray)$,
one obtains a term that is \emph{linear} in the derivatives of the fluid variables.
This means, in particular, that the for large-gradient solutions,
the differentiated terms $\partial_{\alpha} \left(\bf{\Gamma} \cdot \smoothfunction(\solutionarray) \right)$
will be small relative to the shock-driving Riccati-type fluid terms,
i.e., the ``new terms'' coming from the non-flat ambient geometry are \emph{relatively} small. 	
This observation is a key reason behind our expectation that one can use the methods
described later in this article to prove shock formation
for the relativistic Euler equations on a general spacetime
$(\spacetimemanifold,\gfour)$,
at least for initial data with suitable large gradients.
\end{remark}

\subsubsection{Equations of variation}
\label{SSS:EQUATIONSOFVARIATION}
As a preliminary step in studying the well-posedness of 
\eqref{E:VELOCITYEVOLUTIONWITHPROJECTION}--\eqref{E:AGAINFLUIDFOURVELOCITYNORMALIZED},
we will discuss the equations satisfied by the derivatives of solutions. This is important in the 
sense that proofs of local well-posedness rely on differentiating the equations to obtain energy estimates
for the solution's higher derivatives.
Specifically, by differentiating the equations, it is straightforward to see that
given a solution $(\Lnenth,\fourvelocity^{\alpha},\Ent)$ to \eqref{E:ENTHALPYEVOLUTION}--\eqref{E:AGAINFLUIDFOURVELOCITYNORMALIZED},
any derivative (of any order), which we denote by $(\dot{\Lnenth},\dot{\fourvelocity}^{\alpha},\dot{\Ent})$,
satisfies an inhomogeneous system, known as the \emph{equations of variation}, of the following form:
\begin{subequations}
\begin{align}
	\fourvelocity^{\kappa} \partial_{\kappa} \dot{\Lnenth}
	+
	\speed^2 \partial_{\kappa} \dot{\fourvelocity}^{\kappa}
	& = \dot{\mathfrak{F}},
	&&
		\label{E:EOVENTHALPYEVOLUTION} 
			\\
	\fourvelocity^{\kappa} \partial_{\kappa} \dot{\fourvelocity}^{\alpha}
	+ 
	\Pi^{\alpha \kappa} \partial_{\kappa} \dot{\Lnenth}
	-
	\TempoverEnth
	(\pmb{\mathfrak{m}}^{-1})^{\alpha \kappa} \partial_{\kappa} \dot{\Ent}
	& = \dot{\mathfrak{G}}^{\alpha},
	&
	(\alpha & = 0,1,2,3),
	\label{E:EOVVELOCITYEVOLUTIONWITHPROJECTION}
		\\
	\fourvelocity^{\kappa} \partial_{\kappa} \dot{\Ent}
	& = \dot{\mathfrak{H}},
	&&
	\label{E:EOVENTROPYEVOLUTION}
		\\
	\mink(\fourvelocity,\dot{\fourvelocity}) 
	& = \dot{\mathfrak{I}}.
	&&
	\label{E:EOVFLUIDFOURVELOCITYNORMALIZED}
\end{align}
\end{subequations}
Although here we do not provide a detailed formula for the structure of 
the inhomogeneous terms $\dot{\mathfrak{F}}$, $\cdots$, $\dot{\mathfrak{I}}$ 
on RHSs~\eqref{E:EOVENTHALPYEVOLUTION}--\eqref{E:EOVFLUIDFOURVELOCITYNORMALIZED},
it is easy to see that they feature terms involving $\leq$ the number of derivatives represented by the 
variations $(\dot{\Lnenth},\dot{\fourvelocity}^{\alpha},\dot{\Ent})$,
i.e., fewer derivatives than the principal terms on LHSs~\eqref{E:EOVENTHALPYEVOLUTION}--\eqref{E:EOVFLUIDFOURVELOCITYNORMALIZED}.

\subsubsection{The acoustical metric}
\label{SSS:ACOUSTICALMETRIC}
A basic question about the equations of variation is: what kinds of energy estimates are available for solutions?
It turns out that this question is connected to the intrinsic geometry of the flow via the \emph{acoustical metric},
which plays a fundamental role throughout the article.

\begin{definition}[The acoustical metric and its inverse] 
\label{D:ACOUSTICALMETRIC}
\begin{subequations}
We define:
\begin{align} \label{E:ACOUSTICALMETRIC}
		\hfour_{\alpha \beta}
	& : = 
		\normalizer
		\left\lbrace
		\speed^{-2} \mink_{\alpha \beta} 
		+ 
		(\speed^{-2} - 1) \fourvelocity_{\alpha} \fourvelocity_{\beta}
		\right
		\rbrace,
					\\
	(\hfour^{-1})^{\alpha \beta}
	& :
		= 
		\normalizer^{-1}
		\left\lbrace
		\speed^2 (\mink^{-1})^{\alpha \beta} 
			+ 
			(\speed^2 - 1)
			\fourvelocity^{\alpha} \fourvelocity^{\beta}
		\right\rbrace,
			\label{E:INVERSEACOUSTICALMETRIC}
				\\
	\normalizer
	& := 
		\speed^2
		+
		(1 - \speed^2) (\fourvelocity^0)^2 \geq 1.
		\label{E:ACOUSTICALMETRICNORMALIZER}
\end{align}
\end{subequations}
We refer to $\hfour$ as the \emph{acoustical metric}.
\end{definition}

With the help of \eqref{E:AGAINFLUIDFOURVELOCITYNORMALIZED}, one can compute that:
\begin{align} \label{E:INVERSEISTHEINVERSE}
	\hfour_{\alpha \gamma} (\hfour^{-1})^{\gamma \beta}
	& = \updelta_{\alpha}^{\beta},
\end{align}
where $\updelta_{\alpha}^{\beta}$ is the Kronecker delta. That is,
\eqref{E:INVERSEISTHEINVERSE} shows that $\hfour^{-1}$ is indeed the inverse of $\hfour$.
Moreover, by construction, we have:
\begin{align} \label{E:GINVERSE00ISMINUSONE}
	(\hfour^{-1})^{00}
	& = -1.
\end{align}
 
\begin{remark}[Normalization condition for $\hfour$]
\label{R:NORMALIZATIONCONDITIONFORGFOUR}
\eqref{E:GINVERSE00ISMINUSONE} is a convenient normalization condition.
Other works (e.g.\ \cite{dC2007}) rely on a different normalization condition for the acoustical metric.
That represents a minor difference that typically has no important implications for the study of solutions. 
\end{remark}

\begin{remark}[Subluminal speeds]
\label{R:SUBLUMINAL}
Note that by \eqref{E:SPEEDOFSOUNDASSUMEDBOUNDS},
\eqref{E:ACOUSTICALMETRIC},
and \eqref{E:ACOUSTICALMETRICNORMALIZER},
$\hfour(\genericvf,\genericvf) < 0 \implies \mink(\genericvf,\genericvf) < 0$,
i.e., the $\hfour$-null cones are inside the $\mink$-null cones.
Note also that by \eqref{E:AGAINFLUIDFOURVELOCITYNORMALIZED},
\eqref{E:ACOUSTICALMETRIC}, 
and \eqref{E:ACOUSTICALMETRICNORMALIZER},
we have:
\begin{align} \label{E:GFOURLENGTHOFFOURVELOCITY}
	\hfour_{\kappa \lambda} 
	\fourvelocity^{\kappa}
	\fourvelocity^{\lambda}
	& = - \normalizer < 0.
\end{align}
In particular, $\fourvelocity$ is $\hfour$-timelike (and thus $\mink$-timelike, as one can explicitly see
via \eqref{E:AGAINFLUIDFOURVELOCITYNORMALIZED}).
\end{remark}

\begin{remark}[Acoustical metric in a general ambient spacetime $(\spacetimemanifold,\gfour)$]
\label{R:ACOUSTICALMETRICGENERALAMBIENTSPACETIME}
When studying the relativistic Euler equations in a general ambient spacetime $(\spacetimemanifold,\gfour)$
equipped with coordinates $(x^0,x^1,x^2,x^3)$, 
where $t := x^0$ is a smooth time function
(see Sect.\,\ref{SS:FIRSTCOMMENTSONMOREGENERALSPACETIMES}),
the acoustical metric is (modulo the comments of Remark~\ref{R:NORMALIZATIONCONDITIONFORGFOUR}):
\begin{align} \label{E:ACOUSTICALMETRICGENERALAMBINETSPACETIME}
\hfour_{\alpha \beta}
	: = 
	\normalizer
		\left\lbrace
		\speed^{-2} \gfour_{\alpha \beta} 
		+ 
		(\speed^{-2} - 1) \fourvelocity_{\alpha} \fourvelocity_{\beta}
		\right
		\rbrace,
\end{align}
where
\begin{align} \label{E:GENERALAMBIENTSPACETIMEACOUSTICALMETRICNORMALIZER}
\normalizer
	& := 
		-
		(\gfour^{-1})^{00}
		\speed^2
		+
		(1 - \speed^2) (\fourvelocity^0)^2 > 0,
\end{align}
where the positivity on RHS~\eqref{E:GENERALAMBIENTSPACETIMEACOUSTICALMETRICNORMALIZER}
stems from \eqref{E:SPEEDOFSOUNDASSUMEDBOUNDS} and the assumption that $t$ is a time function for $\gfour$, 
which in particular implies that relative to the coordinates $(x^0,x^1,x^2,x^3)$,
we have
$(\gfour^{-1})^{00} 
= 
(\gfour^{-1})^{\alpha \beta} 
(\partial_{\alpha} t) 
\partial_{\beta} t
< 0
$.
\end{remark}

\subsubsection{The arrays $\solutionarray$ and $\variations$}
\label{SSS:SOLUTIONARRYANDVARIATIONARRAY}
For notational convenience, we define the \emph{solution array}
$\solutionarray$ and the \emph{variation array} $\variations$ as follows:
\begin{align}
	\solutionarray
	& := \left(\Lnenth,\fourvelocity^0,\fourvelocity^1,\fourvelocity^2,\fourvelocity^3,\Ent \right),
		\label{E:SOLUTIONARRAY} 
		\\
	\variations
	& := \left(\dot{\Lnenth},\dot{\fourvelocity}^0,\dot{\fourvelocity}^1,\dot{\fourvelocity}^2,\dot{\fourvelocity}^3,\dot{\Ent} \right).
		\label{E:VARIATIONARRAY}
\end{align}

\subsubsection{Energy currents and energy identity in differential form}
\label{SSS:ENERGYCURRENTSANDIDENTITY}
We now introduce energy currents, which form the basis for energy identities for the equations of variation.

\begin{definition}[Energy currents]
\label{D:ENERGYCURRENTS}
Given a solution $\solutionarray$ to
\eqref{E:VELOCITYEVOLUTIONWITHPROJECTION}--\eqref{E:AGAINFLUIDFOURVELOCITYNORMALIZED}
+
\eqref{E:PROJECTTIONONTOORTHOGONALCOMPLEMENTOFFOURVELOCITY},
variations $\variations = (\dot{\Lnenth},\dot{\fourvelocity}^{\alpha},\dot{\Ent})$
that satisfy \eqref{E:EOVENTHALPYEVOLUTION}--\eqref{E:EOVFLUIDFOURVELOCITYNORMALIZED}, 
and scalar functions $f_1$ and $f_2$, 
we define the corresponding \emph{energy current} vectorfield
$\dot{\mathbf{J}}_{(\solutionarray,f_1,f_2)}$ 
as follows:
	\begin{align} 
	\begin{split}
	\label{E:ENERGYCURRENTVECTORFIELD}
		\dot{\mathbf{J}}_{(\solutionarray,f_1,f_2)}^{\alpha}[\variations,\variations]
		& :=	 
			f_1
			\fourvelocity^{\alpha} \dot{\Ent}^2
			-
			2 \speed^2 \TempoverEnth \dot{\fourvelocity}^{\alpha} \dot{\Ent}
			- 
			2 \TempoverEnth \fourvelocity^{\alpha} \dot{\Ent} \dot{\Lnenth}
			+
			\fourvelocity^{\alpha} \dot{\Lnenth}^2
			+
			2 \speed^2 \dot{\fourvelocity}^{\alpha} \dot{\Lnenth}
			+
			\speed^2 \fourvelocity^{\alpha} \dot{\fourvelocity}_{\kappa} \dot{\fourvelocity}^{\kappa}
				\\
		& \ \
			+
			2 \speed^2 \fourvelocity^{\alpha} \fourvelocity_{\kappa} \dot{\fourvelocity}^{\kappa} \dot{\Lnenth}
			+
			f_2 \fourvelocity^{\alpha} (\fourvelocity_{\kappa} \dot{\fourvelocity}^{\kappa})^2.
	\end{split}
\end{align}
\end{definition}

Note that the components of $\dot{\mathbf{J}}$ can be viewed as quadratic forms in
$\variations$ with coefficients that depend on the solution $\solutionarray$.
Related energy currents were used by Christodoulou in \cite{dC2007}. A general framework for 
constructing energy currents in the context of \emph{regularly hyperbolic} PDEs was
developed by Christodoulou in \cite{dC2000}.

The following lemma provides a differential version of the energy identities
afforded by the energy current. The proof is a tedious but straightforward computation, omitted here
(c.f.\ \cite{dC2007}*{Equation~(1.41)}).

\begin{lemma}[Energy identity in differential form]
\label{L:ENERGYIDENTITYINDIFFERENTIALFORM}
Under the assumptions of Def.\,\ref{D:ENERGYCURRENTS}, with
$\dot{\mathbf{J}}_{(\solutionarray,f_1,f_2)}^{\alpha}[\variations,\variations]$ 
denoting the energy current defined in \eqref{E:ENERGYCURRENTVECTORFIELD},
the following identity holds relative to the Minkowski-rectangular coordinates:
\begin{align} 
\begin{split} \label{E:ENERGYCURRENTDIVERGENCEIDENTITY}
	\partial_{\kappa} \dot{\mathbf{J}}_{(\solutionarray,f_1,f_2)}^{\kappa}[\variations,\variations]
	& = 2 f_1 \dot{\Ent} \dot{\mathfrak{H}} 
			-
			2 \TempoverEnth \dot{\Lnenth} \dot{\mathfrak{H}}
			-
			2 \TempoverEnth \dot{\Ent} \dot{\mathfrak{F}}
			+
			2 \dot{\Lnenth} \dot{\mathfrak{F}}
			+
			2 \speed^2 \dot{\fourvelocity}_{\kappa} \dot{\mathfrak{G}}^{\kappa}
			+ 
			2 \speed^2 \dot{\Lnenth} \fourvelocity_{\kappa} \dot{\mathfrak{G}}^{\kappa}
				\\
		& \ \
			+
			2 f_2 \TempoverEnth \fourvelocity_{\kappa} \dot{\fourvelocity}^{\kappa} \dot{\mathfrak{H}}
			+
			2 f_2 \fourvelocity_{\kappa} \dot{\fourvelocity}^{\kappa} \fourvelocity_{\lambda} \dot{\mathfrak{G}}^{\lambda}
			+
			\mathscr{Q}_{(\solutionarray,f_1,f_2,\bm{\partial} \solutionarray,\bm{\partial} f_1, \bm{\partial} f_2)}[\variations,\variations],
\end{split}
\end{align}
where 
$\mathscr{Q}_{(\solutionarray,f_1,f_2,\bm{\partial} \solutionarray,\bm{\partial} f_1, \bm{\partial} f_2)}[\variations,\variations]$ 
is a quadratic form in $\variations$ with coefficients that depend on
$\solutionarray$, $f_1$, $f_2$, and their first-order partial derivatives; 
it is straightforward to compute $\mathscr{Q}$
in detail, though we do not do so here.
\end{lemma}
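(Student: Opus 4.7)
The plan is to compute $\partial_\kappa \dot{\mathbf{J}}^\kappa_{(\solutionarray,f_1,f_2)}$ directly via the Leibniz rule, then apply the equations of variation \eqref{E:EOVENTHALPYEVOLUTION}--\eqref{E:EOVENTROPYEVOLUTION} to substitute for the $\fourvelocity^\kappa \partial_\kappa$-derivatives of the variations. Every derivative that hits one of the coefficient functions (each built smoothly from $\solutionarray, f_1, f_2$, and the thermodynamic functions $\speed, \TempoverEnth$) yields a term quadratic in $\variations$ with coefficients involving $\bm{\partial}\solutionarray, \bm{\partial}f_1, \bm{\partial}f_2$; by the very definition of $\mathscr{Q}$, all such terms are absorbed into the remainder. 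What must actually be verified is that the remaining ``principal'' contributions — those in which a derivative falls on a variation — reassemble into precisely the linear combination of sources displayed on RHS~\eqref{E:ENERGYCURRENTDIVERGENCEIDENTITY}.

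The organizing observation is that the eight terms of \eqref{E:ENERGYCURRENTVECTORFIELD} are engineered so that their divergences reproduce specific multiplier contractions of the equations of variation. Reading multipliers off the quadratic structure of $\dot{\mathbf{J}}^\kappa$, I would form the sum of: $(2 \dot{\Lnenth} - 2\TempoverEnth \dot{\Ent})$ times \eqref{E:EOVENTHALPYEVOLUTION}; the contraction of \eqref{E:EOVVELOCITYEVOLUTIONWITHPROJECTION} against $(2\speed^2 \dot{\fourvelocity}_\alpha + 2\speed^2 \fourvelocity_\alpha \dot{\Lnenth} + 2 f_2 \fourvelocity_\alpha \fourvelocity_\kappa \dot{\fourvelocity}^\kappa)$; and $(2 f_1 \dot{\Ent} - 2\TempoverEnth \dot{\Lnenth} + 2 f_2 \TempoverEnth \fourvelocity_\kappa \dot{\fourvelocity}^\kappa)$ times \eqref{E:EOVENTROPYEVOLUTION}. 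The RHS of the resulting scalar identity matches the explicit source terms on RHS~\eqref{E:ENERGYCURRENTDIVERGENCEIDENTITY} exactly. The LHS must then be shown to coincide with $\partial_\kappa \dot{\mathbf{J}}^\kappa$ modulo $\mathscr{Q}$, which reduces to algebraic cancellations among cross-terms: the $\pm 2\speed^2 \dot{\Lnenth} \partial_\kappa \dot{\fourvelocity}^\kappa$ pieces cancel between the $\fourvelocity^\alpha \dot{\Lnenth}^2$ and $2\speed^2 \dot{\fourvelocity}^\alpha \dot{\Lnenth}$ entries of the current; the analogous $\TempoverEnth$-weighted entropy-gradient cross-terms cancel between the $-2\speed^2 \TempoverEnth \dot{\fourvelocity}^\alpha \dot{\Ent}$ and $-2\TempoverEnth \fourvelocity^\alpha \dot{\Ent}\dot{\Lnenth}$ entries; and the decomposition $\Pi^{\alpha\kappa} = (\mink^{-1})^{\alpha\kappa} + \fourvelocity^\alpha \fourvelocity^\kappa$ from \eqref{E:PROJECTTIONONTOORTHOGONALCOMPLEMENTOFFOURVELOCITY} accounts for the $\speed^2 \fourvelocity^\alpha \dot{\fourvelocity}_\kappa \dot{\fourvelocity}^\kappa$ and $2\speed^2 \fourvelocity^\alpha \fourvelocity_\kappa \dot{\fourvelocity}^\kappa \dot{\Lnenth}$ entries via the $\fourvelocity$-contraction of the velocity EOV.

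The main obstacle is purely bookkeeping: tracking index positions (since \eqref{E:EOVVELOCITYEVOLUTIONWITHPROJECTION} features both $\Pi^{\alpha\kappa}$ and $(\mink^{-1})^{\alpha\kappa}$, while the current couples $\dot{\fourvelocity}^\alpha$ to scalars through both $\fourvelocity_\alpha$ and $\mink_{\alpha\beta}$ contractions), and distinguishing principal from coefficient-hitting terms at each stage. No delicate estimates are involved; every cancellation is algebraic and uses only \eqref{E:AGAINFLUIDFOURVELOCITYNORMALIZED}, \eqref{E:PROJECTTIONONTOORTHOGONALCOMPLEMENTOFFOURVELOCITY}, and the product rule. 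The calculation mirrors \cite{dC2007}*{Equation~(1.41)} and is a concrete instance of the general energy-current framework of \cite{dC2000}, adapted to the unknowns $(\Lnenth, \fourvelocity^\alpha, \Ent)$ and the normalization fixed here.
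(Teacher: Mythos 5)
Your high-level plan — expand $\partial_\kappa\dot{\mathbf{J}}^\kappa$ by Leibniz, absorb every coefficient-hitting derivative into $\mathscr{Q}$, and substitute \eqref{E:EOVENTHALPYEVOLUTION}--\eqref{E:EOVENTROPYEVOLUTION} for the transport derivatives — is exactly what the paper envisions (the proof there is omitted as ``tedious''), and organizing the bookkeeping via multipliers is a natural device. However, the specific multiplier weights you read off from the displayed RHS do not quite close. Contracting \eqref{E:EOVVELOCITYEVOLUTIONWITHPROJECTION} against your weight $2\speed^2\fourvelocity_\alpha\dot{\Lnenth}$ produces, through the term $-\TempoverEnth(\mink^{-1})^{\alpha\kappa}\partial_\kappa\dot{\Ent}$, a contribution $-2\speed^2\TempoverEnth\dot{\Lnenth}\,\fourvelocity^\kappa\partial_\kappa\dot{\Ent} = -2\speed^2\TempoverEnth\dot{\Lnenth}\dot{\mathfrak{H}}$ that nothing in $\partial_\kappa\dot{\mathbf{J}}^\kappa$ can match; equivalently, expanding $\partial_\alpha\bigl(2\speed^2\fourvelocity^\alpha\fourvelocity_\kappa\dot{\fourvelocity}^\kappa\dot{\Lnenth}\bigr)$ and using the $\fourvelocity_\kappa$-contraction $\fourvelocity_\kappa\fourvelocity^\lambda\partial_\lambda\dot{\fourvelocity}^\kappa = \fourvelocity_\kappa\dot{\mathfrak{G}}^\kappa + \TempoverEnth\dot{\mathfrak{H}}$ (which follows from \eqref{E:EOVVELOCITYEVOLUTIONWITHPROJECTION}, \eqref{E:PROJECTIONANNIHILATESFOURVELOCITY}, and then \eqref{E:EOVENTROPYEVOLUTION}) yields the sources $2\speed^2\dot{\Lnenth}\fourvelocity_\kappa\dot{\mathfrak{G}}^\kappa + 2\speed^2\TempoverEnth\dot{\Lnenth}\dot{\mathfrak{H}}$, of which only the first appears in \eqref{E:ENERGYCURRENTDIVERGENCEIDENTITY}.

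Running the Leibniz expansion and EOV substitutions forward from \eqref{E:ENERGYCURRENTVECTORFIELD}, I obtain RHS~\eqref{E:ENERGYCURRENTDIVERGENCEIDENTITY} plus the additional term $2\speed^2\TempoverEnth\dot{\Lnenth}\dot{\mathfrak{H}}$, i.e.\ the $\dot{\Lnenth}\dot{\mathfrak{H}}$ coefficient comes out to $-2(1-\speed^2)\TempoverEnth$, so that the correct entropy multiplier is $2f_1\dot{\Ent} - 2(1-\speed^2)\TempoverEnth\dot{\Lnenth} + 2f_2\TempoverEnth\fourvelocity_\kappa\dot{\fourvelocity}^\kappa$ (your enthalpy and velocity multipliers are fine). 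This cannot be hidden in $\mathscr{Q}$, since $\dot{\mathfrak{H}}$ is not one of the $\variations$. I believe the displayed identity carries a small misprint, but either way the lesson for your write-up is that inferring the multipliers backwards from the stated source terms is circular and masks exactly this kind of doubly-substituted cross-term; you should derive the multipliers forward from the current. Your description of the remaining cancellations — the $\pm 2\speed^2\dot{\Lnenth}\partial_\kappa\dot{\fourvelocity}^\kappa$ pair, the $\TempoverEnth$-weighted entropy-gradient pair, and the $\Pi$-decomposition of the $\fourvelocity$-contraction — is correct and is the core of the calculation.
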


By integrating \eqref{E:ENERGYCURRENTDIVERGENCEIDENTITY} over a spacetime region and applying the divergence theorem,
one obtains an energy identity, where the energies are defined along the boundary of the region. A fundamental question is:
when are the energies obtained in this fashion positive definite\footnote{In general, energies are not useful if they are indefinite.
\label{FN:INDEFINITEENERGIESNOTUSEFUL}}? 
The answer to this question, provided by Lemma~\ref{L:POSITIVEDEFINITENESSOFENERGYCURRENT}, 
is tied to the acoustic geometry of the system, captured by the acoustical metric.
In short, the energies are positive definite whenever the boundary of the region 
has $\hfour$-timelike normals, i.e., whenever the boundary is spacelike with respect to $\hfour$.

Before proving the lemma, we first introduce a subset of state-space in which the 
relativistic Euler equations are hyperbolic.

\begin{definition}[Regime of hyperbolicity]
	\label{D:REGIMEOFHYPERBOLICITY}
	We define the \emph{regime of hyperbolicity} $\mathcal{H}$ to be the following subset of solution variable space:
	\begin{align} \label{E:REGIMEOFHYPERBOLICITY}
		\mathcal{H}
		& := \left\lbrace
					\left(\Lnenth,\fourvelocity^0,\fourvelocity^1,\fourvelocity^2,\fourvelocity^3,\Ent \right) \in \mathbb{R}^6
					\ | \ 0 < \speed(\Lnenth,\Ent) \leq 1
			 \right\rbrace.
	\end{align}
\end{definition}

\begin{lemma}[Positive definiteness of {$\dot{\mathbf{J}}_{(\solutionarray,f_1,f_2)}[\variations,\variations]$}]
\label{L:POSITIVEDEFINITENESSOFENERGYCURRENT}
Let $\upxi$ be a past-directed $\hfour$-timelike one-form, 
i.e., assume that $(\hfour^{-1})^{\kappa \lambda} \upxi_{\kappa} \upxi_{\lambda} < 0$ and $\upxi_0 := \upxi(\partial_t) > 0$.
Let $\mathfrak{K}$ be a compact subset of the set $\mathcal{H}$ defined in \eqref{E:REGIMEOFHYPERBOLICITY}, 
and assume that the solution array $\solutionarray$ defined in \eqref{E:SOLUTIONARRAY} satisfies 
$\solutionarray \in \mathfrak{K}$.
Then if $f_1$ and $f_2$ are sufficiently large (in a manner that depends on $\upxi$ and $\mathfrak{K}$), 
there exists a $C = C(\upxi,\mathfrak{K},f_1,f_2) > 0$ such that
the quadratic form $\upxi_{\alpha} \dot{\mathbf{J}}_{(\solutionarray,f_1,f_2)}^{\alpha}[\variations,\variations]$
is positive definite in the following sense:
\begin{align} \label{E:POSITIVEDEFINITENESSOFENERGYCURRENT}
	\upxi_{\kappa} \dot{\mathbf{J}}_{(\solutionarray,f_1,f_2)}^{\kappa}[\variations,\variations]
	& \geq 
		C |\variations|^2.
\end{align}
\end{lemma}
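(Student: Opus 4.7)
My plan is to decompose $\dot{\fourvelocity}$ into parts $\mink$-parallel and $\mink$-perpendicular to $\fourvelocity$, isolate a coupled ``$(\dot{\Lnenth}, \dot{\fourvelocity}_\perp)$-block'' whose positive definiteness is algebraically equivalent to $\upxi$ being $\hfour$-timelike, and absorb the remaining cross terms involving $\dot{\Ent}$ via Young's inequality by taking $f_1, f_2$ large. To set this up, introduce $\mathfrak{u} := \upxi_\kappa \fourvelocity^\kappa$, $\upsigma := \fourvelocity_\kappa \dot{\fourvelocity}^\kappa$, and the projections $\dot{\fourvelocity}_\perp^\alpha := \dot{\fourvelocity}^\alpha + \upsigma \fourvelocity^\alpha$ and $\upxi^\perp_\alpha := \upxi_\alpha + \mathfrak{u} \fourvelocity_\alpha$, so that $\fourvelocity_\kappa \dot{\fourvelocity}_\perp^\kappa = 0 = \fourvelocity^\kappa \upxi^\perp_\kappa$, the Minkowski norms $|\dot{\fourvelocity}_\perp|_\mink^2$ and $|\upxi^\perp|_\mink^2$ are nonnegative (since $\fourvelocity^\perp$ is a $\mink$-spacelike hyperplane), and the identities $\dot{\fourvelocity}^\alpha = \dot{\fourvelocity}_\perp^\alpha - \upsigma \fourvelocity^\alpha$, $\dot{\fourvelocity}_\kappa \dot{\fourvelocity}^\kappa = |\dot{\fourvelocity}_\perp|_\mink^2 - \upsigma^2$, and $\upxi_\kappa \dot{\fourvelocity}_\perp^\kappa = \upxi^\perp_\kappa \dot{\fourvelocity}_\perp^\kappa$ all hold. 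Since $\upxi$ is past-directed $\hfour$-timelike and $\fourvelocity$ is future-directed $\hfour$-timelike (by \eqref{E:GFOURLENGTHOFFOURVELOCITY}), standard Lorentzian geometry of $\hfour$ gives $\mathfrak{u} > 0$ pointwise, and compactness of $\mathfrak{K}$ upgrades this to a uniform lower bound $\mathfrak{u} \geq \mathfrak{u}_* > 0$.

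Substituting this decomposition into $\upxi_\alpha \dot{\mathbf{J}}^\alpha_{(\solutionarray,f_1,f_2)}[\variations,\variations]$, the two $\upsigma \dot{\Lnenth}$ contributions, namely $-2\speed^2 \mathfrak{u} \upsigma \dot{\Lnenth}$ from expanding $2\speed^2 \upxi_\alpha \dot{\fourvelocity}^\alpha \dot{\Lnenth}$ and $+2\speed^2 \mathfrak{u} \upsigma \dot{\Lnenth}$ from the explicit $2\speed^2 \fourvelocity^\alpha \fourvelocity_\kappa \dot{\fourvelocity}^\kappa \dot{\Lnenth}$ summand in \eqref{E:ENERGYCURRENTVECTORFIELD}, cancel exactly; what remains rearranges as $f_1 \mathfrak{u} \dot{\Ent}^2 + (f_2 - \speed^2)\mathfrak{u} \upsigma^2 + \mathcal{Q}(\dot{\Lnenth}, \dot{\fourvelocity}_\perp) + R$, where $\mathcal{Q}(\dot{\Lnenth}, \dot{\fourvelocity}_\perp) := \mathfrak{u} \dot{\Lnenth}^2 + 2\speed^2 \upxi^\perp_\kappa \dot{\fourvelocity}_\perp^\kappa \dot{\Lnenth} + \speed^2 \mathfrak{u} |\dot{\fourvelocity}_\perp|_\mink^2$ and $R$ collects cross terms involving $\dot{\Ent}$ with coefficients uniformly bounded on $\mathfrak{K}$. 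The crucial observation is that $\mathcal{Q}$ is positive definite in $(\dot{\Lnenth}, \dot{\fourvelocity}_\perp)$ \emph{precisely} when $\upxi$ is $\hfour$-timelike: completing the square in $\dot{\Lnenth}$ and applying Cauchy--Schwarz on the Riemannian subspace $\fourvelocity^\perp$ (which gives $(\upxi^\perp_\kappa \dot{\fourvelocity}_\perp^\kappa)^2 \leq |\upxi^\perp|_\mink^2 |\dot{\fourvelocity}_\perp|_\mink^2$) reduces the positivity to the pointwise inequality $\mathfrak{u}^2 > \speed^2 |\upxi^\perp|_\mink^2$, and a short direct calculation using \eqref{E:INVERSEACOUSTICALMETRIC} together with $\upxi_\alpha = -\mathfrak{u} \fourvelocity_\alpha + \upxi^\perp_\alpha$ shows $\mathfrak{u}^2 - \speed^2 |\upxi^\perp|_\mink^2 = -\normalizer \cdot (\hfour^{-1})^{\kappa \lambda} \upxi_\kappa \upxi_\lambda$, which is strictly positive exactly by the $\hfour$-timelike hypothesis on $\upxi$. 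Compactness then yields a uniform bound $\mathcal{Q} \geq c (\dot{\Lnenth}^2 + |\dot{\fourvelocity}_\perp|_\mink^2)$ for some $c = c(\upxi, \mathfrak{K}) > 0$.

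To finish, each cross term in $R$ (of schematic form $\dot{\Ent}\dot{\Lnenth}$, $\dot{\Ent}\upsigma$, or $\dot{\Ent}\cdot(\upxi^\perp_\kappa \dot{\fourvelocity}_\perp^\kappa)$) is absorbed by Young's inequality against $f_1 \mathfrak{u} \dot{\Ent}^2$, with the partner deficit paid from small fractions of $\mathcal{Q}$ and $(f_2 - \speed^2)\mathfrak{u} \upsigma^2$; taking $f_1$ and then $f_2$ sufficiently large (depending on $\upxi$ and $\mathfrak{K}$) yields $\upxi_\kappa \dot{\mathbf{J}}^\kappa \gtrsim \dot{\Ent}^2 + \dot{\Lnenth}^2 + \upsigma^2 + |\dot{\fourvelocity}_\perp|_\mink^2$. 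A final continuity/compactness argument notes that the linear map $\dot{\fourvelocity} \mapsto (\upsigma, \dot{\fourvelocity}_\perp)$ is an isomorphism onto $\mathbb{R} \oplus \fourvelocity^\perp$ depending continuously on $\solutionarray \in \mathfrak{K}$, which produces uniform equivalence with the Euclidean norm of the Minkowski-rectangular components and hence the claimed lower bound $\geq C|\variations|^2$. The hardest step, and the true content of the lemma, is the identification of the $\hfour$-timelike hypothesis on $\upxi$ with positive definiteness of the coupled block $\mathcal{Q}$; this is the only place where the causal geometry of the acoustical metric enters in an essential way, and it is the precise reason why $\hfour$-hyperbolicity (rather than $\mink$-hyperbolicity) is the correct notion for the energy method on this system.
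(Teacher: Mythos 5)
Your proposal is correct and follows essentially the same route as the paper: decompose $\upxi$ (and $\dot{\fourvelocity}$) relative to $\fourvelocity$, identify the $(\dot{\Lnenth},\dot{\fourvelocity}_\perp)$-block whose discriminant reduces to $\mathfrak{u}^2 - \speed^2 |\upxi^\perp|^2_\mink = -\normalizer\,(\hfour^{-1})^{\kappa\lambda}\upxi_\kappa\upxi_\lambda$ (which is exactly \eqref{E:KEYBOUNDFOR}), and absorb the $\dot{\Ent}$- and $\upsigma$-cross terms by Young's inequality after taking $f_1,f_2$ large. The only cosmetic difference is that you carry the full $\dot{\fourvelocity}$-decomposition through from the start, making the cancellation of the $\upsigma\dot{\Lnenth}$ terms explicit, whereas the paper first reduces to the special case $f_1=f_2=0$, $\dot{\Ent}=0$, $\fourvelocity_\kappa\dot{\fourvelocity}^\kappa = 0$ and then extends; these are equivalent presentations of the same argument.
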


\begin{remark}[Motivation for Lemmas \ref{L:ENERGYIDENTITYINDIFFERENTIALFORM}--\ref{L:POSITIVEDEFINITENESSOFENERGYCURRENT}]
Although Lemmas~\ref{L:ENERGYIDENTITYINDIFFERENTIALFORM}--\ref{L:POSITIVEDEFINITENESSOFENERGYCURRENT} might seem at first glance like unmotivated technical results, they sit within the general framework of \emph{multiplier methods} for hyperbolic equations. To give a rough idea of the multiplier method, consider a Schwartz-class solution $\phi$ to the linear wave equation 
$-\p_t^2\phi + \Delta_x \phi = 0$ on Minkowski spacetime $(\R^{1+3},\mink)$.
We define the \emph{energy current} vectorfield $\mathbf{J}$ to be the vectorfield with the following components relative
the standard Minkowski-rectangular coordinates:
\begin{subequations}
\begin{align}
\mathbf{J}^0 
& := \frac{1}{2} (\partial_t \phi)^2 + \frac{1}{2} |\nabla_x \phi|^2,
	&&
	\label{E:WAVEENERGYCURRENT0COMPONENT}
	\\
\mathbf{J}^i
& := - (\p_i \phi) \p_t \phi,
&& (i=1,2,3).
\label{E:WAVEENERGYCURRENTSPACECOMPONENTS}
\end{align}
\end{subequations}
In \eqref{E:WAVEENERGYCURRENT0COMPONENT}, $\nabla_x \phi := (\p_1 \phi,\phi_2,\phi_3)$ denotes the spatial gradient of $\phi$
and $| \cdot |$ is the standard Euclidean norm on $\R^3$. 
Then straightforward calculations yield that $\partial_{\alpha} \mathbf{J}^\alpha = 0$ and that $(\p_t)_\alpha \mathbf{J}^\alpha = \tfrac{1}{2} (\p_t \phi)^2 + \tfrac{1}{2} |\nabla_x \phi|^2$. These two identities are analogs of \eqref{E:ENERGYCURRENTDIVERGENCEIDENTITY} and 
\eqref{E:POSITIVEDEFINITENESSOFENERGYCURRENT} respectively. 
The upshot is that by integrating $\partial_{\alpha} \mathbf{J}^\alpha = 0$ 
over $[0,t] \times \mathbb{R}^3$, one obtains the celebrated conservation of energy formula for solutions
to the linear wave equation:
\begin{align} \label{E:CONSERVATIONOFENERGYINLINEARWAVEEQ}
	\frac{1}{2}
	\int_{\{t\} \times \R^3}  
	\left\{(\p_t \phi)^2 + |\nabla_x \phi|^2
	\right\}\mathrm{d} x 
	& =  
	\frac{1}{2}
	\int_{\{0\} \times \R^3}  \left\{(\p_t\phi)^2 + |\nabla_x \phi|^2 \right\} \mathrm{d} x. 
\end{align}
The approach is called the ``multiplier method'' because it is a geometric version of the more standard proof of 
\eqref{E:CONSERVATIONOFENERGYINLINEARWAVEEQ},
in which one multiplies the wave equation by $\p_t \phi$ and integrates by parts over $[0,t] \times \mathbb{R}^3$.

\end{remark}

\begin{proof}
	Throughout, $C =C(\upxi,\mathfrak{K},f_1,f_2)$ 
	denotes a positive constant (often smaller than $1$) that can vary from line to line.
	
	We start by defining the scalar function $\upxi_{\parallel}$ by:
	\begin{align} \label{E:XICONTRACTIWITHFOURVELOCITY}
		\upxi^{\parallel}
		& := \upxi_{\kappa} \fourvelocity^{\kappa}.
	\end{align}
	Then, using \eqref{E:AGAINFLUIDFOURVELOCITYNORMALIZED}, we decompose:
	\begin{align} \label{E:DEOCMPOSEXIINTOPARALLELANDPERPENDICULAR}
		\upxi_{\alpha}
		& = 
			- 
			\upxi^{\parallel} \fourvelocity_{\alpha}
			+
			\upxi_{\alpha}^{\perp},
	\end{align}
	where the one-form $\upxi_{\alpha}^{\perp}$ satisfies
	$\upxi_{\kappa}^{\perp} \fourvelocity^{\kappa} = 0$.
	In particular, $\upxi_{\alpha}^{\perp}$ is $\mink$-orthogonal to the $\mink$-timelike vectorfield
	$\fourvelocity^{\alpha}$ and thus $\upxi_{\alpha}^{\perp}$ is $\mink$-spacelike whenever it is non-zero,
	i.e., $(\mink)^{-1} \upxi_{\kappa}^{\perp} \upxi_{\lambda}^{\perp} \geq 0$.
	Since $(\hfour^{-1})^{\kappa \lambda} \upxi_{\kappa} \upxi_{\lambda} < 0$ and 
	$\hfour_{\kappa \lambda} \fourvelocity^{\kappa} \fourvelocity^{\lambda} < 0$
	(see \eqref{E:GFOURLENGTHOFFOURVELOCITY}),
	and since $\upxi_{\alpha}$ is past-directed while $\fourvelocity^{\alpha}$ is future-directed,
	it follows that there is a $C$ with $0 < C < 1$ such that:
	\begin{align} \label{E:BOUNDFORXIPARALLEL}
		C
		& \leq
		\upxi^{\parallel}
		\leq \frac{1}{C}.
	\end{align}
	
	We can similarly decompose $\dot{\fourvelocity}^{\alpha}$ into a variation that is parallel to
	$\fourvelocity^{\alpha}$ and one that vanishes upon contraction with $\fourvelocity_{\alpha}$.
	We claim that prove \eqref{E:POSITIVEDEFINITENESSOFENERGYCURRENT},
	it suffices to show that when $f_1 = f_2 = 0$, $\dot{\Ent} = 0$,
	and $\fourvelocity_{\kappa} \dot{\fourvelocity}^{\kappa} = 0$,
	we have:
	\begin{align} \label{E:REDUCEDPOSITIVEDEFINITENESSSTATEMENT}
		\upxi_{\kappa} \dot{\mathbf{J}}^{\kappa}[\variations,\variations]
		& \geq C (\dot{\Lnenth}^2 + \dot{\fourvelocity}_{\kappa} \dot{\fourvelocity}^{\kappa}).
	\end{align}
	The estimate \eqref{E:REDUCEDPOSITIVEDEFINITENESSSTATEMENT} would yield control over $\dot{\Lnenth}^2$ and variations such that
	$\dot{\Ent} = \fourvelocity_{\kappa} \dot{\fourvelocity}^{\kappa} = 0$
	(note that $\dot{\fourvelocity}^{\alpha}$ is $\mink$-spacelike for such variations and thus the
	term $\dot{\fourvelocity}_{\kappa} \dot{\fourvelocity}^{\kappa}$ on RHS~\eqref{E:REDUCEDPOSITIVEDEFINITENESSSTATEMENT}
	is positive-definite in such variations).
	Then
	to handle general variations,
	since the first and last
	terms on RHS~\eqref{E:ENERGYCURRENTVECTORFIELD} 
	yield the two coercive (in view of \eqref{E:BOUNDFORXIPARALLEL}) terms 
	$f_1 \upxi^{\parallel} \dot{\Ent}^2$ and $f_2 \upxi^{\parallel} (\fourvelocity_{\kappa} \dot{\fourvelocity}^{\kappa})^2$,
	we can choose $f_1$ and $f_2$ to be sufficiently large 
	(for example, we can choose them to be large, positive constants)
	to obtain control over
	$\dot{\Ent}^2$ and $(\fourvelocity_{\kappa} \dot{\fourvelocity}^{\kappa})^2$, and large enough so that these
	two coercive terms 
	and the coercivity \eqref{E:REDUCEDPOSITIVEDEFINITENESSSTATEMENT}
	can collectively be used to absorb (via Young's inequality) 
	all the remaining terms on RHS~\eqref{E:ENERGYCURRENTVECTORFIELD}. 
	In total, this would yield \eqref{E:POSITIVEDEFINITENESSOFENERGYCURRENT}.
	
	Hence, to complete the proof of \eqref{E:POSITIVEDEFINITENESSOFENERGYCURRENT}, we can 
	assume that $f_1 = f_2 = 0$, $\dot{\Ent} = 0$,
	and $\fourvelocity_{\kappa} \dot{\fourvelocity}^{\kappa} = 0$.
	This implies that $\dot{\fourvelocity}^{\alpha}$ is $\mink$-spacelike.
	We use \eqref{E:AGAINFLUIDFOURVELOCITYNORMALIZED},
	\eqref{E:ENERGYCURRENTVECTORFIELD}, 
	\eqref{E:XICONTRACTIWITHFOURVELOCITY},
	and
	\eqref{E:DEOCMPOSEXIINTOPARALLELANDPERPENDICULAR}
	to compute that:
	\begin{align} \label{E:REDUCEDPOSITIVEDEFINITENESSEQUALITY}
		\upxi_{\kappa} \dot{\mathbf{J}}^{\kappa}[\variations,\variations]
		 & = 
			\upxi^{\parallel} \dot{\Lnenth}^2 
			+
			2 \speed^2 \upxi_{\kappa}^{\perp} \dot{\fourvelocity}^{\kappa} \dot{\Lnenth}
			+
			\speed^2 \upxi^{\parallel} \dot{\fourvelocity}_{\kappa} \dot{\fourvelocity}^{\kappa}.
	\end{align}
	Since $(\hfour^{-1})^{\kappa \lambda} \upxi_{\kappa} \upxi_{\lambda} < - C < 0$,
	$C < \normalizer < \frac{1}{C}$,
	and $\fourvelocity_{\kappa} \dot{\fourvelocity}^{\kappa} = 0$,
	we compute using 
	\eqref{E:AGAINFLUIDFOURVELOCITYNORMALIZED},
	\eqref{E:INVERSEACOUSTICALMETRIC}, 
	\eqref{E:XICONTRACTIWITHFOURVELOCITY},
	and \eqref{E:DEOCMPOSEXIINTOPARALLELANDPERPENDICULAR}
	that there is a $C' > 0$ such that:
	\begin{align} \label{E:KEYBOUNDFOR}
		\normalizer
		(\hfour^{-1})^{\kappa \lambda} \upxi_{\kappa} \upxi_{\lambda}
		& = 
			- 
			(\upxi^{\parallel})^2
			+ 
			\speed^2
			(\mink^{-1})^{\kappa \lambda} 
			\upxi_{\kappa}^{\perp}
			\upxi_{\lambda}^{\perp}
			< - C'.
	\end{align}
	We are denoting the positive constant on RHS~\eqref{E:KEYBOUNDFOR} by ``$C'$'' (rather than ``$C$'') for clarity.
	Since $\upxi_{\alpha}^{\perp}$ is $\mink$-spacelike, we have
	$\speed^2
			(\mink^{-1})^{\kappa \lambda} 
			\upxi_{\kappa}^{\perp}
			\upxi_{\lambda}^{\perp} \geq 0$
	and thus $C' < (\upxi^{\parallel})^2$.
	Since $\upxi_{\alpha}^{\perp}$ and $\dot{\fourvelocity}^{\alpha}$ are
	$\mink$-spacelike,
	for any $\upsigma > 0$,
	we can use 
	\eqref{E:KEYBOUNDFOR},
	the Cauchy--Schwarz inequality with respect to $\mink$,
	and Young's inequality 
	to bound the cross term on RHS~\eqref{E:REDUCEDPOSITIVEDEFINITENESSEQUALITY} as follows:
	\begin{align} 
	\begin{split} \label{E:KEYBOUNDFORCROSSTERM}
		|2 \speed^2 \upxi_{\kappa}^{\perp} \dot{\fourvelocity}^{\kappa} \dot{\Lnenth}|
		& \leq 
			\frac{1}{\upsigma}
			\speed^4
			(\mink^{-1})^{\alpha \beta} 
			\upxi_{\alpha}^{\perp}
			\upxi_{\beta}^{\perp}
			\dot{\fourvelocity}_{\kappa} \dot{\fourvelocity}^{\kappa}
			+
			\upsigma
			\dot{\Lnenth}^2
				\\
		& 
		\leq
		\frac{1}{\upsigma}
		\speed^2
		\left\lbrace
			(\upxi^{\parallel})^2
			-
			C'
		\right\rbrace
		\dot{\fourvelocity}_{\kappa} \dot{\fourvelocity}^{\kappa}
		+
		\upsigma
		\dot{\Lnenth}^2
			\\
	& 
		\leq
		\frac{1}{\upsigma}
		\speed^2
		(\upxi^{\parallel})^2
		\left\lbrace
			1
			-
			C' C^2 
		\right\rbrace
		\dot{\fourvelocity}_{\kappa} \dot{\fourvelocity}^{\kappa}
		+
		\upsigma
		\dot{\Lnenth}^2,
	\end{split}
	\end{align}
	where the last $C$ on RHS~\eqref{E:KEYBOUNDFORCROSSTERM} is from \eqref{E:BOUNDFORXIPARALLEL}.
	We now set $\upsigma := \upxi^{\parallel} 
	\left\lbrace
			1
			-
			C' C^2 
		\right\rbrace^{1/2}$. It follows that:
	\begin{align} \label{E:FINALKEYBOUNDFORCROSSTERM}
		|2 \speed^2 \upxi_{\kappa}^{\perp} \dot{\fourvelocity}^{\kappa} \dot{\Lnenth}|
		& \leq 
		\speed^2
		\upxi^{\parallel}
		\left\lbrace
			1
			-
			C' C^2 
		\right\rbrace^{1/2}
		\dot{\fourvelocity}_{\kappa} \dot{\fourvelocity}^{\kappa}
		+
		\upxi^{\parallel}
		\left\lbrace
			1
			-
			C' C^2 
		\right\rbrace^{1/2}
		\dot{\Lnenth}^2.
	\end{align}
	Thus,
	since $1
			-
			C' C^2 < 1$,
	we deduce from \eqref{E:FINALKEYBOUNDFORCROSSTERM} 
	that the cross term on RHS~\eqref{E:REDUCEDPOSITIVEDEFINITENESSEQUALITY} can be absorbed
	into the two positive definite terms with room to spare.
	We have therefore proved \eqref{E:REDUCEDPOSITIVEDEFINITENESSSTATEMENT}
	assuming $f_1 = f_2 = 0$, $\dot{\Ent} = 0$,
	and
	$\fourvelocity_{\kappa} \dot{\fourvelocity}^{\kappa} = 0$,
	which completes the proof of the lemma.
\end{proof}

\subsubsection{Local well-posedness and continuation principle in Sobolev spaces}
\label{SSS:LWPFIRSTORDER}
In Prop.\,\ref{P:LWPFIRSTORDER}, we discuss local well-posedness for the relativistic Euler equations in Sobolev spaces.

\begin{proposition}[Local well-posedness and continuation principle in Sobolev spaces]
	\label{P:LWPFIRSTORDER}
	Recall the notation $\solutionarray$ introduced in \eqref{E:SOLUTIONARRAY}.
	Let 
	$\mathring{\solutionarray}
	=
		(\mathring{\Lnenth},\mathring{\fourvelocity}^0,\mathring{\fourvelocity}^1,\mathring{\fourvelocity}^2,
		\mathring{\fourvelocity}^3,\mathring{\Ent})
	:=\solutionarray|_{t=0}$
	be initial data for the relativistic Euler equations \eqref{E:ENTHALPYEVOLUTION}--\eqref{E:AGAINFLUIDFOURVELOCITYNORMALIZED}
	(in particular, assume that $\mathring{\fourvelocity}$ satisfies \eqref{E:AGAINFLUIDFOURVELOCITYNORMALIZED}).
	Assume that there is an integer $N \geq 3$ such that 
	$
	\mathring{\Lnenth},\mathring{\fourvelocity}^1,\mathring{\fourvelocity}^2,\mathring{\fourvelocity}^3, \mathring{\Ent}
	\in H^N(\mathbb{R}^3)
	$ 
	(and thus \eqref{E:AGAINFLUIDFOURVELOCITYNORMALIZED} and the standard Sobolev--Moser calculus imply that
	$\mathring{\fourvelocity}^0 - 1 \in H^N(\mathbb{R}^3)$)
	and that $\mathring{\solutionarray}(\mathbb{R}^3)$ is contained in a compact subset
	of the interior of set $\mathcal{H}$ defined in \eqref{E:REGIMEOFHYPERBOLICITY}.
	Then there is a maximal $T_{\textnormal{Lifespan}} > 0$ such that a unique solution to 
	\eqref{E:ENTHALPYEVOLUTION}--\eqref{E:AGAINFLUIDFOURVELOCITYNORMALIZED}
	exists and satisfies 
	$\Lnenth, \fourvelocity^0 - 1, \fourvelocity^1, \fourvelocity^2, \fourvelocity^3, \Ent 
	\in 
	C^0\left([0,T_{\textnormal{Lifespan}}),H^N(\mathbb{R}^3) \right) 
	\cap 
	C^1\left([0,T_{\textnormal{Lifespan}}), H^{N-1}(\mathbb{R}^3) \right)
	$. 
	Moreover, either $T_{\textnormal{Lifespan}} = \infty$, or one of the following two breakdown scenarios occurs:
	\begin{enumerate}
		\item  (\textbf{Blowup}). The solution's $C^1$ norm blows up:
			\begin{align} \label{E:C1BLOWUPSCENARIO}
				\lim_{t \uparrow T_{\textnormal{Lifespan}}}
				\| \solutionarray \|_{C^1([0,t] \times \mathbb{R}^3)}
				= \infty.
			\end{align}
			\item  (\textbf{Exiting the regime of hyperbolicity}).
				There exists a sequence of points 
				$\lbrace (t_m,x_m) \rbrace_{m \in \mathbb{N}} \subset [0,T_{\textnormal{Lifespan}} ) \times \mathbb{R}^3$
				such that for every compact subset $\mathfrak{K}$ of the interior of $\mathcal{H}$,
				there exists an $m_{\mathfrak{K}} \in \mathbb{N}$ such that
				$\solutionarray(t_m,x_m)
				\notin \mathfrak{K}
				$
				whenever $m \geq m_{\mathfrak{K}}$.
	\end{enumerate}
\end{proposition}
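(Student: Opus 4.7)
The plan is to treat \eqref{E:ENTHALPYEVOLUTION}--\eqref{E:AGAINFLUIDFOURVELOCITYNORMALIZED} as a quasilinear first-order hyperbolic system (regularly hyperbolic, with the acoustical metric $\hfour$ playing the role of the causal structure), and to carry out a standard energy-method construction whose symmetrizer is provided precisely by the energy current $\dot{\mathbf{J}}_{(\solutionarray,f_1,f_2)}$ of Lemmas~\ref{L:ENERGYIDENTITYINDIFFERENTIALFORM}--\ref{L:POSITIVEDEFINITENESSOFENERGYCURRENT}. First I would reduce to a true evolution system in the five unknowns $(\Lnenth,\fourvelocity^1,\fourvelocity^2,\fourvelocity^3,\Ent)$, recovering $\fourvelocity^0 = \sqrt{1+\fourvelocity_a\fourvelocity^a}$ from the constraint \eqref{E:AGAINFLUIDFOURVELOCITYNORMALIZED} (cf.\ Footnote~\ref{FN:U0REDUNDANT}); the identity \eqref{E:EVOUTIONOFMINKOWSKINORMOFFOURVELCOITY}, together with Gr\"{o}nwall, shows that provided the constraint holds at $t=0$, it is propagated by any $C^1$ solution, so the reduction is consistent with the full system.

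For existence, I would run a Picard-type iteration: given $\solutionarray^{(k)}$ taking values in a fixed compact subset $\mathfrak{K}$ of the interior of $\mathcal{H}$, define $\solutionarray^{(k+1)}$ by freezing the principal coefficients of \eqref{E:ENTHALPYEVOLUTION}, \eqref{E:VELOCITYEVOLUTIONWITHPROJECTION}, and \eqref{E:ENTROPYEVOLUTION} at $\solutionarray^{(k)}$ and solving the resulting linear symmetric hyperbolic system, whose well-posedness follows from the positive-definite energy afforded by Lemma~\ref{L:POSITIVEDEFINITENESSOFENERGYCURRENT} applied with $\upxi := -\mathrm{d}t$. For the higher-order a priori estimates, I would, for each multi-index $\vec{\alpha}$ with $|\vec{\alpha}|\leq N$, apply $\partial^{\vec{\alpha}}$ to the equations, view the result as an equation of variation \eqref{E:EOVENTHALPYEVOLUTION}--\eqref{E:EOVENTROPYEVOLUTION} for $\variations := \partial^{\vec{\alpha}}\solutionarray$, and integrate the divergence identity \eqref{E:ENERGYCURRENTDIVERGENCEIDENTITY} over $[0,t]\times\mathbb{R}^3$. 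Lemma~\ref{L:POSITIVEDEFINITENESSOFENERGYCURRENT} then produces a coercive boundary term equivalent to $\|\partial^{\vec{\alpha}}\solutionarray\|_{L^2(\{t\}\times\mathbb{R}^3)}^2$, while Sobolev--Moser product and commutator estimates (valid because $N\geq 3 > 3/2 + 1$, so $H^{N-1}\hookrightarrow L^\infty$ and $H^N \hookrightarrow C^1$) control the inhomogeneities $\dot{\mathfrak{F}},\dot{\mathfrak{G}},\dot{\mathfrak{H}},\dot{\mathfrak{I}}$ by a polynomial in $\|\solutionarray\|_{H^N}$. Gr\"{o}nwall then yields a uniform $H^N$ bound on a short time interval determined by $\|\mathring{\solutionarray}\|_{H^N}$ and $\mathrm{dist}(\mathring{\solutionarray}(\mathbb{R}^3),\partial\mathcal{H})$; a contraction estimate in $L^2$ for the differences $\solutionarray^{(k+1)}-\solutionarray^{(k)}$ (again via the energy current applied to the linearized variation system) yields a limit solution, and a Bona--Smith-type argument upgrades the convergence to $C^0([0,T],H^N)\cap C^1([0,T],H^{N-1})$. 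Uniqueness follows by one further application of the energy current to the difference of two solutions.

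For the continuation dichotomy, suppose $T_{\textnormal{Lifespan}}<\infty$ and that neither (1) nor (2) holds. Then $M := \|\solutionarray\|_{C^1([0,T_{\textnormal{Lifespan}})\times\mathbb{R}^3)}<\infty$ and the image of $\solutionarray$ sits in a compact $\mathfrak{K}\subset\mathrm{int}(\mathcal{H})$, so the coefficients of the energy current and of the commutators in the differentiated equations are uniformly bounded in $C^0$, and the Sobolev--Moser estimate on the inhomogeneities simplifies to $\|\cdot\|_{L^2}\lesssim M\|\solutionarray\|_{H^N}$; Gr\"{o}nwall then produces $\sup_{t<T_{\textnormal{Lifespan}}}\|\solutionarray(t)\|_{H^N}<\infty$, and invoking local existence at times $t\uparrow T_{\textnormal{Lifespan}}$ extends the solution past $T_{\textnormal{Lifespan}}$, contradicting maximality. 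The main technical obstacle, in my view, is organizing the Picard iteration so that (a) the redundancy of $\fourvelocity^0$ does not obstruct the application of Lemma~\ref{L:POSITIVEDEFINITENESSOFENERGYCURRENT}, which is set up for the full six-component variation, and (b) top-order commutators $[\partial^{\vec{\alpha}},\fourvelocity^\kappa\partial_\kappa]$ and the derivatives of $\speed^2$ and $\TempoverEnth$ appearing in the principal part are absorbed without any loss of derivatives; this is precisely controlled by the observation after \eqref{E:EOVFLUIDFOURVELOCITYNORMALIZED} that the inhomogeneous terms involve at most as many derivatives as the variations themselves.
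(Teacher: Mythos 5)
Your proposal follows essentially the same strategy as the paper's (sketched) proof: linearize, derive a priori $H^N$ estimates by integrating the energy-current divergence identity \eqref{E:ENERGYCURRENTDIVERGENCEIDENTITY} and invoking the coercivity of Lemma~\ref{L:POSITIVEDEFINITENESSOFENERGYCURRENT} together with the Sobolev--Moser calculus (valid here since $N\geq 3$), then close via a Picard iteration and a standard continuation argument. The only small slip is the sign of your multiplier one-form: under the paper's convention (which requires $\upxi_0 := \upxi(\partial_t) > 0$), the correct choice is $\upxi := \mathrm{d}t$ rather than $-\mathrm{d}t$, and $\mathrm{d}t$ is indeed $\hfour$-timelike thanks to the normalization \eqref{E:GINVERSE00ISMINUSONE}.
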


We make the following remarks.

\begin{itemize}
	\item In Sects.\,\ref{S:1DMAXIMALDEVELOPMENT} and \ref{S:SHOCKFORMATIONAWAYFROMSYMMETRY},  we will study the formation
		of shock singularities, in which the solution remains in the interior of the regime of hyperbolicity and the breakdown is precisely 
		due to the scenario \eqref{E:C1BLOWUPSCENARIO}.
	\item The techniques of \cite{tHtKjM1976}, which rely only on energy estimates, Sobolev embedding, and the fractional
		Sobolev--Moser calculus, can be used to save half a derivative compared to Prop.\,\ref{P:LWPFIRSTORDER}, that is, to
		prove local well-posedness for the $3D$ relativistic Euler equations whenever
		$
		\mathring{\Lnenth},\mathring{\fourvelocity}^1,\mathring{\fourvelocity}^2,\mathring{\fourvelocity}^3, \mathring{\Ent}
		\in H^{{5/2}^+}(\mathbb{R}^3)
		$.
	\item In \cite{sY2022}, a low-regularity
			local well-posedness result for the $3D$ relativistic Euler equations was proved. The low-regularity assumption
			was on the ``wave-part'' of the initial data, which was assumed to belong to $H^{2^+}(\mathbb{R}^3)$, 
			while the ``transport-part'' (including the vorticity and entropy) was assumed to satisfy some
		additional smoothness assumptions, which were shown to be propagated by the flow;
		the terminology ``wave-part'' and ``transport'' 
		is tied to the formulation of the flow provided by Theorem~\ref{T:GEOMETRICWAVETRANSPORTDIVCURLFORMULATION}
		and is explained in \cite{sY2022}.
		Lindblad's work \cite{hL1998} implies that the assumptions on the wave-part
		of the initial data are \emph{optimal} in the scale of Sobolev spaces due to the
		phenomenon of instantaneous shock formation for initial data in $H^2(\mathbb{R}^3)$.
		The analysis in \cite{sY2022}
		relies on the new formulation of the flow presented in Theorem~\ref{T:GEOMETRICWAVETRANSPORTDIVCURLFORMULATION},
		which allows one to employ analytical techniques based on nonlinear geometric optics and Strichartz estimates.
		The techniques used in \cite{sY2022} have roots in the works
		\cites{mDcLgMjS2022,qW2022} (see also \cites{zH2020,zHlA2021,zHlA2022})
		on low-regularity well-posedness for the non-relativistic compressible Euler equations,
		which in turn have roots in the works
		\cites{sKiR2003,sKiR2005d,sKiR2006a,sKiR2005b,sKiR2005c,sKiR2008,hSdT2005,qW2017}
		on low-regularity well-posedness for quasilinear wave equations.
\end{itemize}

\begin{proof}[Discussion of proof of Prop.\,\ref{P:LWPFIRSTORDER}]
This is a standard result that can be proved by linearizing the equations,
deriving a priori estimates for solutions to the linearized equations, and
then invoking a standard iteration scheme.
We refer to \cites{jS2008a,jS2008b,jS2008c} for the main ideas, where an analog of Prop.\,\ref{P:LWPFIRSTORDER} 
was proved for the coupling of the relativistic Euler equations to
Nordstr\"{o}m's theory of gravity. The main idea behind
the a priori estimates (in $H^N$) is to construct energy currents for the linearized systems
such that analogs of \eqref{E:ENERGYCURRENTDIVERGENCEIDENTITY}
and \eqref{E:POSITIVEDEFINITENESSOFENERGYCURRENT} hold,
and to control RHS~\eqref{E:ENERGYCURRENTDIVERGENCEIDENTITY}
in $L^2$ using the Sobolev--Moser calculus (which is standard in $3D$ when $N \geq 3$).
\end{proof}

\section{A study of the maximal development in simple isentropic plane-symmetry}
\label{S:1DMAXIMALDEVELOPMENT}    
Our goal in this section is to prove Theorem~\ref{T:MAINTHEOREM1DSINGULARBOUNDARYANDCREASE}, which yields
a complete description of a localized portion of the boundary of the maximal (classical) $\hfour$-globally hyperbolic development ($\hfour$-MGHD for short) 
for open sets of shock-forming solutions to the relativistic Euler equations \eqref{E:ENTHALPYEVOLUTION}--\eqref{E:ENTROPYEVOLUTION}
in simple isentropic plane-symmetry; see Fig.\,\ref{F:1DPLANESYMMETRYCAUCHYHORANDSINGULARCURVE}.
Roughly, the $\hfour$-MGHD is the largest possible classical solution + $\hfour$-globally hyperbolic region that is launched by the initial data.
By a $\hfour$-globally hyperbolic region $\mathfrak{R}$ of classical existence launched by the initial data, we mean that every inextendible
$\hfour$-causal curve in $\mathfrak{R}$ intersects $\Sigma_0 := \lbrace t = 0 \rbrace$,
where $\hfour$ is the acoustical metric from Def.\,\ref{D:ACOUSTICALMETRIC}. 
In particular, the $\hfour$-MGHD cannot contain any points lying in the $\hfour$-causal future of any singularity,
where the $\hfour$-causal future of a point is effectively determined by integral curves of
the two transversal null vectorfields $\Lunit$ and $\uLunit$ from
Def.\,\ref{D:NULLVECTORFIELDSINPLANESYMMETRY}.

The portion of the boundary that we study includes a singular boundary, where the fluid's gradient blows up in a shock singularity, 
and a Cauchy horizon, along which the solution remains smooth.  
For the solutions under study, the past boundary of the Cauchy horizon intersects the past boundary of the singular boundary 
in a distinguished point that we refer to as ``the crease;'' see Fig.\,\ref{F:MINKOWSKIRECTANGULAR1DPLANESYMMETRYCAUCHYHORANDSINGULARCURVE}. 
The crease plays the role of the true initial singularity in
a shock-forming relativistic Euler solution. 

Sect.\,\ref{S:1DMAXIMALDEVELOPMENT} is intended to provide a gentle introduction to the main ideas behind the study of the $\hfour$-MGHD
in a simplified setting in which the geometry is easy to study and one can avoid the burden of energy estimates.
It will also help set the stage for Sect.\,\ref{S:SHOCKFORMATIONAWAYFROMSYMMETRY}, in which we 
outline some of the main ideas needed to study shock formation in $3D$.

\subsection{A description of the symmetry class}
\label{SS:SIMPLEISENTROPICPLANESYMMETRY}
As we mentioned, we will study simple isentropic plane-symmetric solutions, which we now describe.
First, by ``isentropic solutions,'' we mean that $\Ent \equiv 0$;
for any constant $\Ent_0$, we could treat the case $\Ent \equiv \Ent_0$ using the same arguments.
From \eqref{E:ENTROPYEVOLUTION}, we infer that for $C^1$ isentropic solutions, 
the condition $\Ent = \Ent_0$
is preserved by the flow if it is satisfied at $t = 0$. By ``plane-symmetry,'' we mean that
$\fourvelocity^2 = \fourvelocity^3 \equiv 0$ and that $\Lnenth = \Lnenth(t,x^1)$, $\fourvelocity^1 = \fourvelocity^1(t,x^1)$, and $\fourvelocity^0 = \fourvelocity^0(t,x^1)$. 
Such solutions can be viewed as solutions in $1D$ in which $\Ent \equiv 0$, and throughout
the rest of the paper,
we make that identification. By ``simple,'' we mean that only one Riemann invariant 
(see Sect.\,\ref{SS:RIEMANNINVARIANTS}) is non-vanishing.

\subsection{The basic setup and conventions in plane-symmetry}
\label{SS:SETUPANDCONVENTIONSINPLANESYMMETRY}
In Sect.\,\ref{S:1DMAXIMALDEVELOPMENT}, since we are studying isentropic plane-symmetric solutions, 
we will slightly abuse notation and use $\hfour$ to denote the restriction of
the acoustical metric \eqref{E:ACOUSTICALMETRIC}--\eqref{E:INVERSEACOUSTICALMETRIC}, 
to the $(t,x^1)$-plane. That is, in Sect.\,\ref{S:1DMAXIMALDEVELOPMENT}, relative to the $(t,x^1)$-coordinates,
we have:
\begin{subequations}
\begin{align} \label{E:1DACOUSTICALMETRIC}
		\hfour_{\alpha \beta}
	& : = 
		\normalizer
		\left\lbrace
		\speed^{-2} \mink_{\alpha \beta} 
		+ 
		(\speed^{-2} - 1) \fourvelocity_{\alpha} \fourvelocity_{\beta}
		\right
		\rbrace,
		&
		(\alpha, \beta 
		& = 0,1),
					\\
	(\hfour^{-1})^{\alpha \beta}
	& :
		= 
		\normalizer^{-1}
		\left\lbrace
		\speed^2 (\mink^{-1})^{\alpha \beta} 
			+ 
			(\speed^2 - 1)
			\fourvelocity^{\alpha} \fourvelocity^{\beta}
		\right\rbrace,
		&
		(\alpha, \beta 
		& = 0,1),
			\label{E:1DINVERSEACOUSTICALMETRIC}
\end{align}
\end{subequations}
where $\normalizer$ is defined by \eqref{E:ACOUSTICALMETRICNORMALIZER},
and we recall that \eqref{E:GINVERSE00ISMINUSONE} holds.

For future use, we note that for $C^1$  isentropic plane-symmetric solutions,
the relativistic Euler equations \eqref{E:ENTHALPYEVOLUTION}--\eqref{E:ENTROPYEVOLUTION} 
in Minkowski spacetime are equivalent to:
\begin{subequations}
\begin{align}
	\fourvelocity^{\kappa} \partial_{\kappa} \Lnenth
	+
	\speed^2 \partial_{\kappa} \fourvelocity^{\kappa}
	& = 0,
		\label{E:1DENTHALPYEVOLUTION} 
			\\
	\fourvelocity^{\kappa} \partial_{\kappa} \fourvelocity^1
	+ 
	\Pi^{1 \kappa} \partial_{\kappa} \Lnenth
	& = 0,
	\label{E:1DVELOCITYEVOLUTIONWITHPROJECTION}
		\\
	 \fourvelocity^0 
	& = \sqrt{1 + (\fourvelocity^1)^2}.
	\label{E:1DFLUIDFOURVELOCITYNORMALIZED}
\end{align}
\end{subequations}
For future use, we also note the following algebraic identity, which holds in plane-symmetry due to 
\eqref{E:ACOUSTICALMETRICNORMALIZER}
and
\eqref{E:1DFLUIDFOURVELOCITYNORMALIZED}:
\begin{align} \label{E:NORMAIZERFACTORSIN1D}
\normalizer 
& = (\fourvelocity^0 + \fourvelocity^1 \speed)(\fourvelocity^0 - \fourvelocity^1 \speed).
\end{align}

To ensure ensure that shocks can form in solutions near constant fluid states with constant enthalpy $\overline{\Enth} > 0$, 
we assume the following non-degeneracy assumption: 
\begin{align} \label{E:NONDEGENARCYASSUMPTION}
	1 - (\overline{\speed})^2 + \frac{\overline{\speed}'}{\overline{\speed} \overline{\Enth}}
	& \neq 0,
\end{align}
where in \eqref{E:NONDEGENARCYASSUMPTION}, we are viewing $\speed = \speed(\Enth)$,
$\speed' := \frac{\mathrm{d}}{\mathrm{d} \Enth} \speed$, 
and LHS \eqref{E:NONDEGENARCYASSUMPTION} is defined to be the constant obtained by evaluating 
$1 - \speed^2 + \frac{\speed'}{\Enth \speed}$ at $\Enth = \overline{\Enth}$. 
The condition \eqref{E:NONDEGENARCYASSUMPTION} implies in particular that
the isentropic plane-symmetric relativistic Euler equations are genuinely nonlinear near the
constant-state solution $(\Enth,\fourvelocity^1) \equiv (\overline{\Enth},0)$; 
see also Sect.\,\ref{SS:1DRICCATIBLOWUP}.
Christodoulou showed \cite{dC2007}*{Chapter~1} that irrotational relativistic fluids 
(of which isentropic plane-symmetric fluids are a special case)
admit a Lagrangian formulation for a potential function, 
and the only equation of state leading to
$1 - \speed^2 + \frac{\speed'}{\Enth \speed} \equiv 0$ 
is such that the potential function solves the
timelike minimal surface equation.\footnote{For the minimal surface equation, 
$\speed \equiv 1$ corresponds to the trivial embedding $\R^{1+d} \hookrightarrow \R^{1 + (d+1)}$ with respect to the Minkowski metric.} 
For that equation, which satisfies a nonlinear version of the null condition that is stronger than the 
classic null condition identified by Klainerman \cite{sK1984},
the first author proved that perturbations of plane-symmetric data 
(which are similar to the data we study here in Sect.\,\ref{S:1DMAXIMALDEVELOPMENT}) 
lead to global, nonlinearly stable $C^2$ solutions in both $\R^{1+1}$ and $\R^{1+3}$ 
\cites{abbrescia2019geometric,abbrescia2020global}.

\subsection{Null frame}
\label{SS:1DNULLFRAME}
In $1D$, it is possible to explicitly write down a pair of transversal null vectorfields that dictate the geometry of sound waves. 
As we explain in Sect.\,\ref{SS:NONLINEARGEOMETRICOPTICS}, in more than one spatial dimension, it is not possible to explicitly write down
null vectorfields in terms of the fluid variables. Rather, in that context, the null vectorfields depend on the gradient
of an eikonal function $\eik$.

\begin{definition}[Null vectorfields in $1D$] \label{D:NULLVECTORFIELDSINPLANESYMMETRY}
Relative to the Minkowski-rectangular coordinates $(t,x^1)$, 
we define the vectorfields $\Lunit$ and $\uLunit$ as follows:
\begin{subequations}
\begin{align}
	\Lunit
	& :=  \partial_t
				+
				\frac{\frac{\fourvelocity^1}{\fourvelocity^0} + \speed}{1 + \frac{\fourvelocity^1}{\fourvelocity^0} \speed} \partial_1,
			\label{E:1DLUNIT} \\
	\uLunit
	& :=  \partial_t
				+
				\frac{\frac{\fourvelocity^1}{\fourvelocity^0} - \speed}{1 - \frac{\fourvelocity^1}{\fourvelocity^0} \speed} \partial_1.
				\label{E:1DULUNIT}
\end{align}
\end{subequations}
\end{definition}

Straightforward calculations 
based on \eqref{E:1DACOUSTICALMETRIC}
and \eqref{E:1DLUNIT}--\eqref{E:1DULUNIT}
yield the following identities:
\begin{align} \label{E:GFOURINNERPRODUCTOFNULLVECTORFIELDSIN1D}
	\hfour(\Lunit,\Lunit)
	= 
	\hfour(\uLunit,\uLunit)
	& = 0,
	&
\hfour(\Lunit,\uLunit) 
& = -2.
\end{align}
In particular, \eqref{E:GFOURINNERPRODUCTOFNULLVECTORFIELDSIN1D} shows that $\Lunit$ and $\uLunit$ are $\hfour$-null.
The pair $\lbrace \Lunit, \uLunit \rbrace$ is known as a $\hfour$-\emph{null frame}.

From \eqref{E:GFOURINNERPRODUCTOFNULLVECTORFIELDSIN1D} and straightforward calculations,
we deduce the following lemma.

\begin{lemma}[$\hfour^{-1}$ in terms of $\Lunit$ and $\uLunit$]
	\label{L:1DGFOURINVERSEINTERMSOFLUNITANDULUNIT}
	In $1 + 1$ spacetime dimensions,
	the inverse acoustical metric $\hfour^{-1}$ from \eqref{E:1DINVERSEACOUSTICALMETRIC} satisfies:
	\begin{align} \label{E:1DACOUSTICALMETRICINVERSEINTERMSOFNULLVECTORFIELDS}
		(\hfour^{-1})^{\alpha \beta}
		& = - 
				\frac{1}{2} \Lunit^{\alpha} \uLunit^{\beta}
				 - 
				\frac{1}{2} \uLunit^{\alpha} \Lunit^{\beta},
		&
		(\alpha, \beta 
		& = 0,1),
	\end{align}
	i.e., $\hfour^{-1} = - \frac{1}{2} \Lunit \otimes \uLunit 
		- 
		\frac{1}{2} \uLunit \otimes \Lunit
		$. 
\end{lemma}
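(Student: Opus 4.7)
The plan is to exploit the fact that in $1+1$ dimensions the pair $\{\Lunit,\uLunit\}$ forms a pointwise basis of $T_p\spacetimemanifold$. Indeed, by \eqref{E:GFOURINNERPRODUCTOFNULLVECTORFIELDSIN1D} we have $\hfour(\Lunit,\uLunit) = -2 \neq 0$, so $\Lunit$ and $\uLunit$ are linearly independent. In $1+1$ dimensions the space of symmetric $(2,0)$-tensors is three-dimensional, and $\{\Lunit\otimes\Lunit,\ \uLunit\otimes\uLunit,\ \Lunit\otimes\uLunit+\uLunit\otimes\Lunit\}$ is a basis of it.

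First, I would write
\begin{align*}
(\hfour^{-1})^{\alpha\beta}
= A\,\Lunit^\alpha \Lunit^\beta
+ B\,\uLunit^\alpha \uLunit^\beta
+ C\,\bigl(\Lunit^\alpha \uLunit^\beta + \uLunit^\alpha \Lunit^\beta\bigr)
\end{align*}
for scalar functions $A,B,C$ to be determined. To pin down these coefficients, I would use the defining property $\hfour_{\alpha\gamma}(\hfour^{-1})^{\gamma\beta} = \updelta_\alpha^\beta$ and test it against the basis vectors $\Lunit$ and $\uLunit$. Concretely, contracting the identity $\hfour_{\alpha\gamma}(\hfour^{-1})^{\gamma\beta}\Lunit^\alpha = \Lunit^\beta$ using \eqref{E:GFOURINNERPRODUCTOFNULLVECTORFIELDSIN1D} (which gives $\hfour(\Lunit,\Lunit)=0$ and $\hfour(\Lunit,\uLunit)=-2$) yields
\begin{align*}
-2 B\,\uLunit^\beta - 2 C\,\Lunit^\beta = \Lunit^\beta,
\end{align*}
forcing $B = 0$ and $C = -\tfrac{1}{2}$. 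Testing against $\uLunit^\alpha$ in the same way forces $A=0$ and confirms $C=-\tfrac{1}{2}$. Substituting these values produces the claimed formula \eqref{E:1DACOUSTICALMETRICINVERSEINTERMSOFNULLVECTORFIELDS}.

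Alternatively, and perhaps more transparently, one can work in the frame itself: in the basis $(e_1,e_2) := (\Lunit,\uLunit)$, the components of $\hfour$ are the $2\times 2$ matrix with $\hfour_{11}=\hfour_{22}=0$ and $\hfour_{12}=\hfour_{21}=-2$; its inverse is the matrix with entries $(\hfour^{-1})^{11}=(\hfour^{-1})^{22}=0$ and $(\hfour^{-1})^{12}=(\hfour^{-1})^{21}=-\tfrac{1}{2}$, from which \eqref{E:1DACOUSTICALMETRICINVERSEINTERMSOFNULLVECTORFIELDS} is immediate upon transforming back to Minkowski-rectangular coordinates.

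There is no serious obstacle here; the computation is short, and the only subtlety worth flagging is that the ``diagonal'' coefficients in front of $\Lunit\otimes\Lunit$ and $\uLunit\otimes\uLunit$ vanish, which is a feature of the null frame and of the low-dimensional setting, since $\{\Lunit,\uLunit\}$ already exhausts the tangent space in $1+1$ dimensions (in higher dimensions an additional angular contribution would appear, as will be seen in Sect.\,\ref{S:SHOCKFORMATIONAWAYFROMSYMMETRY}).
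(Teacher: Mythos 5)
Your proposal is correct and matches the approach the paper hints at (the paper says the lemma follows ``from \eqref{E:GFOURINNERPRODUCTOFNULLVECTORFIELDSIN1D} and straightforward calculations'' without spelling them out, and your argument supplies exactly those calculations). Both routes you give—expanding $\hfour^{-1}$ in the symmetric-tensor basis and testing against $\Lunit$, $\uLunit$, or simply inverting the $2\times 2$ matrix of $\hfour$ in the null frame—are clean and equivalent; no gaps.
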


\subsection{Riemann invariants} \label{SS:RIEMANNINVARIANTS}
In this section, we introduce the \emph{Riemann invariants}, which are a pair of fluid variables, one of which is 
constant along the integral curves of $\Lunit$ and the other along the integral curves of $\uLunit$. For 
isentropic plane-symmetric solutions to the relativistic Euler equations,
the Riemann invariants can be used as state-space variables for the fluid, i.e., the other fluid variables are determined by them.
Riemann invariants were discovered by Riemann in his proof of shock formation for the non-relativistic 
compressible Euler equations in $1D$. 
In \cite{aT1948}, Taub discovered Riemann invariants for the isentropic relativistic Euler equations in $1D$. 
Studying the flow with respect to the Riemann invariants allows one to study the $\hfour$-MGHD using
techniques based on transport equations and ODE theory. In particular, energy estimates are not needed in $1D$.
Although Riemann invariants are not available in multi-dimensions, approximate Riemann invariants \emph{are} available, and they
are convenient for studying perturbations of isentropic plane-symmetric solutions; see Sect.\,\ref{SS:ALMOSTRIEMANNINVARIANTS}.

To proceed, we let $\RIfunction = \RIfunction(\Enth)$ be the solution to
the initial value problem:
\begin{align} \label{E:ODEFORRIEMANNINVARIANT}
	\frac{\mathrm{d}}{\mathrm{d} \Enth} 
	\RIfunction(\Enth)
	& = \frac{1}{\Enth \speed},
	&
	\RIfunction(\overline{\Enth})
	= 0,
\end{align}	
where $\overline{\Enth} > 0$ is the constant
\eqref{E:FIXEDPOSITIVEENTHALPYCONSTANT},
and in \eqref{E:ODEFORRIEMANNINVARIANT}, 
we are viewing $\speed$ as a function of $\Enth$. 

Note that since $\speed(\overline{\Enth}) \neq 0$ by assumption,
it follows from \eqref{E:ODEFORRIEMANNINVARIANT} that the function $\RIfunction$ 
has a local inverse $\RIfunction^{-1}$ defined near the origin such that $\RIfunction^{-1}(0) = \overline{\Enth}$.

\begin{definition}[Riemann invariants]	
	\label{D:RIEMANNINVARIANTS}
	We define the \emph{Riemann invariants} $\RRiemannPS$ and $\LRiemannPS$ as follows:
	\begin{subequations} 
	\begin{align} \label{E:RRIEMANNPS}
		\RRiemannPS
		& := 
				\RIfunction(\Enth)
				+
				\frac{1}{2} \ln \left( \frac{1 + \frac{\fourvelocity^1}{\fourvelocity^0}}{1 - \frac{\fourvelocity^1}{\fourvelocity^0}} \right),
			\\
		\LRiemannPS
		& := 
			\RIfunction(\Enth)
			-
			\frac{1}{2} \ln \left( \frac{1 + \frac{\fourvelocity^1}{\fourvelocity^0}}{1 - \frac{\fourvelocity^1}{\fourvelocity^0}} \right).
		\label{E:LRIEMANNPS}
	\end{align}
	\end{subequations}
\end{definition}

In the next proposition, for isentropic plane-symmetric solutions, we express the relativistic Euler equations
in terms of $(\RRiemannPS,\LRiemannPS)$, and we provide explicit formulae
for various quantities in terms of $(\RRiemannPS,\LRiemannPS)$.

\begin{proposition}[Isentropic plane-symmetric flow in terms of the Riemann invariants]	
\label{P:QUASILINEARTRANSPORTSYSTEMFORRIEMANNINVARAINTSANDOTHERRESULTS}
	Let $(\Lnenth,\fourvelocity^0,\fourvelocity^1)$ be a $C^1$ solution to \eqref{E:1DENTHALPYEVOLUTION}--\eqref{E:1DFLUIDFOURVELOCITYNORMALIZED},
	and let $\Lunit$ and $\uLunit$ be the vectorfields from Def.\,\ref{D:NULLVECTORFIELDSINPLANESYMMETRY}.
	Then the Riemann invariants of Def.\,\ref{D:RIEMANNINVARIANTS} satisfy the 
	following quasilinear transport system:
	\begin{subequations}
	\begin{align} \label{E:RRIEMANNPSTRANSPORT}
		\Lunit \RRiemannPS
		& = 0,
			\\
		\uLunit \LRiemannPS
		& = 0.
		\label{E:LRIEMANNPSTRANSPORT}
	\end{align}
	\end{subequations}
	Moreover, the components of the fluid velocity 
	in the Minkowski-rectangular coordinate system $(t,x^1)$
	can be expressed in terms of the Riemann invariants as follows: 
	\begin{subequations}
	\begin{align}
		\fourvelocity^0  & = 
		\cosh\left(\frac{\RRiemannPS - \LRiemannPS}{2}\right), \qquad \fourvelocity^1  = \sinh\left(\frac{\RRiemannPS - \LRiemannPS}{2}\right). 
		\label{E:1DFLUIDVELOCITYINTERMSOFRIEMANNINVARIANTS} 
	\end{align}
	In addition, with $\RIfunction^{-1}$ denoting the inverse function of the map $\RIfunction$ from Def.\,\ref{D:RIEMANNINVARIANTS},
	we have:
	\begin{align} \label{E:1DENTHALPYINTERMSOFRIEMANNINVARIANTS}
		\Enth 
		& 
		=  
		\RIfunction^{-1}\left( \frac{1}{2}\left(\RRiemannPS + \LRiemannPS\right)\right). 
	\end{align}
	\end{subequations}
	Finally, the following identities hold:
	\begin{subequations}
		\begin{align}
			\Lunit & = \p_t + \frac{\sinh\left(\frac{\RRiemannPS - \LRiemannPS}{2}\right) + \speed \cosh\left(\frac{\RRiemannPS - \LRiemannPS}{2}\right)}{\cosh\left(\frac{\RRiemannPS - \LRiemannPS}{2}\right) + \speed \sinh\left(\frac{\RRiemannPS - \LRiemannPS}{2}\right)} \p_1, 
			\label{E:1DLUNITINTERMSOFRIEMANNINVARIANTS}
				\\
			\uLunit 
			& 
			= 
			\p_t 
			+ 
			\frac{\sinh\left(\frac{\RRiemannPS - \LRiemannPS}{2}\right) 
			- 
			\speed \cosh\left(\frac{\RRiemannPS - \LRiemannPS}{2}\right)}{
			\cosh\left(\frac{\RRiemannPS - \LRiemannPS}{2}\right) - \speed \sinh\left(\frac{\RRiemannPS - \LRiemannPS}{2}\right)} \p_1. 
			\label{E:1DULUNITINTERMSOFRIEMANNINVARIANTS} 
		\end{align}
	\end{subequations}
	\end{proposition}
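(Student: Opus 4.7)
The plan is to first introduce a rapidity variable that diagonalizes the constraint \eqref{E:1DFLUIDFOURVELOCITYNORMALIZED}, then read off the algebraic identities \eqref{E:1DFLUIDVELOCITYINTERMSOFRIEMANNINVARIANTS}--\eqref{E:1DULUNITINTERMSOFRIEMANNINVARIANTS} from the definitions, and finally derive the transport equations \eqref{E:RRIEMANNPSTRANSPORT}--\eqref{E:LRIEMANNPSTRANSPORT} by taking a well-chosen linear combination of the evolution PDEs \eqref{E:1DENTHALPYEVOLUTION}--\eqref{E:1DVELOCITYEVOLUTIONWITHPROJECTION}. Since \eqref{E:1DFLUIDFOURVELOCITYNORMALIZED} is equivalent to $(\fourvelocity^0)^2 - (\fourvelocity^1)^2 = 1$ with $\fourvelocity^0 > 0$, I may introduce a unique spacetime scalar $\theta$ such that $\fourvelocity^0 = \cosh\theta$ and $\fourvelocity^1 = \sinh\theta$.

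With this rapidity in hand, $\fourvelocity^1/\fourvelocity^0 = \tanh\theta$, and the standard identity $\ln\bigl(\tfrac{1+\tanh\theta}{1-\tanh\theta}\bigr) = 2\theta$ applied to Def.\,\ref{D:RIEMANNINVARIANTS} gives $\RRiemannPS + \LRiemannPS = 2\RIfunction(\Enth)$ and $\RRiemannPS - \LRiemannPS = 2\theta$. Inverting for $\Enth$ and $\theta$ yields \eqref{E:1DENTHALPYINTERMSOFRIEMANNINVARIANTS}; substituting back into $\fourvelocity^0 = \cosh\theta$ and $\fourvelocity^1 = \sinh\theta$ yields \eqref{E:1DFLUIDVELOCITYINTERMSOFRIEMANNINVARIANTS}. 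Plugging these expressions into the definitions \eqref{E:1DLUNIT}--\eqref{E:1DULUNIT} of $\Lunit$ and $\uLunit$ immediately produces \eqref{E:1DLUNITINTERMSOFRIEMANNINVARIANTS}--\eqref{E:1DULUNITINTERMSOFRIEMANNINVARIANTS}.

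For the transport equations, I rewrite the PDEs as a first-order quasilinear system in the unknowns $(\Lnenth,\theta)$. Using $\Pi^{10} = \sinh\theta\cosh\theta$ and $\Pi^{11} = \cosh^2\theta$ (from \eqref{E:PROJECTTIONONTOORTHOGONALCOMPLEMENTOFFOURVELOCITY}) and dividing \eqref{E:1DVELOCITYEVOLUTIONWITHPROJECTION} by $\cosh\theta$, the system becomes
\begin{align*}
\cosh\theta \, \partial_t \Lnenth + \sinh\theta \, \partial_1 \Lnenth + \speed^2 \sinh\theta \, \partial_t \theta + \speed^2 \cosh\theta \, \partial_1 \theta &= 0, \\
\sinh\theta \, \partial_t \Lnenth + \cosh\theta \, \partial_1 \Lnenth + \cosh\theta \, \partial_t \theta + \sinh\theta \, \partial_1 \theta &= 0.
\end{align*}
Forming $(1/\speed)\times(\text{first}) + (\text{second})$ factors the result as $(\cosh\theta + \speed\sinh\theta)\,\Lunit \RRiemannPS = 0$, and the analogous combination with a minus sign factors as $(\cosh\theta - \speed\sinh\theta)\,\uLunit \LRiemannPS = 0$. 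Here I use the chain-rule identities $\partial_\kappa \RRiemannPS = (1/\speed)\partial_\kappa \Lnenth + \partial_\kappa \theta$ and $\partial_\kappa \LRiemannPS = (1/\speed)\partial_\kappa \Lnenth - \partial_\kappa \theta$, which follow from Def.\,\ref{D:RIEMANNINVARIANTS}, the ODE \eqref{E:ODEFORRIEMANNINVARIANT}, and $\mathrm{d}\Lnenth = \mathrm{d}\Enth/\Enth$. The prefactors $\cosh\theta \pm \speed\sinh\theta$ are strictly positive (their product equals $\normalizer > 0$ by \eqref{E:NORMAIZERFACTORSIN1D} and \eqref{E:SPEEDOFSOUNDASSUMEDBOUNDS}), so they may be divided out, yielding \eqref{E:RRIEMANNPSTRANSPORT}--\eqref{E:LRIEMANNPSTRANSPORT}. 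There is no serious obstacle: the whole argument reduces to bookkeeping once one spots the diagonalizing combinations, which is characteristic analysis in disguise --- the null vectorfields $\Lunit,\uLunit$ are the characteristic directions of the $(\Lnenth,\theta)$ system, and \eqref{E:ODEFORRIEMANNINVARIANT} is precisely the integrability condition ensuring that the gradients of $\RIfunction(\Enth) \pm \theta$ serve as the corresponding left eigenvectors.
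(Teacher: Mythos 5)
Your proof is correct and follows essentially the same route as the paper: the identity $\tfrac{1}{2}(\RRiemannPS-\LRiemannPS)=\ln(\fourvelocity^0+\fourvelocity^1)$ the paper derives is precisely your rapidity $\theta$, and the chain-rule identity $\p_\alpha \RIfunction(\Enth)=\tfrac{1}{c}\p_\alpha\Lnenth$ and the differentiated constraint $\fourvelocity^0\p_\alpha\fourvelocity^0=\fourvelocity^1\p_\alpha\fourvelocity^1$ that the paper cites are exactly what your parametrization $\fourvelocity^0=\cosh\theta$, $\fourvelocity^1=\sinh\theta$ encodes. The only difference is presentational: by diagonalizing the $(\Lnenth,\theta)$ system explicitly and recognizing the factored prefactors $\cosh\theta\pm\speed\sinh\theta$ (whose product is $\normalizer$), you carry out in full the ``straightforward but tedious calculation'' the paper leaves to the reader.
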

	
	\begin{proof}
		Equations \eqref{E:RRIEMANNPSTRANSPORT}--\eqref{E:LRIEMANNPSTRANSPORT} follow from straightforward but tedious calculations involving
		Def.\,\ref{D:RIEMANNINVARIANTS}, equations \eqref{E:1DENTHALPYEVOLUTION}--\eqref{E:1DFLUIDFOURVELOCITYNORMALIZED}, 
		the identity $\fourvelocity^0 \p_{\alpha} \fourvelocity^0 = \fourvelocity^1 \p_{\alpha} \fourvelocity^1$ 
		(which follows from differentiating \eqref{E:1DFLUIDFOURVELOCITYNORMALIZED} with $\p_{\alpha}$), 
		and the identity 
		$\p_{\alpha} \RIfunction(\Enth) = \frac{1}{c} \p_{\alpha} \Lnenth$ 
		(which follows from \eqref{E:LOGENTHALPY} and \eqref{E:ODEFORRIEMANNINVARIANT}). 
		
		To prove the identities in \eqref{E:1DFLUIDVELOCITYINTERMSOFRIEMANNINVARIANTS}, 
		we first subtract
		\eqref{E:LRIEMANNPS}
		from
		\eqref{E:RRIEMANNPS}
		and use
		\eqref{E:1DFLUIDFOURVELOCITYNORMALIZED} to compute that 
		$\frac{1}{2} 
		\left(
			\RRiemannPS - \LRiemannPS
		\right)
		= 
		\ln(\fourvelocity^0 + \fourvelocity^1)$. 
		Rewriting~\eqref{E:1DFLUIDFOURVELOCITYNORMALIZED} as $(\fourvelocity^0 + \fourvelocity^1)(\fourvelocity^0-\fourvelocity^1) = 1$
		and carrying out straightforward calculations based on the definitions of $\cosh$ and $\sinh$,
		we conclude \eqref{E:1DFLUIDVELOCITYINTERMSOFRIEMANNINVARIANTS}. 
		\eqref{E:1DENTHALPYINTERMSOFRIEMANNINVARIANTS} follows from adding \eqref{E:RRIEMANNPS} to \eqref{E:LRIEMANNPS},
		dividing by $2$,
		and then applying $\RIfunction^{-1}$ to each side of the resulting identity.
		
		Finally, \eqref{E:1DLUNITINTERMSOFRIEMANNINVARIANTS}--\eqref{E:1DULUNITINTERMSOFRIEMANNINVARIANTS} 
		follow from definitions~\eqref{E:1DLUNIT}--\eqref{E:1DULUNIT}
		and the already proven identity \eqref{E:1DFLUIDVELOCITYINTERMSOFRIEMANNINVARIANTS}. 
	\end{proof}
	
\begin{convention}[The fluid variables as functions of the Riemann invariants] \label{CON:1DFLUIDVARIABLESASFUNCTIONSOFRIEMANNINVARIANTS}
	In view of \eqref{E:1DFLUIDVELOCITYINTERMSOFRIEMANNINVARIANTS}--\eqref{E:1DENTHALPYINTERMSOFRIEMANNINVARIANTS},
	throughout Sect.\,\ref{S:1DMAXIMALDEVELOPMENT},
	we often view all the fluid variables as smooth functions of the Riemann invariants
	(or of only $\RRiemannPS$ when we are studying simple isentropic plane-symmetric solutions), 
	e.g., $\fourvelocity^0,\fourvelocity^1,\Lnenth,\Enth = \smoothfunction(\RRiemannPS,\LRiemannPS)$,
	where throughout the paper, $\smoothfunction$ schematically denotes a smooth function that can vary from
	line to line. 
	For example, we use 
	\eqref{E:ODEFORRIEMANNINVARIANT},
	\eqref{E:1DENTHALPYINTERMSOFRIEMANNINVARIANTS},
	the inverse function theorem,
	and the chain rule
	to compute the following identity, which holds for isentropic plane-symmetric solutions:  	
	\begin{align} \label{E:1DARBITRARYDERIVATIVEOFSPEEDOFSOUNDINTERMSOFRIEMANNINVARIANTS}
		\p_{\alpha} \speed 
		& = 
		\frac{\speed \Enth}{2} 
		\speed' 
		\left\lbrace
			\p_{\alpha} \RRiemannPS - \p_{\alpha} \LRiemannPS
		\right\rbrace,
	\end{align}
where on RHS~\eqref{E:1DARBITRARYDERIVATIVEOFSPEEDOFSOUNDINTERMSOFRIEMANNINVARIANTS}, 
$\speed' := \frac{\mathrm{d} \speed}{\mathrm{d} \Enth}$.
	
\end{convention}

\subsection{Brief remarks on Riccati-type blowup and genuine nonlinearity}
\label{SS:1DRICCATIBLOWUP}
Consider a simple isentropic plane-symmetric solution $\RRiemannPS$ to \eqref{E:RRIEMANNPSTRANSPORT} (with $\LRiemannPS = 0$). 
By differentiating the equation with respect to $\partial_1$, one obtains a Riccati-type equation
for $\partial_1 \RRiemannPS$ of the form 
$\Lunit \partial_1 \RRiemannPS = - \frac{\mathrm{d} }{\mathrm{d} \RRiemannPS} \Lunit^1 \cdot (\partial_1 \RRiemannPS)^2$,
where here we are viewing $\Lunit^1$ as a function of $\RRiemannPS$, which is possible thanks to
Prop.\,\ref{P:QUASILINEARTRANSPORTSYSTEMFORRIEMANNINVARAINTSANDOTHERRESULTS} 
and Convention~\ref{CON:1DFLUIDVARIABLESASFUNCTIONSOFRIEMANNINVARIANTS}.
Hence, as long as the \emph{genuine nonlinearity condition} 
$\frac{\mathrm{d} }{\mathrm{d} \RRiemannPS} \Lunit^1 \neq 0$ holds,
then for solutions $\RRiemannPS$ launched by smooth compactly supported initial data, 
one can easily show that $\partial_1 \RRiemannPS$ typically blows up along the integral curves of $\Lunit$,
much like in the case of Burgers' equation $\partial_t \Psi + \Psi \partial_1 \Psi = 0$;
we refer to \cite{cD2010}*{Section~7.5} for a detailed discussion of the notion of genuine nonlinearity
in the context of $1D$ hyperbolic conservation laws.
One can check that the non-degeneracy assumption \eqref{E:NONDEGENARCYASSUMPTION} indeed implies that
$
\frac{\mathrm{d} }{\mathrm{d} \RRiemannPS} \Lunit^1|_{\RRiemannPS = 0}
\neq 0
$,
i.e., that equation \eqref{E:RRIEMANNPSTRANSPORT} is genuinely nonlinear
near $(\RRiemannPS,\LRiemannPS) = (0,0)$. 

Our study of the blowup in Theorem~\ref{T:MAINTHEOREM1DSINGULARBOUNDARYANDCREASE} does not rely on differentiating
the equations with $\partial_1$ to obtain a Riccati-type PDE.
Instead, it relies on a much sharper approach tied to nonlinear geometric optics, which we set up in 
Sect.\,\ref{SS:1DSIMPLEPLANESYMMETRYANDACOUSTICGEOMETRYSETUP}. 
In $3D$, such a sharpened approach seems essential, as 
there is no known simple approach for studying gradient-blowup based only on
differentiating the equations with respect to a standard, solution-independent partial derivative operator.
In particular, in our discussion of shock formation in $3D$ in Sect.\,\ref{S:SHOCKFORMATIONAWAYFROMSYMMETRY},
nonlinear geometric optics will play a fundamental role.

\subsection{Simple isentropic plane symmetry and nonlinear geometric optics} 
\label{SS:1DSIMPLEPLANESYMMETRYANDACOUSTICGEOMETRYSETUP}
Our study of the $\hfour$-MGHD in Theorem~\ref{T:MAINTHEOREM1DSINGULARBOUNDARYANDCREASE} 
intimately relies on nonlinear geometric optics, implemented via an acoustic eikonal function $\eik$,
and a careful study of the corresponding acoustic geometry. The main idea of the proof of 
Theorem~\ref{T:MAINTHEOREM1DSINGULARBOUNDARYANDCREASE} 
is to show that the solution remains smooth relative to the ``geometric coordinates'' $(t,\eik)$ all the way up to the shock
and to recover the formation of the shock as a degeneracy between the geometric coordinates
and the Minkowski-rectangular coordinates $(t,x^1)$.

In this section, we construct the eikonal function and corresponding acoustic geometry for 
the isentropic relativistic Euler equations in $1D$,
i.e., to the following initial value problem: 
\begin{align} \label{E:1DCAUCHYIVPFORRIEMANNINVARIANTS}
	& \begin{cases} 
		\Lunit \RRiemannPS = 0, 
		\\
		\RRiemannPS|_{t=0} =  \dataRRiemannPS, 
	\end{cases} & & 
	\begin{cases} 
		\uLunit \LRiemannPS = 0, 
		\\
		\LRiemannPS|_{t=0} =  \LRiemannPSdata.
	\end{cases} 
\end{align} 

More precisely, we restrict our attention to \emph{simple} isentropic plane-symmetric solutions, 
defined here to be those solutions such that $\LRiemannPS \equiv 0$. 
From \eqref{E:1DCAUCHYIVPFORRIEMANNINVARIANTS} and \eqref{E:1DULUNIT},
it follows that such solutions arise from initial data with $\LRiemannPSdata \equiv 0$. In this case, 
\eqref{E:1DFLUIDVELOCITYINTERMSOFRIEMANNINVARIANTS} yields the identity
$\fourvelocity^0 = \cosh(\frac{1}{2} \RRiemannPS)$.
As we alluded to in Convention~\ref{CON:1DFLUIDVARIABLESASFUNCTIONSOFRIEMANNINVARIANTS},
\eqref{E:1DFLUIDVELOCITYINTERMSOFRIEMANNINVARIANTS}--\eqref{E:1DENTHALPYINTERMSOFRIEMANNINVARIANTS} 
also yield similar formulas
for $\fourvelocity^1,\, \Enth$ as functions of $\RRiemannPS$.

We start by defining the acoustic eikonal function, which plays a central role in all of the forthcoming analysis. 

\begin{definition}[The acoustic eikonal function in $1D$ and the characteristics] \label{D:1DEIKONALFUNCTION} 
Let $\hfour^{-1}$ be the inverse acoustical metric defined in \eqref{E:1DACOUSTICALMETRICINVERSEINTERMSOFNULLVECTORFIELDS}. Then we define the acoustic eikonal function (eikonal function for short) to be the solution $\eik$ to the following fully nonlinear hyperbolic PDE with Cauchy data:
\begin{subequations}
	\begin{align}
		 (\hfour^{-1})^{\alpha\beta} \partial_{\alpha} \eik \p_\beta \eik & = 0, \label{E:1DEIKONALNULLGRADIENTVANISHES} \\
		 \p_t \eik & > 0, \label{E:MINKPARTIALTOF1DEIKISPOSITIVE} \\
		 \eik|_{t=0} & = - x^1. \label{E:1DEIKONALDATAFORFULLYNONLINEARIVP}
	\end{align} 
\end{subequations}

We refer to the level sets of $\eik$ as ``the characteristics.''

\end{definition}

\begin{remark}[Geometric optics is easier in $1D$] \label{E:GEOMETRICOPTICSISEASIERIN1D} Although understanding the fully nonlinear structure of \eqref{E:1DEIKONALNULLGRADIENTVANISHES} is necessary in multiple space dimensions, in $1D$,  \eqref{E:1DEIKONALNULLGRADIENTVANISHES} is equivalent to the \emph{linear} (in $\eik$) transport equation of $\eik$ derived in the following lemma. This makes the analysis of the acoustic geometry in this section considerably simpler than in multi-$D$. It is also unique to $1D$ that the $\hfour$-null vectorfield $\Lunit$ can be explicitly expressed in terms of the fluid via the formula \eqref{E:1DLUNIT}. The correct analog of \eqref{E:1DLUNIT} in higher dimensions is precisely $\Lunit = \frac{1}{\Lgeo^0} \Lgeo$, where $\Lgeo^{\alpha} = - (\hfour^{-1})^{\alpha \kappa} \p_{\kappa} \eik$. 
\end{remark}

\begin{lemma}[Linear transport equation for $\eik$] \label{E:LINEARTRANSPORTEQUATIONFOREIKONALFUNCTION}
Let $\Lunit$ be the vectorfield defined in \eqref{E:1DLUNIT}. 
For simple isentropic plane-symmetric solutions,
if $\RRiemannPS$ is sufficiently small,\footnote{The solutions from Theorem~\ref{T:MAINTHEOREM1DSINGULARBOUNDARYANDCREASE}
satisfy the needed smallness. The key point is that if $\RRiemannPS$ is large, then
$\Lunit$ and $\uLunit$ could both be right-pointing or both be left-pointing, in which case the initial
condition \eqref{E:1DEIKONALDATA} would not be sufficient for identifying the ``correct root of the eikonal equation,''
namely the ``root'' $\Lunit \eik = 0$.
\label{FN:1DSMALLNESSOFRRIEMANNFORCORRECTROOTOFEIKONALEQUATION}} 
then
the scalar function $\eik$ solves \eqref{E:1DEIKONALNULLGRADIENTVANISHES}--\eqref{E:1DEIKONALDATAFORFULLYNONLINEARIVP} 
\textbf{if and only if} 
it solves the following transport equation initial value problem:
\begin{subequations}
	\begin{align}
		\Lunit \eik & = 0, \label{E:1DEIKONALTRANSPORTEQUATION} \\
		\eik|_{t=0} & = -x^1. \label{E:1DEIKONALDATA}
	\end{align}
\end{subequations}

\end{lemma}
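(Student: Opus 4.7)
\textbf{Proof proposal for Lemma \ref{E:LINEARTRANSPORTEQUATIONFOREIKONALFUNCTION}.}

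The plan is to use Lemma \ref{L:1DGFOURINVERSEINTERMSOFLUNITANDULUNIT} to factor the fully nonlinear PDE \eqref{E:1DEIKONALNULLGRADIENTVANISHES} as a product of two linear operators acting on $\eik$, and then use the smallness of $\RRiemannPS$ together with the sign condition \eqref{E:MINKPARTIALTOF1DEIKISPOSITIVE} and the initial data \eqref{E:1DEIKONALDATAFORFULLYNONLINEARIVP} to select the correct factor. Substituting \eqref{E:1DACOUSTICALMETRICINVERSEINTERMSOFNULLVECTORFIELDS} into \eqref{E:1DEIKONALNULLGRADIENTVANISHES}, I obtain the identity
\begin{align} \label{E:EIKONALFACTORIZATION}
	(\hfour^{-1})^{\alpha \beta} \partial_{\alpha} \eik \, \partial_{\beta} \eik
	& = - (\Lunit \eik)(\uLunit \eik).
\end{align}
Thus \eqref{E:1DEIKONALNULLGRADIENTVANISHES} is equivalent to the pointwise alternative $(\Lunit \eik)(\uLunit \eik) = 0$, which we must resolve globally.

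\emph{Forward direction.} Assume $\eik$ solves \eqref{E:1DEIKONALTRANSPORTEQUATION}--\eqref{E:1DEIKONALDATA}. Then \eqref{E:EIKONALFACTORIZATION} trivially implies \eqref{E:1DEIKONALNULLGRADIENTVANISHES}, and \eqref{E:1DEIKONALDATAFORFULLYNONLINEARIVP} is immediate from \eqref{E:1DEIKONALDATA}. It remains to verify the sign condition \eqref{E:MINKPARTIALTOF1DEIKISPOSITIVE}. From \eqref{E:1DLUNIT}, $\Lunit^1 = \frac{\fourvelocity^1/\fourvelocity^0 + \speed}{1 + (\fourvelocity^1/\fourvelocity^0)\speed}$, which for $\RRiemannPS$ sufficiently small is a small perturbation of $\overline{\speed} > 0$. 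Solving $\Lunit \eik = 0$ by the method of characteristics with data $\eik|_{t=0} = -x^1$ shows $\partial_1 \eik < 0$ is preserved (one may derive a linear transport equation for $\partial_1 \eik$ by commuting $\partial_1$ through $\Lunit$; the coefficients are bounded because $\Lunit$ is a smooth function of $\RRiemannPS$ and $\RRiemannPS$ is advected by $\Lunit$). Then $\partial_t \eik = -\Lunit^1 \partial_1 \eik > 0$ follows.

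\emph{Reverse direction.} Assume $\eik$ solves \eqref{E:1DEIKONALNULLGRADIENTVANISHES}--\eqref{E:1DEIKONALDATAFORFULLYNONLINEARIVP}. Evaluating \eqref{E:EIKONALFACTORIZATION} at $t=0$ using $\partial_1 \eik|_{t=0} = -1$ yields
\begin{align*}
	\left. (\partial_t \eik - \Lunit^1)(\partial_t \eik - \uLunit^1) \right|_{t=0} = 0,
\end{align*}
so $\partial_t \eik|_{t=0} \in \{ \Lunit^1|_{t=0}, \uLunit^1|_{t=0}\}$. For $\RRiemannPS$ sufficiently small, $\Lunit^1 > 0$ and $\uLunit^1 < 0$ uniformly, so the positivity condition \eqref{E:MINKPARTIALTOF1DEIKISPOSITIVE} forces $\partial_t \eik|_{t=0} = \Lunit^1|_{t=0}$. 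Consequently $\Lunit \eik|_{t=0} \equiv 0$ while $\uLunit \eik|_{t=0} = (\Lunit^1 - \uLunit^1)|_{t=0}$ is bounded away from zero. By continuity of $\eik$ and its first derivatives, $\uLunit \eik$ remains nonzero in a neighborhood of $\Sigma_0$, and the factorization \eqref{E:EIKONALFACTORIZATION} forces $\Lunit \eik = 0$ there.

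\emph{Main obstacle.} The subtle point is globalizing this branch selection: one must rule out that $\uLunit \eik$ vanishes at some later time, creating an ambiguity in which factor is zero. The plan is to run a standard continuity/bootstrap argument, using that as long as $\RRiemannPS$ remains small (which we may assume throughout the region of interest in Theorem \ref{T:MAINTHEOREM1DSINGULARBOUNDARYANDCREASE}), the separation $\Lunit^1 - \uLunit^1 \approx 2 \overline{\speed}$ persists; combined with the already-established identity $\partial_t \eik = \Lunit^1 \cdot (-\partial_1 \eik)$ wherever $\Lunit \eik = 0$ (which propagates the sign $\partial_1 \eik < 0$ by the forward-direction argument), this keeps $\uLunit \eik$ uniformly bounded away from zero, closing the bootstrap and giving $\Lunit \eik \equiv 0$ globally.
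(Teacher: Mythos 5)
Your proof is correct and takes essentially the same route as the paper's: factorize the eikonal equation via Lemma~\ref{L:1DGFOURINVERSEINTERMSOFLUNITANDULUNIT}, then use the smallness of $\RRiemannPS$ together with the sign condition $\partial_t \eik > 0$ and the initial condition $\partial_1 \eik|_{t=0} = -1$ to select the $\Lunit$-branch. The formula
$(\hfour^{-1})^{\alpha\beta} \partial_\alpha \eik\, \partial_\beta \eik = -(\Lunit \eik)(\uLunit \eik)$,
your evaluation at $t=0$, and your identification of $\partial_t \eik|_{t=0}$ with $\Lunit^1|_{t=0}$ via the sign condition all match the paper's logic.

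You are more careful than the paper's proof on one point, which is worth noting. The paper's proof derives a contradiction assuming $\uLunit \eik = 0$, but it actually checks this only at $t = 0$; it does not explicitly address the possibility that the solution could follow the $\Lunit$-branch near $\Sigma_0$ and later switch to the $\uLunit$-branch. You name this concern explicitly and propose a bootstrap. That works, but there is a shorter way to close it: note that for small $\RRiemannPS$ the slopes satisfy $\Lunit^1 \neq \uLunit^1$, so $\Lunit \eik$ and $\uLunit \eik$ cannot vanish simultaneously at any point where $\partial_t \eik > 0$ (subtracting $\Lunit \eik = 0$ from $\uLunit \eik = 0$ would force $\partial_1 \eik = 0$ and hence $\partial_t \eik = 0$). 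Therefore $\{\Lunit \eik = 0\}$ and $\{\uLunit \eik = 0\}$ are disjoint closed sets whose union is the whole domain; by connectedness, exactly one of them is the domain, and since $\Sigma_0 \subset \{\Lunit \eik = 0\}$, you conclude $\Lunit \eik \equiv 0$ without a bootstrap. Your forward direction (propagating $\partial_1 \eik < 0$ via the commuted linear transport equation $\Lunit \partial_1 \eik = -(\partial_1 \Lunit^1)\, \partial_1 \eik$ and concluding $\partial_t \eik = -\Lunit^1 \partial_1 \eik > 0$) is likewise a complete justification of the sign condition for all $t$, which the paper's terse ``arguing as in the proof of \eqref{E:CONTRADICTIONSTEPIFULUNITEIKONALVANISHES}'' leaves to the reader.
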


\begin{proof}
Suppose $\eik$ solves \eqref{E:1DEIKONALNULLGRADIENTVANISHES}--\eqref{E:MINKPARTIALTOF1DEIKISPOSITIVE}. 
Then by \eqref{E:1DACOUSTICALMETRICINVERSEINTERMSOFNULLVECTORFIELDS}, 
we find that $0 = (\Lunit \eik) \cdot \uLunit \eik$. 
If $\Lunit \eik = 0$, there is nothing to prove. Hence, we assume that $\Lunit \eik \neq 0$,
and we will show that this leads to a contradiction.
Then by \eqref{E:1DACOUSTICALMETRICINVERSEINTERMSOFNULLVECTORFIELDS}, we have $\uLunit \eik = 0$. 
By 
\eqref{E:SPEEDOFSOUNDASSUMEDBOUNDS},
\eqref{E:1DULUNITINTERMSOFRIEMANNINVARIANTS},
\eqref{E:1DEIKONALDATAFORFULLYNONLINEARIVP}, 
\eqref{E:1DEIKONALDATA},
and our smallness assumption on $\RRiemannPS$, 
we have:
	\begin{align}\label{E:CONTRADICTIONSTEPIFULUNITEIKONALVANISHES}
	\p_t \eik\big|_{t=0} 
	& = - \frac{\sinh(\tfrac{1}{2}\RRiemannPS) 
	- 
	\speed \cosh(\tfrac{1}{2} \RRiemannPS)}{\cosh(\tfrac{1}{2}\RRiemannPS) 
	- 
	\speed \sinh(\tfrac{1}{2}\RRiemannPS)} \p_1 \eik \big|_{t=0} 
	=
	\frac{\sinh(\tfrac{1}{2}\RRiemannPS) 
	- 
	\speed \cosh(\tfrac{1}{2} \RRiemannPS)}{\cosh(\tfrac{1}{2}\RRiemannPS) 
	- 
	\speed \sinh(\tfrac{1}{2}\RRiemannPS)}
	< 0.  
\end{align}
\eqref{E:CONTRADICTIONSTEPIFULUNITEIKONALVANISHES} contradicts \eqref{E:MINKPARTIALTOF1DEIKISPOSITIVE}. 

Conversely, if \eqref{E:1DEIKONALTRANSPORTEQUATION}--\eqref{E:1DEIKONALDATA} hold, 
then since \eqref{E:1DACOUSTICALMETRICINVERSEINTERMSOFNULLVECTORFIELDS} implies that
$(\hfour^{-1})^{\alpha \beta} \partial_{\alpha} \eik \p_\beta \eik = - (\Lunit \eik) \cdot \uLunit \eik$,
it immediately follows that \eqref{E:1DEIKONALNULLGRADIENTVANISHES} and \eqref{E:1DEIKONALDATAFORFULLYNONLINEARIVP} hold. 
To prove \eqref{E:MINKPARTIALTOF1DEIKISPOSITIVE}, 
we use
\eqref{E:SPEEDOFSOUNDASSUMEDBOUNDS},
\eqref{E:1DLUNITINTERMSOFRIEMANNINVARIANTS},
\eqref{E:1DEIKONALDATA},
and the smallness of $\RRiemannPS$,
arguing as in the proof of \eqref{E:CONTRADICTIONSTEPIFULUNITEIKONALVANISHES}
to deduce that $\partial_t \eik > 0$, as is desired.
\end{proof} 

Some remarks are in order.
\begin{itemize}
	\item The minus sign in \eqref{E:1DEIKONALDATA} is just a convention imposed for consistency with related results that have
		appeared in the literature, such as \cites{jSgHjLwW2016,jLjS2018,jLjS2021,lAjS2022}.
	\item The precise initial condition in \eqref{E:1DEIKONALDATA} is not important; small perturbations of this initial condition
		could also be used to study the $\hfour$-MGHD.
	\item We have adapted our eikonal function to $\Lunit$ because in 
	Theorem~\ref{T:MAINTHEOREM1DSINGULARBOUNDARYANDCREASE}, we will study initial data that lead to shock-forming solutions
	such that the singularity is tied to the infinite density of the $\Lunit$-characteristics and the blowup of
	$\partial_1 \RRiemannPS$. There is nothing special about $\Lunit$ compared to $\uLunit$, and it is only for convenience that we have
	adapted our constructions to it. That is,
	using the methods of this section, one could also construct an eikonal function $\underline{\eik}$ adapted to $\uLunit$
	and study different initial data that lead to a singularity
	tied to the infinite density of the $\uLunit$-characteristics and the blowup of
	$\partial_1 \LRiemannPS$. One could also try to study richer problems in which singularities occur
	along both families of characteristics.
\end{itemize}

We now define $\upmu$, the inverse foliation density of the characteristics. 
The name is justified by the fact that $\frac{1}{\upmu}$ is a measure of the density of the stacking of the 
characteristics relative to the level-sets $\Sigma_{t'} := \lbrace t = t' \rbrace$. 
Roughly speaking, $\upmu$ is initially positive, while shock formation corresponds to $\upmu \downarrow 0$,
which signifies the infinite density of the integral curves of $\Lunit$ 
(or equivalently, the level sets of $\eik$ in $(t,x^1)$-space);
we refer to Fig.\,\ref{F:MINKOWSKIRECTANGULAR1DPLANESYMMETRYCAUCHYHORANDSINGULARCURVE}, a context in which
$\upmu$ vanishes along the curve portion denoted by ``$\Upsilon(\singularcurve_{[-\interestingu,0]})$''
(we will explain these issues in much more detail later on).

\begin{definition}[Inverse foliation density] \label{D:MU} 
We define the \emph{inverse foliation density} of the characteristics, denoted by $\upmu$, as follows:
\begin{align} \label{E:INVERSEFOLIATIONDENSITYINPLANESYMMETRY}
	\upmu 
	& 
	:= 
	- 
	\frac{1}{\frac{\speed}{\normalizer}
	\partial_{1} \eik}. 
\end{align}

In Sect.\,\ref{S:SHOCKFORMATIONAWAYFROMSYMMETRY}, which concerns solutions in $3D$, 
we will define $\upmu$ to be $- \frac{1}{(\hfour^{-1})^{\kappa \lambda} \p_{\kappa} t \p_{\lambda} \eik}$;
see \eqref{E:INVERSEFOLIATIONDENSITY}.
Under the initial condition \eqref{E:EIKONALDATAAWAYFROMSYMMETRY} that we will choose for $\eik$, 
that definition of $\upmu$ can be shown to agree with the one for plane-symmetric solutions
stated in \eqref{E:INVERSEFOLIATIONDENSITYINPLANESYMMETRY}.

We also note the following identities, which follow from \eqref{E:1DEIKONALDATA}
and
\eqref{E:INVERSEFOLIATIONDENSITYINPLANESYMMETRY}:
\begin{align} \label{E:DATAIDENTITIESFORMU}
	\upmu|_{t=0} 
	& = \frac{\normalizer}{\speed}\big|_{t=0}, \qquad \Longrightarrow \qquad \frac{\speed}{\normalizer}\upmu \big|_{t=0}  = 1.
\end{align}
\end{definition}

We now define a collection of vectorfields associated to the eikonal function.

\begin{definition}[The vectorfields $X,\, \muX,\, \newuL$] \label{D:SEVERALACOUSTICVECTORFIELDS}
Let $\Lunit$ and $\uLunit$ be the vectorfields from Def.\,\ref{D:NULLVECTORFIELDSINPLANESYMMETRY}
(they are $\hfour$-null by \eqref{E:GFOURINNERPRODUCTOFNULLVECTORFIELDSIN1D}).
We define:
\begin{align} \label{E:SEVERALACOUSTICVECTORFIELDS}
X &:= \frac{1}{2} (\uLunit - \Lunit), & \muX & := \upmu X, & \newuL &:= \upmu \uLunit.
\end{align}
\end{definition}

Using 
\eqref{E:1DLUNIT}--\eqref{E:1DULUNIT},
\eqref{E:GFOURINNERPRODUCTOFNULLVECTORFIELDSIN1D}, 
and \eqref{E:SEVERALACOUSTICVECTORFIELDS},
it is straightforward to see that $X$ is $\Sigma_t$-tangent (i.e., $X t = 0$) and satisfies 
$\hfour(X,X) = 1$, that $\hfour(\muX,\muX) = \upmu^2$, and that $\hfour(\newuL,\newuL) = 0$.
We also note that in plane-symmetry, 
using Def.\,\ref{D:NULLVECTORFIELDSINPLANESYMMETRY},
\eqref{E:NORMAIZERFACTORSIN1D}, 
and Def.\,\ref{D:SEVERALACOUSTICVECTORFIELDS},
one can compute the following identity:
\begin{align} \label{E:EXPLICITFORMOFXIN1D}
	 X 
	& = -\frac{\speed}{\normalizer} \p_1.
\end{align}
Our main results concern solutions in which the factor $\frac{\speed}{\normalizer}$
on RHS~\eqref{E:EXPLICITFORMOFXIN1D} satisfies $\frac{\speed}{\normalizer} \approx 1$.

\begin{definition}[Geometric coordinates in $1D$ and the corresponding partial derivative vectorfields] 
\label{D:GEOMETRICCOORDINATES1D}
We define $(t,\eik)$ to be the \emph{geometric coordinates}, 
and we denote the corresponding geometric coordinate partial derivative vectorfields
by $\left\lbrace \geop{t},\geop{\eik} \right\rbrace$
(which are not to be confused with the partial derivatives $\lbrace \partial_t, \partial_1 \rbrace$ 
in the $(t,x^1)$-coordinate system). 
\end{definition}

Using \eqref{E:1DLUNIT}, \eqref{E:1DEIKONALTRANSPORTEQUATION}, \eqref{E:INVERSEFOLIATIONDENSITYINPLANESYMMETRY}, 
and \eqref{E:EXPLICITFORMOFXIN1D}, it is straightforward to compute that:
\begin{subequations}
\begin{align} \label{E:1DGEOMETRICVECTORFIELDSAPPLIEDTOGEOMETRICCOORDIANTEFUNCTIONS}
	\Lunit t 
	& = \muX  \eik = 1,
	\\
\Lunit \eik 
& = \muX t = 0.
\label{E:SECOND1DGEOMETRICVECTORFIELDSAPPLIEDTOGEOMETRICCOORDIANTEFUNCTIONS}
\end{align}
\end{subequations}
From 
\eqref{E:1DGEOMETRICVECTORFIELDSAPPLIEDTOGEOMETRICCOORDIANTEFUNCTIONS}--\eqref{E:SECOND1DGEOMETRICVECTORFIELDSAPPLIEDTOGEOMETRICCOORDIANTEFUNCTIONS},
it follows that in geometric coordinates, the following identities hold: 
\begin{align} \label{E:PLANESYMMETRYCOMMUTATORVECTORFIELDSAREGEOCOORDINATEVECTORFIELDS}
	\Lunit 
	& = \geop{t}, 
	& 
	\muX & = \geop{\eik}, 
		\\
	 [\Lunit, \muX] &
	 = 0. \label{E:LANDMUXCOMMUTEINPLANESYMMETRY}
\end{align} 
From \eqref{E:1DULUNIT},
\eqref{E:SEVERALACOUSTICVECTORFIELDS}, 
and the identities in \eqref{E:PLANESYMMETRYCOMMUTATORVECTORFIELDSAREGEOCOORDINATEVECTORFIELDS},
we deduce the following expression for $\newuL$ relative to the geometric coordinates:
\begin{align} \label{E:NEWULINGEOMETRICCOORDINATES}
	\newuL 
	& = \upmu \geop{t} + 2 \geop{\eik}.
\end{align}
We will often silently use 
\eqref{E:PLANESYMMETRYCOMMUTATORVECTORFIELDSAREGEOCOORDINATEVECTORFIELDS}--\eqref{E:LANDMUXCOMMUTEINPLANESYMMETRY}
in the rest of Sect.\,\ref{S:1DMAXIMALDEVELOPMENT}.

The following sets play a fundamental role in our analysis of solutions.

\begin{definition}[Portions of submanifolds of geometric coordinate space $\mathbb{R}_t \times \mathbb{R}_{\eik}$]
\label{D:1DSUBSETSOFGEOMETRICCOORDINATESPACE}
Given intervals $I,J \subset \mathbb{R}$,
relative to the geometric coordinates, we define:
\begin{subequations}
	\begin{align}
		\nullhyparg{\eik'}^{I} 
		& := \big\{ (t,\eik) \in \R^2 \ | \ t \in I, \, \eik = \eik' \big\}, \label{E:CHARACTERISTICSURFACES1D} \\
		\Sigma_{t'}^{J} & := \big\{ (t,\eik) \in \R^2 \ | \ t = t', \, \eik \in J \big\}. \label{E:TRUNCATEDSIGMAT}
	\end{align}
\end{subequations}
We often write $\nullhyparg{\eik}$ instead of $\nullhyparg{\eik}^{[0,\infty)}$ and
$\Sigma_t$ instead of $\Sigma_t^{(-\infty,\infty)}$.
We often refer to the $\nullhyparg{\eik}$ as \emph{the characteristics}.
\end{definition}
	
The following lemma shows that the geometric coordinates degenerate with respect to the Minkowski-rectangular coordinates precisely when 
$\upmu = 0$. 
However, Theorem~\ref{T:MAINTHEOREM1DSINGULARBOUNDARYANDCREASE} shows that for appropriate initial data, 
$\Upsilon$ will still be a \emph{homeomorphism} on the subset of the boundary of the $\hfour$-MGHD where $\upmu$ vanishes.

\begin{lemma}[The Jacobian determinant of $(t,\eik) \mapsto (t,x^1)$]
Let: 
\begin{align} \label{E:1DCHOVGEOTORECT}
\Upsilon(t,\eik) 
& := (t,x^1) 
\end{align}
denote the change of variables map from geometric coordinates to Minkowski-rectangular coordinates. 
Then the following identity holds:
\begin{align} \label{E:JACOBIANDETERMINANTOFCHANGEOFVARIABLESMAP1D}
\det \mathrm{d} \Upsilon 
& = - \frac{\speed}{\normalizer} \upmu.
\end{align}
\end{lemma}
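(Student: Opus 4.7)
The plan is to exploit the first component of $\Upsilon$ being trivial and then translate the $\eik$-derivative into an action of the vectorfield $\muX$. Writing $\Upsilon(t,\eik) = (t, x^1(t,\eik))$, the Jacobian matrix in block form is
\begin{align*}
\mathrm{d}\Upsilon
& =
\begin{pmatrix}
1 & 0 \\
\frac{\partial x^1}{\partial t} & \frac{\partial x^1}{\partial \eik}
\end{pmatrix},
\end{align*}
so immediately $\det \mathrm{d}\Upsilon = \frac{\partial x^1}{\partial \eik} = \geop{\eik} x^1$, where the right-hand side is the geometric coordinate partial derivative vectorfield applied to the Minkowski-rectangular coordinate function $x^1$.

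Next, I would identify $\geop{\eik}$ with $\muX$ using the identity $\muX = \geop{\eik}$ from \eqref{E:PLANESYMMETRYCOMMUTATORVECTORFIELDSAREGEOCOORDINATEVECTORFIELDS}, and expand via the definition $\muX = \upmu X$ from \eqref{E:SEVERALACOUSTICVECTORFIELDS}. Then I would invoke the explicit representation $X = -\frac{\speed}{\normalizer}\partial_1$ from \eqref{E:EXPLICITFORMOFXIN1D}, which yields
\begin{align*}
\geop{\eik} x^1
& = \muX x^1
= \upmu X x^1
= -\upmu \frac{\speed}{\normalizer}\,\partial_1 x^1
= -\frac{\speed}{\normalizer}\upmu,
\end{align*}
since $\partial_1 x^1 = 1$. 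Combining this with the first step gives the claimed formula \eqref{E:JACOBIANDETERMINANTOFCHANGEOFVARIABLESMAP1D}.

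There is essentially no obstacle here: the lemma is a bookkeeping consequence of the earlier identifications of the geometric coordinate vectorfields with the acoustic vectorfields $\Lunit$ and $\muX$, together with the plane-symmetric formula for $X$. The only conceptual point worth emphasizing is that because the first Minkowski coordinate coincides with the first geometric coordinate (both equal $t$), the full $2\times 2$ determinant collapses to the single entry $\geop{\eik} x^1$, which is exactly the quantity for which the acoustic framework supplies a clean expression.
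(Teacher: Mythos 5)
Your proof is correct and follows the same route as the paper: both reduce $\det \mathrm{d}\Upsilon$ to the single entry $\geop{\eik}\, x^1$ (since the first coordinate is unchanged), then compute it via $\geop{\eik} = \muX = \upmu X$ and $X = -\frac{\speed}{\normalizer}\partial_1$. Nothing to add.
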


\begin{proof}
\eqref{E:JACOBIANDETERMINANTOFCHANGEOFVARIABLESMAP1D} follows from straightforward calculations based
on the identity $\frac{\p}{\p \eik} x^1 = -\frac{\speed}{\normalizer} \upmu$, 
which is implied by \eqref{E:SEVERALACOUSTICVECTORFIELDS}--\eqref{E:EXPLICITFORMOFXIN1D} and 
\eqref{E:PLANESYMMETRYCOMMUTATORVECTORFIELDSAREGEOCOORDINATEVECTORFIELDS}. 
\end{proof}

\subsubsection{Explicit solution formula in geometric coordinates}
\label{SSS:PSEXPLICITFLUIDSOLUTION}
From \eqref{E:PLANESYMMETRYCOMMUTATORVECTORFIELDSAREGEOCOORDINATEVECTORFIELDS}, we see 
that in simple isentropic plane-symmetry,
the transport equation \eqref{E:RRIEMANNPSTRANSPORT} 
takes the following form in geometric coordinates:
\begin{align} \label{E:RRIEMANNTRIVIALTRANSPORTEQUATIONINGEOMETRICCOORDINATES}
	\geop{t} \RRiemannPS(t,\eik) 
	& = 0.
\end{align}
From \eqref{E:RRIEMANNTRIVIALTRANSPORTEQUATIONINGEOMETRICCOORDINATES} and \eqref{E:1DCAUCHYIVPFORRIEMANNINVARIANTS},
we see that in geometric coordinates, the solution to \eqref{E:RRIEMANNTRIVIALTRANSPORTEQUATIONINGEOMETRICCOORDINATES} is:
\begin{align} \label{E:SOLUIONRRIEMANNTRIVIALTRANSPORTEQUATIONINGEOMETRICCOORDINATES}
	\RRiemannPS(t,\eik) 
	& = \dataRRiemannPS(\eik).
\end{align}

\subsubsection{The evolution equation for $\upmu$, the non-degeneracy condition, and explicit expressions for various solution variables}
\label{SSS:1DEVOLUTIONEQUATIONFORMUNONDEGENANDEXPLICITFORMULAE}
In this section, we derive various identities involving $\upmu$. In particular, 
we will later use the identities to show that suitable assumptions on the initial data of $\RRiemannPS$
lead to the vanishing of $\upmu$ in finite time, i.e., the formation of a shock.

We begin with the following lemma, which provides the evolution equation for $\upmu$. 

\begin{lemma}[Transport equation satisfied by $\upmu$]
\label{L:1DPSTRANSPORTFORMU}
For simple isentropic plane-symmetric solutions, 
the inverse foliation density, defined in \eqref{E:INVERSEFOLIATIONDENSITYINPLANESYMMETRY}, 
satisfies the following transport equation,
where 
$
\speed'(\Enth) := \frac{\mathrm{d} }{\mathrm{d} \Enth} \speed(\Enth)
$:
\begin{align} \label{E:LMUPS}
 \Lunit \left( \frac{\speed}{\normalizer} \upmu\right)
	& = \PSLmusourcetermfunction,
		\\
\PSLmusourcetermfunction 
& := 
- 
 \frac{1 - \speed^2 + \frac{\speed'}{\speed \Enth}}{2(\fourvelocity^0 + \speed \fourvelocity^1)^2} \muX \RRiemannPS.
\label{E:PSLMUSOURCETERMFUNCTION}
\end{align}
Moreover, 
\begin{align}\label{E:LLUPS}
\Lunit \Lunit \upmu 
& = 
\Lunit \Lunit \left( \frac{\speed}{\normalizer} \upmu\right)
= 0.
\end{align}
\end{lemma}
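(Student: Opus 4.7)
The plan is to establish \eqref{E:LMUPS} and \eqref{E:LLUPS} in sequence, exploiting the algebraic rewriting $\frac{\speed}{\normalizer} \upmu = - \frac{1}{\p_1 \eik}$, which is just a rearrangement of \eqref{E:INVERSEFOLIATIONDENSITYINPLANESYMMETRY}. First I apply $\Lunit$ to this identity, which gives
\begin{align*}
\Lunit \left( \frac{\speed}{\normalizer} \upmu \right) = \frac{\Lunit \p_1 \eik}{(\p_1 \eik)^2},
\end{align*}
so the source term is determined by $\Lunit \p_1 \eik$. In the Minkowski-rectangular coordinate system, $\Lunit = \p_t + \Lunit^1 \p_1$ has $\Lunit^0 = 1$, so the commutator satisfies $[\Lunit, \p_1] f = - (\p_1 \Lunit^1) \p_1 f$ for any $f$. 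Combining with $\Lunit \eik = 0$ from \eqref{E:1DEIKONALTRANSPORTEQUATION} yields $\Lunit \p_1 \eik = - (\p_1 \Lunit^1) \p_1 \eik$. Substituting back and using $\p_1 = - \frac{\normalizer}{\speed \upmu} \muX$ (a consequence of \eqref{E:EXPLICITFORMOFXIN1D} and \eqref{E:SEVERALACOUSTICVECTORFIELDS}), I obtain
\begin{align*}
\Lunit \left( \frac{\speed}{\normalizer} \upmu \right) = - \frac{\p_1 \Lunit^1}{\p_1 \eik} = - \frac{\mathrm{d} \Lunit^1}{\mathrm{d} \RRiemannPS} \muX \RRiemannPS,
\end{align*}
where the last step uses that \eqref{E:1DLUNITINTERMSOFRIEMANNINVARIANTS} (specialized to $\LRiemannPS \equiv 0$) shows $\Lunit^1$ is a smooth function of $\RRiemannPS$ alone, so $\p_1 \Lunit^1 = (\mathrm{d} \Lunit^1/\mathrm{d} \RRiemannPS) \p_1 \RRiemannPS$.

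The remaining step for \eqref{E:LMUPS} is to compute $\mathrm{d} \Lunit^1 / \mathrm{d} \RRiemannPS$ explicitly. Writing $\Lunit^1 = N/D$ with $N := \fourvelocity^1 + \speed \fourvelocity^0$ and $D := \fourvelocity^0 + \speed \fourvelocity^1$, and using the simple-wave versions of \eqref{E:1DFLUIDVELOCITYINTERMSOFRIEMANNINVARIANTS} (which give $\mathrm{d} \fourvelocity^0 / \mathrm{d} \RRiemannPS = \fourvelocity^1/2$ and $\mathrm{d} \fourvelocity^1/\mathrm{d} \RRiemannPS = \fourvelocity^0/2$) together with \eqref{E:1DARBITRARYDERIVATIVEOFSPEEDOFSOUNDINTERMSOFRIEMANNINVARIANTS} (which converts $\mathrm{d} \speed / \mathrm{d} \RRiemannPS$ into an expression in $\speed'(\Enth)$), the quotient rule produces a numerator that simplifies dramatically via the two identities $D^2 - N^2 = 1 - \speed^2$ and $\fourvelocity^0 D - \fourvelocity^1 N = 1$, both of which are immediate consequences of $(\fourvelocity^0)^2 - (\fourvelocity^1)^2 = 1$. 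This collapses $\mathrm{d} \Lunit^1/\mathrm{d} \RRiemannPS$ into the coefficient appearing in \eqref{E:PSLMUSOURCETERMFUNCTION}, completing the proof of \eqref{E:LMUPS}.

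To prove \eqref{E:LLUPS}, I use the explicit solution formula \eqref{E:SOLUIONRRIEMANNTRIVIALTRANSPORTEQUATIONINGEOMETRICCOORDINATES}, which in geometric coordinates reads $\RRiemannPS(t,\eik) = \dataRRiemannPS(\eik)$. By Convention~\ref{CON:1DFLUIDVARIABLESASFUNCTIONSOFRIEMANNINVARIANTS}, all fluid variables entering the coefficient of $\PSLmusourcetermfunction$ -- namely $\speed, \Enth, \fourvelocity^0, \fourvelocity^1$ -- are smooth functions of $\RRiemannPS$, hence of $\eik$ alone, and $\muX \RRiemannPS = \p_\eik \dataRRiemannPS(\eik)$ is likewise a function of $\eik$ alone. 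Since $\Lunit = \p / \p t$ in geometric coordinates by \eqref{E:PLANESYMMETRYCOMMUTATORVECTORFIELDSAREGEOCOORDINATEVECTORFIELDS}, any function of $\eik$ alone is annihilated by $\Lunit$, so $\Lunit \PSLmusourcetermfunction = 0$; applying $\Lunit$ to \eqref{E:LMUPS} then gives $\Lunit \Lunit (\frac{\speed}{\normalizer} \upmu) = 0$. To pass to $\Lunit \Lunit \upmu$, I factor $\upmu = \frac{\normalizer}{\speed} \cdot \frac{\speed \upmu}{\normalizer}$; since $\normalizer/\speed$ is also a function of $\RRiemannPS$ and hence $\Lunit$-invariant, the Leibniz rule yields $\Lunit \upmu = \frac{\normalizer}{\speed} \PSLmusourcetermfunction$, which is again a function of $\eik$ alone and hence $\Lunit$-annihilated by another application of $\Lunit$.

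The only genuine obstacle is the algebraic bookkeeping in the explicit computation of $\mathrm{d} \Lunit^1 / \mathrm{d} \RRiemannPS$: the formula involves $\speed, \fourvelocity^0, \fourvelocity^1$, each of which depends non-trivially on $\RRiemannPS$, and it is only through the structural cancellations $D^2 - N^2 = 1 - \speed^2$ and $\fourvelocity^0 D - \fourvelocity^1 N = 1$ that the dependence on $\fourvelocity^0$ and $\fourvelocity^1$ collapses into the compact form of \eqref{E:PSLMUSOURCETERMFUNCTION}. Everything else follows automatically from the eikonal identity $\Lunit \eik = 0$, the geometric-coordinate description \eqref{E:PLANESYMMETRYCOMMUTATORVECTORFIELDSAREGEOCOORDINATEVECTORFIELDS}, and the transported simple-wave solution formula \eqref{E:SOLUIONRRIEMANNTRIVIALTRANSPORTEQUATIONINGEOMETRICCOORDINATES}.
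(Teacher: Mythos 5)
Your proof is correct and takes essentially the same route as the paper's: both hinge on computing the commutator $[\Lunit,\p_1]$ (which you cleanly identify as $-(\p_1 \Lunit^1)\,\p_1$ from $\Lunit = \p_t + \Lunit^1\p_1$), combining it with $\Lunit \eik = 0$ and the identity $\frac{\speed}{\normalizer}\upmu = -1/\p_1\eik$, and then using the Riemann-invariant formulas for $\Lunit^1, \fourvelocity^0, \fourvelocity^1, \speed$ together with the quotient rule to evaluate the source term; and both proofs of \eqref{E:LLUPS} reduce to the observation that $\PSLmusourcetermfunction$ depends on $\eik$ alone (via $\Lunit\RRiemannPS = 0$ and $[\Lunit,\muX]=0$) while $\Lunit = \geop{t}$. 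Your explicit identification of the cancellation identities $D^2 - N^2 = 1-\speed^2$ and $\fourvelocity^0 D - \fourvelocity^1 N = 1$ is a nice supplement to the paper's terser ``straightforward calculations.''
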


\begin{proof}
First, using \eqref{E:1DLUNITINTERMSOFRIEMANNINVARIANTS}, 
\eqref{E:1DARBITRARYDERIVATIVEOFSPEEDOFSOUNDINTERMSOFRIEMANNINVARIANTS}, 
\eqref{E:EXPLICITFORMOFXIN1D},
the quotient rule, 
the identity $\cosh^2(z) - \sinh^2(z) = 1$, 
and straightforward calculations, we deduce the following commutator identity, valid for simple isentropic plane-symmetric solutions: 
\[ [\Lunit,\p_1] = \frac{\normalizer}{\speed} \left\lbrace \frac{1 - \speed^2 + \frac{\speed'}{\speed \Enth}}{2(\fourvelocity^0 + \speed \fourvelocity^1)^2} \right\rbrace X \RRiemannPS \p_1.\]
Equation~\eqref{E:LMUPS} then follows from multiplying \eqref{E:INVERSEFOLIATIONDENSITYINPLANESYMMETRY} by $\speed/\normalizer$, differentiating both sides with respect to $\Lunit$, and using the following commutator identity, 
which is a consequence the eikonal equation $\Lunit \eik = 0$:
\[ \Lunit \left( \frac{\speed}{\normalizer} \upmu\right) = \frac{1}{(\p_1 \eik)^2} \Lunit \p_1 \eik = \left( \frac{\speed}{\normalizer} \upmu \right)^2 \Lunit \p_1 \eik =  \left( \frac{\speed}{\normalizer} \upmu \right)^2  [\Lunit,\p_1 ] \eik.\] 

Equation~\eqref{E:LLUPS} then follows from differentiating \eqref{E:LMUPS} with respect to $\Lunit$ 
and using \eqref{E:RRIEMANNPSTRANSPORT},
\eqref{E:EXPLICITFORMOFXIN1D},
\eqref{E:LANDMUXCOMMUTEINPLANESYMMETRY},
and the assumption of simple isentropic plane-symmetry.

\end{proof}

We find it convenient to work with an \emph{anti-derivative} of the nonlinearity $\PSLmusourcetermfunction$ 
defined in \eqref{E:PSLMUSOURCETERMFUNCTION}. 
To this end, 
viewing the quantity
$- \frac{1 - \speed^2 + \frac{\speed'}{\speed \Enth}}{2(\fourvelocity^0 +  \speed \fourvelocity^1)^2}$ as a function of $\RRiemannPS$ 
(see Convention~\ref{CON:1DFLUIDVARIABLESASFUNCTIONSOFRIEMANNINVARIANTS}), 
we define the function $\RRiemannPS \rightarrow \antiderivativePSLmusourcetermfunction[\RRiemannPS]$ as follows:
\begin{align} \label{E:ANTIDERIVATIVESOURCETERM}
	\antiderivativePSLmusourcetermfunction[\RRiemannPS] 
	& :=
		\int_0^{\RRiemannPS} 
			\left\lbrace
				-\frac{1 - \speed^2 + \frac{\speed'}{\speed \Enth}}{2(\fourvelocity^0 +  \speed \fourvelocity^1)^2}
			\right\rbrace(z) 
		\, \mathrm{d} z.
\end{align}
From \eqref{E:PLANESYMMETRYCOMMUTATORVECTORFIELDSAREGEOCOORDINATEVECTORFIELDS},
\eqref{E:ANTIDERIVATIVESOURCETERM},
the fundamental theorem of calculus,
and the chain rule, it follows that:
\begin{subequations}
	\begin{align}
		\geop{u} \antiderivativePSLmusourcetermfunction[\RRiemannPS] 
		& 
		=
		\muX \antiderivativePSLmusourcetermfunction[\RRiemannPS]  
		= 
		\PSLmusourcetermfunction, \label{E:ANTIDERIVATIVEIDENTITYFORSOURCETERM} 
			\\
		\antiderivativePSLmusourcetermfunction[0] & = 0. \label{E:NORMALIZATIONOFANTIDERIVATIVEFORLMUSOURCETERM}
	\end{align}
\end{subequations}
We highlight the important fact that in simple isentropic plane-symmetry, 
the non-degeneracy assumption \eqref{E:NONDEGENARCYASSUMPTION}, 
the initial condition of \eqref{E:ODEFORRIEMANNINVARIANT}, 
\eqref{E:ANTIDERIVATIVESOURCETERM},
and \eqref{E:NORMALIZATIONOFANTIDERIVATIVEFORLMUSOURCETERM} 
collectively imply that: 
\begin{align} \label{E:1DNONDEGENERACYONEOSCONSEQUENCE}
\frac{\mathrm{d} }{\mathrm{d} \RRiemannPS} 
\antiderivativePSLmusourcetermfunction[\RRiemannPS]|_{\RRiemannPS = 0}
& 
=
- \frac{1}{2}
\left\lbrace
	1 
	- 
	(\overline{\speed})^2 
	+ 
	\frac{\overline{\speed}'}{\overline{\speed} \overline{\Enth}}
\right\rbrace 
\neq 0.
\end{align}
From 
\eqref{E:ANTIDERIVATIVESOURCETERM}
and
\eqref{E:1DNONDEGENERACYONEOSCONSEQUENCE}, 
it follows that the function
$\RRiemannPS \mapsto \antiderivativePSLmusourcetermfunction [\RRiemannPS]$ is 
a diffeomorphism from a neighborhood of the origin 
(i.e., near $\RRiemannPS = 0$) onto a neighborhood of the origin;
we will silently use this basic fact in the rest of Sect.\,\ref{S:1DMAXIMALDEVELOPMENT}.
As we mentioned in Sect.\,\ref{SS:1DRICCATIBLOWUP},
\eqref{E:1DNONDEGENERACYONEOSCONSEQUENCE} is fundamental for ensuring the 
formation of shocks, for it implies that near $(\RRiemannPS,\LRiemannPS) = (0,0)$,
equation \eqref{E:RRIEMANNPSTRANSPORT} is genuinely nonlinear.

Next, we note that in simple isentropic plane-symmetry, 
in view of \eqref{E:SOLUIONRRIEMANNTRIVIALTRANSPORTEQUATIONINGEOMETRICCOORDINATES},
the quantities $\PSLmusourcetermfunction$, $\speed$, and $\normalizer$ can be viewed as functions of $\eik$ alone. 
We sometimes emphasize this fact by using notation such as $\PSLmusourcetermfunction(\eik)$, $\speed(\eik)$, etc.
The following corollary is a straightforward application of this fact and the above discussion;
we omit the straightforward proof.

\begin{corollary}[Explicit expressions for $\upmu$, $\Lunit\upmu$, $\muX \upmu$, and $\partial_1 \RRiemannPS$] 
\label{C:PSEXPLICITEXPRESSIONSFORSOLUTION}
Let $\PSLmusourcetermfunction$ be the source term from \eqref{E:PSLMUSOURCETERMFUNCTION}, viewed as a function of $\eik$. 
Let $\antiderivativePSLmusourcetermfunction = \antiderivativePSLmusourcetermfunction[\RRiemannPS]$ 
be the function of $\RRiemannPS$ defined by
\eqref{E:ANTIDERIVATIVESOURCETERM}.
Then for simple isentropic plane-symmetric solutions, the following identities hold
relative to the geometric coordinates,
where
$
\antiderivativePSLmusourcetermfunction'[z]
:=
\frac{\mathrm{d} }{\mathrm{d} z} 
\antiderivativePSLmusourcetermfunction[z]
$:
\begin{align}
\Lunit 
\left\lbrace 
	\frac{\speed}{\normalizer} \upmu(t,\eik) 
\right\rbrace
& = \PSLmusourcetermfunction(\eik)
	=
	\frac{ \mathrm{d} }{ \mathrm{d}  \eik}
	\antiderivativePSLmusourcetermfunction[\dataRRiemannPS(\eik)]
	= \antiderivativePSLmusourcetermfunction'[\dataRRiemannPS(\eik)]
\frac{\mathrm{d}}{\mathrm{d} \eik} \dataRRiemannPS(\eik),
	\label{E:LSPEEDTIMESMUPLANESYMMETRYWITHEXPLICITSOURCE} 
	\\
\begin{split} \label{E:SPEEDTIMESMUPLANESYMMETRYWITHEXPLICITSOURCE} 
\frac{\speed}{\normalizer} \upmu(t,\eik) 
& = 
	1
	+ 
	t \PSLmusourcetermfunction(\eik)
= 1 
	+
	t
	\frac{ \mathrm{d} }{ \mathrm{d}  \eik}
	\antiderivativePSLmusourcetermfunction[\dataRRiemannPS(\eik)]
		\\
	& 
= 1 
	+
	t
	\antiderivativePSLmusourcetermfunction'[\dataRRiemannPS(\eik)]
	\frac{\mathrm{d}}{\mathrm{d} \eik}
	\dataRRiemannPS(\eik),
\end{split}
	\\
\begin{split} \label{E:MUXMUPLANESYMMETRYWITHEXPLICITSOURCE}
	\muX \upmu(t,\eik) 
	& = 
		t 
		\left( 
		\frac{\normalizer}{\speed}
		\PSLmusourcetermfunction'
		\right)(\eik)
		+
		\left\lbrace
			\frac{ \mathrm{d}}{ \mathrm{d}  \eik}
			\left( \frac{\normalizer}{\speed}\right)(\eik)
		\right\rbrace
		\left\lbrace	
			1
			+ 
			t \PSLmusourcetermfunction(\eik)
		\right\rbrace
			\\
	& = 
	\frac{\normalizer}{\speed} t
	\frac{ \mathrm{d}^2 }{ \mathrm{d}  \eik^2}
	\antiderivativePSLmusourcetermfunction[\dataRRiemannPS(\eik)]
	+
	\left\lbrace
		\frac{ \mathrm{d}}{ \mathrm{d}  \eik}
		\left( \frac{\normalizer}{\speed}\right)(\eik)
	\right\rbrace
	\left\lbrace
	1 
	+
	t
	\frac{\mathrm{d} }{\mathrm{d} \eik}
		\antiderivativePSLmusourcetermfunction[\dataRRiemannPS(\eik)]
	\right\rbrace,
\end{split}
	\\
\begin{split}  \label{E:CLOSEDPARTIAL1DERIVATIVEOFRPLUSPS}
	[\partial_1 \RRiemannPS](t,\eik)
	& = - \frac{1}{\frac{\speed}{\normalizer} \upmu (t,\eik)} \geop{\eik} \RRiemannPS(t,\eik)
	= 
	- \frac{\frac{ \mathrm{d} }{ \mathrm{d}  \eik} \dataRRiemannPS(\eik)}{1
	+ 
	t \PSLmusourcetermfunction(\eik)} 
		\\
	& 
	= 
	- \frac{\frac{ \mathrm{d} }{ \mathrm{d}  \eik} \dataRRiemannPS(\eik)}{
	1 
	+
	t
	\antiderivativePSLmusourcetermfunction'[\dataRRiemannPS(\eik)]
	\frac{ \mathrm{d} }{ \mathrm{d}  \eik}
	\dataRRiemannPS(\eik)}.
\end{split}
\end{align}	

\end{corollary}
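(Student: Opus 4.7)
The plan is to work entirely in the geometric coordinates $(t,\eik)$, in which the assumption of simple isentropic plane-symmetry and \eqref{E:SOLUIONRRIEMANNTRIVIALTRANSPORTEQUATIONINGEOMETRICCOORDINATES} reduce $\RRiemannPS$ to a function of $\eik$ alone. By Convention~\ref{CON:1DFLUIDVARIABLESASFUNCTIONSOFRIEMANNINVARIANTS}, this in turn means that $\speed$, $\normalizer$, and $\PSLmusourcetermfunction$ are functions of $\eik$ alone in these coordinates. Coupled with the identifications $\Lunit = \geop{t}$ and $\muX = \geop{\eik}$ from \eqref{E:PLANESYMMETRYCOMMUTATORVECTORFIELDSAREGEOCOORDINATEVECTORFIELDS}, this makes every asserted formula a consequence of trivial integration in $t$ followed by chain/product-rule bookkeeping in $\eik$; there is no serious obstacle in the proof.

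First I would establish \eqref{E:LSPEEDTIMESMUPLANESYMMETRYWITHEXPLICITSOURCE}. The leftmost equality is precisely Lemma~\ref{L:1DPSTRANSPORTFORMU}. For the remaining equalities, the identity \eqref{E:ANTIDERIVATIVEIDENTITYFORSOURCETERM} gives $\PSLmusourcetermfunction = \geop{\eik}\antiderivativePSLmusourcetermfunction[\RRiemannPS]$, and the chain rule combined with $\RRiemannPS(t,\eik) = \dataRRiemannPS(\eik)$ then produces the explicit expression $\antiderivativePSLmusourcetermfunction'[\dataRRiemannPS(\eik)]\,\frac{\mathrm{d}}{\mathrm{d}\eik}\dataRRiemannPS(\eik)$.

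Next, \eqref{E:SPEEDTIMESMUPLANESYMMETRYWITHEXPLICITSOURCE} follows by integrating the transport equation $\geop{t}\left(\frac{\speed}{\normalizer}\upmu\right) = \PSLmusourcetermfunction(\eik)$ in $t$, using the initial condition $\frac{\speed}{\normalizer}\upmu|_{t=0} = 1$ supplied by \eqref{E:DATAIDENTITIESFORMU}; because $\PSLmusourcetermfunction$ depends only on $\eik$ in these coordinates, the integration yields exactly $1 + t\PSLmusourcetermfunction(\eik)$, and the alternate forms are obtained by substituting the expressions from the previous step.

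Then to prove \eqref{E:MUXMUPLANESYMMETRYWITHEXPLICITSOURCE}, I would solve \eqref{E:SPEEDTIMESMUPLANESYMMETRYWITHEXPLICITSOURCE} for $\upmu = \frac{\normalizer}{\speed}\left\{1 + t\PSLmusourcetermfunction(\eik)\right\}$ and apply $\muX = \geop{\eik}$, using the product rule and the fact that $\frac{\normalizer}{\speed}$ and $\PSLmusourcetermfunction$ depend on $\eik$ alone; substituting the $\antiderivativePSLmusourcetermfunction$ representation of $\PSLmusourcetermfunction$ gives the second form. Finally, for \eqref{E:CLOSEDPARTIAL1DERIVATIVEOFRPLUSPS}, I would combine \eqref{E:SEVERALACOUSTICVECTORFIELDS}, \eqref{E:EXPLICITFORMOFXIN1D}, and \eqref{E:PLANESYMMETRYCOMMUTATORVECTORFIELDSAREGEOCOORDINATEVECTORFIELDS} to obtain the coordinate-derivative identity $\geop{\eik} = \muX = \upmu X = -\upmu \frac{\speed}{\normalizer}\partial_1$, which inverts to $\partial_1 = -\frac{1}{\frac{\speed}{\normalizer}\upmu}\geop{\eik}$; applying this operator to $\RRiemannPS$ and invoking \eqref{E:SOLUIONRRIEMANNTRIVIALTRANSPORTEQUATIONINGEOMETRICCOORDINATES} together with the already-proved \eqref{E:SPEEDTIMESMUPLANESYMMETRYWITHEXPLICITSOURCE} yields the stated formula for $\partial_1 \RRiemannPS$, which makes the incipient blowup in $\partial_1 \RRiemannPS$ as $\frac{\speed}{\normalizer}\upmu \downarrow 0$ completely manifest.
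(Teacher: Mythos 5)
Your proof is correct and takes exactly the route the paper indicates, namely passing to geometric coordinates where $\Lunit=\geop{t}$, $\muX=\geop{\eik}$, $\RRiemannPS=\dataRRiemannPS(\eik)$, and hence $\speed,\normalizer,\PSLmusourcetermfunction$ depend on $\eik$ alone, so that Lemma~\ref{L:1DPSTRANSPORTFORMU}, the initial condition \eqref{E:DATAIDENTITIESFORMU}, the anti-derivative identity \eqref{E:ANTIDERIVATIVEIDENTITYFORSOURCETERM}, and elementary integration/chain-rule/product-rule computations yield all four displays. The paper explicitly omits this proof as ``straightforward,'' and your argument supplies precisely the missing details, including the correct derivation of \eqref{E:CLOSEDPARTIAL1DERIVATIVEOFRPLUSPS} from the vectorfield identity $\geop{\eik}=\muX=-\upmu\frac{\speed}{\normalizer}\partial_1$.
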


\subsection{Initial data that lead to admissible shock-forming solutions}
\label{SS:EXAMPLESOFBONAFIDEDATALEADINGTOADMISSIBLE}
In this section, we construct examples of initial data that fall under the scope of
Theorem~\ref{T:MAINTHEOREM1DSINGULARBOUNDARYANDCREASE},
i.e., initial data that launch shock-forming solutions 
such that we can derive the structure of a portion of the maximal classical development that includes a neighborhood of its boundary.
Our construction could be substantially generalized to exhibit a much larger class of initial data for which the results of
Theorem~\ref{T:MAINTHEOREM1DSINGULARBOUNDARYANDCREASE} hold; here we have striven for simplicity. 

\subsubsection{Assumptions on the ``seed'' profile}
\label{SSS:SEEDPROFILE}
To start, we fix any scalar ``seed profile'' $\seed$ with the following properties
(it is straightforward to show that such functions exist):

\begin{itemize}
	\item $\seed = \seed(\eik)$ is a\footnote{Our proof of Theorem~\ref{T:MAINTHEOREM1DSINGULARBOUNDARYANDCREASE} 
	does not actually require $\seed$ to be $C^{\infty}$; $C^4$ would suffice. \label{FN:C3ISENOUGH}} 
		$C^{\infty}$ function that is
		compactly supported in
		an interval $[-\rightu,\leftu]$ of $\eik$-values, where $\rightu, \leftu > 1$.
	\item $\frac{\mathrm{d} }{\mathrm{d} \eik} \seed(\eik)$ 
		has a unique, non-degenerate minimum at $\eik=0$ (our choice $\eik = 0$ is only for convenience).
		In particular, $\frac{\mathrm{d}^3 }{\mathrm{d} \eik^3} \seed(0) > 0$.
	\item Modifying $\seed$ by multiplying it by a constant 
		and composing it with a linear map of the form $\eik \rightarrow z \eik$
		for some constant $z \in \mathbb{R}$ if necessary, we assume that 
		(for the modified $\seed$), there is a constant $\blowupdelta$ satisfying:
		\begin{align} \label{E:1DBLOWUPDELTAISPOSITIVE} 
		\blowupdelta
			& > 0
		\end{align}
		such that:
		\begin{align} \label{AE:CONVENIENTRESCALINGOFSEED}
		\frac{\mathrm{d} }{\mathrm{d} \eik} \seed(\eik)\bigg|_{\eik=0} 
		& = - \blowupdelta,
		& &
		\frac{\mathrm{d}^2 }{\mathrm{d} \eik^2} \seed(\eik)\bigg|_{\eik=0} = 0;
	\end{align}
	that there is a constant $\PSdatamuHessianTaylorcoefficient$ satisfying:
		\begin{align}  \label{AE:PSKEYMUHESSIANTAYLORCOEFFICIENT}
			\PSdatamuHessianTaylorcoefficient 
			& > 0 
		\end{align} 
		such that:
		\begin{align} \label{E:1DROLEOFPSKEYMUHESSIANTAYLORCOEFFICIENT}
		\frac{\mathrm{d}^3 }{\mathrm{d} \eik^3} \seed(0)
		& = \PSdatamuHessianTaylorcoefficient,
		\end{align}
		and such that:
		\begin{align} \label{E:1SEEDESTIMATETIEDTOPSKEYMUHESSIANTAYLORCOEFFICIENT}
			\frac{1}{2} \PSdatamuHessianTaylorcoefficient 
			& \leq \frac{d^3}{\mathrm{d} \eik^3} \seed(\eik) 
			\leq 
			2 \PSdatamuHessianTaylorcoefficient,
			&&
			\mbox{if } 
			|\eik| \leq 1;
		\end{align}
		and that there is a constant $\PSLmunottoonegativeparameter$ satisfying:
		\begin{align} \label{AE:PSLOWERBOUNDONNULLDERIVATIVEOFMUINBORINGREGION}
			0 
			& 
			\leq
			\PSLmunottoonegativeparameter 
			< 1 
		\end{align}
		such that:
		\begin{align} \label{AE:PSMUNOTTOONEGATIVEINBORINGREGION}
		\frac{\mathrm{d} }{\mathrm{d} \eik} \seed(\eik) 
		& > - \PSLmunottoonegativeparameter \blowupdelta,
		& & \mbox{if } |\eik| \geq 1.
		\end{align}
\end{itemize}
See Fig.\,\ref{F:SEEDFUNCTION} for the graph 
of a representative seed profile.

\begin{center}
	\begin{figure}  
		\begin{overpic}[scale=.6, grid = false, tics=5, trim=-.5cm -1cm -1cm -.5cm, clip]{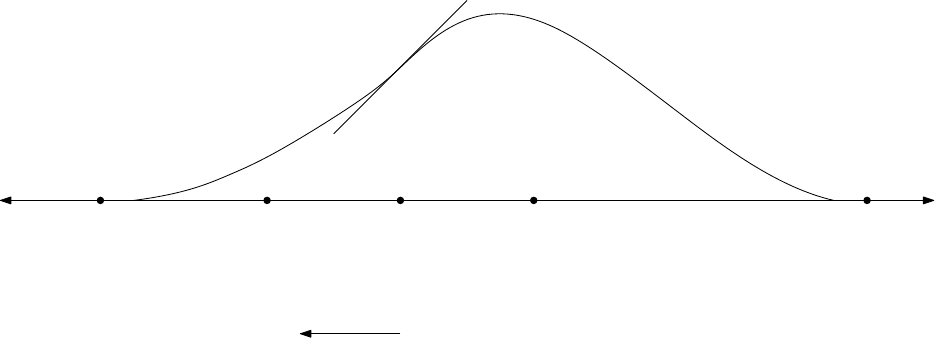}
			\put (9,15) {$\eik=\leftu$}
			\put (26,15) {$\eik=1$}
			\put (38,15) {$\eik=0$}
			\put (51,15) {$\eik=-1$}
			\put (82,15) {$\eik = - \rightu$}
			\put (36,7) {$\eik$}
			\put (65,32) {$\seed(\eik)$}
		\end{overpic}
		\caption{The graph of a representative ``seed profile,'' with $\eik$ increasing from right to left.}
		\label{F:SEEDFUNCTION}
	\end{figure}
\end{center}

\subsubsection{The initial data for $\RRiemannPS$ and amplitude smallness assumption}
\label{SSS:1DADMISSIBLEDATAFORRRIEMANN}
We now construct the initial data for $\RRiemannPS$.
Let $\seed$ be as in Sect.\,\ref{SSS:SEEDPROFILE}. 
We define the initial data function $\dataRRiemannPS := \RRiemannPS|_{t=0}$ to be the following function of
$\eik$:
\begin{align} 
	\dataRRiemannPS(\eik)
	& := \antiderivativePSLmusourcetermfunction^{-1}[\seed(\eik)],
	\label{AE:RIEMANNINVARIANTRESCALEDSEEDDATA}
\end{align}
where $\antiderivativePSLmusourcetermfunction^{-1}$ is the inverse function of the map
$\RRiemannPS \rightarrow \antiderivativePSLmusourcetermfunction[\RRiemannPS]$ 
defined by \eqref{E:ANTIDERIVATIVESOURCETERM}.

\medskip

\begin{center}
	\underline{\textbf{Amplitude smallness assumption on the initial data}}
\end{center}	
By \eqref{E:1DNONDEGENERACYONEOSCONSEQUENCE} and continuity,
we can further modify $\seed$ by multiplying it by a constant 
to shrink the amplitude of $\dataRRiemannPS$ and ensure that 
the following two conditions hold:
\begin{enumerate}
\item The isentropic plane-symmetric initial data array 
		$\mathring{\solutionarray} := (\Lnenth,\fourvelocity^0,\fourvelocity^1,0,0,0)|_{t=0}$
		determined by $\dataRRiemannPS$ via
		\eqref{E:1DFLUIDVELOCITYINTERMSOFRIEMANNINVARIANTS}--\eqref{E:1DENTHALPYINTERMSOFRIEMANNINVARIANTS}
		(with $\LRiemannPS = 0$ in those formulas)
		satisfies $\mathring{\solutionarray} \in \mbox{\upshape interior}(\mathcal{H})$,
		where $\mathcal{H}$ is the regime of hyperbolicity defined in \eqref{E:REGIMEOFHYPERBOLICITY}.
\item
The following
non-degeneracy condition holds:
\begin{align} \label{E:ANOTHER1DNONDEGENERACYONEOSCONSEQUENCE}
\frac{\mathrm{d} }{\mathrm{d} \RRiemannPS} 
\antiderivativePSLmusourcetermfunction[\RRiemannPS]\bigg|_{\RRiemannPS = \dataRRiemannPS(\eik)} 
& 
\neq 0,
& & \eik \in [-\rightu,\leftu].
\end{align}
\end{enumerate}
In the rest of Sect.\,\ref{S:1DMAXIMALDEVELOPMENT}, we will use the above two assumptions without always explicitly mentioning them.

Taking into account \eqref{E:NONDEGENARCYASSUMPTION} and \eqref{E:NORMALIZATIONOFANTIDERIVATIVEFORLMUSOURCETERM},
we deduce from Taylor expansions and standard calculus 
that the following conclusions
hold, where $C > 0$ depends on $\seed$,
and for functions $f = f(\eik)$ on $\Sigma_0 := \lbrace t = 0 \rbrace$, 
$\| f \|_{C^0(\Sigma_0)} := \max_{\eik \in \mathbb{R}} |f(\eik)|$:
\begin{itemize}
	\item
		$\dataRRiemannPS$ is compactly supported in $[-\rightu,\leftu]$
		and satisfies the following bounds:
		\begin{align} \label{AE:PSRESCALEDDATALINFINITYBOUND} 
			\left\| \frac{\mathrm{d}^M}{\mathrm{d} \eik^M} \dataRRiemannPS \right\|_{C^0(\Sigma_0)} 
			& \leq C,
			& 
			M = 0,1,2,3,4.
		\end{align}
	\item $\frac{\mathrm{d}}{\mathrm{d} \eik} \antiderivativePSLmusourcetermfunction[\dataRRiemannPS(\eik)]$ 
		has a unique, negative, non-degenerate minimum at $\eik=0$.
		In particular, 
		$\frac{\mathrm{d}^3}{\mathrm{d} \eik^3} \antiderivativePSLmusourcetermfunction[\dataRRiemannPS(\eik)] |_{\eik=0} > 0$. 
	\item There is a differentiable function $\PSthirdorderTaylorremaindercoefficientfunction: [-\rightu,\leftu] \rightarrow \mathbb{R}$ 	
		such that
		$
		\left\| 
			\PSthirdorderTaylorremaindercoefficientfunction
		\right\|_{C^1(\Sigma_0^{[-\rightu,\leftu]})} 
		\leq C
		$ 
		and such that:
		\begin{align} \label{E:TAYLOREXPANSIONOFKEYMUTERMABOUTCREASE}
			\frac{\mathrm{d}}{\mathrm{d} \eik} \antiderivativePSLmusourcetermfunction[\dataRRiemannPS(\eik)]
			& = - 
					\blowupdelta
					+
					\frac{1}{2}  \PSdatamuHessianTaylorcoefficient \eik^2
					+
					\PSthirdorderTaylorremaindercoefficientfunction(\eik) \eik^3.
		\end{align}
		\item For $j=1,2,3$, there exists a continuous function 
		$\PSinitialkeymufunctioncoefficients_j : [-\rightu,\leftu] \rightarrow (0,\infty)$
		such that:
		\begin{align} \label{E:INITIALMUFUNCTIONSKEYEXPANSION}
			\PSinitialkeymufunctioncoefficients_j(\eik)
			& \approx 1,
			&& \eik \in [-\rightu,\leftu],
		\end{align}
		and such that for $M=0,1,2$, we have:
		\begin{subequations}
		\begin{align}
			\frac{\mathrm{d}^M}{\mathrm{d} \eik^M} 
			\left(
				\frac{\mathrm{d}}{\mathrm{d} \eik} 
				\antiderivativePSLmusourcetermfunction[\dataRRiemannPS(\eik)]
				+
				\blowupdelta
			\right)
			& = \PSinitialkeymufunctioncoefficients_{M+1}(\eik)  \eik^{2-M},
				\label{E:RESCALEDMUFUNCTIONANDDERIVATIVESEXPANSIONSNEAR0} 
			& \mbox{if } |\eik | \leq 1,
				\\
			\frac{\mathrm{d}}{\mathrm{d} \eik} \antiderivativePSLmusourcetermfunction[\dataRRiemannPS(\eik)] 
			& \geq - \PSLmunottoonegativeparameter \blowupdelta,
			& \mbox{if } |\eik | \geq 1,
				\label{E:RESCALEDFIRSTDERIVATIVEOFMUFUNCTIONUNIFORMLYPOSITIVEAWAYFROMINTERESTINGREGION} 
		\end{align}
		\end{subequations}
		where $\PSLmunottoonegativeparameter \in [0,1)$ is the non-negative constant in \eqref{AE:PSMUNOTTOONEGATIVEINBORINGREGION},
		and in \eqref{E:INITIALMUFUNCTIONSKEYEXPANSION} and throughout,
		$A \approx B$ means that there is a $C \geq 1$ such that $\frac{1}{C} B \leq A \leq C B$.
		\item The constant $\blowupdelta$ appearing in \eqref{AE:CONVENIENTRESCALINGOFSEED} satisfies:
			\begin{align}  \label{E:1DUSEFULCHARACTERIZATIONOFBLOWUPDELTA}
				\blowupdelta 
				& 
				= 
				\max_{\eik \in [- \rightu,\leftu]} 
				[\PSLmusourcetermfunction(\eik)]_-	
				=  
				\max_{\eik \in [- \rightu,\leftu]}	
				\left[\frac{\mathrm{d}}{\mathrm{d} \eik} \antiderivativePSLmusourcetermfunction[\dataRRiemannPS(\eik)]\right]_-,
			\end{align}
			where $[z]_- := \max\lbrace -z, 0 \rbrace$ is the negative part of $z$.
\end{itemize}

\subsubsection{The time of first blowup}
\label{SSS:TIMEOFFIRSTBLOWUP}

\begin{definition}[Time of first blowup] \label{D:BLOWUPTIME} 
Given any initial data function $\dataRRiemannPS(\eik)$ constructed above, 
with $\blowupdelta > 0$ as in \eqref{AE:CONVENIENTRESCALINGOFSEED},
we define:
\begin{align} \label{E:BLOWUPTIME}
	\blowuptime
	& := \frac{1}{\blowupdelta}.
\end{align}
\end{definition}

Note that 
\eqref{E:SPEEDTIMESMUPLANESYMMETRYWITHEXPLICITSOURCE}
and
\eqref{E:1DUSEFULCHARACTERIZATIONOFBLOWUPDELTA} imply that $\blowuptime$ is the Minkowskian time 
at which $\upmu$ first vanishes, which by \eqref{E:CLOSEDPARTIAL1DERIVATIVEOFRPLUSPS} is
the positive time of first blowup for $|\partial_1 \RRiemannPS|$.

\subsubsection{Sharp estimates for the inverse foliation density}
\label{SSS:1DSHARPESTIMATESFORINVERSEFOLIATIONDENSITY}
In Lemma~\ref{L:SHARPESTIMATESFORMUNEARCREASE}, 
we provide various estimates for $\upmu$ and its derivatives
in a neighborhood of $[0,\blowuptime] \times \{0\} \subset \mathbb{R}_t \times \mathbb{R}_{\eik}$. 
These estimates will be useful in the description of the boundary of the maximal development. 

\begin{lemma}[Sharp estimates for $\upmu$ and $\PSLmusourcetermfunction$] 
\label{L:SHARPESTIMATESFORMUNEARCREASE} 
Let $\dataRRiemannPS = \dataRRiemannPS(\eik)$ 
be compactly supported initial data constructed in 
and satisfying the assumptions stated in
Sect.\,\ref{SSS:1DADMISSIBLEDATAFORRRIEMANN}. 
Recall that for the corresponding simple isentropic plane-symmetric solution, 
relative to the geometric coordinates $(t,\eik)$,
$\RRiemannPS$, $\speed$, and $\normalizer$ are functions of $\eik$.
With this in mind, we define:
\begin{align} \label{E:UPPERANDLOWERBOUNDSONSPEEDOVERNORMALIZER}
	\minimumspeed 
	& := \min_{\eik \in \mathbb{R}} \frac{\speed(\eik)}{\normalizer(\eik)}, 
	& \maximumspeed & := \max_{\eik \in \mathbb{R}} \frac{\speed(\eik)}{\normalizer(\eik)}.
\end{align}
Note that by \eqref{E:SPEEDOFSOUNDASSUMEDBOUNDS} and \eqref{E:ACOUSTICALMETRICNORMALIZER}, we have:
\begin{align} \label{E:EASYSPEEDOVERNORMALIZERBOUNDS}
	0 & < \minimumspeed \leq \maximumspeed < 1.
\end{align}

Then there exists\footnote{With minor additional effort, one could explicitly compute such a constant $\interestingu$ in terms of the constants associated to $\seed$ such as $\blowupdelta, \, \PSdatamuHessianTaylorcoefficient$, etc.} a 
constant $\interestingu$ depending only on the seed profile $\seed$ 
and satisfying $0 < \interestingu  < 1$ 
such that the following hold. 

\noindent \underline{\textbf{Behavior of $\upmu$ on $[-\interestingu,\interestingu]$}}.
Let $\blowuptime > 0$ be as defined in \eqref{E:BLOWUPTIME}.
Then the following estimates hold for $t \in [0,\blowuptime]$:
\begin{align} \label{AE:BOUNDSONLMUINTERESTINGREGION} 
	- 
	\minimumspeed^{-1}
	\blowupdelta
	\leq
	\min_{\Sigma_t^{[-\interestingu,\interestingu]}} \Lunit \upmu
	\leq 
	\max_{\Sigma_t^{[-\interestingu,\interestingu]}} \Lunit \upmu
	\leq 
	- \maximumspeed^{-1} \blowupdelta.
\end{align}

Moreover, for
$
(t,\eik) 
\in
[0,\blowuptime] 
			\times [-\interestingu,\interestingu]
$,
the following estimates hold, where here and throughout, 
$A = \positivebigO(B)$ means that there exist constants $C_1, C_2$
(depending on $\seed$)\footnote{For example, the implicit constants present on RHS~\eqref{AE:PSLUNTMUTAYLOREXPANSIONININTERESTINGREGION} can be taken to be 
$C_1 = \maximumspeed^{-1}$ and $C_2 = \minimumspeed^{-1}$; see \eqref{AE:BOUNDSONLMUINTERESTINGREGION}.} such that 
$0 < C_1 \leq C_2$ and $C_1 B \leq A \leq C_2 B$,
and 
$A = \mathcal{O}(B)$ means that there exists a $C > 0$ (depending on $\seed$)
such that $|A| \leq C |B|$:
\begin{align}
			 \upmu(t,\eik) 
				& 
				= 
					\positivebigO(1)
					\frac{\PSdatamuHessianTaylorcoefficient}{2} \eik^2
					+
					\positivebigO(1) \blowupdelta 
					(\blowuptime - t), 
				\label{AE:PSMUTAYLOREXPANSIONININTERESTINGREGION} 
					\\
				\Lunit \upmu(t,\eik) 
				& 
				= - 
					\positivebigO(1)
					\blowupdelta, 
				\label{AE:PSLUNTMUTAYLOREXPANSIONININTERESTINGREGION} 
					\\
		\muX \upmu(t,\eik) 
		& 
			=
			\positivebigO(1)
			\PSdatamuHessianTaylorcoefficient \eik + \mathcal{O}(1) (\blowuptime - t),
			\label{AE:PSMUFIRSTDERIVATIVETAYLOREXPANSIONININTERESTINGREGION}
				\\
			\Lunit \muX \upmu(t,\eik)
				& 
				= 
					\mathcal{O}(1),
			\label{AE:PSLUNITMUXMUTAYLOREXPANSIONININTERESTINGREGION} 
					\\
		\muX \muX \upmu(t,\eik) 
		& = \positivebigO(1)
			\PSdatamuHessianTaylorcoefficient
			+
			\mathcal{O}(1)
			(\blowuptime - t).
			\label{AE:PSMUSECONDDERIVATIVETAYLOREXPANSIONININTERESTINGREGION}
\end{align}

In addition, we have:
	\begin{align} \label{AE:PSLOCATIONOFXMUEQUALSMINUSKAPPA}
			\lbrace \muX \upmu = 0 \rbrace
			\cap
			\Sigma_{\blowuptime}^{[-\interestingu,\interestingu]}
			& = \lbrace (\blowuptime,0) \rbrace
			\subset
			\Sigma_{\blowuptime}^{[-\frac{1}{4}\interestingu,\frac{1}{4}\interestingu]},
					\\
	\min_{\Sigma_{\blowuptime}^{[-\interestingu,\interestingu]}
			\backslash
			\Sigma_{\blowuptime}^{[-\frac{1}{2} \interestingu,\frac{1}{2} \interestingu]}}
			|\muX \upmu | 
			& \geq \frac{\PSdatamuHessianTaylorcoefficient \interestingu}{8},
			\label{AE:PSREGIONWHEREMUXMUKAPPALEVELSETISNOTLOCATED}
	\end{align}

\begin{align}  \label{AE:PSMUTRANSVERSALCONVEXITY}
\begin{split}
			\frac{1}{2} \maximumspeed^{-1} \blowuptime \PSdatamuHessianTaylorcoefficient
			& \leq 
			\min_{\Sigma_{\blowuptime}^{[-\interestingu,\interestingu]}}
				\muX \muX \upmu,
			\leq 
			\max_{\Sigma_{\blowuptime}^{[-\interestingu,\interestingu]}} 
				\muX \muX \upmu,
			\leq 
			2 \minimumspeed^{-1} \blowuptime \PSdatamuHessianTaylorcoefficient.
\end{split}
\end{align}

\medskip
\noindent \underline{\textbf{$\muPS$ is uniformly positive away from the interesting region}}.
With $\PSLmunottoonegativeparameter \in [0,1)$ denoting the non-negative constant on 
RHS~\eqref{E:RESCALEDFIRSTDERIVATIVEOFMUFUNCTIONUNIFORMLYPOSITIVEAWAYFROMINTERESTINGREGION},
we have the following estimate:
\begin{align} \label{AE:PSMUISLARGEAWAYFROMINTERESTINGREGION}
	\min_{\lbrace (t,\eik) \ | \ t \in [0,\blowuptime], \,  |u| \geq \interestingu \rbrace} \upmu(t,\eik)
	& 
	\geq \positivebigO(1) (1 - \PSLmunottoonegativeparameter).
\end{align}

\medskip
\noindent \underline{\textbf{Properties of $\PSLmusourcetermfunction$ in the interesting region}}.
Let $\PSLmusourcetermfunction$ be the source term from \eqref{E:PSLMUSOURCETERMFUNCTION}, 
viewed as a function of $\eik$, and let $\PSLmusourcetermfunction' := \frac{\mathrm{d}}{\mathrm{d} \eik} \PSLmusourcetermfunction$.
Then for $\eik \in [-\interestingu,\interestingu]$, the following estimates hold:
	\begin{subequations}
	\begin{align}  \label{E:ALTERNATERESCALEDMUFUNCTIONEXPANSIONSNEAR0}
			\PSLmusourcetermfunction(\eik)
			& = 
			- 
			\blowupdelta
			+
			\positivebigO(1) \eik^2
			= 
			-\positivebigO(1) \blowupdelta,
				\\
			\frac{1}
			{\PSLmusourcetermfunction(\eik)}
			& = 
			- 
			\frac{1}{\blowupdelta}
			-
			\positivebigO(1) \eik^2,	
				\label{E:ALTERNATERERECIPROCALSCALEDMUFUNCTIONEXPANSIONSNEAR0} 
					\\
			\PSLmusourcetermfunction'(\eik)
			& = 
			\positivebigO(1) \eik.
	\label{E:ALTERNATERESCALEDMUFUNCTIONANDDERIVATIVESEXPANSIONSNEAR0}		
	\end{align}
	\end{subequations}
		In particular:
		\begin{itemize}
			\item $\PSLmusourcetermfunction(\eik) < 0$ for $\eik \in [-\interestingu,\interestingu]$
			\item $\PSLmusourcetermfunction'(0) = 0$ 
			\item $\PSLmusourcetermfunction'(\eik) > 0$ for $\eik \in [0,\interestingu]$ 
			\item $\PSLmusourcetermfunction'(\eik) < 0$ for $\eik \in [-\interestingu,0]$.
		\end{itemize}
\end{lemma}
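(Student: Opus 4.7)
The plan is to derive every estimate from the explicit formulas in Corollary~\ref{C:PSEXPLICITEXPRESSIONSFORSOLUTION} combined with the Taylor expansion \eqref{E:TAYLOREXPANSIONOFKEYMUTERMABOUTCREASE} and the structural data estimates \eqref{E:INITIALMUFUNCTIONSKEYEXPANSION}--\eqref{E:RESCALEDFIRSTDERIVATIVEOFMUFUNCTIONUNIFORMLYPOSITIVEAWAYFROMINTERESTINGREGION}. The constant $\interestingu \in (0,1)$ will be chosen at the end so that the Taylor remainders $\mathcal{O}(\eik^3)$ can be absorbed into the leading quadratic term $\frac{1}{2}\PSdatamuHessianTaylorcoefficient \eik^2$, and so that $\PSLmusourcetermfunction'$ retains its sign on each side of $\eik = 0$. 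Throughout, I will exploit that $\RRiemannPS$, and hence $\speed/\normalizer$ and $\PSLmusourcetermfunction$, are functions of $\eik$ alone.

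First I will establish the estimates for $\upmu$ on $[-\interestingu,\interestingu]$. Starting from \eqref{E:SPEEDTIMESMUPLANESYMMETRYWITHEXPLICITSOURCE} and substituting the Taylor expansion \eqref{E:TAYLOREXPANSIONOFKEYMUTERMABOUTCREASE}, I get
\begin{align*}
\tfrac{\speed}{\normalizer}\upmu(t,\eik)
 = \blowupdelta(\blowuptime - t) + \tfrac{1}{2} t \PSdatamuHessianTaylorcoefficient \eik^2 + t \PSthirdorderTaylorremaindercoefficientfunction(\eik) \eik^3,
\end{align*}
where I used $1 - t\blowupdelta = \blowupdelta(\blowuptime - t)$. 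Choosing $\interestingu$ small relative to $\PSdatamuHessianTaylorcoefficient / \|\PSthirdorderTaylorremaindercoefficientfunction\|_{C^0}$ absorbs the cubic term into the $\eik^2$ term, and multiplying by $\normalizer/\speed \in [\maximumspeed^{-1},\minimumspeed^{-1}]$ produces \eqref{AE:PSMUTAYLOREXPANSIONININTERESTINGREGION}. For $\Lunit \upmu$, I use that \eqref{E:LLUPS} combined with $\Lunit(\speed/\normalizer) = 0$ (valid in simple plane-symmetry) yields $\Lunit\upmu = \frac{\normalizer}{\speed}\PSLmusourcetermfunction(\eik)$, which is time-independent; then \eqref{E:RESCALEDMUFUNCTIONANDDERIVATIVESEXPANSIONSNEAR0} with $M=0$ gives $\PSLmusourcetermfunction(\eik) = -\blowupdelta + \PSinitialkeymufunctioncoefficients_1(\eik)\eik^2$ with $\PSinitialkeymufunctioncoefficients_1 \approx 1$, producing both \eqref{AE:BOUNDSONLMUINTERESTINGREGION} and \eqref{AE:PSLUNTMUTAYLOREXPANSIONININTERESTINGREGION} after shrinking $\interestingu$. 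For the $\muX$-derivatives, I apply $\muX = \geop{\eik}$ to the explicit formula \eqref{E:SPEEDTIMESMUPLANESYMMETRYWITHEXPLICITSOURCE}, or equivalently just differentiate the Taylor expansion above; this yields \eqref{AE:PSMUFIRSTDERIVATIVETAYLOREXPANSIONININTERESTINGREGION} and \eqref{AE:PSMUSECONDDERIVATIVETAYLOREXPANSIONININTERESTINGREGION} via \eqref{E:RESCALEDMUFUNCTIONANDDERIVATIVESEXPANSIONSNEAR0} with $M=1,2$. The bound \eqref{AE:PSLUNITMUXMUTAYLOREXPANSIONININTERESTINGREGION} follows by one more $\Lunit$ differentiation, which kills the $t$-independent quadratic term in $\eik$ and leaves only the $\PSinitialkeymufunctioncoefficients_j$-related bounded derivative factors.

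The assertions about the zero set of $\muX\upmu$ are a consequence of \eqref{AE:PSMUFIRSTDERIVATIVETAYLOREXPANSIONININTERESTINGREGION} evaluated at $t = \blowuptime$: the expansion reduces to $\muX\upmu(\blowuptime,\eik) = \positivebigO(1)\PSdatamuHessianTaylorcoefficient \eik$, so after shrinking $\interestingu$ to absorb error terms, the zero on $\Sigma_\blowuptime^{[-\interestingu,\interestingu]}$ occurs only at $\eik = 0$, which is \eqref{AE:PSLOCATIONOFXMUEQUALSMINUSKAPPA}, and the lower bound away from $\{|\eik| \leq \interestingu/2\}$ follows with the explicit constant $\PSdatamuHessianTaylorcoefficient\interestingu/8$, which is \eqref{AE:PSREGIONWHEREMUXMUKAPPALEVELSETISNOTLOCATED}. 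The transversal convexity bound \eqref{AE:PSMUTRANSVERSALCONVEXITY} is obtained by a second differentiation of \eqref{E:SPEEDTIMESMUPLANESYMMETRYWITHEXPLICITSOURCE} with respect to $\eik$, evaluating at $t = \blowuptime$, and extracting the leading constant $\blowuptime \PSdatamuHessianTaylorcoefficient$ multiplied by the uniform bounds $\minimumspeed^{-1}, \maximumspeed^{-1}$ on $\normalizer/\speed$.

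Next, for the behavior away from $[-\interestingu,\interestingu]$, I split into the annular region $\interestingu \leq |\eik| \leq 1$ and the region $|\eik| \geq 1$. On the annulus, \eqref{E:RESCALEDMUFUNCTIONANDDERIVATIVESEXPANSIONSNEAR0} with $M=0$ gives $\PSLmusourcetermfunction(\eik) \geq -\blowupdelta + c_0 \interestingu^2$ for some $c_0 > 0$, hence $\tfrac{\speed}{\normalizer}\upmu \geq c_0 \interestingu^2 \blowuptime$ for $t \leq \blowuptime$; on the far region, \eqref{E:RESCALEDFIRSTDERIVATIVEOFMUFUNCTIONUNIFORMLYPOSITIVEAWAYFROMINTERESTINGREGION} gives $\PSLmusourcetermfunction \geq -\PSLmunottoonegativeparameter \blowupdelta$, so $\tfrac{\speed}{\normalizer}\upmu \geq 1 - \PSLmunottoonegativeparameter$. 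Together these yield \eqref{AE:PSMUISLARGEAWAYFROMINTERESTINGREGION}. Finally, the three estimates \eqref{E:ALTERNATERESCALEDMUFUNCTIONEXPANSIONSNEAR0}--\eqref{E:ALTERNATERESCALEDMUFUNCTIONANDDERIVATIVESEXPANSIONSNEAR0} on $\PSLmusourcetermfunction$ and its reciprocal and derivative follow directly from \eqref{E:RESCALEDMUFUNCTIONANDDERIVATIVESEXPANSIONSNEAR0} for $M=0,1$ (using $\PSLmusourcetermfunction = \frac{d}{d\eik}\antiderivativePSLmusourcetermfunction[\dataRRiemannPS(\eik)]$), together with a geometric-series expansion of $1/(-\blowupdelta + \positivebigO(1)\eik^2)$; after shrinking $\interestingu$ once more, the claimed sign information about $\PSLmusourcetermfunction'$ follows because $\PSLmusourcetermfunction'(\eik) = \PSinitialkeymufunctioncoefficients_2(\eik) \eik$ with $\PSinitialkeymufunctioncoefficients_2 > 0$. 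The only nontrivial bookkeeping obstacle is making sure that the single choice of $\interestingu$ is small enough to simultaneously control all of the cubic remainders appearing across these estimates; since every such remainder is controlled by the same $C^1$ norm of $\PSthirdorderTaylorremaindercoefficientfunction$ and $\PSinitialkeymufunctioncoefficients_j$, one can fix $\interestingu$ at the end of the argument to handle all error terms at once.
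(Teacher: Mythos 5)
Your proposal is correct and follows essentially the same route as the paper, which simply cites Corollary~\ref{C:PSEXPLICITEXPRESSIONSFORSOLUTION}, the data assumptions of Sect.\,\ref{SSS:1DADMISSIBLEDATAFORRRIEMANN}, and Taylor expansions, and omits the remaining details. You supply exactly those omitted details: the algebraic rewriting $1 - t\blowupdelta = \blowupdelta(\blowuptime - t)$, the observation that $\Lunit\upmu = \tfrac{\normalizer}{\speed}\PSLmusourcetermfunction$ is a function of $\eik$ alone (via $\Lunit(\speed/\normalizer) = 0$), the applications of \eqref{E:TAYLOREXPANSIONOFKEYMUTERMABOUTCREASE} and \eqref{E:RESCALEDMUFUNCTIONANDDERIVATIVESEXPANSIONSNEAR0}--\eqref{E:RESCALEDFIRSTDERIVATIVEOFMUFUNCTIONUNIFORMLYPOSITIVEAWAYFROMINTERESTINGREGION} to the respective regions, and the final uniform choice of $\interestingu$ to absorb all remainders at once — all consistent with what the authors intend.
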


\begin{proof}
All of the results except for those concerning $\PSLmusourcetermfunction$
are straightforward consequences of Cor.\,\ref{C:PSEXPLICITEXPRESSIONSFORSOLUTION}, 
the properties of the initial data $\dataRRiemannPS(\eik)$ from Sect.\,\ref{SSS:1DADMISSIBLEDATAFORRRIEMANN}, 
and Taylor expansions; we omit the details.
	
	To prove the results for $\PSLmusourcetermfunction$,
	we first use \eqref{E:SOLUIONRRIEMANNTRIVIALTRANSPORTEQUATIONINGEOMETRICCOORDINATES}
	and \eqref{E:ANTIDERIVATIVEIDENTITYFORSOURCETERM}
	to deduce that
	$\PSLmusourcetermfunction(\eik)
	=
	\frac{\mathrm{d}}{\mathrm{d} \eik}
	\antiderivativePSLmusourcetermfunction[\dataRRiemannPS(\eik)] 
	$.
	Using this identity and
	\eqref{E:TAYLOREXPANSIONOFKEYMUTERMABOUTCREASE}, we conclude
	\eqref{E:ALTERNATERESCALEDMUFUNCTIONANDDERIVATIVESEXPANSIONSNEAR0},
	provided $\interestingu > 0$ is taken to be sufficiently small.
\end{proof}

\subsection{The crease, singular curve, and Cauchy horizon in $1D$ simple isentropic plane-symmetry} \label{SS:1DBOUNDARYOFMAXIMALDEVELOPMENT}
In this section, 
we provide a description of a localized portion of the boundary of the $\hfour$-MGHD. 
We emphasize the following key point, which follows from
\eqref{E:SOLUIONRRIEMANNTRIVIALTRANSPORTEQUATIONINGEOMETRICCOORDINATES}, 
\eqref{E:SPEEDTIMESMUPLANESYMMETRYWITHEXPLICITSOURCE},
and \eqref{E:CLOSEDPARTIAL1DERIVATIVEOFRPLUSPS}:

\begin{quote} For the initial data from Sect.\,\ref{SSS:1DADMISSIBLEDATAFORRRIEMANN},
the corresponding fluid solution $\RRiemannPS$ and the inverse foliation density 
$\upmu$ are \emph{explicit smooth functions} on all\footnote{In multi-dimensions, the best known estimates allow for the possibility of high order energy-blowup; see Sect.\,\ref{SSS:WAVEENERGYESTIMATEHIERARCHY}. 
However, even in multi-dimensions, the solution remains smooth at the low-to-mid order derivative levels.}  
of $\R^2$ \emph{in the $(t,\eik)$ differential structure}, even though 
the Minkowskian partial derivative $\partial_1 \RRiemannPS$ blows up in finite time.
\end{quote}

Therefore, when we speak of the $\hfour$-MGHD, we are referring to the standard differential structure corresponding to the $(t,x^1)$ coordinates. 
Nonetheless, \emph{we prefer to carry out most our analysis of the $\hfour$-MGHD in $(t,\eik)$ coordinates},
thanks to the explicit structure of the equations and solutions in geometric coordinates revealed in 
Cor.\,\ref{C:PSEXPLICITEXPRESSIONSFORSOLUTION}. Hence, to reach conclusions about the behavior of solution in the $(t,x^1)$-differential structure, we have to control the change of variables map $\Upsilon(t,\eik) := (t,x^1)$; indeed, 
in Theorem~\ref{T:MAINTHEOREM1DSINGULARBOUNDARYANDCREASE}, we reveal various diffeomorphism and homeomorphism properties of $\Upsilon$.

For the solutions and localized region under study, 
when described in geometric coordinates,
the boundary of the $\hfour$-MGHD consists of two pieces, depicted in Fig.\,\ref{F:1DPLANESYMMETRYCAUCHYHORANDSINGULARCURVE}: 
\begin{enumerate}
\item A singular curve portion 
$\singularcurve_{[-\interestingu,0]}$, emanating from the first shock point -- called the \emph{crease} and denoted by $\crease$ --
such that $\upmu$ vanishes on $\singularcurve_{[-\interestingu,0]}$.
\item A Cauchy horizon portion, denoted by $\cauchyhor_{[0,\interestingu]}$, which is a portion of 
the integral curve of $\uLunit$ that 
that emanates from $\crease$, and along which $\upmu$ is positive, 
except at $\crease$. 
\end{enumerate}
In particular, the image $\Upsilon(\singularcurve_{[-\interestingu,0]})$, 
depicted in Fig.\,\ref{F:MINKOWSKIRECTANGULAR1DPLANESYMMETRYCAUCHYHORANDSINGULARCURVE},
is a boundary of the $\hfour$-MGHD in the differential structure of the Minkowski-rectangular coordinates $(t,x^1)$ in the sense that $|\partial_1 \RRiemannPS|$ blows up when $\upmu$ vanishes.
In contrast, even though $\RRiemannPS$ remains smooth all the way up to 
$\Upsilon(\cauchyhor_{[0,\interestingu]} \backslash \crease)$, 
$\Upsilon(\cauchyhor_{[0,\interestingu]})$ is a boundary of the $\hfour$-MGHD  
because the crease $\Upsilon(\crease)$ -- a point at which $|\partial_1 \RRiemannPS|$ blows up -- 
lies in the $\hfour$-causal past of any point in $\Upsilon(\cauchyhor_{[0,\interestingu]})$. 
We highlight again that the singular behavior is only visible relative to the differential structure of Minkowski space in $(t,x^1)$ coordinates. The discrepancy between the behavior of the solution in the two coordinate systems
is tied to the breakdown in the diffeomorphism property of $\Upsilon$ 
when $\upmu$ vanishes; see \eqref{E:JACOBIANDETERMINANTOFCHANGEOFVARIABLESMAP1D}.

\subsubsection{Singular curve, crease, Cauchy horizon, and developments} \label{SSS:1DMAXIMALDEVBOUNDARYDEFINITIONS}
We now define some subsets of geometric coordinate space $\mathbb{R}_t \times \mathbb{R}_{\eik}$ that, 
in Theorem~\ref{T:MAINTHEOREM1DSINGULARBOUNDARYANDCREASE}, 
will play an important role in describing the shape of the $\hfour$-MGHD.
Many of our definitions are localized to the subset $\lbrace |\eik| \leq \interestingu \rbrace$,
where $\interestingu > 0$ is the constant from Lemma~\ref{L:SHARPESTIMATESFORMUNEARCREASE}.
This is mainly for convenience, since Lemma~\ref{L:SHARPESTIMATESFORMUNEARCREASE} provides sharp
control over the solution when $|\eik| \leq \interestingu$.

\begin{definition}[Truncated portions of the singular curve and the crease] \label{D:CREASEANDTRUNCATEDB}
For the solution launched by the compactly supported initial data $\dataRRiemannPS= \dataRRiemannPS(\eik)$ 
constructed in Sect.\,\ref{SSS:1DADMISSIBLEDATAFORRRIEMANN}, 
let $\PSLmusourcetermfunction$ be the corresponding source term from \eqref{E:PSLMUSOURCETERMFUNCTION},
viewed as a function of $\eik$. 
We define: 
\begin{align} \label{E:TISAFUNCTIONALONGSINGULARCURVE}
	\timeisafunctionofeikonalinsingularcurve(\eik) 
	& : = 
	- \frac{1}{\PSLmusourcetermfunction(\eik)}.
\end{align}

Let $I \subset [-\interestingu,\interestingu]$ be a subset of $\eik$-values,
where $\interestingu > 0$ is the constant from Lemma~\ref{L:SHARPESTIMATESFORMUNEARCREASE}.
We define the \emph{truncated singular curve} to be the
following subset of geometric coordinate space 
$\mathbb{R}_t \times \mathbb{R}_{\eik}$:
\begin{align} \label{E:TRUNCATEDPORTIONOFSINGULARCURVE} 
\begin{split} 
	\singularcurve_I 
	& 
	:= \big\{ (t,\eik) \in [0,\infty) \times I \ | \ \upmu(t,\eik) = 0 \big\} 
			\cap
		 \big\{ (t,\eik) \in [0,\infty) \times I \ | \ \muX \upmu(t,\eik) \leq 0 \big\}
			\\
	& = \big\{ 
				\left( \timeisafunctionofeikonalinsingularcurve(\eik),\eik \right) 
					\ | \ 
				\eik \in I
				\big\} 
			\cap
			[0,\infty) \times \mathbb{R}
			\cap
		 \big\{ (t,\eik) \in [0,\infty) \times I \ | \ \muX \upmu(t,\eik) \leq 0 \big\},		
\end{split}
\end{align}
where $\muX$ is the vectorfield defined in \eqref{E:SEVERALACOUSTICVECTORFIELDS},
and the second identity in \eqref{E:TRUNCATEDPORTIONOFSINGULARCURVE} follows from
\eqref{E:SPEEDTIMESMUPLANESYMMETRYWITHEXPLICITSOURCE},
\eqref{E:EASYSPEEDOVERNORMALIZERBOUNDS},
and
\eqref{E:TISAFUNCTIONALONGSINGULARCURVE}.

We define the \emph{crease}, which we denote by $\crease$, 
to be the following subset of geometric coordinate space $\mathbb{R}_t \times \mathbb{R}_{\eik}$
(Theorem~\ref{T:MAINTHEOREM1DSINGULARBOUNDARYANDCREASE} implies that $\crease$ is a single point):
\begin{align} \label{E:1DCREASE}
	\crease 
	& 
	:= \big\{ (t,\eik) \in [0,\infty) \times [-\interestingu,\interestingu] \ | \ \upmu(t,\eik) = 0 \big\} 
		\cap 
		\big\{ (t,\eik) \in [0,\infty) \times [-\interestingu,\interestingu] \ | \ \muX \upmu(t,\eik) = 0 \big\}.
\end{align}
\end{definition}

From
\eqref{E:SPEEDTIMESMUPLANESYMMETRYWITHEXPLICITSOURCE},
\eqref{E:MUXMUPLANESYMMETRYWITHEXPLICITSOURCE},
\eqref{E:EASYSPEEDOVERNORMALIZERBOUNDS},
\eqref{E:ALTERNATERERECIPROCALSCALEDMUFUNCTIONEXPANSIONSNEAR0},
\eqref{E:ALTERNATERESCALEDMUFUNCTIONANDDERIVATIVESEXPANSIONSNEAR0},
and \eqref{E:TISAFUNCTIONALONGSINGULARCURVE},
it follows that if 
$(t',\eik') \in \big\{ (t,\eik) \in  [0,\infty) \times [-\interestingu,\interestingu] \ | \ \upmu(t,\eik) = 0 \big\}$,
then $\muX \upmu(t',\eik') = \positivebigO(1) \eik'$ and thus $\muX \upmu(t',\eik') \leq 0 \iff \eik' \in [-\interestingu,0]$.
We therefore have the following alternate characterization of $\singularcurve_I$:
\begin{align} \label{E:ALTERNATETRUNCATEDPORTIONOFSINGULARCURVE} 
\begin{split} 
	\singularcurve_I 
	& = \big\{ 
				\left( \timeisafunctionofeikonalinsingularcurve(\eik),\eik \right) 
					\ | \ 
				\eik \in I \cap [-\interestingu,0]
				\big\} 
				\cap
				[0,\infty) \times \mathbb{R}.		
\end{split}
\end{align}

We make the following remarks.
\begin{itemize}
\item One might wonder why in the singular curve definition~\eqref{E:TRUNCATEDPORTIONOFSINGULARCURVE}, 
	there is a restriction to regions in which $\muX \upmu \leq 0$.
	This is because the portion of $\lbrace \upmu = 0 \rbrace$ on which we formally have $\muX \upmu > 0$
	never actually dynamically develops in the $\hfour$-MGHD; 
	that ``fictitious portion'' is cut off by the Cauchy horizon before it has a chance to dynamically form; see also
	Remark~\ref{R:NOTALLOFMUEQUALSZEROBELONGSTOMGHD}.
\item Our terminology ``crease'' is motivated by picture of the $\hfour$-MGHD in multi-dimensions, where the
	analog of $\crease$ is a co-dimension $2$ hypersurface given by the intersection of the singular
	boundary and the Cauchy horizon; see Fig.\,\ref{F:CONJECTUREDMAXDEVELOPMENTINMINKRECT}.
\item 
From \eqref{E:CLOSEDPARTIAL1DERIVATIVEOFRPLUSPS},
\eqref{E:TRUNCATEDPORTIONOFSINGULARCURVE}, 
and \eqref{E:ALTERNATETRUNCATEDPORTIONOFSINGULARCURVE},
we deduce the following key result: \emph{in regions where we can \textbf{justify} the use of geometric coordinates 
for reaching conclusions about the behavior of the solution in $(t,x^1)$ space, 
$\singularcurve_I$ is the subset of points $(t,\eik) \in [0,\infty) \times \left(I \cap [-\interestingu,0] \right)$ 
such that $|\p_1 \RRiemannPS(t,\eik)|$ blows up at $\Upsilon(t,\eik)$.}
In Theorem~\ref{T:MAINTHEOREM1DSINGULARBOUNDARYANDCREASE}, we will indeed justify the use of the geometric coordinates
when $I = [-\interestingu,0]$.
\item 
\eqref{E:SPEEDTIMESMUPLANESYMMETRYWITHEXPLICITSOURCE},
\eqref{E:1DUSEFULCHARACTERIZATIONOFBLOWUPDELTA},
and
\eqref{E:EASYSPEEDOVERNORMALIZERBOUNDS} 
imply that $\blowuptime$ is the smallest value of $t$ along $\singularcurve_{[-\interestingu,0]}$.
\item To further flesh out the previous point, we note that for the solutions 
	we study in Theorem~\ref{T:MAINTHEOREM1DSINGULARBOUNDARYANDCREASE},
	the smallest value of $t$ on $\singularcurve_{[-\interestingu,0]}$ will occur precisely at $\eik = 0$, and
	we will have $\timeisafunctionofeikonalinsingularcurve(0) = \blowuptime$.
\end{itemize}

\begin{definition}[Truncated Cauchy horizon] \label{D:CAUCHYHORIZON}
Assume that $\crease = (\blowuptime,0)$, where $\blowuptime$ is defined by \eqref{E:BLOWUPTIME}
(Theorem~\ref{T:MAINTHEOREM1DSINGULARBOUNDARYANDCREASE} shows that this assumption is satisfied for the solutions under study here).
Let $\newuL$ be the vectorfield defined in \eqref{E:SEVERALACOUSTICVECTORFIELDS}.
Let $\intcurvenewuL$ be the integral curve of $\frac{1}{2} \newuL$ in geometric coordinate space $\mathbb{R}_t \times \mathbb{R}_{\eik}$
parametrized by $\eik$ (note that $\frac{1}{2} \newuL \eik = 1$ by \eqref{E:NEWULINGEOMETRICCOORDINATES}),
with the initial condition $\intcurvenewuL(0) = \crease$.
That is, $\intcurvenewuL$ solves the following initial value problem:
\begin{subequations}
	\begin{align}  
		\frac{\mathrm{d}}{\mathrm{d} \eik} \intcurvenewuL(\eik) 
		& 
		= 
		\frac{1}{2}
		\newuL \circ \intcurvenewuL(\eik), 
		\label{E:INTEGRALCURVEOFNEWULODE} 
		\\
		\intcurvenewuL(0) & = \left(\blowuptime,0 \right). \label{E:INTEGRALCURVEOFNEWULINITIALCONDITION}
	\end{align}
\end{subequations}
We use the notation $\timeisafunctionofeikonalcauchyhorizon$ to denote the $t$-component of the curve $\intcurvenewuL$. 
That is, by \eqref{E:NEWULINGEOMETRICCOORDINATES}, we have:
\begin{align}  \label{E:1DUPARAMETERIZEDINTEGRALCURVEOFMULUNIT}
	\intcurvenewuL(\eik) 
	& = \left(\timeisafunctionofeikonalcauchyhorizon(\eik),\eik \right),
\end{align}
where $\timeisafunctionofeikonalcauchyhorizon$ solves the following initial value problem:
\begin{align} \label{E:ODEFORTCOMPONENTOFMUULUNITINTEGRALCURVE}
	\frac{\mathrm{d}}{\mathrm{d} \eik} \timeisafunctionofeikonalcauchyhorizon(\eik)
	& = \frac{1}{2} \upmu\left(\timeisafunctionofeikonalcauchyhorizon(\eik),\eik \right),
	& 
	\timeisafunctionofeikonalcauchyhorizon(0)
	& = \blowuptime.
\end{align}

Then for an interval $I \subset [0,\infty)$ of non-negative\footnote{In Theorem~\ref{T:MAINTHEOREM1DSINGULARBOUNDARYANDCREASE}, the 
Cauchy horizon will correspond to non-negative values of $\eik$.} 
$\eik$-values, 
we define the truncated Cauchy horizon to be: 
\begin{align} \label{E:CAUCHYHORIZON}
	\cauchyhor_I 
	& := \intcurvenewuL(I). 
\end{align}
\end{definition}

We now define $\classicaldev$, which is a subset of $\mathbb{R}_t \times \mathbb{R}_{\eik}$ 
that is tailored to the shape of the $\hfour$-MGHD.

\begin{definition}[Classical development regions]
\label{D:CLASSICALDEVELOPMENTNEARVANISHINGOFINVERSEFOLIATIONDENSITY}
Let $\singularcurve_{[-\interestingu,0]}$ and $\cauchyhor_{[0,\interestingu]}$ denote the truncated singular curve and Cauchy horizon from Defs.\,\ref{D:CREASEANDTRUNCATEDB}--\ref{D:CAUCHYHORIZON}, 
where $0 < \interestingu < 1$ is the constant from Lemma~\ref{L:SHARPESTIMATESFORMUNEARCREASE}. 
Let $\timeisafunctionofeikonalinsingularcurve$ and $\timeisafunctionofeikonalcauchyhorizon$
be the functions of $\eik$ from \eqref{E:TISAFUNCTIONALONGSINGULARCURVE} and \eqref{E:1DUPARAMETERIZEDINTEGRALCURVEOFMULUNIT}.
We define the following subsets of geometric coordinate space $\mathbb{R}_t \times \mathbb{R}_{\eik}$:
\begin{align}
	\classicaldevsingular 
	& 
	:= 
	\left\{ 
		(t,\eik) \in \mathbb{R}^2 
		\ | \  
		0 \leq t  < \timeisafunctionofeikonalinsingularcurve(\eik), \, \eik \in [-\interestingu,0]
	\right\}, 
	\label{E:SINGULARPORTIONOFCLASSICALDEVBOUNDARYININTERESTINGREGION} 
		\\
	\classicaldevregular 
	& := 
	\left\{ 
		(t,\eik) \in \mathbb{R}^2 
		\ | \  0 \leq t \leq \timeisafunctionofeikonalcauchyhorizon(\eik), \, \eik \in (0, \interestingu]
	\right\}, 
		\label{E:REGULARPORTIONOFCLASSICALDEVBOUNDARYININTERESTINGREGION} \\
	\classicaldev & :=\classicaldevsingular \cup\classicaldevregular. \label{E:INTERESTINGREGION} 
\end{align} 

\end{definition}

The set $\classicaldev$ defined by \eqref{E:INTERESTINGREGION}
is the region of geometric coordinate space 
lying between $\Sigma_0^{[-\interestingu,\interestingu]}$ and $\cauchyhor_{(0,\interestingu]} \cup \singularcurve_{[-\interestingu,0]}$, 
where the boundary portions 
$\cauchyhor_{(0,\interestingu]}$, 
$\nullhyparg{-\interestingu}^{\left[0,\timeisafunctionofeikonalinsingularcurve(-\interestingu)\right)}$,
and
$\nullhyparg{\interestingu}^{\left[0,\timeisafunctionofeikonalcauchyhorizon(\interestingu)\right]}$ are included, 
but $\singularcurve_{[-\interestingu,0]}$ is not; see Fig.\,\ref{F:1DPLANESYMMETRYCAUCHYHORANDSINGULARCURVE}. 
Theorem~\ref{T:MAINTHEOREM1DSINGULARBOUNDARYANDCREASE} shows that $\Upsilon(\classicaldev) \subset \mathbb{R}_t \times \mathbb{R}_{x^1}$ 
is a subset of Minkowski-rectangular coordinate space where we can \emph{justify} 
the use of $(t,\eik)$ coordinates to reach conclusions about the behavior of the solution in $(t,x^1)$ coordinates.\footnote{With some additional effort, given the results of Theorem\,\ref{T:MAINTHEOREM1DSINGULARBOUNDARYANDCREASE}, we could extend the region of spacetime on which $\Upsilon(t,\eik) = (t,x^1)$ is a homeomorphism to include the infinite region $\classicaldev^{\textnormal{Large}} 
:= 
\classicaldev  
\bigcup 
\left(
[0,\blowuptime] \times (-\infty,-\rightu) \cup (\leftu,\infty)
\right)$. 
We refrain from  
giving the proof because it would require some technical, but uninteresting, adjustments of the arguments presented in the proof of the theorem. The details are uninteresting in the sense that $\upmu$ is uniformly positive on 
$\classicaldev^{\textnormal{Large}} \setminus \classicaldev$ 
(see \eqref{AE:PSMUISLARGEAWAYFROMINTERESTINGREGION})
and thus no singularity occurs in that region.}

\subsubsection{The main theorem} \label{SSS:1DMAXIMALDEVMAINTHM}
We now prove Theorem~\ref{T:MAINTHEOREM1DSINGULARBOUNDARYANDCREASE}, which is the main result of 
Sect.\,\ref{S:1DMAXIMALDEVELOPMENT}. On the region $\classicaldev$, the theorem provides a sharp description of the solution in the differential structure of the geometric coordinates $(t,\eik)$ as well as the behavior of the solution on
$\Upsilon(\classicaldev)$ in differential structure of the Minkowski-rectangular coordinates $(t,x^1)$,
where we recall that $\Upsilon(t,\eik) = (t,x^1)$ is the change of variables map.
In particular, the theorem reveals various diffeomorphism and homeomorphism properties of $\Upsilon$ and exhibits the blowup 
of $|\partial_1 \RRiemannPS|$ as $\Upsilon(\singularcurve_{[-\interestingu,0]})$ is approached by points in $\Upsilon(\classicaldev)$.
In Fig.\,\ref{F:1DPLANESYMMETRYCAUCHYHORANDSINGULARCURVE}, we have depicted the region in geometric coordinate space 
$\mathbb{R} \times \mathbb{R}_{\eik}$.
In Fig.\,\ref{F:MINKOWSKIRECTANGULAR1DPLANESYMMETRYCAUCHYHORANDSINGULARCURVE}, we have depicted the 
region in Minkowski-rectangular coordinate space $\mathbb{R} \times \mathbb{R}_{x^1}$, i.e., the image
under $\Upsilon$ of the region from geometric coordinate space.

\begin{center}
\begin{figure}  
\begin{overpic}[scale=.7, grid = false, tics=5, trim=-.5cm -1cm -1cm -.5cm, clip]{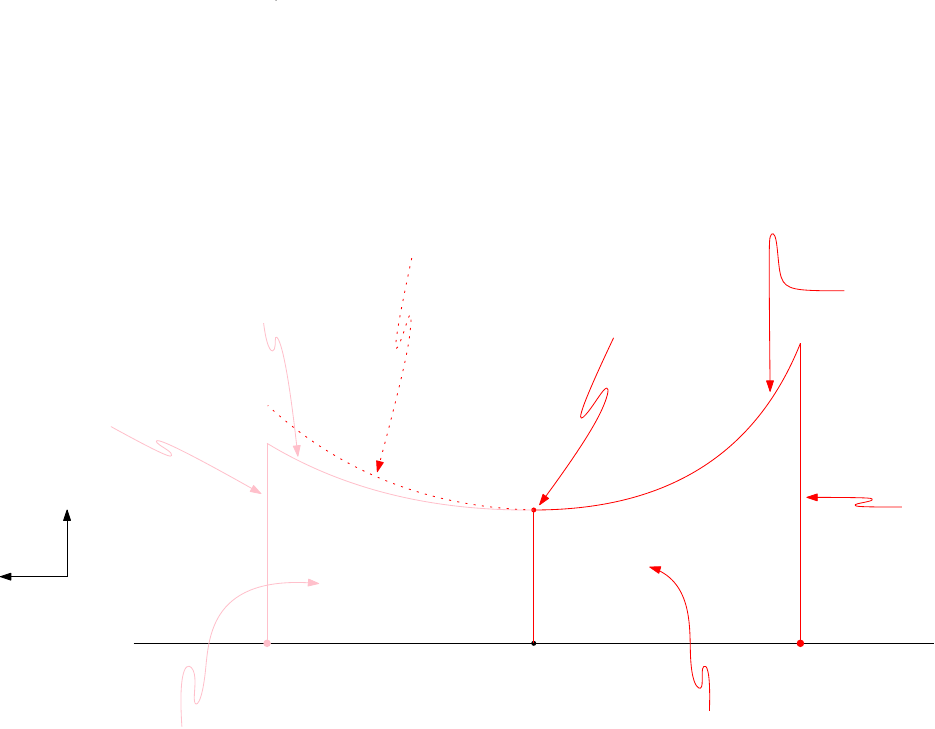}
\put (52,11) {$(0,0)$}
\put (26,11) {$(0,\interestingu)$}
\put (77,11) {$(0,-\interestingu)$}
\put (60,51) {$\crease$}
\put (62,48) {\rotatebox{90}{$=$}}
\put (58,45) {$(\blowuptime,0)$}
\put (28,46.5) {$\cauchyhor_{[0,\interestingu]}$}
\put (18,3.5) {$\classicaldevregular$}
\put (69,5) {$\classicaldevsingular$}
\put (86,47.5) {$\singularcurve_{[-\interestingu,0]}$}
\put (92,43) {\rotatebox{90}{$=$}}
\put (86,40.5) {$\lbrace \upmu = 0 \rbrace \cap \lbrace \muX \upmu \leq 0 \rbrace$}
\put (97.5,37.5) {$\cap$}
\put (90,34.5) {$\lbrace \eik \in [-\interestingu,0] \rbrace$}
\put (33,53) {$\lbrace \upmu = 0 \rbrace \cap \lbrace \muX \upmu > 0 \rbrace$}
\put (4,21) {$\eik$}
\put (8,25.5) {$t$}
\put (95,13) {$\Sigma_0$}  
\put (92,26.5) {$\nullhyparg{-\interestingu}^{\left[0,\timeisafunctionofeikonalinsingularcurve(-\interestingu)\right)}$}
\put (6,35.5) {$\nullhyparg{\interestingu}^{\left[0,\timeisafunctionofeikonalcauchyhorizon(\interestingu)\right]}$}                  
\end{overpic}
\caption{$\classicaldev$ in geometric coordinate space, with $\eik$ increasing from right to left}
\label{F:1DPLANESYMMETRYCAUCHYHORANDSINGULARCURVE}
\end{figure}
\end{center}

\begin{remark}[Only a subset of $\lbrace \upmu = 0 \rbrace$ is relevant for the $\hfour$-MGHD]
	\label{R:NOTALLOFMUEQUALSZEROBELONGSTOMGHD}
	The dotted curve in Fig.\,\ref{F:1DPLANESYMMETRYCAUCHYHORANDSINGULARCURVE}
	labeled ``$\lbrace \upmu = 0 \rbrace \cap \lbrace \muX \upmu > 0 \rbrace$''
	is not part of the $\hfour$-MGHD or its boundary. Formally, $\lbrace \upmu = 0 \rbrace \cap \lbrace \muX \upmu > 0 \rbrace$
	is the portion of the level set $\lbrace \upmu = 0 \rbrace$
	that never has a chance to dynamically emerge because it is cut off by the Cauchy horizon portion
	$\cauchyhor_{(0,\interestingu]}$, \emph{which lies below it}
	(as the proof of Theorem~\ref{T:MAINTHEOREM1DSINGULARBOUNDARYANDCREASE} shows).
	For this reason, we do not display $\Upsilon\left(\lbrace \upmu = 0 \rbrace \cap \lbrace \muX \upmu > 0 \rbrace \right)$
	in Fig.\,\ref{F:MINKOWSKIRECTANGULAR1DPLANESYMMETRYCAUCHYHORANDSINGULARCURVE}.
	One can use Taylor expansions to show that formally, 
	$\Upsilon\left(\lbrace \upmu = 0 \rbrace \cap \lbrace \muX \upmu > 0 \rbrace \right)$ would 
	be a curve in $\Upsilon(\classicaldevsingular)$ that lies \emph{below} $\Upsilon(\singularcurve_{[-\interestingu,0]})$
	and thus formally, $\Upsilon$ would not even be injective if its domain extended 
	up to $\lbrace \upmu = 0 \rbrace \cap \lbrace \muX \upmu > 0 \rbrace$.
\end{remark}

\begin{center}
\begin{figure}  
\begin{overpic}[scale=.7, grid = false, tics=5, trim=-.5cm -1cm -1cm -.5cm, clip]{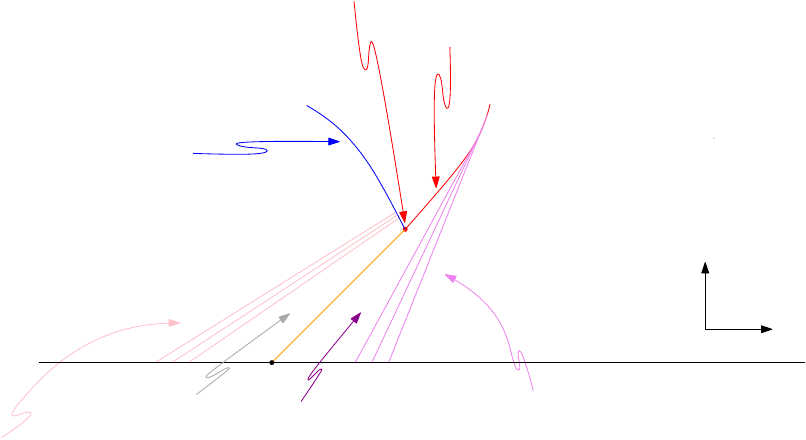}
\put (87,20) {$x^1$}
\put (83,24.5) {$t$}
\put (53,9) {\small $\mbox{Shock-forming}$}
\put (50,5) {\small $\mbox{characteristic family}$}
\put (62,1) {\rotatebox{90}{$=$}}
\put (49,-2) {\small $\mbox{Integral curves of } \Lunit$}
\put (9,39.5) {$\Upsilon(\cauchyhor_{[0,\interestingu]})$}    
\put (17,35.5) {\rotatebox{90}{$=$}}
\put (8,32.5) {\small $\mbox{Cauchy horizon}$}  
\put (17,28.5) {\rotatebox{90}{$=$}}
\put (7,25.5) {\small $\mbox{An integral curve}$}     
\put (14,21.5) {\small $\mbox{of } \newuL$}                                          
\put (36.5,56.5) {$\Upsilon(\crease)$}
\put (48,52) {$\Upsilon(\singularcurve_{[-\interestingu,0]})$}
\put (-8,4)  {\small $\mbox{Characteristic family}$}           
\put (-8,0)  {\small $\mbox{``blocked'' by } \Upsilon(\cauchyhor_{[0,\interestingu]})$}
\put (94.5,14) {$\Sigma_0$}
\put (18,8.5) {$\Upsilon(\classicaldevregular)$}
\put (33,7.5) {$\Upsilon(\classicaldevsingular)$}
\end{overpic}                                                
\caption{$\Upsilon(\classicaldev)$ in Minkowski-rectangular coordinate space}
\label{F:MINKOWSKIRECTANGULAR1DPLANESYMMETRYCAUCHYHORANDSINGULARCURVE}                                         
\end{figure}
\end{center}

\begin{theorem}[A large portion of the $\hfour$-MGHD, including a localized portion of its boundary] 
\label{T:MAINTHEOREM1DSINGULARBOUNDARYANDCREASE}
Let $\dataRRiemannPS$ be initial data satisfying the assumptions of Sect.\,\ref{SSS:1DADMISSIBLEDATAFORRRIEMANN},
and let $\RRiemannPS$ be the corresponding solution of the quasilinear initial value problem \eqref{E:1DCAUCHYIVPFORRIEMANNINVARIANTS}
(with $\LRiemannPSdata \equiv 0$).
Under the assumptions and conclusions of Lemma~\ref{L:SHARPESTIMATESFORMUNEARCREASE},
perhaps shrinking the constant $\interestingu > 0$ if necessary, the following results hold,
where the notation $\positivebigO(1)$ is defined in the statement of
Lemma~\ref{L:SHARPESTIMATESFORMUNEARCREASE}.

\medskip
\noindent \underline{\textbf{Classical existence with respect to the geometric coordinates.}} 
With respect to the geometric coordinates $(t,\eik)$, the following results hold.

\begin{itemize}	
	\item $\RRiemannPS$ is a smooth function of $\eik$ alone, i.e., $\RRiemannPS(t,\eik) = \dataRRiemannPS(\eik)$. 
	\item $\RRiemannPS$, the null vectorfields $\Lunit$ and $\newuL$, 
	the vectorfield $\muX$,
	and the inverse foliation density $\upmu$ exist classically on 
	$\mathbb{R}_t \times \mathbb{R}_{\eik}$.
	\item  On $\{(t,\eik) \in \R^2 \ | \ \eik \notin [-\rightu,\leftu]\}$,
			$\RRiemannPS$ vanishes,
			$\Lunit^1 := \Lunit x^1 = \overline{c}$, 
			$\newuL^1 := \newuL x^1 = - \overline{c}$, 
			and $\upmu = 1/\overline{c}$,
			where the constant $\overline{c} > 0$ denotes the speed of sound when the enthalpy is constantly $\overline{\Enth}$,
			where $\overline{\Enth} > 0$ is the constant fixed in \eqref{E:FIXEDPOSITIVEENTHALPYCONSTANT}.
		\item The following estimate holds:
			\begin{align} \label{E:QUANTITATIVENEGATIVITYOFSPEEDTIMESLMU}
				\Lunit \upmu(t,\eik) 
				& = - \positivebigO(1)\blowupdelta,
				&
				& \mbox{for } (t,\eik) \in \R^2 \times [-\interestingu,\interestingu].
			\end{align}
			In particular, with $\overline{\classicaldev}$ denoting the closure of 
			the region $\classicaldev$ defined in \eqref{E:INTERESTINGREGION},
			since Def.\,\ref{D:CLASSICALDEVELOPMENTNEARVANISHINGOFINVERSEFOLIATIONDENSITY} implies that
			$\overline{\classicaldev} \subset \R^2 \times [-\interestingu,\interestingu]$,
			it follows that the estimate \eqref{E:QUANTITATIVENEGATIVITYOFSPEEDTIMESLMU} holds on
			$\overline{\classicaldev}$.
\end{itemize}

\medskip
\noindent \underline{\textbf{Description of the $\hfour$-MGHD in geometric coordinates.}} 
	\begin{itemize}
		\item Let $\singularcurve_{[-\interestingu,0]}$ be the singular curve portion
		from \eqref{E:ALTERNATETRUNCATEDPORTIONOFSINGULARCURVE},
		and let $\classicaldev$ be the subset of geometric coordinate space defined in \eqref{E:INTERESTINGREGION}.
		Then the following disjoint union holds,
		where $\overline{\classicaldev}$ is the closure of $\classicaldev$: 
\begin{align} \label{E:CLOSUREOFCLASSICALDEVREGION}
	\overline{\classicaldev} 
	& = \classicaldev \sqcup \singularcurve_{[-\interestingu,0]}.
\end{align}
\item The crease, which is defined in \eqref{E:1DCREASE}, 
			is a single point:
			\begin{align} \label{E:MAINTHMCREASEISSINGLEPOINT1D}
				\crease 
				& = \{ \left(\blowuptime, 0 \right) \}. 
			\end{align}
		\item The singular curve function $\timeisafunctionofeikonalinsingularcurve = \timeisafunctionofeikonalinsingularcurve(\eik)$ 
			from \eqref{E:TISAFUNCTIONALONGSINGULARCURVE} 
			and
			\eqref{E:ALTERNATETRUNCATEDPORTIONOFSINGULARCURVE}
			satisfies the following estimates:
			\begin{subequations}
			\begin{align}  \label{E:TAYLOREXPANSIONSUDERIVATIVEOFSINGULARCURVE}
				\frac{\mathrm{d} }{\mathrm{d} \eik} \timeisafunctionofeikonalinsingularcurve(\eik) 
				& = 
				\positivebigO(1) \eik, 
				& & \eik \in [-\interestingu,0], 
					\\
				\timeisafunctionofeikonalinsingularcurve(\eik) 
				& = 
				\blowuptime 
				+ 
				\positivebigO(1) \eik^2, 
				& & \eik \in [-\interestingu,0].
				\label{E:TAYLOREXPANSIONSOFSINGULARCURVE}
			\end{align}
			\end{subequations}
		\item The rectangular spatial coordinate $x^1$
		satisfies the following estimate on $\singularcurve_{[-\interestingu,0]}$:
		\begin{align} \label{E:TAYLOREXPANSIONSOFDERIVATIVERECTANGULARX1ALONGSINGULARCURVE}
			\frac{\mathrm{d} }{\mathrm{d} \eik} x^1\left(\timeisafunctionofeikonalcauchyhorizon(\eik),\eik \right) 
			& 
			= 
			\positivebigO(1) \eik, 
			& & \eik \in [-\interestingu,0]. 
		\end{align}
		\item The integral curve initial value problem for $\intcurvenewuL(\eik)$ given by 
		\eqref{E:INTEGRALCURVEOFNEWULODE}--\eqref{E:INTEGRALCURVEOFNEWULINITIALCONDITION} 
		has a unique solution on the interval $\eik \in [0,\interestingu]$. 
		Hence, the truncated Cauchy horizon $\cauchyhor_{[0,\interestingu]}$, 
		defined in \eqref{E:CAUCHYHORIZON}, 
		can be parameterized as follows:
		\begin{align} \label{E:MAIN1DTHEOREMPARAMETERIZATIONOFTRUNCATEDCAUCHYHORIZON}
			\cauchyhor_{[0,\interestingu]} 
			& = \{ \left(\timeisafunctionofeikonalcauchyhorizon(\eik),\eik \right) \ | \ \eik \in [0,\interestingu]\},
		\end{align}
		as in \eqref{E:CAUCHYHORIZON}.
		Moreover, $\timeisafunctionofeikonalcauchyhorizon(\eik)$ satisfies the following estimates:
			\begin{subequations}
			\begin{align} \label{E:TAYLOREXPANSIONUDERIVATIVEOFCAUCHYHORIZON}
				\frac{\mathrm{d} }{\mathrm{d} \eik} \timeisafunctionofeikonalcauchyhorizon(\eik) 
				& 
				= 
				\positivebigO(1) \eik^2, 
				& & \eik \in [0,\interestingu],
					\\
				\timeisafunctionofeikonalcauchyhorizon(\eik) 
				& 
				= 
				\blowuptime 
				+ 
				\positivebigO(1) \eik^3, 
				& & \eik \in [0,\interestingu].
				\label{E:TAYLOREXPANSIONOFCAUCHYHORIZON}
			\end{align}
			\end{subequations}
		\item The inverse foliation density $\upmu$, 
		defined in \eqref{E:INVERSEFOLIATIONDENSITYINPLANESYMMETRY},
		satisfies the following estimate along the Cauchy horizon portion $\cauchyhor_{[0,\interestingu]}$:
		\begin{align} \label{E:TAYLOREXPANSIONOFMUALONGCAUCHYHORIZON}
			\upmu\left(\timeisafunctionofeikonalcauchyhorizon(\eik),\eik \right) 
			& 
			= 
			\positivebigO(1) \eik^2, 
			& & \eik \in [0,\interestingu]. 
		\end{align}
	\item The rectangular spatial coordinate $x^1$
		satisfies the following estimate along the Cauchy horizon portion $\cauchyhor_{[0,\interestingu]}$:
		\begin{align} \label{E:TAYLOREXPANSIONDERIVATIVEOFRECTANGULARX1ALONGCAUCHYHORIZON}
			\frac{\mathrm{d} }{\mathrm{d} \eik} x^1\left(\timeisafunctionofeikonalcauchyhorizon(\eik),\eik \right) 
			& 
			= 
			-
			\positivebigO(1) \eik^2, 
			& & \eik \in [0,\interestingu]. 
		\end{align} 
	\item $\upmu$ is positive on $\classicaldev$ but vanishes on $\singularcurve_{[-\interestingu,0]}$.
		In particular, \eqref{E:TAYLOREXPANSIONOFMUALONGCAUCHYHORIZON} implies
		that $\upmu$ is positive on $\cauchyhor_{(0,\interestingu]}$
		but vanishes on the crease $\crease \subset \cauchyhor_{[0,\interestingu]}$, which corresponds to $\eik = 0$ in 
		\eqref{E:TAYLOREXPANSIONOFMUALONGCAUCHYHORIZON}.
	\end{itemize}

\medskip
\noindent \underline{\textbf{Description of the singularity formation and the $\hfour$-MGHD in Minkowski-rectangular coordinates.}}
	\begin{itemize}
		\item Let $\Upsilon(t,\eik) := (t,x^1)$ be the change of variables map from geometric coordinates to Minkowski-rectangular coordinates.
			Then $\Upsilon$ is a diffeomorphism from $\classicaldev$ onto its image. Moreover,
			$\Upsilon$ is a \textbf{homeomorphism} from $\overline{\classicaldev}$ onto its image. 
		\item The solution $\RRiemannPS$ exists classically relative to the Minkowski-rectangular coordinates $(t,x^1)$ 
			on the subset $\Upsilon(\classicaldev)$ of $\mathbb{R}_t \times \mathbb{R}_{x^1}$.
		\item The following estimate holds on $\Upsilon\left(\classicaldev \right)$:
			\begin{align}\label{E:BLOWUPOFCARTESIANCOORDINATES}
				\left| [\p_1 \RRiemannPS] \circ \Upsilon^{-1}(t,x^1) \right| 
				& = 
				\frac{1}{\upmu} 
				\left|  
					\frac{\normalizer}{\speed}  \muX \RRiemannPS 
				\right| \circ \Upsilon^{-1}(t,x^1) 
				= 
				\frac{1}{\upmu} \positivebigO(1), & & (t,x^1) \in \Upsilon\left(\classicaldev \right). 
			\end{align}
			In particular, since $\upmu$ vanishes along $\Upsilon(\singularcurve_{[0,\interestingu]})$,
			given any point $(t_*,x_*^1) \in \Upsilon(\singularcurve_{[0,\interestingu]})$
			and any sequence of points $\{ (t_m, x_m^1) \}_{m \in \mathbb{N}} \subset \Upsilon\left(\classicaldev \right)$ 
			with $\lim_{m \to \infty} (t_m, x_m^1) = (t_*,x_*^1)$, 
			we have that $|\p_1 \RRiemannPS| \circ \Upsilon^{-1} \left((t_m, x_m^1) \right) \to \infty$ as $m \to \infty$. 
		\item Recall that the vectorfield $\Lunit$ is defined relative to the $(t,x^1)$-coordinates in \eqref{E:1DLUNIT}
		(see also \eqref{E:1DLUNITINTERMSOFRIEMANNINVARIANTS}).
		With respect to the $(t,x^1)$-coordinates,
		the solution $\RRiemannPS$ and all of its $\Lunit$-derivatives of any order, i.e., $\Lunit^M \RRiemannPS$ for any $M$, 
		are continuous on $\Upsilon(\overline{\classicaldev})$, 
		including the singular curve portion $\Upsilon(\singularcurve_{[0,\interestingu]})$. 
		Moreover,
		on $\Upsilon(\overline{\classicaldev})$,
		relative to the $(t,x^1)$-coordinates,
		the $1D$ acoustical metric $\hfour_{\alpha \beta}$ from \eqref{E:1DACOUSTICALMETRIC}
		is a continuous, non-degenerate, $2 \times 2$ matrix of signature $(-,+)$.
	\end{itemize}

\medskip
\noindent \underline{\textbf{The causal structure of the Cauchy horizon.}} 
		\begin{itemize}
			\item $\cauchyhor_{[0,\interestingu]}$ 
				is a $\hfour$-null curve portion, i.e., its tangent vector
				is null with respect to $\hfour$.
		\end{itemize}
	
\medskip
\noindent \underline{\textbf{The causal structure of the singular curve.}} 
	\begin{itemize}
		\item Relative to the geometric coordinates $(t,\eik)$, 
		the vectorfield $\singcurvevectorfield$ defined by:
		\begin{align} \label{E:SINGCURVEVECTORFIELD}
			\singcurvevectorfield 
			& : = \Lunit -  \frac{\Lunit(\frac{\speed}{\normalizer} \upmu)}{\muX (\frac{\speed}{\normalizer} \upmu)} \muX
		\end{align}
		is well-defined on the singular curve portion $\singularcurve_{[-\interestingu,0)}$, and $\singcurvevectorfield$ 
		is tangent to $\singularcurve_{[-\interestingu,0)}$.
		\item On $\singularcurve_{[-\interestingu,0)}$, we have 
		$\singcurvevectorfield t > 0$ and $\singcurvevectorfield \eik < 0$. In particular,
		$\singcurvevectorfield$ is future-directed and transversal to the characteristics 
		$\nullhyparg{\eik}$.
		\item For every $q \in \singularcurve_{[-\interestingu,0)}$, 
		the pushforward vectorfield $[\mathrm{d} \Upsilon(q)] \cdot \singcurvevectorfield(q)$, which is tangent to the curve
		$\Upsilon(\singularcurve_{[-\interestingu,0)}) \subset \mathbb{R}_t \times \mathbb{R}_{x^1}$, 
		is equal to $\Lunit|_{\Upsilon(q)} = [\Lunit^{\kappa} \partial_{\kappa}]_{\Upsilon(q)}$.
		\item For every $q \in \singularcurve_{[-\interestingu,0)}$, $\Lunit|_{\Upsilon(q)}$ is $\hfour$-orthogonal to the tangent space of  
		$\Upsilon(\singularcurve_{[-\interestingu,0)})$.
	\end{itemize}
	Since there exist integral curves of the $\hfour$-null vectorfield $\Lunit^{\kappa} \partial_{\kappa}$ that foliate and are $\hfour$-orthogonal to $\Upsilon(\singularcurve_{[-\interestingu,0)})$, it follows that $\Upsilon(\singularcurve_{[-\interestingu,0)})$ is a $\hfour$-null curve in the Minkowskian coordinate differential structure.

\end{theorem}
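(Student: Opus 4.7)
The plan is to leverage the explicit representation of the solution in geometric coordinates afforded by Cor.\,\ref{C:PSEXPLICITEXPRESSIONSFORSOLUTION} and the sharp estimates from Lemma~\ref{L:SHARPESTIMATESFORMUNEARCREASE}, performing almost all analysis with respect to $(t,\eik)$ and only translating to $(t,x^1)$ via the change of variables map $\Upsilon$ at the end. The statements about classical existence relative to the geometric coordinates are immediate: $\RRiemannPS = \dataRRiemannPS(\eik)$ by \eqref{E:SOLUIONRRIEMANNTRIVIALTRANSPORTEQUATIONINGEOMETRICCOORDINATES}, the fluid variables are smooth functions of $\RRiemannPS$, and $\upmu$ is given by the explicit formula \eqref{E:SPEEDTIMESMUPLANESYMMETRYWITHEXPLICITSOURCE}. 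Outside $[-\rightu,\leftu]$, $\dataRRiemannPS \equiv 0$, so the solution reduces to the constant state and $\Lunit^1$, $\newuL^1$, $\upmu$ take the asserted constant values. The sign estimate \eqref{E:QUANTITATIVENEGATIVITYOFSPEEDTIMESLMU} on $\Lunit\upmu$ follows from \eqref{AE:BOUNDSONLMUINTERESTINGREGION}.

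For the detailed description of the boundary in geometric coordinates, I will combine \eqref{E:1DUSEFULCHARACTERIZATIONOFBLOWUPDELTA} (which locates the first zero of $\upmu$ at $(\blowuptime,0)$) with the localization \eqref{AE:PSLOCATIONOFXMUEQUALSMINUSKAPPA} to conclude $\crease = \{(\blowuptime,0)\}$ and to obtain \eqref{E:CLOSUREOFCLASSICALDEVREGION}. The Taylor expansions \eqref{E:TAYLOREXPANSIONSUDERIVATIVEOFSINGULARCURVE}--\eqref{E:TAYLOREXPANSIONSOFSINGULARCURVE} follow from differentiating $\timeisafunctionofeikonalinsingularcurve(\eik) = -1/\PSLmusourcetermfunction(\eik)$ and invoking \eqref{E:ALTERNATERESCALEDMUFUNCTIONEXPANSIONSNEAR0}--\eqref{E:ALTERNATERESCALEDMUFUNCTIONANDDERIVATIVESEXPANSIONSNEAR0}, in particular $\PSLmusourcetermfunction'(0)=0$. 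For the Cauchy horizon, I will solve \eqref{E:ODEFORTCOMPONENTOFMUULUNITINTEGRALCURVE} by differentiating the identity $\frac{\speed}{\normalizer}\upmu(\timeisafunctionofeikonalcauchyhorizon(\eik),\eik) = 1 + \timeisafunctionofeikonalcauchyhorizon(\eik)\PSLmusourcetermfunction(\eik)$: a short calculation shows $\timeisafunctionofeikonalcauchyhorizon(0)=\blowuptime$, $\timeisafunctionofeikonalcauchyhorizon'(0)=0$, and (using $\PSLmusourcetermfunction'(0)=0$) $\timeisafunctionofeikonalcauchyhorizon''(0)=0$, which yields \eqref{E:TAYLOREXPANSIONUDERIVATIVEOFCAUCHYHORIZON}--\eqref{E:TAYLOREXPANSIONOFCAUCHYHORIZON} and then \eqref{E:TAYLOREXPANSIONOFMUALONGCAUCHYHORIZON} via $\upmu = 2\timeisafunctionofeikonalcauchyhorizon'$. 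The estimates for $x^1$ along each boundary component in \eqref{E:TAYLOREXPANSIONSOFDERIVATIVERECTANGULARX1ALONGSINGULARCURVE} and \eqref{E:TAYLOREXPANSIONDERIVATIVEOFRECTANGULARX1ALONGCAUCHYHORIZON} are obtained by the chain rule, using $\frac{\partial x^1}{\partial \eik} = -\frac{\speed}{\normalizer}\upmu$ and the already derived expansions of $\timeisafunctionofeikonalinsingularcurve$, $\timeisafunctionofeikonalcauchyhorizon$, and $\upmu$.

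For the Minkowski-rectangular description, I will first invoke the Jacobian identity \eqref{E:JACOBIANDETERMINANTOFCHANGEOFVARIABLESMAP1D} together with the positivity of $\upmu$ on $\classicaldev$ to conclude that $\Upsilon$ is a local diffeomorphism there; global injectivity on $\classicaldev$ follows because each $\Lunit$-integral curve is the image of a single characteristic $\nullhyparg{\eik}$ and these do not cross while $\upmu > 0$. To upgrade to a homeomorphism on the compactum $\overline{\classicaldev}$, I will use that $\Upsilon$ extends continuously, combine it with injectivity on the boundary pieces $\singularcurve_{[-\interestingu,0]}$ and $\cauchyhor_{[0,\interestingu]}$ (which is ensured by the strict sign of $\frac{\mathrm{d}}{\mathrm{d}\eik}x^1$ from \eqref{E:TAYLOREXPANSIONSOFDERIVATIVERECTANGULARX1ALONGSINGULARCURVE} and \eqref{E:TAYLOREXPANSIONDERIVATIVEOFRECTANGULARX1ALONGCAUCHYHORIZON}, and the fact that they meet only at $\crease$), and then invoke the standard fact that a continuous bijection from a compact space to a Hausdorff space is a homeomorphism. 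The blowup \eqref{E:BLOWUPOFCARTESIANCOORDINATES} is then read off from \eqref{E:CLOSEDPARTIAL1DERIVATIVEOFRPLUSPS}. Continuity of $\Lunit^M \RRiemannPS$ for all $M$ on $\Upsilon(\overline{\classicaldev})$ is trivial since $\Lunit = \geop{t}$ annihilates $\RRiemannPS(t,\eik)=\dataRRiemannPS(\eik)$, giving $\Lunit^M \RRiemannPS \equiv 0$ for $M\geq 1$, while $\RRiemannPS$ itself extends continuously via the homeomorphism $\Upsilon$; non-degeneracy of $\hfour_{\alpha\beta}$ in $(t,x^1)$ on $\Upsilon(\overline{\classicaldev})$ follows because $\hfour$ depends smoothly on $\RRiemannPS$ (hence continuously on $\eik$) and its formula \eqref{E:1DACOUSTICALMETRIC} remains uniformly Lorentzian by \eqref{E:SPEEDOFSOUNDASSUMEDBOUNDS}.

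The two causal-structure claims are the lightest part. The Cauchy horizon has parametrizing tangent $\frac{1}{2}\newuL = \frac{1}{2}\upmu\,\uLunit$ by \eqref{E:INTEGRALCURVEOFNEWULODE} and Def.\,\ref{D:SEVERALACOUSTICVECTORFIELDS}, which pushes forward to a scalar multiple of the $\hfour$-null vectorfield $\uLunit$, giving the result. For the singular curve, I will verify by direct computation that $\singcurvevectorfield(\frac{\speed}{\normalizer}\upmu) = 0$, so $\singcurvevectorfield$ is tangent to the level set $\{\upmu=0\}$; well-definedness on $\singularcurve_{[-\interestingu,0)}$ follows because \eqref{AE:PSMUFIRSTDERIVATIVETAYLOREXPANSIONININTERESTINGREGION} combined with \eqref{E:TAYLOREXPANSIONSOFSINGULARCURVE} yields $\muX\upmu \sim \positivebigO(1)\eik \ne 0$ for $\eik\in [-\interestingu,0)$, while $\Lunit\upmu < 0$ by \eqref{AE:PSLUNTMUTAYLOREXPANSIONININTERESTINGREGION}, which also forces $\singcurvevectorfield t > 0$ and $\singcurvevectorfield \eik < 0$. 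To identify $\mathrm{d}\Upsilon\cdot\singcurvevectorfield$ with $\Lunit^\kappa\partial_\kappa$, I will note that $\mathrm{d}\Upsilon\cdot\muX = \upmu X^\kappa\partial_\kappa$ vanishes identically on $\{\upmu=0\}$, so the potentially-divergent coefficient of $\muX$ in \eqref{E:SINGCURVEVECTORFIELD} is annihilated; the $\hfour$-orthogonality and null character of the tangent then follow from \eqref{E:GFOURINNERPRODUCTOFNULLVECTORFIELDSIN1D}. The main technical obstacle I anticipate is the careful global-injectivity argument for the continuous extension of $\Upsilon$ to $\overline{\classicaldev}$, since Remark~\ref{R:NOTALLOFMUEQUALSZEROBELONGSTOMGHD} warns that the ``fictitious'' portion $\{\upmu=0\}\cap\{\muX\upmu>0\}$ would violate injectivity if it were not excluded by the Cauchy horizon; the estimates \eqref{E:TAYLOREXPANSIONSOFSINGULARCURVE} and \eqref{E:TAYLOREXPANSIONOFCAUCHYHORIZON} must be combined to verify that $\cauchyhor_{(0,\interestingu]}$ strictly precedes the fictitious branch, so that $\overline{\classicaldev}$ is genuinely the ``good'' side of the boundary.
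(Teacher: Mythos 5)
Your proposal is substantively correct and follows essentially the same route as the paper: exploit the explicit geometric-coordinate formulas from Cor.\,\ref{C:PSEXPLICITEXPRESSIONSFORSOLUTION} and the sharp Taylor expansions of Lemma~\ref{L:SHARPESTIMATESFORMUNEARCREASE}, use the Jacobian identity \eqref{E:JACOBIANDETERMINANTOFCHANGEOFVARIABLESMAP1D} and monotonicity of $x^1$ for the diffeomorphism/homeomorphism claims, and finish by direct computation for the causal structure. One genuine but modest variation is your treatment of the Cauchy horizon expansions: you differentiate the algebraic relation $\frac{\speed}{\normalizer}\upmu(\timeisafunctionofeikonalcauchyhorizon(\eik),\eik) = 1 + \timeisafunctionofeikonalcauchyhorizon(\eik)\PSLmusourcetermfunction(\eik)$ coupled with $\timeisafunctionofeikonalcauchyhorizon' = \tfrac{1}{2}\upmu\circ\intcurvenewuL$, whereas the paper instead writes out the second and third $\eik$-derivatives of the integral-curve ODE and substitutes \eqref{AE:PSMUTAYLOREXPANSIONININTERESTINGREGION}--\eqref{AE:PSMUSECONDDERIVATIVETAYLOREXPANSIONININTERESTINGREGION}. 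Both work, and yours is arguably cleaner (you should still note the need for uniform control of $\timeisafunctionofeikonalcauchyhorizon'''$ over $[0,\interestingu]$, not just at $\eik=0$, to upgrade the pointwise computation to the stated $\positivebigO(1)$ estimates).

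The one point to repair is logical ordering. You invoke the positivity of $\upmu$ on $\classicaldev$ when applying the Jacobian identity and again when asserting injectivity on the $\Sigma_t$-slices, but the key fact that underwrites it on $\classicaldevregular$ — namely that $\cauchyhor_{(0,\interestingu]}$ lies \emph{strictly below} the fictitious branch $\lbrace \upmu = 0 \rbrace \cap \lbrace \muX\upmu > 0\rbrace$, which follows by comparing \eqref{E:TAYLOREXPANSIONSOFSINGULARCURVE} (extended to $\eik\in(0,\interestingu]$) against \eqref{E:TAYLOREXPANSIONOFCAUCHYHORIZON} — only appears at the very end, flagged as an anticipated technicality of the injectivity argument. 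The paper proves ``$\upmu > 0$ on $\classicaldev$ and vanishes precisely on $\singularcurve_{[-\interestingu,0]}$'' as a dedicated step \emph{before} the change-of-variables analysis, and that is where it must go: as written, your Jacobian and homeomorphism arguments presuppose a fact you prove only afterward. Promoting your trailing observation to a standalone lemma preceding the $\Upsilon$-analysis closes the gap.
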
 

Before proving the theorem, we make some remarks.

\begin{itemize}
	\item All the important results of the theorem are stable under perturbations of the initial data of
			$\RRiemannPS$. That is, even though the theorem was stated only for initial data that satisfy 
			the assumptions of Sect.\,\ref{SSS:1DADMISSIBLEDATAFORRRIEMANN}, the proof shows that
			if one perturbs any of those data by an arbitrary (small) $C^4$ function, then the corresponding
			perturbed simple isentropic plane-wave solution has an $\hfour$-MGHD/exhibits singularity formation 
			that is quantitatively and qualitatively close to the un-perturbed solution.
	\item In a similar vein as the previous point, with only modest additional effort, the results of
		Theorem~\ref{T:MAINTHEOREM1DSINGULARBOUNDARYANDCREASE} could be extended to 
		a large, open set of (non-simple) isentropic plane-symmetric solutions
		in which $\LRiemannPS$ is sufficiently small compared to $\RRiemannPS$
		(or, alternatively, in which $\RRiemannPS$ is sufficiently small compared to $\LRiemannPS$).
	\item The emergence of the Cauchy horizon from the crease is fundamentally tied to the 
		fact that in $1D$, the isentropic compressible Euler equations have two characteristic directions,
		$\Lunit$ and $\uLunit$, as is evident from equations \eqref{E:RRIEMANNPSTRANSPORT}--\eqref{E:LRIEMANNPSTRANSPORT}.
		In particular, even within the class of simple isentropic plane-symmetric solutions
		in which $\LRiemannPS$ (which is transported by $\uLunit$) vanishes, 
		when investigating the structure of the $\hfour$-MGHD,
		one cannot ``ignore'' the $\uLunit$ direction.
		This is in contrast to Burgers' equation, in which there is only one characteristic direction.
		Relatedly, for Burgers' equation, there does not exist a notion of ``globally hyperbolic''
		that enjoys the same fundamental significance as the notion of ``globally hyperbolic''
		in the context of Lorentzian geometry, relativistic Euler flow, or wave equations.
	\item From the point of view of determinism, the Cauchy horizon is an ally in that it ``blocks'' the non-uniqueness
		of classical solutions (in $\hfour$-globally hyperbolic regions). 
		In particular, in Fig.\,\ref{F:MINKOWSKIRECTANGULAR1DPLANESYMMETRYCAUCHYHORANDSINGULARCURVE}, 
		one can see that the Cauchy horizon prevents the characteristics in the left of the figure from entering into the region on the right,
		where the fluid's gradient blows up. That is, the Cauchy horizon prevents crossing of the characteristics
		$\Upsilon(\classicaldev)$, which in turn prevents multi-valuedness/non-uniqueness of classical solutions in this region.
		This should be contrasted against Burgers' equation, 
		which exhibits non-uniqueness of classical solutions due to multi-valuedness stemming from the crossing of
		characteristics emanating from widely separated regions in the initial data hypersurface.
	\item In Fig.\,\ref{F:MINKOWSKIRECTANGULAR1DPLANESYMMETRYCAUCHYHORANDSINGULARCURVE}, 
		along $\Upsilon(\singularcurve_{[-\interestingu,0]})$,
		one sees the interesting phenomenon of
		\emph{non-uniqueness} of integral curves of $\Lunit$. In particular, $\Lunit$ is tangent to
		$\Upsilon(\singularcurve_{[-\interestingu,0]})$
		\underline{and} the characteristics, which are the straight lines in the figure
		that intersect $\Upsilon(\singularcurve_{[-\interestingu,0]})$ tangentially.
		The non-uniqueness is tied to the blowup of $|\partial_1 \Lunit^1|$
		along $\Upsilon(\singularcurve_{[-\interestingu,0]})$, which in particular 
		shows that $\Lunit^{\alpha}$ does not enjoy the Lipschitz regularity that lies
		behind standard ODE uniqueness theorems for integral curves.
	\item One can show that in $1D$, the acoustical metric takes the following form in geometric coordinates: 
		$\hfour 
		= 
		- 
		\upmu \mathrm{d} t \otimes \mathrm{d} \eik 
		- 
		\upmu \mathrm{d} \eik \otimes \mathrm{d} t
		+ 
		\upmu^2 \mathrm{d} \eik \otimes \mathrm{d} \eik$. 
		In particular, in geometric coordinates, $\hfour$ \emph{vanishes} at points such that
		$\upmu  = 0$. Hence, $\hfour$ is not capable of ``measuring'' causality along the singular curve portion
		$\singularcurve_{[-\interestingu,0]}$, along which $\upmu$ vanishes.
		We note, however, that in more than one space dimension, even when $\upmu = 0$,
		there are some remaining non-degenerate components of $\hfour$ in geometric coordinates.
		Relatedly, in our $3D$ paper \cite{lAjS2022} on the non-relativistic compressible Euler equations, 
		we prove that the $\crease$ is a co-dimension $2$ $\hfour$-spacelike submanifold of geometric coordinate space.
\end{itemize}

\begin{proof}[Proof of Theorem~\ref{T:MAINTHEOREM1DSINGULARBOUNDARYANDCREASE}] 
Throughout the proof, we will silently shrink the size of $\interestingu$ as necessary; 
this will in particular guarantee that some important terms arising in various 
Taylor expansions have a sign.
\ \\

\noindent \textbf{Classical existence and properties of the solution relative to the geometric coordinates:} The fact that 
$\RRiemannPS(t,\eik) = \dataRRiemannPS(\eik)$ is smooth on all of $\mathbb{R}_t \times \mathbb{R}_{\eik}$
was proved in \eqref{E:SOLUIONRRIEMANNTRIVIALTRANSPORTEQUATIONINGEOMETRICCOORDINATES}.
In particular, in geometric coordinates, $\RRiemannPS$ 
smoothly extends to the closure $\overline{\classicaldev}$.
The fact that $\RRiemannPS$ vanishes on 
the complement of $\{ (t,\eik) \in \R^2 \ | \ \eik \in [-\rightu,\leftu]\}$ then follows from 
our assumptions on the initial data, which in particular imply that
$\dataRRiemannPS$ is supported in $[-\rightu,\leftu]$.

Next, using the identity \eqref{E:SPEEDTIMESMUPLANESYMMETRYWITHEXPLICITSOURCE},
the estimate \eqref{E:EASYSPEEDOVERNORMALIZERBOUNDS},
and the fact that $\RRiemannPS$ is supported in $\{ (t,\eik) \in \R^2 \ | \ \eik \in [-\rightu,\leftu]\}$,
we conclude the asserted properties of $\upmu$ in geometric coordinates,
except for the positivity of $\upmu$ on $\classicaldev$, which we prove below.
The asserted properties of $\Lunit$, $\newuL$, $\Lunit x^1$, and $\uLunit x^1$ in geometric coordinates then follow from 
\eqref{E:1DLUNITINTERMSOFRIEMANNINVARIANTS}--\eqref{E:1DULUNITINTERMSOFRIEMANNINVARIANTS},
\eqref{E:PLANESYMMETRYCOMMUTATORVECTORFIELDSAREGEOCOORDINATEVECTORFIELDS},
\eqref{E:LANDMUXCOMMUTEINPLANESYMMETRY},
and the fact that $\RRiemannPS$ is supported in $\{ (t,\eik) \in \R^2 \ | \ \eik \in [-\rightu,\leftu]\}$.

\medskip
\noindent \textbf{Proof of the properties of $\crease$ and $\singularcurve_{[-\interestingu,0]}$ relative to the geometric coordinates:} 
The fact that the crease is the single point $(\blowuptime,0)$ is a consequence of
definition~\eqref{E:1DCREASE},
\eqref{AE:PSMUTAYLOREXPANSIONININTERESTINGREGION}, 
and \eqref{AE:PSLOCATIONOFXMUEQUALSMINUSKAPPA}.
We have therefore proved \eqref{E:MAINTHMCREASEISSINGLEPOINT1D}.

From 
\eqref{E:SPEEDTIMESMUPLANESYMMETRYWITHEXPLICITSOURCE},
\eqref{E:EASYSPEEDOVERNORMALIZERBOUNDS},
\eqref{E:TISAFUNCTIONALONGSINGULARCURVE},
\eqref{E:TRUNCATEDPORTIONOFSINGULARCURVE},
\eqref{E:ALTERNATETRUNCATEDPORTIONOFSINGULARCURVE},
and definition~\eqref{E:SINGULARPORTIONOFCLASSICALDEVBOUNDARYININTERESTINGREGION},
we see that $\upmu > 0$ on $\classicaldevsingular$ while $\upmu$ vanishes on 
$\singularcurve_{[-\interestingu,0]}$. 

The estimate \eqref{E:QUANTITATIVENEGATIVITYOFSPEEDTIMESLMU} follows from the first equality in
\eqref{E:LSPEEDTIMESMUPLANESYMMETRYWITHEXPLICITSOURCE} and the estimate \eqref{E:ALTERNATERESCALEDMUFUNCTIONEXPANSIONSNEAR0}. 

The estimates \eqref{E:TAYLOREXPANSIONSUDERIVATIVEOFSINGULARCURVE}--\eqref{E:TAYLOREXPANSIONSOFSINGULARCURVE}
follow from definitions~\eqref{E:BLOWUPTIME} and \eqref{E:TISAFUNCTIONALONGSINGULARCURVE}
and the estimates
\eqref{E:ALTERNATERERECIPROCALSCALEDMUFUNCTIONEXPANSIONSNEAR0}--\eqref{E:ALTERNATERESCALEDMUFUNCTIONANDDERIVATIVESEXPANSIONSNEAR0}.

\medskip
\noindent \textbf{Proof of the properties of $\cauchyhor_{[0,\interestingu]}$ relative to the geometric coordinates:}
We first prove that the integral curve initial value problem for $\intcurvenewuL(\eik)$ given by 
\eqref{E:INTEGRALCURVEOFNEWULODE}--\eqref{E:INTEGRALCURVEOFNEWULINITIALCONDITION} (or equivalently, \eqref{E:ODEFORTCOMPONENTOFMUULUNITINTEGRALCURVE})
has a unique solution on the interval $\eik \in [0,\interestingu]$.  
Since $\upmu$ is a smooth function on all of $(t,\eik) \in \R^2$, the standard existence and uniqueness theorem for ODEs
yields a unique solution to \eqref{E:ODEFORTCOMPONENTOFMUULUNITINTEGRALCURVE}
for $\eik \in [0,\interestingu]$, provided we shrink the size of $\interestingu$ if necessary. 

We now prove \eqref{E:TAYLOREXPANSIONUDERIVATIVEOFCAUCHYHORIZON}--\eqref{E:TAYLOREXPANSIONOFCAUCHYHORIZON}. 
Differentiating \eqref{E:ODEFORTCOMPONENTOFMUULUNITINTEGRALCURVE} and using 
\eqref{E:PLANESYMMETRYCOMMUTATORVECTORFIELDSAREGEOCOORDINATEVECTORFIELDS} and the chain rule,
we deduce that:
\begin{align} \label{E:SECONDUDERIVATIVEFORTCOMPONENTOFMUULUNITINTEGRALCURVE}
	\frac{\mathrm{d}^2}{\mathrm{d} \eik^2} \timeisafunctionofeikonalcauchyhorizon(\eik)
	& 
	= 
	\frac{1}{4} [\upmu (\Lunit \upmu)]|_{(\timeisafunctionofeikonalcauchyhorizon(\eik),\eik )}
	+
	\frac{1}{2} \muX \upmu|_{(\timeisafunctionofeikonalcauchyhorizon(\eik),\eik )},
			\\
\begin{split} 	\label{E:THIRDUDERIVATIVEFORTCOMPONENTOFMUULUNITINTEGRALCURVE}
\frac{\mathrm{d}^3}{\mathrm{d} \eik^3} \timeisafunctionofeikonalcauchyhorizon(\eik)
	& 
	= 
	\frac{1}{8} [\upmu^2 (\Lunit \Lunit \upmu)]_{\left(\timeisafunctionofeikonalcauchyhorizon(\eik),\eik \right)}
	+
	\frac{1}{8} [\upmu (\Lunit \upmu)^2]_{\left(\timeisafunctionofeikonalcauchyhorizon(\eik),\eik \right)}
	+
	\frac{1}{4} [(\muX \upmu) (\Lunit \upmu)]_{\left(\timeisafunctionofeikonalcauchyhorizon(\eik),\eik \right)}
		\\
	& 
	\ \
	+
	\frac{1}{2} [\upmu (\muX \Lunit \upmu)]_{\left(\timeisafunctionofeikonalcauchyhorizon(\eik),\eik \right)}	
	+
	\frac{1}{2} \muX \muX \upmu|_{\left(\timeisafunctionofeikonalcauchyhorizon(\eik),\eik \right)}.
\end{split}
\end{align}
Using
\eqref{E:PLANESYMMETRYCOMMUTATORVECTORFIELDSAREGEOCOORDINATEVECTORFIELDS},
\eqref{E:ODEFORTCOMPONENTOFMUULUNITINTEGRALCURVE},
\eqref{E:SECONDUDERIVATIVEFORTCOMPONENTOFMUULUNITINTEGRALCURVE}--\eqref{E:THIRDUDERIVATIVEFORTCOMPONENTOFMUULUNITINTEGRALCURVE},
and the Taylor expansions 
\eqref{AE:PSMUTAYLOREXPANSIONININTERESTINGREGION}--\eqref{AE:PSMUSECONDDERIVATIVETAYLOREXPANSIONININTERESTINGREGION}, 
we can Taylor expand 
the solution $\timeisafunctionofeikonalcauchyhorizon(\eik)$ to \eqref{E:ODEFORTCOMPONENTOFMUULUNITINTEGRALCURVE} around $\eik = 0$, 
thereby concluding \eqref{E:TAYLOREXPANSIONUDERIVATIVEOFCAUCHYHORIZON}--\eqref{E:TAYLOREXPANSIONOFCAUCHYHORIZON}.

Since \eqref{E:ODEFORTCOMPONENTOFMUULUNITINTEGRALCURVE} implies that 
$
\upmu\left(\timeisafunctionofeikonalcauchyhorizon(\eik),\eik \right)
= 2 \frac{\mathrm{d}}{\mathrm{d} \eik} \timeisafunctionofeikonalcauchyhorizon(\eik)$
along $\cauchyhor_{(0,\interestingu]}$, we can use the Taylor expansions 
described in the previous paragraph to conclude the estimate \eqref{E:TAYLOREXPANSIONOFMUALONGCAUCHYHORIZON}
for $\upmu$. 

To prove \eqref{E:TAYLOREXPANSIONDERIVATIVEOFRECTANGULARX1ALONGCAUCHYHORIZON}, 
we first use 
\eqref{E:1DULUNITINTERMSOFRIEMANNINVARIANTS},
\eqref{E:SEVERALACOUSTICVECTORFIELDS},
\eqref{E:INTEGRALCURVEOFNEWULODE},
our assumed smallness on the amplitude of the initial data
(which in particular implies that the ratio
$
\frac{\sinh\left(\frac{\RRiemannPS}{2}\right) 
			- 
			\speed \cosh\left(\frac{\RRiemannPS}{2}\right)}{
			\cosh\left(\frac{\RRiemannPS}{2}\right) - \speed \sinh\left(\frac{\RRiemannPS}{2}\right)}
$
from RHS~\eqref{E:1DULUNIT} 
(with $\LRiemann \equiv 0$)
is $-\positivebigO(1)$),
and the chain rule 
to deduce that
$
\frac{\mathrm{d} }{\mathrm{d} \eik} x^1\left(\timeisafunctionofeikonalcauchyhorizon(\eik),\eik \right) 
= \frac{1}{2} \newuL^1\left(\timeisafunctionofeikonalcauchyhorizon(\eik),\eik \right) 
= 
\frac{1}{2}
[\upmu \uLunit^1]\left(\timeisafunctionofeikonalcauchyhorizon(\eik),\eik \right)
=
- 
\upmu
\positivebigO(1)
$. From this relation and \eqref{E:TAYLOREXPANSIONOFMUALONGCAUCHYHORIZON}, we conclude 
\eqref{E:TAYLOREXPANSIONDERIVATIVEOFRECTANGULARX1ALONGCAUCHYHORIZON}.

Finally, $\cauchyhor_{[0,\interestingu]}$ is a $\hfour$-null curve portion because its tangent vector is everywhere proportional to
the vectorfield $\newuL$, which is $\hfour$-null by \eqref{E:GFOURINNERPRODUCTOFNULLVECTORFIELDSIN1D}
and \eqref{E:SEVERALACOUSTICVECTORFIELDS}.

\medskip
\noindent \textbf{Proof of \eqref{E:CLOSUREOFCLASSICALDEVREGION}:}
We have already shown that $\timeisafunctionofeikonalinsingularcurve(\eik)$
is smooth on $[-\interestingu,0]$,
that $\timeisafunctionofeikonalcauchyhorizon(\eik)$ is smooth on
$[0, \interestingu]$, and that 
$\timeisafunctionofeikonalinsingularcurve(0) = \timeisafunctionofeikonalcauchyhorizon(0) = \blowuptime$.
From these facts,
\eqref{E:ALTERNATETRUNCATEDPORTIONOFSINGULARCURVE},
and
\eqref{E:SINGULARPORTIONOFCLASSICALDEVBOUNDARYININTERESTINGREGION}--\eqref{E:INTERESTINGREGION},
we conclude \eqref{E:CLOSUREOFCLASSICALDEVREGION}.

\medskip 
\noindent \textbf{Proof that $\upmu > 0$ on $\classicaldev$:}
We have already shown that $\upmu > 0$ on $\classicaldevsingular$.
Hence, in view of definition~\ref{E:INTERESTINGREGION}, 
we see that to prove $\upmu > 0$ on $\classicaldev$, it suffices to prove
$\upmu > 0$ on $\classicaldevregular$.
To this end, we first note that the same arguments given above imply that 
the curve $\eik \rightarrow \left( \timeisafunctionofeikonalinsingularcurve(\eik),\eik \right)$
is smooth for $\eik \in [-\interestingu,\interestingu]$
and that $\timeisafunctionofeikonalinsingularcurve(\eik)$ satisfies the estimate stated in \eqref{E:TAYLOREXPANSIONSOFSINGULARCURVE}
for $\eik \in [-\interestingu,\interestingu]$
(not just for $\eik \in [-\interestingu,0]$). The portion of this curve corresponding to $\eik \in (0,\interestingu]$
is shown as a dotted curve in Fig.\,\ref{F:1DPLANESYMMETRYCAUCHYHORANDSINGULARCURVE};
for purposes of this proof,
we denote this dotted curve, which by definition does not contain the crease, 
by ``$\mathcal{B}_{\textnormal{Fictitious}}$''
(``fictitious'' because the image of $\mathcal{B}_{\textnormal{Fictitious}}$ under $\Upsilon$ in $(t,x^1)$-space is not part of the $\hfour$-MGHD).
The same arguments that we used to prove that $\upmu > 0$ on $\classicaldevsingular$
(which relied on \eqref{E:SPEEDTIMESMUPLANESYMMETRYWITHEXPLICITSOURCE},
\eqref{E:EASYSPEEDOVERNORMALIZERBOUNDS},
and
\eqref{E:TISAFUNCTIONALONGSINGULARCURVE})
imply that $\upmu > 0$ on 
$
\mathcal{D}
:=
\left\{ 
		(t,\eik) \in \mathbb{R}^2 
		\ | \  
		0 \leq t  < \timeisafunctionofeikonalinsingularcurve(\eik), \, \eik \in (0,\interestingu]
	\right\}
$,
where we note that the top boundary of this region is $\mathcal{B}_{\textnormal{Fictitious}}$.
The key point is that 
\eqref{E:1DUPARAMETERIZEDINTEGRALCURVEOFMULUNIT},
\eqref{E:CAUCHYHORIZON},
and the estimates
\eqref{E:TAYLOREXPANSIONSOFSINGULARCURVE} (valid also for $\eik \in (0,\interestingu]$)
and 
\eqref{E:TAYLOREXPANSIONOFCAUCHYHORIZON}
imply that $\timeisafunctionofeikonalinsingularcurve(\eik) < \timeisafunctionofeikonalcauchyhorizon(\eik)$
for $\eik \in (0,\interestingu]$ and thus $\cauchyhor_{(0,\interestingu]}$ lies \emph{below} $\mathcal{B}_{\textnormal{Fictitious}}$.
In view of definition \eqref{E:REGULARPORTIONOFCLASSICALDEVBOUNDARYININTERESTINGREGION},
we see that $\classicaldev \subset \mathcal{D}$ 
and hence $\upmu > 0$ on $\classicaldevregular$, as desired.

\medskip
\noindent \textbf{Proof of the diffeomorphism and homeomorphism properties of $\Upsilon$:}
From Def.\,\ref{D:CLASSICALDEVELOPMENTNEARVANISHINGOFINVERSEFOLIATIONDENSITY},  
we see that the top boundary of $\classicaldev$
is the following $\eik$-parameterized curve
(note that the curve is well-defined since $\timeisafunctionofeikonalinsingularcurve(0) = \timeisafunctionofeikonalcauchyhorizon(0) = \blowuptime$):
\begin{align} \label{E:TOPBOUNDARYOFCLASSDEVELOPMENT}
	\upgamma_{\textnormal{top}}(\eik)
	& = (\mathfrak{t}_{\textnormal{top}}(\eik),\eik),
	&&
	\eik \in [-\interestingu,\interestingu],
		\\
	\mathfrak{t}_{\textnormal{top}}(\eik)
	& 
	:=
	\begin{cases}
			(\timeisafunctionofeikonalinsingularcurve(\eik),\eik),
			& \eik \in [-\interestingu,0],
				\\
		\left(\timeisafunctionofeikonalcauchyhorizon(\eik),\eik \right),
		& \eik \in [0,\interestingu].
		\label{E:TALONGTOPBOUNDARYOFCLASSDEVELOPMENT}
	\end{cases}
\end{align}
Using 
\eqref{E:TAYLOREXPANSIONSOFDERIVATIVERECTANGULARX1ALONGSINGULARCURVE},
\eqref{E:TAYLOREXPANSIONDERIVATIVEOFRECTANGULARX1ALONGCAUCHYHORIZON},
and the mean value theorem,
we see that $x^1(\mathfrak{t}_{\textnormal{top}}(\eik),\eik)$ is strictly decreasing
for $\eik \in [-\interestingu,\interestingu]$, i.e., the $x^1$
coordinate function strictly decreases as we move from right to left along
the top boundary of $\classicaldev$ in Fig.\,\ref{F:1DPLANESYMMETRYCAUCHYHORANDSINGULARCURVE}.
Next, we note that \eqref{E:SEVERALACOUSTICVECTORFIELDS} and \eqref{E:EXPLICITFORMOFXIN1D}
imply that $\geop{\eik} x^1 = - \frac{\speed}{\normalizer} \upmu$.
Using this identity, the fact that $\frac{\speed}{\normalizer} > 0$, and the fact that we have already shown that $\upmu > 0$ on $\classicaldev$
and $\upmu$ vanishes precisely along the curve portion $\singularcurve_{[-\interestingu,0]}$ in $\overline{\classicaldev}$,
we see that for any $(t,\eik) \in \overline{\classicaldev}$,
the map $\eik \rightarrow x^1(t,\eik)$ is strictly decreasing.
From these two monotonicity properties of $x^1$, we conclude, given the shape of 
$\overline{\classicaldev}$ (see Fig.\,\ref{F:1DPLANESYMMETRYCAUCHYHORANDSINGULARCURVE}), 
that the map $\Upsilon(t,\eik) = (t,x^1(t,\eik))$ is
injective on the compact set $\overline{\classicaldev}$ and thus is a homeomorphism
from $\overline{\classicaldev}$ onto $\Upsilon(\overline{\classicaldev})$.
Also considering the identity
$\det \mathrm{d} \Upsilon = - \frac{\speed}{\normalizer} \upmu$ proved in
\eqref{E:JACOBIANDETERMINANTOFCHANGEOFVARIABLESMAP1D}, we further conclude that
$\Upsilon$ is a diffeomorphism on $\classicaldev$, where $\upmu > 0$.

\medskip
\noindent \textbf{Proof of \eqref{E:BLOWUPOFCARTESIANCOORDINATES} and the structure of the $\hfour$-MGHD in Minkowski-rectangular coordinates $(t,x^1)$:} 
We deduce from 
\eqref{E:SEVERALACOUSTICVECTORFIELDS},
\eqref{E:EXPLICITFORMOFXIN1D}, 
\eqref{E:SOLUIONRRIEMANNTRIVIALTRANSPORTEQUATIONINGEOMETRICCOORDINATES},
\eqref{E:LSPEEDTIMESMUPLANESYMMETRYWITHEXPLICITSOURCE},
the chain rule,
the non-degeneracy assumption \eqref{E:ANOTHER1DNONDEGENERACYONEOSCONSEQUENCE},
and \eqref{E:QUANTITATIVENEGATIVITYOFSPEEDTIMESLMU}
that for $(t,\eik) \in \classicaldev$,
the following estimate holds:
 \begin{align} 
	\begin{split} \label{E:1DPROOFKEYBOUNDFORCARTESIANBLOWUP}
 	\big| \upmu \p_1 \RRiemannPS \big| (t,\eik)  
	& = 
	\frac{\normalizer(\eik)}{\speed(\eik)}
	\left| \muX \RRiemannPS(\eik) \right| 
	=
	\frac{\normalizer(\eik)}{\speed(\eik)} 
	\left| \frac{\mathrm{d} }{\mathrm{d} \eik} \dataRRiemannPS(\eik) \right| 
	=
	\frac{\normalizer(\eik)}{\speed(\eik)} 
	\left|
	\frac{\frac{\mathrm{d} }{\mathrm{d} \eik} 
	\antiderivativePSLmusourcetermfunction[\dataRRiemannPS(\eik)]}{\frac{\mathrm{d} }{\mathrm{d} \RRiemannPS} 
	\antiderivativePSLmusourcetermfunction[\dataRRiemannPS(\eik)]}
	\right|
		\\
	& 
	=
	\left|
	\frac{\Lunit \upmu(\eik)}{\frac{\mathrm{d} }{\mathrm{d} \RRiemannPS} 
	\antiderivativePSLmusourcetermfunction[\dataRRiemannPS(\eik)]}
	\right|
	=
	\positivebigO(1).
\end{split}
\end{align}
From \eqref{E:1DPROOFKEYBOUNDFORCARTESIANBLOWUP}, we conclude \eqref{E:BLOWUPOFCARTESIANCOORDINATES}.

The continuity of $\RRiemannPS, \, \Lunit^M \RRiemannPS$ and 
$\hfour_{\alpha \beta} = \hfour_{\alpha \beta}(\RRiemannPS)$
all the way up to $\Upsilon(\overline{\classicaldev})$ in Minkowski-rectangular coordinates 
$(t,x^1)$ follows from the fact that $\RRiemannPS(t,\eik)$ is smooth on $\mathbb{R}_t \times \mathbb{R}_{\eik}$
and the already proven fact that $\Upsilon$ is a homeomorphism from $\overline{\classicaldev}$
onto $\Upsilon(\overline{\classicaldev})$. 
The non-degeneracy of the $2 \times 2$ matrix $\hfour_{\alpha \beta}$ relative to the $(t,x^1)$-coordinates
follows from the expression \eqref{E:1DACOUSTICALMETRIC} for $\hfour_{\alpha \beta} = \hfour_{\alpha \beta}(\RRiemannPS)$,
our assumed smallness on the amplitude of the initial data (which in particular implies that $0 < \speed \leq 1$),
and the identities stated in Prop.\,\ref{P:QUASILINEARTRANSPORTSYSTEMFORRIEMANNINVARAINTSANDOTHERRESULTS}.

\medskip
\noindent \textbf{Proof of the causal structure of the singular curve in Minkowski-rectangular coordinates $(t,x^1)$:} 
Using \eqref{E:PLANESYMMETRYCOMMUTATORVECTORFIELDSAREGEOCOORDINATEVECTORFIELDS},
\eqref{E:LSPEEDTIMESMUPLANESYMMETRYWITHEXPLICITSOURCE}--\eqref{E:SPEEDTIMESMUPLANESYMMETRYWITHEXPLICITSOURCE}, 
and
\eqref{E:TRUNCATEDPORTIONOFSINGULARCURVE},
we deduce that along $\singularcurve_{[-\interestingu,0)}$, 
the vectorfield $\singcurvevectorfield$ 
defined by \eqref{E:SINGCURVEVECTORFIELD}
can be expressed as
follows, where 
 $\PSLmusourcetermfunction$ is the source term from \eqref{E:PSLMUSOURCETERMFUNCTION}, viewed as a function of $\eik$,
and $\PSLmusourcetermfunction'$ is its derivative:
\begin{align} \label{E:TANGENTVECTORFIELDTOBINGEOMETRICCOORDINATES}
\singcurvevectorfield 
	& = 
	\geop{t} 
	- 
	\frac{\PSLmusourcetermfunction(\eik)}{t \PSLmusourcetermfunction'(\eik)} \geop{\eik}
	= 
		\geop{t} 
		+ 
		\frac{\PSLmusourcetermfunction^2(\eik)}{\PSLmusourcetermfunction'(\eik)} \geop{\eik},
	& &
	\mbox{along } \singularcurve_{[-\interestingu,0)}.
\end{align}
Hence, using \eqref{E:ALTERNATERESCALEDMUFUNCTIONEXPANSIONSNEAR0} and \eqref{E:ALTERNATERESCALEDMUFUNCTIONANDDERIVATIVESEXPANSIONSNEAR0},
we see that $\singcurvevectorfield$ is well-defined on $\singularcurve_{[-\interestingu,0)}$,
i.e., the denominator term $\PSLmusourcetermfunction'(\eik)$ on RHS~\eqref{E:TANGENTVECTORFIELDTOBINGEOMETRICCOORDINATES} 
is non-zero for $\eik \in [-\interestingu,0)$.
Next, we deduce from \eqref{E:SINGCURVEVECTORFIELD} that 
$
\singcurvevectorfield\left( \frac{\speed}{\normalizer} \upmu \right) = 0
$, i.e., that $\singcurvevectorfield$ is tangent to the level sets of $\frac{\speed}{\normalizer} \upmu$.
Using the fact that $\frac{\speed}{\normalizer} > 0$ (see \eqref{E:EASYSPEEDOVERNORMALIZERBOUNDS}), 
we see that the zero level set of $\frac{\speed}{\normalizer}\upmu$ 
coincides with the zero level set of $\upmu$. Hence, in view of definition \eqref{E:TRUNCATEDPORTIONOFSINGULARCURVE}, we see that
$\singcurvevectorfield$ is tangent to $\singularcurve_{[-\interestingu,0)}$. 
The inequalities $\singcurvevectorfield t > 0$ and $\singcurvevectorfield \eik < 0$ 
follow easily from 
\eqref{E:TANGENTVECTORFIELDTOBINGEOMETRICCOORDINATES},
\eqref{E:ALTERNATERESCALEDMUFUNCTIONEXPANSIONSNEAR0}, 
and \eqref{E:ALTERNATERESCALEDMUFUNCTIONANDDERIVATIVESEXPANSIONSNEAR0}.

Next, using \eqref{E:SEVERALACOUSTICVECTORFIELDS},
\eqref{E:EXPLICITFORMOFXIN1D}, 
and \eqref{E:PLANESYMMETRYCOMMUTATORVECTORFIELDSAREGEOCOORDINATEVECTORFIELDS},
we see that the pushforward of $\geop{\eik}$ by $\Upsilon$, i.e., $\mathrm{d} \Upsilon \cdot \geop{\eik}$ 
is equal to $-\frac{\speed}{\normalizer} \upmu \p_1$. 
Hence, $\mathrm{d} \Upsilon \cdot \geop{\eik}$ 
\emph{vanishes along all of $\Upsilon(\singularcurve_{[-\interestingu,0)})$}, where $\upmu$ is $0$. 
Moreover, from $\eqref{E:PLANESYMMETRYCOMMUTATORVECTORFIELDSAREGEOCOORDINATEVECTORFIELDS}$, we deduce that
 $\mathrm{d} \Upsilon \cdot \geop{t} = \Lunit^{\kappa} \partial_{\kappa}$.
From these facts and \eqref{E:TANGENTVECTORFIELDTOBINGEOMETRICCOORDINATES}, we deduce 
that for every $q \in \singularcurve_{[-\interestingu,0)}$, 
we have $[\mathrm{d} \Upsilon(q)] \cdot \singcurvevectorfield(q) = \Lunit|_{\Upsilon(q)}$. 
In particular, 
since $\Lunit|_{\Upsilon(q)}$ spans the tangent space of $\Upsilon(\singularcurve_{[-\interestingu,0)})$ and is $\hfour$-null 
(see \eqref{E:GFOURINNERPRODUCTOFNULLVECTORFIELDSIN1D}), 
it follows that $\Upsilon(\singularcurve_{[-\interestingu,0)})$ is a $\hfour$-null curve.

\end{proof}

\section{The new formulation of the flow in three spatial dimensions}
\label{S:NEWFORMULATIONOFFLOW}
In Sect.\,\ref{S:SHOCKFORMATIONAWAYFROMSYMMETRY}, we provide an overview of how to extend some aspects of 
the simple isentropic plane-symmetric shock formation results yielded Theorem~\ref{T:MAINTHEOREM1DSINGULARBOUNDARYANDCREASE} to the 
much harder case of three spatial dimensions in the presence of vorticity and entropy. 
The strategy we present in Sect.\,\ref{S:SHOCKFORMATIONAWAYFROMSYMMETRY} is based on the approach we used in studying shock formation
for the non-relativistic multi-dimensional compressible Euler equations \cites{jLjS2018,jLjS2021,lAjS2022}.
Fundamental to that strategy is the availability of a new formulation of the flow as 
as system of geometric wave-transport-div-curl equations that exhibit remarkable regularity properties 
and geometric null structures. As we explain in Sect.\,\ref{S:SHOCKFORMATIONAWAYFROMSYMMETRY},
such a formulation allows one to implement multi-dimensional nonlinear geometric optics,
which is important for the study of shocks 
(as is already evident from Theorem~\ref{T:MAINTHEOREM1DSINGULARBOUNDARYANDCREASE}).
In the non-relativistic case, such a formulation was derived in \cites{jLjS2020a,jS2019c}, 
while in the relativistic case, such a formulation was derived in \cite{mDjS2019}. 
In Sect.\,\ref{S:NEWFORMULATIONOFFLOW}, in the relativistic case,
we set up the machinery needed to state the formulation from \cite{mDjS2019} and present it in a simplified, 
schematic form as Theorem~\ref{T:GEOMETRICWAVETRANSPORTDIVCURLFORMULATION}. 
In Sect.\,\ref{SS:FIRSTCOMMENTSONMOREGENERALSPACETIMES}, we make some brief comments on how the new formulation can be extended to more general spacetimes $(\spacetimemanifold,\gfour)$.

\subsection{Connection to quasilinear wave equations}
\label{SS:CONNECTIONWITHWAVEEQUATIONS}
Although our study of isentropic plane-symmetric solutions in Sect.\,\ref{S:NEWFORMULATIONOFFLOW} was based on
analyzing the first-order Riemann invariant system \eqref{E:RRIEMANNPSTRANSPORT}--\eqref{E:LRIEMANNPSTRANSPORT},
as we will see in Sect.\,\ref{S:SHOCKFORMATIONAWAYFROMSYMMETRY}, in multi-dimensions, it is advantageous
to work with the formulation of the flow provided by Theorem~\ref{T:GEOMETRICWAVETRANSPORTDIVCURLFORMULATION},
which features geometric wave equations. 
The main advantage is that for geometric wave equations, there is an advanced geo-analytic machinery available,
tied to the vectorfield method, for studying the global behavior of solutions.
To motivate the relevance of wave equations for the study of relativistic fluids,
we now establish a simple connection between quasilinear wave equations and the Riemann invariant formulation
of isentropic plane-symmetric solutions that we provided in Sect.\,\ref{S:NEWFORMULATIONOFFLOW}.
More precisely, upon differentiating \eqref{E:RRIEMANNPSTRANSPORT}--\eqref{E:LRIEMANNPSTRANSPORT},
we deduce that for isentropic plane-symmetric solutions, the Riemann invariants
$(\RRiemannPS,\LRiemannPS)$ satisfy the following coupled system of quasilinear wave equations:
\begin{subequations}
\begin{align}
	\uLunit \Lunit \RRiemannPS
	& 
	= 0,
			\label{E:RRIEMANNPSWAVE} 
				\\
	\Lunit \uLunit \LRiemannPS
	& = 0.
	\label{E:LRIEMANNPSWAVE} 
\end{align}
\end{subequations}
Equations \eqref{E:RRIEMANNPSWAVE}--\eqref{E:LRIEMANNPSWAVE} are quasilinear wave equations
because by Lemma~\ref{L:1DGFOURINVERSEINTERMSOFLUNITANDULUNIT}, the principal operator on the LHSs
is proportional to $(\hfour^{-1})^{\alpha \beta} \partial_{\alpha} \partial_{\beta}$,
where $\hfour$ is the acoustical metric.

\subsection{Additional fluid variables}
\label{SS:ADDITIONALFLUIDVARIABLES}
The remainder of Sect.\,\ref{S:NEWFORMULATIONOFFLOW} concerns solutions 
$\left(\Lnenth,\fourvelocity^0,\fourvelocity^1,\fourvelocity^2,\fourvelocity^3,\Ent \right)$
to the relativistic Euler equations \eqref{E:ENTHALPYEVOLUTION}--\eqref{E:AGAINFLUIDFOURVELOCITYNORMALIZED}
in three spatial dimensions without any symmetry, isentropicity, or irrotationality assumptions.

In Sect.\,\ref{SS:ADDITIONALFLUIDVARIABLES}, 
we introduce some additional fluid variables that complement the ones
we introduced in Sect.\,\ref{S:EQUATIONSANDLOCALWELLPOSEDNESS}. The new variables play a fundamental role
in the new formulation of relativistic Euler flow provided by Theorem~\ref{T:GEOMETRICWAVETRANSPORTDIVCURLFORMULATION}.

\begin{definition}[The $\fourvelocity$-orthogonal vorticity of a one-form]
	\label{D:VORTICITYOFAONEFORM}
	Let $\bm{\upxi}$ be a one-form.
	We define its $\fourvelocity$-orthogonal vorticity, denoted by
	$\uperpvort^{\alpha}(\bm{\upxi})$, to be the following vectorfield, where
	$\upepsilon$ is the fully antisymmetric symbol normalized by $\upepsilon_{0123} = 1$:
	\begin{align} \label{E:VORTICITYOFAONEFORM}
		\uperpvort^{\alpha}(\bm{\upxi})
		& := - \upepsilon^{\alpha \beta \gamma \delta} \fourvelocity_{\beta} \partial_{\gamma} \bm{\upxi}_{\delta},
	\end{align}
	and we recall that throughout the paper, 
	we use the Minkowski metric $\mink$ -- as opposed to $\hfour$ -- to lower and raise indices.
	In particular, $\upepsilon^{0123} = - 1$.
\end{definition}

Note that $\fourvelocity_{\alpha} \uperpvort^{\alpha}(\bm{\upxi}) = 0$, which explains our
terminology ``$\fourvelocity$-orthogonal vorticity'' of $\bm{\upxi}$.

\begin{definition}[Vorticity vectorfield]
We define the vorticity $\vort^{\alpha}$ to be the following vectorfield:
\begin{align} \label{E:VORTICITYVECTORFIELD}
		\vort^{\alpha}
		& 
		:=
		\uperpvort^{\alpha}(\Enth \fourvelocity)
		= - \upepsilon^{\alpha \beta \gamma \delta} \fourvelocity_{\beta} \partial_{\gamma} (\Enth \fourvelocity_{\delta}).
\end{align}
\end{definition}

\begin{definition} [Entropy gradient one-form]
\label{D:ENTROPYGRADIENTONEFORM}
We define $\GradEnt_{\alpha}$ to be the following one-form:
\begin{align} \label{E:ENTROPYGRADIENTONEFORM}
		\GradEnt_{\alpha}
		& := \partial_{\alpha} \Ent.
	\end{align}
\end{definition}

The fluid variables
$\modVortVort$ and $\modDivGradEnt$ from the next definition play a fundamental 
role in the setup of the new formulation of the flow. In particular,
Theorem~\ref{T:GEOMETRICWAVETRANSPORTDIVCURLFORMULATION}
shows that they satisfy transport equations with source terms that
enjoy remarkable regularity and null structures.

\begin{definition}[Modified fluid variables]
	\label{D:MODIFIEDFLUIDVARIABLES}
	We define $\modVortVort^{\alpha}$
	and
	$\modDivGradEnt$
	to respectively be the following vectorfield and scalar function:
	\begin{subequations}
	\begin{align} \label{E:MODIFIEDVORTVORT} 
	\begin{split}
		\modVortVort^{\alpha}
		& := \uperpvort^{\alpha}(\vort)
			+
		\speed^{-2}
		\upepsilon^{\alpha \beta \gamma \delta} 
		\fourvelocity_{\beta}
		(\partial_{\gamma} \Lnenth) \vort_{\delta}
			\\
		& \ \
			+
			(\Temp - \Temp_{;\Lnenth}) 
			\GradEnt^{\alpha} 
			(\partial_{\kappa} u^{\kappa})
		+
		(\Temp - \Temp_{;\Lnenth}) u^{\alpha} (\GradEnt^{\kappa} \partial_{\kappa} \Lnenth)
		\\
		& 
		\ \
		+
		(\Temp_{;\Lnenth} - \Temp) \GradEnt^{\kappa} 
		((\mink)^{-1})^{\alpha \lambda} \partial_{\lambda} \fourvelocity_{\kappa}),
		\end{split}	
			\\
		\modDivGradEnt
		& := \frac{1}{n} (\partial_{\kappa} \GradEnt^{\kappa})
			+
			\frac{1}{n} (\GradEnt^{\kappa} \partial_{\kappa} \Lnenth)
			-
			\frac{1}{n} \speed^{-2} (\GradEnt^{\kappa} \partial_{\kappa} \Lnenth).
			 \label{E:MODIFIEDDIVGRADENT}
\end{align}
\end{subequations}
\end{definition}

\subsubsection{Covariant wave operator}
\label{SSS:COVARIANTWAVEOPERATOR}
Covariant wave equations play a central role in Theorem~\ref{T:GEOMETRICWAVETRANSPORTDIVCURLFORMULATION}.
In Def.\,\ref{D:COVWAVEOP}, we recall the standard definition of a covariant wave operator acting on scalar functions;
in Theorem~\ref{T:GEOMETRICWAVETRANSPORTDIVCURLFORMULATION}, we apply the covariant wave operator \emph{only} 
to quantities that we view to be scalar functions.

\begin{definition}[Covariant wave operator acting on a scalar function]
\label{D:COVWAVEOP}
The covariant wave operator $\square_{\hfour}$ acts on scalar functions $\varphi$ as follows:
\begin{align} \label{E:COVWAVEOP}
\square_{\hfour} \varphi
& := \frac{1}{\sqrt{|\mbox{\upshape det} \mbox{$\hfour$}|}}
	\partial_{\alpha}
	\left\lbrace
			\sqrt{|\mbox{\upshape det} \hfour|} (\hfour^{-1})^{\alpha \beta}
			\partial_{\beta} \varphi
	\right\rbrace.
\end{align}	
\end{definition}

\subsubsection{Null forms}
\label{SSS:NULLFORMS}
In Def.\,\ref{D:NULLFORMS}, we recall the definition of
standard null forms relative to $\hfour$.
The key point is that in Theorem~\ref{T:GEOMETRICWAVETRANSPORTDIVCURLFORMULATION}, all derivative-quadratic terms
on the RHS of the equations are null forms relative to the acoustical metric.
In Sect.\,\ref{SSS:COMMUTATORMETHOD}, we will provide a detailed explanation as to why, in the context of shock formation,
such null forms are harmless error terms. However, at this point in the paper, 
we merely state that $\hfour$-null forms are derivative-quadratic nonlinearities 
that, when a shock forms, are \emph{strictly weaker than the 
Riccati-type terms that drive the blowup} (see Sect.\,\ref{SS:1DRICCATIBLOWUP}).
As we will further explain in Remark~\ref{R:SHOCKDRIVINGTERMSHIDDEN}, 
the dangerous Riccati-type terms are ``hiding'' in the definition of the covariant wave operator on LHS~\eqref{E:WAVE}.

\begin{definition}[Standard null forms relative to $\hfour$]
\label{D:NULLFORMS}
We define the standard null forms relative to $\hfour$ (which we refer to as ``standard $\hfour$-null forms'' for short)
to be the following derivative-quadratic terms, 
where $\phi$ and $\psi$ are scalar functions and $0 \leq \mu < \nu \leq 3$:
\begin{subequations} 
\begin{align} 
	\nullform^{(\hfour)}(\pmb{\partial} \phi, \pmb{\partial} \psi)
	& := (\hfour^{-1})^{\kappa \lambda} \partial_{\kappa} \phi \partial_{\lambda} \psi,
	\label{E:GNULLFORMS} 
		\\
\nullform_{\mu \nu}(\pmb{\partial} \phi, \pmb{\partial} \psi)
	& := \partial_{\mu} \phi 
				\partial_{\nu} \psi 
			- 
			\partial_{\nu} \phi
			\partial_{\mu} \psi.
				\label{E:ANTISYMMETRICFORMS}
\end{align}
\end{subequations}
\end{definition}

\subsubsection{Two vectorfields}
\label{SSS:TWOVECTORFIELDS}
The two vectorfields featured in the following definition will play a role in our forthcoming discussion.

\begin{definition}[$\Transport$ and $\Sigmatnormal$]
	\label{D:NORMALIZEDMATERIALDERIVATIVEANDSIGMATNORMAL}
	We define $\Transport$ and $\Sigmatnormal$ to respectively be the vectorfields
	with the following components relative to the Minkowski-rectangular coordinates $(t,x^1,x^2,x^3)$:
	\begin{subequations}
	\begin{align} \label{E:NORMALIZEDMATERIALDERIVATIVE}
		\Transport^{\alpha}
		& := \frac{1}{\fourvelocity^0}\fourvelocity^{\alpha},
			\\
		\Sigmatnormal^{\alpha}
		& := 
			-
			(\hfour^{-1})^{\alpha \beta} \partial_{\beta} t
			= 
			-
			(\hfour^{-1})^{\alpha 0}.
			\label{E:SIGMATNORMAL}
	\end{align}
	\end{subequations}
\end{definition}

Note that $\Transport t = 1$, while by \eqref{E:GINVERSE00ISMINUSONE},
$\Sigmatnormal$ is the future-directed $\hfour$-unit normal to $\Sigma_t$.
In particular,
\begin{align} \label{E:SIGMATNORMALHASUNITLENGTH}
	\hfour(\Sigmatnormal,\Sigmatnormal)
	& = - 1.
\end{align}
We also compute that:
\begin{align} \label{E:GINNERPRODUCTOFTRANSPORTANDSIGMATNORMALISMINUSONE}
	\hfour(\Transport,\Sigmatnormal)
	& = - \Transport^0
		= - 1
\end{align}
and, with the help of \eqref{E:AGAINFLUIDFOURVELOCITYNORMALIZED} and \eqref{E:ACOUSTICALMETRIC}, that:
\begin{align} \label{E:GINNERPRODUCTOFTRANSPORTANDLUNIT}
	\hfour(\Transport,\Lunit)
	& = \normalizer \mink(\Transport,\Lunit).
\end{align}
Note that $\mbox{RHS~\eqref{E:GINNERPRODUCTOFTRANSPORTANDLUNIT}} < 0$
because $\normalizer > 0$,
$\Transport$ is future-directed and $\hfour$-timelike,
$\Lunit$ is future-directed and $\hfour$-null, 
and the $\hfour$-null cones are inside the $\mink$-null cones (see Remark~\ref{R:SUBLUMINAL}).

\subsubsection{The geometric wave-transport-div-curl formulation of the flow}
\label{SSS:GEOMETRICWAVETRANSPORTDIVCURLFORMULATION}
We now state -- in a somewhat abbreviated form -- the formulation of relativistic Euler flow derived in \cite{mDjS2019}. 
The proof of theorem is based on differentiating the equations
\eqref{E:ENTHALPYEVOLUTION}--\eqref{E:AGAINFLUIDFOURVELOCITYNORMALIZED} in well-chosen directions
and finding a myriad of cancellations and special structures.

\begin{theorem}\cite{mDjS2019}*{The geometric wave-transport-div-curl formulation of the flow}
\label{T:GEOMETRICWAVETRANSPORTDIVCURLFORMULATION}
Let $\wavearray = \left(\Lnenth,\fourvelocity^0,\fourvelocity^1,\fourvelocity^2,\fourvelocity^3,\Ent \right)$ be a
$C^3$ solution to the relativistic Euler equations \eqref{E:ENTHALPYEVOLUTION}--\eqref{E:AGAINFLUIDFOURVELOCITYNORMALIZED}
on the Minkowski spacetime background. Then for $\Psi \in \wavearray$, the following equations hold, where 
$\nullform$ denotes a linear combination of standard $\hfour$-null forms (see Def.\,\ref{D:NULLFORMS})
and $\smoothfunction$ denotes a smooth function, both of which are 
free to vary from line to line:
\begin{subequations}
\begin{align} \label{E:WAVE}
 \square_{\hfour(\wavearray)} \Psi
	& 
	= 
	\smoothfunction(\wavearray)
	\cdot
	(\modVortVort, \modDivGradEnt)
	+
	\smoothfunction(\wavearray)
	\cdot
	\nullform(\pmb{\partial} \wavearray, \pmb{\partial} \wavearray)
	+
	\smoothfunction(\wavearray,\GradEnt)
	\cdot
	(\vort,\GradEnt)
	\cdot
	\pmb{\partial} \wavearray, 
		\\
	\Transport (\vort^i,\GradEnt^i)
	& =  
	\smoothfunction(\wavearray,\GradEnt)
	\cdot
	(\vort,\GradEnt)
	\cdot
	\pmb{\partial} \wavearray, 
		\label{E:TRANSPORT} 
		\\
\begin{split} 	\label{E:TRANSPORTFORMODIFIED}
\Transport (\modVortVort^i,\modDivGradEnt)
& = 
	\smoothfunction(\wavearray)
	\cdot
  \nullform(\pmb{\partial} \vort,\pmb{\partial} \wavearray)
	+
	\smoothfunction(\wavearray)
	\cdot
	\nullform(\pmb{\partial} \GradEnt,\pmb{\partial} \wavearray)
		\\
	& \ \
	+
	\smoothfunction(\wavearray)
	\cdot
	(\vort,\GradEnt)
	\cdot
  \nullform(\pmb{\partial} \wavearray,\pmb{\partial} \wavearray)
		\\
	& \ \
	+
	\smoothfunction(\wavearray)
	\cdot
	\pmb{\partial} \wavearray
	\cdot
	(\modVortVort,\modDivGradEnt)
	+
	\smoothfunction(\wavearray)
	\cdot
	\GradEnt
	\cdot
	\pmb{\partial} \GradEnt
	+
	\smoothfunction(\wavearray,\vort,\GradEnt)
	\cdot
	(\vort,\GradEnt)
	\cdot
	\pmb{\partial} \wavearray,
\end{split} 
			\\
\partial_{\kappa} \vort^{\kappa}
& = 
\smoothfunction(\wavearray)
\cdot
\vort
\cdot
\pmb{\partial} \wavearray,
\qquad
\uperpvort^{\alpha}(\GradEnt)
= 0.
	\label{E:DIVCURLFORMODIFIED}
\end{align}
\end{subequations}

\end{theorem}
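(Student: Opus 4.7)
The plan is to derive all four parts of the theorem by appropriately differentiating the first-order system \eqref{E:ENTHALPYEVOLUTION}--\eqref{E:AGAINFLUIDFOURVELOCITYNORMALIZED} from Sect.\,\ref{SS:WELLPOSEDFIRSTORDERFORMULATION} and then exploiting the crucial fact that the modified variables $\modVortVort$ and $\modDivGradEnt$ of Def.\,\ref{D:MODIFIEDFLUIDVARIABLES} are constructed so that every ``bad'' term (one that would cost a derivative or wreck null structure) is absorbed into them. The organizing principle is a regularity-and-null-form accounting: the wave equations for $\wavearray$ would naively require one more derivative of $\vort$ and $\GradEnt$ than the transport equations can provide, and the modifications close exactly this gap by rewriting problematic first derivatives of $(\vort,\GradEnt)$ as $(\modVortVort,\modDivGradEnt)$ plus $\hfour$-null forms plus products $(\vort,\GradEnt) \cdot \pmb{\partial}\wavearray$.

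I would first dispatch the easy pieces. In \eqref{E:DIVCURLFORMODIFIED}, the identity $\uperpvort^{\alpha}(\GradEnt) = 0$ is immediate from \eqref{E:VORTICITYOFAONEFORM} and \eqref{E:ENTROPYGRADIENTONEFORM} since $\upepsilon^{\alpha\beta\gamma\delta}\partial_\gamma \partial_\delta \Ent = 0$; the expression for $\partial_\kappa \vort^\kappa$ coming from \eqref{E:VORTICITYVECTORFIELD} likewise has no second-derivative contribution by antisymmetry, so only the schematic lower-order piece survives. For the unmodified transport equations \eqref{E:TRANSPORT}, applying $\partial_i$ to $\Transport \Ent = 0$ and commuting yields the identity for $\GradEnt^i$, and the analog for $\vort^i$ is the relativistic Helmholtz/Kelvin identity, obtained by acting with $\upepsilon^{i\beta\gamma\delta}\fourvelocity_\beta \partial_\gamma$ on the one-form obtained by lowering an index and multiplying by $\Enth$ in \eqref{E:VELOCITYEVOLUTIONWITHPROJECTION}. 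The only source term is generated by the $\Temp \, \mathrm{d}\Ent$ piece of the thermodynamic relation in \eqref{E:THERMOLAWS}, which produces precisely the RHS schematic form.

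For the wave equations \eqref{E:WAVE}, the strategy is to differentiate \eqref{E:ENTHALPYEVOLUTION} by $\partial_\alpha$ and combine it with $\fourvelocity^\kappa \partial_\kappa$ applied to \eqref{E:VELOCITYEVOLUTIONWITHPROJECTION}, producing a second-order operator whose principal symbol is $\speed^2 \Pi^{\alpha\beta}\xi_\alpha \xi_\beta - (\fourvelocity^\kappa \xi_\kappa)^2$; by Def.\,\ref{D:ACOUSTICALMETRIC} and the relation $\normalizer(\hfour^{-1}) = \speed^2 \Pi - \fourvelocity \otimes \fourvelocity$, this is exactly the principal symbol of $\normalizer \square_{\hfour}$. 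The commutators produce derivative-quadratic terms that must be identified with the structures $\nullform^{(\hfour)}$ and $\nullform_{\mu\nu}$ of \eqref{E:GNULLFORMS}--\eqref{E:ANTISYMMETRICFORMS}, while the seemingly dangerous $\partial \vort$ and $\partial \GradEnt$ contributions are eliminated by adding and subtracting precisely the auxiliary pieces entering \eqref{E:MODIFIEDVORTVORT}--\eqref{E:MODIFIEDDIVGRADENT}, so that what remains is $(\modVortVort,\modDivGradEnt)$, null forms, and $(\vort,\GradEnt) \cdot \pmb{\partial}\wavearray$.

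The main obstacle is proving \eqref{E:TRANSPORTFORMODIFIED}. One must apply $\Transport$ to the expressions in \eqref{E:MODIFIEDVORTVORT}--\eqref{E:MODIFIEDDIVGRADENT} and verify that every resulting second derivative of $\wavearray$ either cancels outright or regroups into a standard $\hfour$-null form. Three mechanisms conspire: the commutator $[\Transport,\partial_\gamma]$ is rewritten using \eqref{E:ENTHALPYEVOLUTION}--\eqref{E:VELOCITYEVOLUTIONWITHPROJECTION} to replace second derivatives of $\wavearray$ along $\fourvelocity$ by expressions involving at most one derivative; the coefficient $\speed^{-2}$ in front of the term $\upepsilon^{\alpha\beta\gamma\delta}\fourvelocity_\beta(\partial_\gamma \Lnenth)\vort_\delta$ in \eqref{E:MODIFIEDVORTVORT} is tuned so that the non-null derivative-quadratic term generated by $\Transport \uperpvort^\alpha(\vort)$ cancels; and the coefficients of the $(\Temp - \Temp_{;\Lnenth})$ terms kill the analogous non-null piece stemming from the entropy source in the velocity equation. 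Verifying these cancellations cleanly requires systematically expanding each $\partial^2 \wavearray$ using the wave equations just derived, grouping by antisymmetry of $\upepsilon$, and using $(\mink^{-1})^{\alpha\beta} = \Pi^{\alpha\beta} - \fourvelocity^\alpha \fourvelocity^\beta$ from \eqref{E:PROJECTTIONONTOORTHOGONALCOMPLEMENTOFFOURVELOCITY} to recognize null-form structure; this bookkeeping is the main burden of the proof in \cite{mDjS2019}.
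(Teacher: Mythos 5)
The paper does not reproduce a proof of Theorem~\ref{T:GEOMETRICWAVETRANSPORTDIVCURLFORMULATION} — it is cited from \cite{mDjS2019}, with only a one-sentence description of the strategy (``differentiating the equations in well-chosen directions and finding a myriad of cancellations and special structures''). Your proposal is a faithful and credibly detailed expansion of that description and is consistent with the cited source's approach: you correctly exhibit the acoustical wave operator via the identity $\normalizer (\hfour^{-1})^{\alpha\beta} = \speed^2\Pi^{\alpha\beta} - \fourvelocity^\alpha\fourvelocity^\beta$ implicit in \eqref{E:INVERSEACOUSTICALMETRIC} and \eqref{E:PROJECTTIONONTOORTHOGONALCOMPLEMENTOFFOURVELOCITY}, dispatch \eqref{E:DIVCURLFORMODIFIED} and \eqref{E:TRANSPORT} by $\upepsilon$-antisymmetry and the relativistic Kelvin/Helmholtz argument, and correctly locate the core difficulty in \eqref{E:TRANSPORTFORMODIFIED}, where the tuned coefficients in Def.\,\ref{D:MODIFIEDFLUIDVARIABLES} must absorb the second-derivative and non-null derivative-quadratic terms.
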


\subsubsection{Comments on the case of more general ambient spacetimes $(\spacetimemanifold,\gfour)$} \label{SS:FIRSTCOMMENTSONMOREGENERALSPACETIMES}
Here we make some brief comments on some of the adjustments that would be needed to extend the results of 
Sect.\,\ref{S:NEWFORMULATIONOFFLOW} to general $\gfour$-globally hyperbolic spacetimes $(\spacetimemanifold,\gfour)$. 
By $\gfour$-globally hyperbolic, we mean that there exists a $\gfour$-Cauchy hypersurface 
$\Sigma_0 \subset \spacetimemanifold$. 
By a ``$\gfour$-Cauchy hypersurface,'' 
we mean a co-dimension one submanifold $\Sigma_0$ such that every inextendible 
$\gfour$-causal curve in $\spacetimemanifold$ intersects $\Sigma_0$ exactly once. It is well known \cites{rG1970,bernal2006further} that 
$\gfour$-globally hyperbolic spacetimes admit a smooth time function $t \colon \spacetimemanifold \to \R$. 
By a ``time function,'' we mean that 
$t^{-1}(0) = \Sigma_0$, 
that the level sets $\Sigma_{t'} := \{ t \equiv t' \}$ are $\gfour$-spacelike Cauchy hypersurfaces, 
that $\spacetimemanifold = \cup_t \Sigma_t$, and 
that $t$ has a past-directed timelike gradient $\nabla^{\#} t := \gfour^{-1} \cdot \mathrm{d} t$ (which is everywhere normal to $\Sigma_t$).
Here, $\nabla^{\#} t$ denotes the vectorfield equal to the dual of $\nabla t$ with respect to $\gfour$.

First, we note that 
the extra terms described in Remark~\ref{R:ADDITIONALTERMS} would
lead to the presence of extra terms in the system \eqref{E:WAVE}--\eqref{E:DIVCURLFORMODIFIED}.
However, as we described in Remark~\ref{R:ADDITIONALTERMS}, in the context of
the study of shock formation, in which fluid gradients are large, the extra terms
would be small relative to the shock-driving Riccati-type fluid terms
(which are ``hidden'' in $\square_{\hfour(\wavearray)} \Psi$; see Remark~\ref{R:SHOCKDRIVINGTERMSHIDDEN}).

Next, we note that under the above assumptions, we can define a lapse function $\Phi$ by $\Phi : = (-\gfour(\nabla t, \nabla t))^{-1/2}$.
We can then define the vectorfield $T := - \Phi^2 \nabla^{\#} t$, which is $\gfour$-normal to $\Sigma_t$ and satisfies $T t = 1$. 
Hence, given a coordinate system $(x^1,x^2,x^3)$ on $\Sigma_0$, 
it can be propagated to any $\Sigma_t$ by the flow of $T$, 
thereby yielding a coordinate system $(t,x^1,x^2,x^3)$ on spacetime. 
In this setup, the ambient spacetime metric can be decomposed as follows:
\begin{align} \label{E:AMBIENTSPACETIMEMETRICDECOMPOSITION}
	\gfour & = - \Phi^2 \mathrm{d}t \otimes \mathrm{d}t + g, 
\end{align}
where $g = g(t)$ is the Riemannian metric on $\Sigma_t$ induced by $\gfour$, i.e.,
the first fundamental form of $\Sigma_t$ with respect to $\gfour$.
Given $\gfour$ and the relativistic fluid, the corresponding acoustical metric $\hfour$ is defined by
\eqref{E:ACOUSTICALMETRICGENERALAMBINETSPACETIME}.

The upshot of introducing a time function $t$ is that it allows for many of the constructions from this section to easily be generalized. 
For example, using \eqref{E:FLUIDFOURVELOCITYNORMALIZED} and \eqref{E:AMBIENTSPACETIMEMETRICDECOMPOSITION}, one sees that 
$\fourvelocity^0 := \fourvelocity \cdot \nabla t := \fourvelocity^{\alpha} \partial_{\alpha} t  > 0$ 
and hence one could naturally define the analog of \eqref{E:NORMALIZEDMATERIALDERIVATIVE} 
by $\Transport := \frac{1}{\fourvelocity^0} \fourvelocity$ and 
the analog of \eqref{E:SIGMATNORMAL} by $\Sigmatnormal := - \hfour^{-1} \cdot \mathrm{d} t$.

\section{Some prior works on shocks}
\label{S:PRIORWORKSSHOCKFORMATION}
In this section, we discuss some prior works on shocks. 
There is a vast literature, and we cannot hope to mention all of it here.
We have aimed to discuss works that provide further context and motivation for
Sects.\,\ref{S:EQUATIONSANDLOCALWELLPOSEDNESS} and \ref{S:NEWFORMULATIONOFFLOW}
and to help prepare the reader for Sects.\,\ref{S:SHOCKFORMATIONAWAYFROMSYMMETRY} 
and \ref{S:OPENPROBLEMS}.

\subsection{Comments on the $1D$ theory}
\label{SS:1DSHOCKFORMATION}
In Riemann's foundational paper \cite{bR1860}, he developed the method of Riemann invariants and combined it with the method of
characteristics to prove that for the $1D$ non-relativistic compressible Euler equations, there exist large sets of initial data such that a shock forms in the sense that the fluid's gradient blows up in finite time, though the fluid variables themselves remain bounded.
Once one has formulated the flow in terms of Riemann invariants, the proof of the blowup is based on differentiating the equations 
to obtain a coupled system of Riccati-type equations along the characteristics. This can be viewed as a
multi-directional (i.e., there are two characteristic directions in $1D$ isentropic compressible Euler flow) 
version of the blowup of $\partial_x \Psi$
that often occurs in solutions to Burgers' equation $\partial_t \Psi(t,x) + \Psi \partial_x \Psi(t,x) = 0$;
in Sect.\,\ref{SS:1DRICCATIBLOWUP}, we discussed this approach in more detail in the case of 
simple isentropic plane-symmetric solutions to the relativistic Euler equations.
Lax later generalized \cite{pL1964} Riemann's results to $2 \times 2$ genuinely nonlinear hyperbolic systems in $1D$.
John later extended \cite{fJ1974} Lax's blowup-results to apply to some systems in $1D$ with more than two unknowns.
Christodoulou and Raoul-Perez used a sharpened, more geometric version 
(in the style of our proof of Theorem~\ref{T:MAINTHEOREM1DSINGULARBOUNDARYANDCREASE})
of Lax's approach to give a detailed proof of shock formation in $1D$ for electromagnetic waves in nonlinear crystals \cite{dCdRP2016}.
Related results have recently been proved for plane-symmetric solutions to the equations of elasticity 
\cite{xAhCsY2020} and the equations of non-relativistic magnetohydrodynamics \cite{xAhCsY2021},
and in these works, the plane-symmetric blowup was also used to prove ill-posedness results in $3D$;
see also the survey article \cite{xAhCsY2022}.
For the relativistic Euler equations in $1D$, stable blowup-results 
have been proved for large sets of initial data 
(which are allowed to be large) for isentropic flows \cite{nAsZ2021} as well as flows with dynamic entropy 
\cite{nAtBRzT2023}.
Theorem~\ref{T:MAINTHEOREM1DSINGULARBOUNDARYANDCREASE} provides a sharpened version of these kind of results for 
simple isentropic plane-symmetric solutions to the relativistic Euler equations; by ``sharpened,'' we mean that,
different from the works mentioned above, 
Theorem~\ref{T:MAINTHEOREM1DSINGULARBOUNDARYANDCREASE} follows the solution all the way up to the singular boundary
and Cauchy horizon, rather than just up to the time of first blowup.

Even though shocks can form for large classes of hyperbolic PDEs, signifying the end of the classical evolution,
ideally, one would like to develop a theory of weak
solutions that can accommodate the formation of shocks and their subsequent interactions. In $1D$, 
a robust such theory exists, at least for strictly hyperbolic systems and initial data that are small in a suitable
bounded variation (BV) function space. We refer to \cite{cD2010} for a comprehensive discussion of the $1D$ theory.
In multi-dimensions, much less is known. A key obstacle is that it is known,
thanks to the important paper \cite{jR1986} by Rauch,
that quasilinear hyperbolic PDEs are typically ill-posed in BV spaces. In fact, for most 
multi-dimensional quasilinear hyperbolic systems
of interest, even local well-posedness is known \emph{only} in $L^2$-type Sobolev spaces. In practice, this means that to prove
shock formation in multi-dimensions, one must commute the equations and derive energy estimates up the singularity;
this turns out to be a difficult task, for reasons we discuss in detail in Sect.\,\ref{S:SHOCKFORMATIONAWAYFROMSYMMETRY}.

\subsection{Blowup-results in multi-dimensions without symmetry: proofs by contradiction}
\label{SS:SIDERISETC}
In light of the previous paragraph, 
it is perhaps not surprising that the first blowup-results without symmetry assumptions
in fluid mechanics were non-constructive.
Specifically, in \cite{tS1985}, Sideris studied the $3D$ compressible Euler equations under equations of state that satisfy a convexity assumption. For open sets of initial data (including near-constant-state-data) without symmetry or irrotationality assumptions, 
he used arguments based on integrated quantities to give a proof by contradiction that a smooth solution cannot exist for all time.
For the $3D$ relativistic Euler equations and relativistic Euler--Maxwell equations, Guo and Tahvildar-Zadeh proved similar results 
\cite{yGsTZ1998} for open sets of initial data that are \emph{large} perturbations of constant-states with positive density.
In contrast to the present work, \cites{tS1985,yGsTZ1998} relied on convexity assumptions on the equation of state.

\subsection{Proofs of shock formation in multi-dimensions via nonlinear geometric optics}
\label{SS:CONSTRUCTIVEMULTIDIMENSIONAL}
Alinhac \cites{sA1999a,sA1999b,sA2001b,sA2002} 
was the first to prove stable shock formation without symmetry assumptions
for scalar quasilinear wave equations in $2D$ and $3D$ 
of the form:
\begin{align} \label{E:ALINHACQUASLINEARWAVEEQUATION}
	(\hfour^{-1})^{\kappa \lambda}(\pmb{\partial} \Phi)
	\partial_{\kappa} \partial_{\lambda} \Phi
	& = 0
\end{align}
whenever the nonlinearities in \eqref{E:ALINHACQUASLINEARWAVEEQUATION} fail to satisfy the null condition.
In \eqref{E:ALINHACQUASLINEARWAVEEQUATION}, $\hfour$ is a Lorentzian metric whose components 
$\hfour_{\alpha \beta}$ in the standard coordinate system are prescribed functions of $\pmb{\partial} \Phi$,
i.e., functions of the spacetime-gradient of $\Phi$ in the standard coordinate system.
Alinhac's proof applied to open sets of initial data that satisfy a non-degeneracy assumption that guaranteed
that within the constant-time hypersurface of first blowup, the singularity is an isolated point.
His non-degeneracy assumption guaranteed that the singular boundary $\mathcal{B}$ and its past boundary $\crease$
are strictly convex, as in Fig.\,\ref{F:STRICTLYCONVEX}, and in the context of the figure, his approach allowed
him to follow the solution up to the flat constant-time hypersurface that contains the point $b_{\textnormal{lowest}}$;
in Sect.\,\ref{SS:MAXIMALDEVELOPMENT}, we discuss $\mathcal{B}$ and $\crease$ in more detail.
Alinhac's main results showed that at $b_{\textnormal{lowest}}$, a shock forms in the sense that
$\pmb{\partial}^2 \Phi$ blows up while $\pmb{\partial}^{\leq 1} \Phi$ remains bounded.
His proof relied on nonlinear geometric optics 
(i.e., eikonal functions and the method of characteristics, 
as in Sects.\,\ref{SS:1DSIMPLEPLANESYMMETRYANDACOUSTICGEOMETRYSETUP} and \ref{SS:NONLINEARGEOMETRICOPTICS}),
and his proof showed that relative to a \emph{geometric coordinate system} constructed out of the eikonal function,
$\Phi$ and its partial derivatives remain bounded, except possibly at the high derivative levels.
To close the energy estimates in the geometric coordinate system without derivative loss,
he relied on a Nash--Moser iteration scheme that necessarily terminated at the time of first blowup.

\begin{figure}[H]
 \centering
  	\begin{overpic}[scale=.4, grid = false, tics=5, trim= 0cm 0cm 0cm -1.5cm, clip]{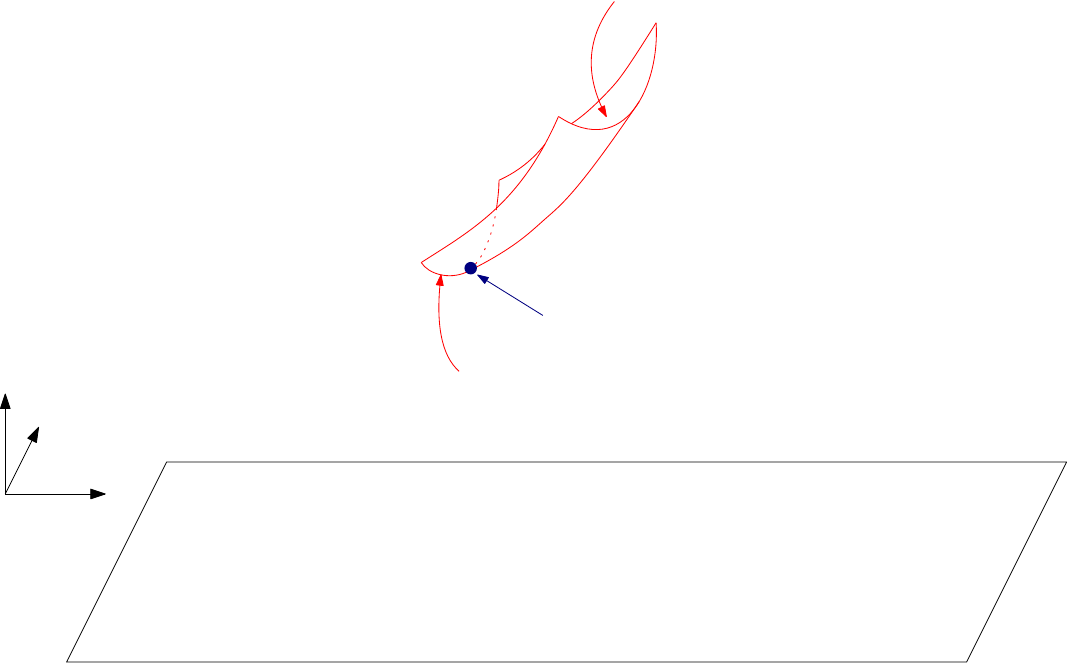}
			\put (51,8) {$\Sigma_0$}
			\put (4,20) {$(x^2,x^3)$}
			\put (-2,21) {$t$}
			\put (3,11) {$x^1$}
			\put (38,23) {$\crease $}
			\put (55.5,63) {$\mathcal{B}$}
			\put (51,30) {$b_{\textnormal{lowest}}$}
		\end{overpic}
		\caption{Strictly convex crease and singular boundary}
\label{F:STRICTLYCONVEX}
\end{figure}

In Christodoulou's breakthrough monograph \cite{dC2007},
he proved a substantially sharper result for the $3D$ relativistic Euler equations 
in regions where the solution is irrotational and isentropic. The dynamics in such regions
is described by a scalar quasilinear wave equation for a potential function $\Phi$ 
(i.e., $\pmb{\partial} \Phi$ determines the four-velocity and proper energy density) 
that is of the form \eqref{E:ALINHACQUASLINEARWAVEEQUATION}
and that is invariant under the Poincar\'{e} group.
Like Alinhac, Christodoulou used an eikonal function in his proof, that is, a solution $\eik$
to the eikonal equation:
\begin{align} \label{E:WAVEEIKONAL}
	(\hfour^{-1})^{\kappa \lambda}(\pmb{\partial} \Phi)
	\partial_{\kappa} \eik \partial_{\lambda} \eik
	& = 0.
\end{align}
The level sets of $\eik$ are characteristic for equation \eqref{E:ALINHACQUASLINEARWAVEEQUATION}, i.e., 
hypersurfaces that are null with respect to $\hfour$.
Also like Alinhac, Christodoulou 
constructed a geometric coordinate system tied to $\eik$ and proved that relative to the geometric coordinates,
the solution remains smooth, except possibly at the high derivative levels. 
He showed that for open sets of initial data, the shock singularity 
--
i.e., the blowing-up of the second-order Minkowski-rectangular derivatives
$\pmb{\partial}^2 \Phi$ while $\pmb{\partial}^{\leq 1} \Phi$ remains bounded 
--
is tied to the vanishing of the \emph{inverse foliation density} $\upmu$,
defined by:
\begin{align} \label{E:WAVEINVERSEFOLIATIONDENSITY}
	\upmu
	& := - \frac{1}{(\hfour^{-1})^{\kappa \lambda}(\pmb{\partial} \Phi) \partial_{\kappa} \eik \partial_{\lambda} t} = 
		- \frac{1}{(\hfour^{-1})^{\kappa 0}(\pmb{\partial} \Phi) \partial_{\kappa} \eik},
\end{align}
where $t$ is the standard time coordinate on Minkowski spacetime.
The vanishing of $\upmu$ signifies the infinite density of the level-sets of $\eik$ in Minkowski-spacetime, i.e.,
the infinite density of the characteristics and the blowup of $\pmb{\partial} \eik$,
as in Theorem~\ref{T:MAINTHEOREM1DSINGULARBOUNDARYANDCREASE}.
Motivated by techniques used in his joint proof of the stability of Minkowski spacetime with Klainerman \cite{dCsK1993},
Christodoulou was able to avoid Nash--Moser estimates. Rather, to avoid derivative loss in the eikonal function 
and to close the proof, he used a \emph{geometric energy method} for wave equations and $\eik$;
see Sect.\,\ref{S:SHOCKFORMATIONAWAYFROMSYMMETRY}
for further discussion of a related geometric energy method
in the context of the $3D$ relativistic Euler equations with vorticity and entropy.

The results of \cite{dC2007} go beyond Alinhac's in several crucial ways.
First, for smooth compactly supported perturbations of non-vacuum constant-state initial data, Christodoulou proved that the solution
remains global unless $\upmu$ vanishes, i.e., he proved a conditional global existence result. Second, his geometric energy method allowed
for commutator and multiplier vectorfields with time and radial weights, which allowed him to simultaneously study the 
interaction between the dispersive tendency of waves and the fact that nonlinearities can focus waves and cause singularities.
Third, for open sets of initial data that do not have to satisfy Alinhac's non-degeneracy condition, Christodoulou proved that
$\upmu$ vanishes in finite time, leading to the blowup of $\pmb{\partial}^2 \Phi$. In particular, even \emph{without} 
strict convexity of the type shown in Fig.\,\ref{F:STRICTLYCONVEX}, Christodoulou was able show that there is at least one singular point.
Finally, for open sets of initial data that
satisfy a non-degeneracy assumption that is weaker than Alinhac's, Christodoulou was able to follow the solution beyond the time
of first blowup and to reveal a portion of the $\hfour$-MGHD, 
up to the boundary. The portion of the boundary of the $\hfour$-MGHD revealed by \cite{dC2007}
was not described explicitly and thus we cannot make straightforward comparisons
with Theorem~\ref{T:MAINTHEOREM1DSINGULARBOUNDARYANDCREASE}.
The weaker notion of non-degeneracy used by Christodoulou coincides with our notion of 
transversal convexity, which we describe in Sect.\,\ref{SS:MAXIMALDEVELOPMENT}.

Christodoulou's results \cite{dC2007} have been extended and applied to other equations, including:
\begin{itemize}
	\item Under the assumption of an irrotational and isentropic flow,
		the (non-relativistic) $3D$ compressible Euler equations 
		were handled in \cite{dCsM2014}.
	\item A larger class of wave equations (e.g., without the assumption of Poincar\'{e} invariance)
		was treated in \cites{sMpY2017,jS2016b}. See also the survey article \cite{gHsKjSwW2016}, which is centered around
		the works \cites{dC2007,jS2016b}.
	\item Solution regimes that are different than the small, compactly supported data regime were treated in
		\cites{gHsKjSwW2016,sMpY2017}.
	\item Solutions that exist classically precisely on a past-infinite half-slab were studied in \cite{sM2018}.
	\item Some systems in $1D$ involving multiple speeds of propagation were treated in \cites{dCdRP2016,xAhCsY2020,xAhCsY2021}.
	\item Other multiple speed systems in multi-dimensions were treated in \cites{jS2018b,jS2019a}.
	\item $2D$ compressible Euler solutions with vorticity were handled in \cite{jLjS2018} with the help of
		the new formulation of the flow derived in \cite{jLjS2020a},
		which yields a non-relativistic analog of Theorem~\ref{T:GEOMETRICWAVETRANSPORTDIVCURLFORMULATION} for barotropic equations
		of state, which by definition are such that the pressure is a function of the density.
	\item $3D$ compressible Euler solutions with vorticity and entropy were treated in \cite{jLjS2021}
		with the help of the new formulation of the flow derived in \cite{jS2019c}, 
		which extends the results of \cite{jLjS2020a} to general equations of state
		in which the pressure is a function of the density and entropy.
	\item For the $3D$ compressible Euler equations with vorticity and entropy,
		a complete description of open sets of solutions up to a neighborhood of the boundary of the $\hfour$-MGHD
		is provided by our series \cites{lAjS2020,lAjS2022,lAjS20XX}, which collectively yield a
		multi-dimensional version of Theorem~\ref{T:MAINTHEOREM1DSINGULARBOUNDARYANDCREASE} for the non-relativistic equations.
		See also Sect.\,\ref{SS:MAXIMALDEVELOPMENT}.
\end{itemize}

\subsection{A different approach to multi-dimensional singularity formation}
\label{SS:DIFFERENTAPPROACH}
In the works \cites{tBsSvV2019a,tBsSvV2020}, the authors developed a new approach for proving gradient-blowup for
for solutions the $3D$ compressible Euler equations under adiabatic equations of state with vorticity 
(and entropy in \cite{tBsSvV2020}). Instead of using nonlinear geometric optics, the authors used 
modulation parameters to show that for open sets of initial data with large gradients, the 
solution forms a gradient singularity in finite time that is a perturbation of a self-similar
Burgers-type shock. The approach
allows one to follow the solution to the time of first blowup, but not further. 
It applies to non-degenerate initial data such that within the constant-time hypersurface of first blowup, the singularity occurs
at an isolated point. That is, the approach applies when the singular boundary $\mathcal{B}$ and its past boundary $\crease$
are strictly convex, as in Fig.\,\ref{F:STRICTLYCONVEX}, and in the context of the figure, it allows one
to follow the solution up to the flat constant-time hypersurface that contains the point 
$b_{\textnormal{lowest}}$, where the singularity occurs.
Such singularities can be viewed as fluid analogs of the ones studied by Alinhac in his
aforementioned works \cites{sA1999a,sA1999b,sA2001b,sA2002} on quasilinear wave equations.
See also the precursor work \cite{tBsSvV2022}, in which the authors studied the same problem 
for the $2D$ compressible Euler equations in azimuthal symmetry, and the work \cite{tBsI2022},
which, in the same symmetry class, constructs unstable self-similar solutions whose cusp-like
behavior at the singularity is non-generic.

Self-similar blowup is a phenomenon that occurs in many other PDEs besides those of fluid mechanics.
In particular, singularity formation modeled on a self-similar
Burgers'-type shock has been proved for various
\emph{non-hyperbolic PDEs} \cites{qYlZ2021,cCteGnM2022,rY2021,krCrcMVgP2021,sjOfP2021,cCteGnM2021,cCteGsInM2022}.

\subsection{Maximal classical $\hfour$-globally hyperbolic developments}
\label{SS:MAXIMALDEVELOPMENT}
It is of mathematical and physical interest to study shock-forming solutions in a region of classical existence
that goes beyond the constant-time hypersurface of first blowup, e.g., in a region larger than the one studied by
Alinhac in \cites{sA1999a,sA1999b,sA2001b,sA2002},
as we described in Sect.\,\ref{SS:CONSTRUCTIVEMULTIDIMENSIONAL}.
The holy grail object in this vein is the
maximal (classical) $\hfour$-globally hyperbolic development
($\hfour$-MGHD), which is what we studied in Theorem~\ref{T:MAINTHEOREM1DSINGULARBOUNDARYANDCREASE}
within the class of simple isentropic plane-symmetric solutions to the relativistic Euler equations.
Recall that, roughly speaking, the $\hfour$-MGHD the largest possible classical solution + $\hfour$-globally hyperbolic region
that is uniquely determined by the initial data.
In particular, the $\hfour$-MGHD can have a complicated boundary 
that includes points lying to the future of the constant-time hypersurface of first blowup,
as in Figs.\,\ref{F:MINKOWSKIRECTANGULAR1DPLANESYMMETRYCAUCHYHORANDSINGULARCURVE}
and \ref{F:CONJECTUREDMAXDEVELOPMENTINMINKRECT}.
Here, ``$\hfour$-globally hyperbolic'' means that the development contains a Cauchy hypersurface, 
i.e., a surface such that every inextendible $\hfour$-causal curve 
(where $\hfour$ is the acoustical metric) in the development intersects it.
In the important work \cite{fEhRjS2019}, the authors showed that for general quasilinear hyperbolic PDEs, 
\emph{one cannot ensure uniqueness of the MGHD}
until one constructs it and shows that it enjoys some crucial structural properties,
which are global in nature.

In our series \cites{lAjS2020,lAjS2022,lAjS20XX}, for open sets of initial data for the $3D$ compressible Euler equations
leading to shock-forming solutions, we constructed a large (though bounded) portion of the $\hfour$-MGHD, 
up to the boundary, i.e., in the non-relativistic case without symmetry assumptions,
we proved an analog of Theorem~\ref{T:MAINTHEOREM1DSINGULARBOUNDARYANDCREASE}.
In particular, these are the first results to fully justify
Fig.\,\ref{F:CONJECTUREDMAXDEVELOPMENTINMINKRECT} for open sets of solutions to the $3D$ compressible Euler equations
with vorticity and dynamic entropy. 
Although these works are of mathematical interest in themselves, they are also fundamental for setting up
the shock development problem, as we describe in Sect.\,\ref{SS:SHOCKDEVELOPMENT}.

Our results \cites{lAjS2020,lAjS2022,lAjS20XX} exhibit a localized version of the crucial property that the $\hfour$-MGHD 
``lies on one side of its boundary,'' as 
is shown in Fig.\,\ref{F:CONJECTUREDMAXDEVELOPMENTINMINKRECT}.
In \cite{fEhRjS2019}, the authors showed, roughly, 
that if this ``one-sidedness'' property holds globally, then the $\hfour$-MGHD is unique.
They also showed, for a specific hyperbolic PDE and specific initial data, that \emph{uniqueness of the MGHD fails} and the property fails!
As in Theorem~\ref{T:MAINTHEOREM1DSINGULARBOUNDARYANDCREASE}, 
the boundary portion we construct in \cites{lAjS2020,lAjS2022,lAjS20XX} has two kinds of components:
\begin{itemize}
	\item A singular boundary $\mathcal{B}$, constructed in \cite{lAjS2020},
	along which the fluid's gradient blows up, the past boundary of which 
	is a $2$-dimensional acoustically spacelike\footnote{Acoustically spacelike means spacelike with respect to
	the acoustical metric of the $3D$ compressible Euler equations. Acoustically null means null with respect to the
	acoustical metric. \label{FN:ACOUSTICALLYSPACELIKEANDNULL}} 
	submanifold known as \emph{the crease} and denoted by ``$\partial_- \mathcal{B}$'' in Fig.\,\ref{F:CONJECTUREDMAXDEVELOPMENTINMINKRECT}.
\item A Cauchy horizon, constructed in \cite{lAjS20XX}
	and denoted by ``$\cauchyhor$'' in Fig.\,\ref{F:CONJECTUREDMAXDEVELOPMENTINMINKRECT},
	which is an acoustically null
	hypersurface that emanates from the crease and along which the solution extends smoothly (except at the crease).
	The crease plays the role of the \emph{true initial singularity} in shock-forming solutions.
\end{itemize}

Our analysis in \cites{lAjS2022,lAjS20XX} relies on an assumption of \emph{transversal convexity},
which is a weak, stable form of convexity 
that can be read off the initial data and propagated by the flow.
Roughly, transversal convexity means that when
the inverse foliation density 
$\upmu$ 
(which in the context of \cites{lAjS2022,lAjS20XX}
is an analog of \eqref{E:WAVEINVERSEFOLIATIONDENSITY} or \eqref{E:INVERSEFOLIATIONDENSITY} for the $3D$ compressible Euler equations)
is small, $\upmu$ has a positive second derivative in a direction transversal to the level sets of the eikonal function $\eik$.
For the simple isentropic plane-symmetric solutions 
we studied in Sect.\,\ref{S:1DMAXIMALDEVELOPMENT}, transversal convexity is ensured by the estimate 
\eqref{AE:PSMUSECONDDERIVATIVETAYLOREXPANSIONININTERESTINGREGION} and is manifested in 
Fig.\,\ref{F:MINKOWSKIRECTANGULAR1DPLANESYMMETRYCAUCHYHORANDSINGULARCURVE} by the upwards-bending
nature of $\Upsilon(\singularcurve_{[-\interestingu,0]})$. We highlight the importance of transversal
convexity with the following remarks.

\begin{quote}
		Even for simple isentropic plane-symmetric solutions, in the absence of transversal convexity, the
		qualitative structure of the singularity can be radically altered.
		For example, if instead of the estimate \eqref{AE:PSMUSECONDDERIVATIVETAYLOREXPANSIONININTERESTINGREGION}
		there held $\muX \muX \upmu \equiv 0$ at the first singularity, 
		then in the $(t,x^1)$-coordinates picture, 
		an entire continuum of characteristics would fold into a single point,
		and the change of variables map $\Upsilon$ from Theorem~\ref{T:MAINTHEOREM1DSINGULARBOUNDARYANDCREASE}
		would dramatically fail to be an injection. This would be a serious obstacle to even the local
		well-posedness of the shock development problem, which we describe in Sect.\,\ref{SS:SHOCKDEVELOPMENT}.
		This is in contrast to the ``favorable'' situation shown in
		Fig.\,\ref{F:MINKOWSKIRECTANGULAR1DPLANESYMMETRYCAUCHYHORANDSINGULARCURVE}, in which, thanks to transversal convexity,
		the characteristics \emph{graze} the singular boundary $\Upsilon(\singularcurve_{[-\interestingu,0]})$,
		but distinct characteristics do not actually intersect along $\Upsilon(\singularcurve_{[-\interestingu,0]})$.
\end{quote}

In the context of the $3D$ solutions depicted in
Fig.\,\ref{F:CONJECTUREDMAXDEVELOPMENTINMINKRECT}, the transversal convexity is manifested by
$\mathcal{B}$ being ``convex in the $x^1$-direction'' (i.e., upwards bending in the $x^1$ direction)
but \emph{not necessarily convex in the $(x^2,x^3)$-directions}. Note that if we view plane-symmetric solutions as solutions in $3D$
with symmetry, then the corresponding singular boundary $\mathcal{B}$ is not strictly convex because of the ``symmetric directions''
$(x^2,x^3)$. 
%Similarly, for general (asymmetric) perturbations of plane-symmetric solutions 
%$\mathcal{B}$ will fail to be strictly convex. A similar remark holds in any other symmetry class. 
%Hence, it is important to develop techniques that can be applied in the absence of strict convexity;
%this e

Of the works on shock formation cited in Sect.\,\ref{SS:CONSTRUCTIVEMULTIDIMENSIONAL}, Christodoulou's
monograph \cite{dC2007} on $3D$ irrotational and isentropic relativistic fluids
is the only one that follows the solution beyond the time of first blowup.
He used eikonal functions and foliations of spacetime by $\hfour$-spacelike planes $\Sigma$ 
(more precisely, the $\Sigma$ were coordinate planes with respect to the standard Minkowski-rectangular coordinates)
to reveal an implicit portion of the boundary of the $\hfour$-MGHD, where, for example,
the portion of the crease $\crease$ that was revealed is such that the 
crease had to lie to the future of any $\Sigma$ that is tangent to it.
For initial data such that the crease is a strictly convex subset of Minkowski-rectangular coordinate space 
(as in Fig.\,\ref{F:STRICTLYCONVEX}),
his approach can be used to study the entire crease and a neighborhood of the singular boundary
that emerges from it. However, strict convexity does not hold for all shock-forming solutions. In particular,
as we noted in the previous paragraph,
strict convexity fails for symmetric solutions (roughly, strict convexity will fail in the directions of symmetry) 
and general small perturbations of them.
In \cite{lAjS2022}, under the weaker assumption of transversal convexity, we construct the entire crease
and a neighborhood of the singular boundary that contains it for solutions with vorticity and dynamic entropy.
To handle the lack of strict convexity, we dynamically construct a new foliation of spacetime
by ``rough acoustically spacelike hypersurfaces,'' 
depicted in Fig.\,\ref{F:INTERESTINGREGIONMAINRESULTSCARTESIAN},
that are precisely adapted to the shape of the crease and singular boundary.
We refer to the foliations as ``rough'' because they are less differentiable than the fluid solution, 
which is a key difficulty that has to be overcome in the PDE analysis.
To handle the presence of vorticity and dynamic entropy, we combine the special structure of the 
geometric wave-transport-div-curl formulation of the flow derived in \cite{jS2019c} with upgraded versions of the integral
identities derived in \cite{lAjS2020}, which in total yield a regularity theory for the vorticity and entropy that
is adapted to the rough foliations 
(and hence the shape of the singular boundary)
and that allows one to treat the fluid equations as a perturbation of a wave equation system.
In particular, as in \cites{jLjS2018,jLjS2021}, the framework yields (and requires) one extra degree of
differentiability for the vorticity and entropy compared to standard estimates.
See also Sect.\,\ref{SSS:TOPORDERELLIPTICVORTICITYANDENTROPY}, in which we discuss
the extra differentiability of the vorticity and entropy in the context of the
$3D$ relativistic Euler equations.

\begin{center}
	\begin{figure}[ht]  
		\begin{overpic}[scale=.6, grid = false, tics=5, trim=-.5cm -1cm -1cm -.5cm, clip]{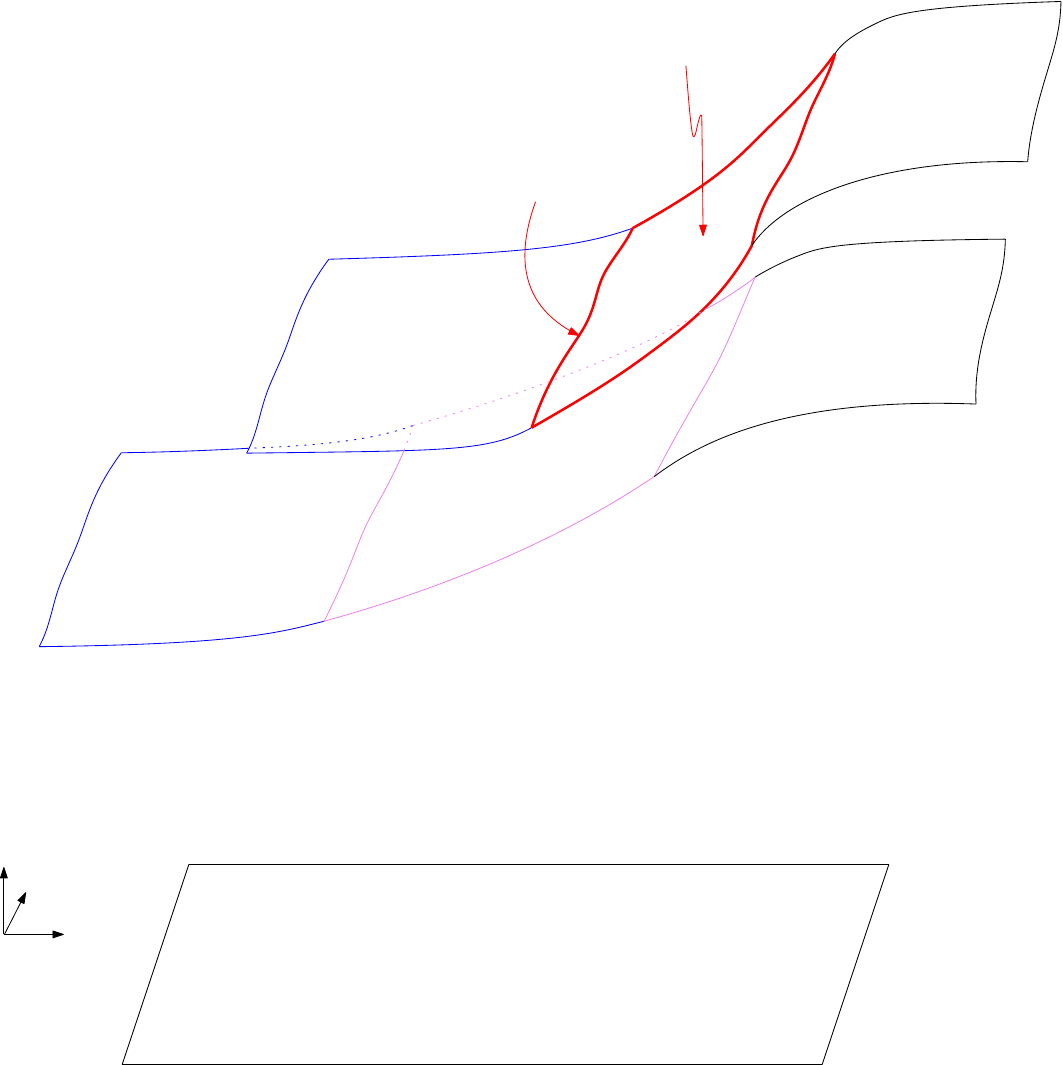}
			\put (46,13) {$\Sigma_0$}
			\put (5,20) {$(x^2,x^3)$}
			\put (1,20) {$t$}
			\put (4.5,13) {$x^1$}
			\put (45,81) {$\Upsilon(\crease)$}
			\put (60,92.5) {$\Upsilon(\mathcal{B})$}
	\end{overpic}
		\caption{Rough foliations and the singular boundary from \cite{lAjS2022}, depicted in Cartesian coordinate space}
	\label{F:INTERESTINGREGIONMAINRESULTSCARTESIAN}
	\end{figure}
\end{center}

\subsection{Shock development problem}
\label{SS:SHOCKDEVELOPMENT}
In Majda's celebrated works \cites{aM1981,aM1983a}, he solved the \emph{shock front problem} for
open sets of solutions to the non-relativistic $3D$ compressible Euler equations. Roughly, this means that he considered
piecewise smooth initial data that have a jump discontinuity across a smooth hypersurface, and he then constructed a local weak solution
to the equations and a shock hypersurface such that the solution is piecewise smooth on either side of 
shock hypersurface but has a jump discontinuity across it and satisfies the Rankine--Hugoniot condition
and an entropy-type condition.

The \emph{shock development problem} is closely related to the shock front problem
but is laden with additional technical difficulties. 
It is the problem of describing how initially smooth solutions form a ``first singularity''
(which is the crease, as described in Sect.\,\ref{SS:MAXIMALDEVELOPMENT}), 
from which emerges a shock hypersurface 
(which is not known in advance, as in the shock front problem),
and describing the transition of the solution from classical to weak 
such that in the ``weak solution region,'' the solution jumps across the shock hypersurface
and satisfies the Rankine--Hugoniot jump condition and an entropy-type condition. In particular,
in the shock development problem, there is no jump discontinuity in the initial data; rather, the jump
emerges dynamically, starting from the crease. 
In problems that are effectively one-dimensional
(and thus energy estimates can be avoided), the shock development
problem has been solved for various hyperbolic PDEs
\cites{mL1994,sClD2001,yHzL2022a,dCaL2016,yHzL2022b,tBtdDsSvV2022}.

The only solution to a multi-dimensional shock development problem without symmetry assumptions was provided by
Christodoulou's recent breakthrough monograph \cite{dC2019},
in which he used nonlinear geometric optics 
(i.e., a pair of acoustic eikonal functions, one ingoing and one outgoing)
to solve the \emph{restricted shock development problem} 
in an arbitrary number of spatial dimensions for the compressible Euler equations and the
relativistic Euler equations. Roughly, the word ``restricted'' means that he solved an idealized problem in which he
ignored the jump in entropy and vorticity across the shock hypersurface,
thereby producing a weak solution to a hyperbolic PDE system that approximates the true one;
in the real problem, which remains unsolved, the vorticity and entropy must jump across the shock hypersurface.
Compared to the $1D$ case, the main new difficulty in multiple spatial dimensions is that one must derive
energy estimates, which are degenerate near the first singularity for reasons related to the energy degeneracies
that occur in the shock formation problem, as we discuss in Sect.\,\ref{SSS:WAVEENERGYESTIMATEHIERARCHY}.

We highlight the following key issue, which is a primary motivating factor for our construction
of large portions of the $\hfour$-MGHD in \cites{lAjS2020,lAjS2022,lAjS20XX}:

\begin{quote}
The setup of the shock development problem 
(see \cites{dCaL2016,dC2019}) is such that the data for
it are an $\hfour$-MGHD\footnote{The shock development problem is local, so to locally study the solution, one only needs 
as initial data a portion of the $\hfour$-MGHD, 
specifically a portion that contains a crease and a portion of the singular boundary and Cauchy horizon that emerge from
it, which we constructed in \cites{lAjS2020,lAjS2022,lAjS20XX}. \label{FN:SHOCKDEVELOPMENTPROBLEMISLOCAL}} 
of a shock-forming solution launched by smooth initial data, with a precise description of the 
gradient-blowup along a singular boundary, a precise description of the classical solution's regular behavior 
along a Cauchy horizon $\cauchyhor$, and a sharp description of the structure of the ``first singularity,'' which
in Sect.\,\ref{SS:MAXIMALDEVELOPMENT} (also in Theorem~\ref{T:MAINTHEOREM1DSINGULARBOUNDARYANDCREASE}),
we referred to as ``the crease'' and which we denote by ``$\crease$'' in Fig.\,\ref{F:CONJECTUREDMAXDEVELOPMENTINMINKRECT}.
Crucially,
under the
``transversal convexity'' assumption described in Sect.\,\ref{SS:MAXIMALDEVELOPMENT},
$\crease$ is a co-dimension $2$, 
$\hfour$-spacelike submanifold equal to the intersection
of the Cauchy horizon and the singular boundary; without this structure, it is not even clear whether the
shock development problem is well-posed. These structures, 
as well as the full state of the fluid up to $\crease, \, \cauchyhor$,  
were \emph{posited} as data in \cites{dCaL2016,dC2019}. 
In $1D$, Theorem~\ref{T:MAINTHEOREM1DSINGULARBOUNDARYANDCREASE} provides the first rigorous \emph{construction} of these qualitative and quantitative aspects of the data for the shock development problem, starting from smooth initial conditions on a spacelike
Cauchy hypersurface. 
For the non-relativistic Euler equations in $3D$ and without symmetry assumptions, 
our works \cites{lAjS2020, lAjS2022, lAjS20XX} collectively provide an analog of Theorem~\ref{T:MAINTHEOREM1DSINGULARBOUNDARYANDCREASE}. 
\end{quote}

\subsection{Rarefaction waves}
\label{SS:RAREFACTION}
In his foundational papers \cites{sA1989a,sA1989b}, for a large class of 
multi-dimensional hyperbolic systems that includes scalar
conservation laws and the compressible Euler equations as special cases, 
Alinhac proved local existence and uniqueness for rarefaction wave solutions.
Multi-dimensional rarefaction waves are analogs of a class of solutions to the well-known
Riemann problem in $1D$, 
in which the initial data are piecewise smooth and discontinuous, 
and the initial discontinuity is immediately smoothed out by the flow.
His approach relied on Nash--Moser estimates to overcome derivative loss
in linearized versions of the equations.
Alinhac proved \cites{sA1989a,sA1989b} that,
as in the $1D$ case, in the corresponding multi-dimensional 
region + solution, the initial discontinuity is immediately smoothed out by the flow.
In the important recent works \cites{tWLpY2023a,tWLpY2023b}, 
the authors study the isentropic $2D$ compressible 
Euler equations with a family of irrotational discontinuous initial data that are (asymmetric) perturbations of plane-symmetric
data for a corresponding $1D$ Riemann problem. 
For irrotational data, 
their main result provides a sharpened version of the local existence and uniqueness results of Alinhac \cites{sA1989a,sA1989b}.
Their work shows in particular that the $2D$ irrotational rarefaction solution is a perturbation of the standard $1D$ rarefaction wave solution to the Riemann problem. Compared to \cites{sA1989a,sA1989b}, the works \cites{tWLpY2023a,tWLpY2023b} yield two important advances. 
First, the techniques of \cites{tWLpY2023a,tWLpY2023b} avoid a loss of derivatives in a corresponding linearized problem and consequently, 
the authors were able to close the energy estimates without Nash--Moser estimates. 
Second, \cites{tWLpY2023a,tWLpY2023b} provide a complete description of the characteristic geometry in the problem.

\section{Some ideas behind the proof of shock formation in \texorpdfstring{$3D$}{3D} without symmetry}
\label{S:SHOCKFORMATIONAWAYFROMSYMMETRY}
In this section, we provide an outline (without complete proofs) of how to extend some aspects of 
Theorem~\ref{T:MAINTHEOREM1DSINGULARBOUNDARYANDCREASE} to apply to open sets of
$3D$ relativistic Euler solutions without symmetry, isentropicity, or irrotationality assumptions.
We anticipate that \emph{all} aspects of Theorem~\ref{T:MAINTHEOREM1DSINGULARBOUNDARYANDCREASE}
can be extended. 
A key reason behind our expectation is the availability of the new formulation of the flow provided by 
Theorem~\ref{T:GEOMETRICWAVETRANSPORTDIVCURLFORMULATION},
which is qualitatively similar to the formulation of the non-relativistic flow from \cite{jS2019c}, 
which was used in prior related works  
\cites{jLjS2018,jLjS2021,lAjS2020,lAjS2022,lAjS20XX}
on shocks for the (non-relativistic) $3D$ compressible Euler equations.
To keep the discussion short, we only 
sketch some key ideas behind the proof of Conjecture~\ref{CON:SHOCKWITHVORTICITYANDENTROPY},
which essentially concerns studying $3D$ solutions until their time of first blowup. 
Our discussion here mirrors the strategy we used in our work \cite{jLjS2021},
in which we proved an analog of Conjecture~\ref{CON:SHOCKWITHVORTICITYANDENTROPY}
for the non-relativistic $3D$ compressible Euler equations.

\subsection{Conjectures for the $3D$ relativistic Euler equations}
\label{SS:CONJECTUREDSHOCKFORMATIONWITHOUTSYMMETRY}
To set the stage, we state three conjectures for the $3D$ relativistic Euler equations that are tied to the following figure:

\begin{figure}[H] 
\centering
\begin{overpic}[scale=.36, grid = false, tics=5, trim=-.5cm 0cm -1cm -.5cm, clip]{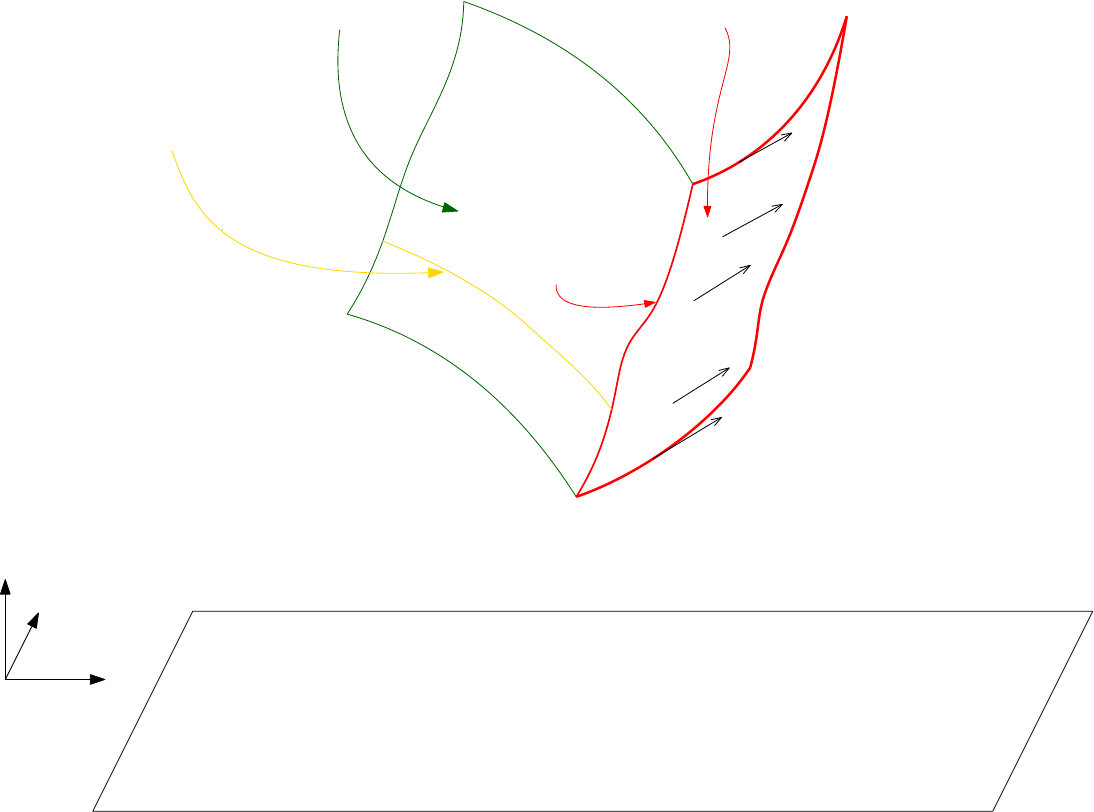}
			\put (50,7) {$\Sigma_0$}
			\put (5.5,18.5) {$(x^2,x^3)$}
			\put (.5,15) {$t$}
			\put (5,6.5) {$x^1$}
			\put (43,54) {$\mbox{\upshape Crease}$}
			\put (49,49.5) {\rotatebox{90}{$=$}}
			\put (45,45.5) {$\partial_- \mathcal{B}$}
			\put (67.5,52.5) {$\Lunit$}
			\put (62.5,67.5) {$\mathcal{B}$}
			\put (29.5,68.5) {$\cauchyhor$}
			\put (10,62) {$\mbox{\upshape null}$}
			\put (5,58.5) {$\mbox{\upshape generator}$}
	\end{overpic}
	\caption{Conjectured local picture of the $\hfour$-MGHD for general small perturbations of simple isentropic plane-symmetric solutions 
		to the $3D$ relativistic Euler equations in Minkowski-rectangular coordinate space,
		with the map $\Upsilon(t,x^1,x^2,x^3) = (t,\eik,x^2,x^3)$ suppressed}
	\label{F:CONJECTUREDMAXDEVELOPMENTINMINKRECT}
\end{figure}

For the $3D$ compressible Euler equations, Fig.\,\ref{F:CONJECTUREDMAXDEVELOPMENTINMINKRECT} has been justified 
\cites{lAjS2020,lAjS2022,lAjS20XX} for open sets of initial data.

\begin{conjecture}[Time of first blowup for the $3D$ relativistic Euler equations with vorticity and entropy]
	\label{CON:SHOCKWITHVORTICITYANDENTROPY}
	Consider the simple isentropic plane-symmetric solutions of Theorem~\ref{T:MAINTHEOREM1DSINGULARBOUNDARYANDCREASE} 
	as ``background solutions'' to the $3D$ relativistic Euler equations with symmetry. Consider smooth
	initial data in $3D$ -- without symmetry, irrotationality, or isentropicity assumptions --
	that are close (in a sufficiently high-order Sobolev space) to the data of one of the background solutions.
	Then the perturbed solution forms a shock at a time that is a perturbation of $\blowuptime$, i.e.,
	a perturbation of the shock-formation-time of the background solution;
	the conjectured perturbed first blowup-time is the smallest value of $t$ along the crease $\crease$
	depicted in Fig.\,\ref{F:CONJECTUREDMAXDEVELOPMENTINMINKRECT}.
\end{conjecture}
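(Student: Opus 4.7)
The plan is to implement, in the $3D$ relativistic setting, the strategy developed in \cites{jLjS2018,jLjS2021} for the non-relativistic compressible Euler equations, using the formulation of the flow provided by Theorem~\ref{T:GEOMETRICWAVETRANSPORTDIVCURLFORMULATION}. Specifically, I would view the system as quasilinear wave equations for the wave-array $\wavearray$, with principal operator $\square_{\hfour(\wavearray)}$, coupled to transport equations along $\Transport$ for the vorticity vectorfield $\vort$, the entropy gradient $\GradEnt$, and the modified quantities $\modVortVort, \modDivGradEnt$. The background plane-symmetric solution plays the role of a reference profile with a well-understood singular geometry (coming from Theorem~\ref{T:MAINTHEOREM1DSINGULARBOUNDARYANDCREASE}), and one would show that a small $3D$ perturbation remains close to it in suitable geometric Sobolev norms up until a nearby time at which an analog of the inverse foliation density $\upmu$ vanishes.

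\textbf{Geometric setup and bootstrap.} First I would construct an acoustic eikonal function $\eik$ solving $(\hfour^{-1})^{\kappa\lambda}\partial_\kappa \eik \, \partial_\lambda \eik = 0$ on $\Sigma_0$, with initial data chosen so that the $\eik$-level sets are small perturbations of the plane-symmetric $\Lunit$-characteristics that drive the background shock; this defines a $3D$ inverse foliation density $\upmu := -1/\bigl((\hfour^{-1})^{\kappa\lambda}\partial_\kappa t \, \partial_\lambda \eik\bigr)$, geometric coordinates $(t,\eik,x^2,x^3)$, and a rescaled acoustic null frame $\{\newL,\muX,\Yvf{2},\Yvf{3}\}$ whose construction and evolution equations mirror those of Def.\,\ref{D:SEVERALACOUSTICVECTORFIELDS}. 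I would then close a bootstrap argument with respect to a set of commutation vectorfields $\Fullset := \{\newL,\muX,\Yvf{2},\Yvf{3}\}$ tangential to the characteristics and of their $\Sigma_t$-tangent subset $\Tanset$, using the following coupled estimates: geometric energy estimates for $\wavearray$ via the multiplier $\multipliervectorfield$ applied to \eqref{E:WAVE}, built on energy currents of the type constructed in Lemma~\ref{L:ENERGYIDENTITYINDIFFERENTIALFORM}; transport estimates along $\Transport$ for $\vort,\GradEnt$ via \eqref{E:TRANSPORT} and for $\modVortVort,\modDivGradEnt$ via \eqref{E:TRANSPORTFORMODIFIED}; div-curl-transport elliptic estimates that gain one derivative of $\vort,\GradEnt$ from $\modVortVort,\modDivGradEnt$ and \eqref{E:DIVCURLFORMODIFIED}; and transport-type ODE control for $\upmu$, the frame coefficients, and the rectangular components of $\Lunit,\uLunit$, using $3D$ analogs of Lemma~\ref{L:1DPSTRANSPORTFORMU}. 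The key structural features that make this scheme close are precisely those highlighted in Theorem~\ref{T:GEOMETRICWAVETRANSPORTDIVCURLFORMULATION}: every derivative-quadratic source term on the RHS is a standard $\hfour$-null form in the sense of Def.\,\ref{D:NULLFORMS}, and the one-derivative gain encoded in $\modVortVort,\modDivGradEnt$ restores the derivative count needed for the wave-transport coupling to be subcritical.

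\textbf{Recovering the shock.} Once the bootstrap is closed, sharp control on $\upmu$ is obtained by integrating its (approximate) transport equation along $\Lunit$, giving an analog of the explicit formula \eqref{E:SPEEDTIMESMUPLANESYMMETRYWITHEXPLICITSOURCE} plus perturbative errors driven by the $3D$ corrections, by $\Lnenth,\fourvelocity^A,\Ent$ variations, and by $(\vort,\GradEnt)$. Because the background satisfies the non-degeneracy consequence \eqref{E:1DNONDEGENERACYONEOSCONSEQUENCE}, small perturbations inherit a quantitative ``$\Lunit \upmu \leq -\positivebigO(1)\blowupdelta$'' bound in a neighborhood of the crease and thus force $\upmu \downarrow 0$ at a perturbed time close to $\blowuptime$. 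As in Theorem~\ref{T:MAINTHEOREM1DSINGULARBOUNDARYANDCREASE}, the solution remains smooth in geometric coordinates while $|\partial_1 \RRiemannPS|$ (equivalently, the appropriate Minkowski-rectangular gradient of the wave-part) blows up at the first zero of $\upmu$, via the identity schematically of the form $\partial_1 \Psi = -\upmu^{-1}(\text{normalizer})\muX\Psi$, exactly paralleling \eqref{E:BLOWUPOFCARTESIANCOORDINATES}.

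\textbf{The main obstacle.} The hardest part is the top-order energy estimate for $\wavearray$. When one commutes \eqref{E:WAVE} with elements of $\Fullset$, the most dangerous commutator terms involve derivatives of the eikonal function and, as emphasized in Remark~\ref{R:SHOCKDRIVINGTERMSHIDDEN}, the shock-driving Riccati-type nonlinearity is hidden inside $\square_{\hfour(\wavearray)}$; these produce source terms carrying an unavoidable $\upmu^{-1}$ factor at the top order, which in turn forces the top-order energies to blow up at a controlled rate $\|\cdot\|_{L^2}\lesssim \upmu_\star^{-P}$ for some $P>0$, while the mid-order energies remain bounded. Propagating mid-order $L^\infty$ control on $\pmb{\partial}\wavearray,\vort,\GradEnt$ down through this degenerate hierarchy---with the additional twist that $\vort,\GradEnt$ must be estimated \emph{one derivative above} the standard first-order transport level, and that the elliptic gain must be made compatible with the $\upmu$-weights---is delicate. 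Executing this requires (a) fully modified versions of $\fullymodquant{Y}\upmu$ and $\fullymodquant{Y}{\mytr}\chi$-type quantities that algebraically cancel the top-order dangerous terms, (b) sharp descent estimates $E_{(M-1)} \lesssim \upmu_\star^{-P+1} E_{(M)}$, and (c) careful tracking of the null frame components of the transport sources so that the only quadratic interactions reaching top order are $\hfour$-null forms that are subordinate, near the shock, to the $\muX$-direction that $\upmu$ naturally suppresses. I expect this top-order analysis, together with verifying that the $\hfour$-null form structure from Theorem~\ref{T:GEOMETRICWAVETRANSPORTDIVCURLFORMULATION} survives the commutations in the relativistic setting (including the coupling between the acoustic null frame and $\Transport$), to be the principal technical hurdle.
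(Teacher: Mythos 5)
Your proposal aligns closely with the strategy the paper itself sketches in Sect.\,\ref{S:SHOCKFORMATIONAWAYFROMSYMMETRY}: an acoustic eikonal function $\eik$ adapted to the shock-forming characteristic family, the geometric coordinates $(t,\eik,x^2,x^3)$, the multiplier $\multipliervectorfield$ applied to the covariant wave equations of Theorem~\ref{T:GEOMETRICWAVETRANSPORTDIVCURLFORMULATION}, a bootstrap argument tracking closeness to the simple isentropic plane-symmetric background, elliptic-hyperbolic transport-div-curl estimates for $(\vort,\GradEnt,\modVortVort,\modDivGradEnt)$ that gain one derivative, the $\hfour$-null-form structure of the inhomogeneous terms, fully modified $\mytr_{\gtorus}\upchi$-type quantities that avert top-order derivative loss stemming from the Raychaudhuri equation, and a descending hierarchy of $\upmu_\star$-weighted energy estimates in which only the high-order energies degenerate. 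Since the statement is a conjecture, the paper supplies only this outline (not a complete proof), and yours captures the same architecture and the same bottleneck.

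One substantive detail does not match. You propose to commute the equations with the full frame $\Fullset$ (which includes the transversal vectorfield $\muX$) in addition to $\Tanset$, and you describe $\Tanset$ as the ``$\Sigma_t$-tangent subset.'' The strategy the paper advocates (Sect.\,\ref{SSS:COMMUTATORMETHOD}) commutes \emph{only} with the $\nullhyparg{\eik}$-tangent subset $\Tanset = \lbrace \Lunit, \Yvf{2}, \Yvf{3} \rbrace$; the paper explicitly states that, as understood since \cite{jLjS2021}, one does \emph{not} need to commute with $\muX$, and that commuting with the transversal $\muX$ is precisely what one is trying to avoid. Also, $\Tanset$ is tangent to the characteristics $\nullhyparg{\eik}$, not to $\Sigma_t$. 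These are not cosmetic slips: the restriction to $\Tanset$-commutations, combined with the $\upmu$-weights, is what keeps the dangerous top-order commutator term $(\muX \Psi)\cdot\Angularset^{\Ntop}\mytr_{\gtorus}\upchi$ tractable via the fully modified quantity $\fullymodquant{\Angularset^{\Ntop}}$, and $\muX$-commutations would introduce error terms that are not known to be controllable near the shock. A secondary point: the wave-equation multiplier estimates are built from the energy-momentum tensor $\enmomem$ and the vectorfield $\multipliervectorfield$ (Sect.\,\ref{SSS:ENERGIESFORWAVEEQUATIONS}), not from the first-order energy currents of Lemma~\ref{L:ENERGYIDENTITYINDIFFERENTIALFORM}, which the paper uses only in the local well-posedness discussion of Sect.\,\ref{S:EQUATIONSANDLOCALWELLPOSEDNESS}.
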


\begin{conjecture}[The structure of the crease and singular boundary for the $3D$ relativistic Euler equations with vorticity and entropy]
	\label{CON:SINGULARBOUNDARY}
	Under the assumptions of Conjecture~\ref{CON:SHOCKWITHVORTICITYANDENTROPY}, 
	the perturbed solution has a crease and singular boundary that are 
	perturbations of the crease singular boundary of the 
	background solution; the background crease and singular boundary are depicted in
	Fig.\,\ref{F:MINKOWSKIRECTANGULAR1DPLANESYMMETRYCAUCHYHORANDSINGULARCURVE},
	while the conjectured perturbed singular boundary and crease are depicted in
	Fig.\,\ref{F:CONJECTUREDMAXDEVELOPMENTINMINKRECT}.
\end{conjecture}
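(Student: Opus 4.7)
The plan is to adapt the framework developed in our series \cites{lAjS2020,lAjS2022} on the non-relativistic compressible Euler equations to the relativistic setting, leveraging the geometric wave-transport-div-curl formulation of Theorem~\ref{T:GEOMETRICWAVETRANSPORTDIVCURLFORMULATION}. First, I would construct an acoustic eikonal function $\eik$ adapted to an outgoing $\hfour$-null vectorfield $\Lunit$, with initial condition $\eik|_{\Sigma_0} = -x^1$ (see \eqref{E:EIKONALDATAAWAYFROMSYMMETRY}), and introduce the inverse foliation density $\upmu := -1/((\hfour^{-1})^{\kappa\lambda}\partial_{\kappa}t\partial_{\lambda}\eik)$ and geometric coordinates $(t,\eik,x^2,x^3)$. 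In the background simple isentropic plane-symmetric solutions, Lemma~\ref{L:SHARPESTIMATESFORMUNEARCREASE} provides the key transversal convexity estimate $\muX\muX\upmu \approx \blowuptime\PSdatamuHessianTaylorcoefficient > 0$ at the crease, which is a stable open condition. Under the perturbation, I would propagate this transversal convexity along with sharp bounds on $\Lunit\upmu \approx -\blowupdelta$, using the fact that the perturbed solution's behavior along $\Lunit$ is a $C^4$-small perturbation of Corollary~\ref{C:PSEXPLICITEXPRESSIONSFORSOLUTION}.

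The second step is to establish a geometric energy estimate hierarchy adapted to the degenerating acoustical geometry. The wave-part variables $\wavearray$ satisfy covariant wave equations \eqref{E:WAVE} with $\hfour$-null form source terms and couplings to the modified vorticity/entropy variables $(\modVortVort,\modDivGradEnt)$, while the latter satisfy transport equations \eqref{E:TRANSPORTFORMODIFIED} whose sources are either $\hfour$-null forms or linear in the solution's derivatives. The key structural miracle preserved from the non-relativistic case is that $(\modVortVort,\modDivGradEnt)$ enjoy one extra degree of differentiability relative to the wave-part derivatives of $\wavearray$, which allows one to treat the vorticity/entropy as a perturbation of the wave-part dynamics. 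I would implement a multiplier method using a $\upmu$-weighted timelike vectorfield (analogous to Christodoulou's $\multipliervectorfield$ in \cite{dC2007}) together with a commutator system built from $\lbrace \Lunit,\muX,\Yvf{A}\rbrace$, where $\Yvf{A}$ are $\Sigma_t$-tangent vectorfields orthogonal to $\muX$. This yields a $\upmu$-degenerate $L^2$ coercivity at the top order, balanced by better estimates at lower orders through descent-type arguments.

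Once the energy estimates are closed up to the first shock time, the third step is to construct the singular boundary $\mathcal{B}$ and the crease $\crease = \partial_-\mathcal{B}$. Following \cite{lAjS2020}, one shows that $\mathcal{B}$ is parameterized as the graph of a function $\mathfrak{t}_{\textnormal{Sing}} = \mathfrak{t}_{\textnormal{Sing}}(\eik,x^2,x^3)$ in geometric coordinate space, with $\mathfrak{t}_{\textnormal{Sing}}$ being a $C^4$-small perturbation of the plane-symmetric singular curve function from \eqref{E:TISAFUNCTIONALONGSINGULARCURVE}. To reach the full crease -- which in $3D$ is a $2$-dimensional $\hfour$-spacelike submanifold -- requires the dynamic construction of the rough foliations from \cite{lAjS2022}, which are acoustically spacelike hypersurfaces adapted to the non-convex (in the $(x^2,x^3)$ directions) shape of $\mathcal{B}$. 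The change of variables map $\Upsilon(t,\eik,x^2,x^3) = (t,x^1,x^2,x^3)$ would be shown to be a diffeomorphism on the region where $\upmu > 0$ and a homeomorphism up to $\mathcal{B}$, with $|\partial_1\wavearray|$ blowing up precisely as $\upmu \downarrow 0$, in direct analogy with \eqref{E:BLOWUPOFCARTESIANCOORDINATES}.

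The main obstacle is closing the top-order energy estimates in the presence of a $\upmu$-degeneracy while simultaneously controlling the eikonal function with the regularity required to make $\upmu$ a trustworthy quantity up to $\mathcal{B}$. The subtlety is that naive commutation of the wave equations with geometric vectorfields produces error terms involving top-order derivatives of $\eik$, which are a priori less regular than the wave variables; one must use fully modified quantities $\fullymodquant{\cdot}$ of Christodoulou type and exploit the $\hfour$-null form structure of \eqref{E:WAVE}--\eqref{E:TRANSPORTFORMODIFIED} to recover the missing regularity. A secondary difficulty, specific to going beyond the first shock time, is the construction of rough foliations in the relativistic context and the justification that the extra derivative enjoyed by $(\vort,\GradEnt)$ survives transport along the acoustically characteristic rough hypersurfaces -- this is where the precise structure of $\smoothfunction(\wavearray)\cdot\nullform(\pmb{\partial}\vort,\pmb{\partial}\wavearray)$ on RHS~\eqref{E:TRANSPORTFORMODIFIED} becomes essential, as the $\hfour$-null form prevents the worst-case regularity loss along $\Transport$.
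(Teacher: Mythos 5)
This statement is one of the paper's open conjectures; the paper supplies no proof, and Sect.\,\ref{S:OPENPROBLEMS} merely records the authors' expectation that it can be settled by adapting the non-relativistic construction of \cite{lAjS2022} to the formulation of Theorem~\ref{T:GEOMETRICWAVETRANSPORTDIVCURLFORMULATION}. Your proposal is consistent with that suggested route: the eikonal function and inverse foliation density $\upmu$, the propagation of transversal convexity at the crease, the fully modified quantities of Christodoulou type that circumvent derivative loss in $\mytr_{\gtorus}\upchi$, the extra degree of regularity for $(\modVortVort,\modDivGradEnt)$, the rough foliations, and the diffeomorphism-up-to-the-boundary analysis for the change-of-variables map are all exactly the ingredients the paper points to in Sects.\,\ref{SS:MAXIMALDEVELOPMENT} and \ref{S:SHOCKFORMATIONAWAYFROMSYMMETRY}.

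Two points merit comment. First, you propose a commutator system built from $\lbrace\Lunit,\muX,\Yvf{A}\rbrace$, i.e., including the transversal vectorfield $\muX$. The paper (Sect.\,\ref{SSS:COMMUTATORMETHOD}, citing \cite{jLjS2021}) emphasizes that it suffices, and is substantially simpler, to commute only with the $\nullhyparg{\eik}$-tangent set $\Tanset=\lbrace\Lunit,\Yvf{2},\Yvf{3}\rbrace$; commuting with $\muX$ in the style of \cite{dC2007} is possible but generates products of transversal derivatives whose $\upmu$-weights are too weak to be absorbed directly by the energies of Def.\,\ref{D:WAVEENERGIES}, and avoiding this is one of the simplifications introduced in the fluid works you cite. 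If you intend to use the older Christodoulou commutator scheme, you should at least flag it as a distinct and heavier technical choice. Second, the rough foliation leaves of \cite{lAjS2022} are acoustically \emph{spacelike}, not ``acoustically characteristic'' as you write; their $\hfour$-spacelikeness is precisely what yields coercive energies on those leaves (cf.\ Lemma~\ref{L:POSITIVEDEFINITENESSOFENERGYCURRENT}). The material derivative $\Transport$ is $\hfour$-timelike and transversal to those leaves, and the statement that the extra derivative of $(\vort,\GradEnt)$ survives is established through elliptic-hyperbolic integral identities posed on the spacelike rough leaves (see Sect.\,\ref{SSS:TOPORDERELLIPTICVORTICITYANDENTROPY} and the integral identities of \cite{lAjS2020}), not by transport ``along'' null hypersurfaces.
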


\begin{conjecture}[The structure of the Cauchy horizon with vorticity and entropy]
	\label{CON:CAUCHYHORIZON}
	Under the assumptions of Conjecture~\ref{CON:SHOCKWITHVORTICITYANDENTROPY},  
	the perturbed solution has a Cauchy horizon that is a perturbation of the Cauchy horizon of the 
	background solution; the background Cauchy horizon is depicted in
	Fig.\,\ref{F:MINKOWSKIRECTANGULAR1DPLANESYMMETRYCAUCHYHORANDSINGULARCURVE},
	while the conjectured perturbed Cauchy horizon is depicted in
	Fig.\,\ref{F:CONJECTUREDMAXDEVELOPMENTINMINKRECT}.
\end{conjecture}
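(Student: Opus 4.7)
The plan is to build upon (and in fact after) the proofs of Conjectures~\ref{CON:SHOCKWITHVORTICITYANDENTROPY}--\ref{CON:SINGULARBOUNDARY}, which will have furnished the crease $\crease$ as a smooth acoustically spacelike $2$-submanifold of spacetime and the singular boundary $\mathcal{B}$ as an acoustically null hypersurface whose past boundary is $\crease$, with quantitative control of the solution on the classical development up to $\mathcal{B}\cup\crease$. Following the blueprint of our non-relativistic treatment in \cite{lAjS20XX}, the idea is to construct $\cauchyhor$ as the unique $\hfour$-null hypersurface emanating from $\crease$ transversally to $\mathcal{B}$, and to prove that the fluid solution extends smoothly across a one-sided neighborhood of $\cauchyhor\setminus\crease$. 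Quantitatively, $\cauchyhor$ should be a small perturbation of the Cauchy horizon portion $\Upsilon(\cauchyhor_{[0,\interestingu]})$ of Theorem~\ref{T:MAINTHEOREM1DSINGULARBOUNDARYANDCREASE}, captured by introducing a second eikonal function $\underline{\eik}$ solving $(\hfour^{-1})^{\kappa\lambda}\partial_\kappa\underline{\eik}\partial_\lambda\underline{\eik}=0$ with initial condition prescribed on $\crease$ so that the level set $\{\underline{\eik}=0\}$ (rescaled appropriately in the transversal direction $\newuL$) is precisely $\cauchyhor$.

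First I would set up a new geometric coordinate system $(t,\eik,\underline{\eik},x^2,x^3)$-$\mbox{hybrid}$ in a one-sided neighborhood of the conjectured $\cauchyhor$. This requires solving the eikonal equation for $\underline{\eik}$ as a characteristic initial value problem with data on $\crease$ (which is itself a submanifold \emph{constructed} in the proof of Conjecture~\ref{CON:SINGULARBOUNDARY}, and across which $\upmu$ vanishes). Existence of $\underline{\eik}$ will follow from an iteration/Picard scheme based on transport along the $\newuL$-integral curves emanating from $\crease$; the critical output is an analog of \eqref{E:TAYLOREXPANSIONOFCAUCHYHORIZON}--\eqref{E:TAYLOREXPANSIONOFMUALONGCAUCHYHORIZON}, expressing $\cauchyhor$ as a graph $t=\timeisafunctionofeikonalcauchyhorizon(\eik,x^2,x^3)$ with $\timeisafunctionofeikonalcauchyhorizon-\blowuptime=\positivebigO(1)\eik^3+(\mbox{transverse perturbations})$, and $\upmu|_{\cauchyhor}=\positivebigO(1)\eik^2$. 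Next I would derive energy estimates for the wave-part variables $(\Lnenth,\fourvelocity)$ using the wave equation~\eqref{E:WAVE}, employing a multiplier that is $\hfour$-timelike and adapted to foliating the spacetime region by $\underline{\eik}$-level sets (in lieu of $\Sigma_t$); Lemma~\ref{L:POSITIVEDEFINITENESSOFENERGYCURRENT} and its wave-equation analog guarantee positive-definiteness provided these level sets are $\hfour$-spacelike, which holds strictly on their interior and degenerates only on $\cauchyhor$ itself (cf.\ \eqref{E:TAYLOREXPANSIONOFMUALONGCAUCHYHORIZON}). For the transport-div-curl subsystem of Theorem~\ref{T:GEOMETRICWAVETRANSPORTDIVCURLFORMULATION}, one propagates $(\vort,\GradEnt)$ via \eqref{E:TRANSPORT} along integral curves of $\Transport$, which enter from $\Sigma_0$ and terminate transversally at $\cauchyhor$, so regularity is preserved trivially at the transport level; the top-order control is secured by combining transport estimates for the modified variables $(\modVortVort,\modDivGradEnt)$ via \eqref{E:TRANSPORTFORMODIFIED} with elliptic-type div-curl estimates \eqref{E:DIVCURLFORMODIFIED} on $\underline{\eik}$-level sets, exploiting the null-form structure of the source terms to avoid loss of regularity.

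The hard part will be controlling the energy and elliptic estimates uniformly as one approaches the crease, where $\upmu\to 0$ \emph{and} the foliation by $\underline{\eik}$-level sets collapses onto $\crease$ simultaneously. Near $\crease$, both the wave multiplier and the elliptic volume form degenerate, so one must identify a hierarchy of weights (powers of $\upmu$ and of the distance to $\crease$) that balance against the $\hfour$-null-form structure of the source terms in \eqref{E:WAVE} and \eqref{E:TRANSPORTFORMODIFIED}; the permissible loss is exactly what is compensated by the $\hfour$-null structures identified in Theorem~\ref{T:GEOMETRICWAVETRANSPORTDIVCURLFORMULATION}. The second serious obstacle is proving that the change-of-variables map from the hybrid geometric coordinates to Minkowski-rectangular coordinates is a homeomorphism up to and including $\cauchyhor\cup\crease$, which is the multi-dimensional analog of the Jacobian analysis yielding \eqref{E:JACOBIANDETERMINANTOFCHANGEOFVARIABLESMAP1D} and the homeomorphism property in Theorem~\ref{T:MAINTHEOREM1DSINGULARBOUNDARYANDCREASE}; the transversal convexity assumption, propagated from the plane-symmetric background via perturbative stability, should guarantee that distinct null generators of $\cauchyhor$ do not intersect before reaching $\crease$, which in turn secures injectivity. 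Once these two steps are in place, the smoothness of the fluid across $\cauchyhor\setminus\crease$ follows from the wave-transport-div-curl formulation (which has no singular coefficients away from $\crease$), and the perturbative closeness to the plane-symmetric Cauchy horizon is a direct consequence of the continuous dependence inherent in the quantitative estimates just described.
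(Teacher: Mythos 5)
The statement you are proving is labeled a \emph{conjecture} in the paper, and the paper does \emph{not} supply a proof of it; consequently there is no paper proof against which your proposal can be checked step by step. The only indication the paper gives of how Conjecture~\ref{CON:CAUCHYHORIZON} might be established is the remark in Sect.\,\ref{S:OPENPROBLEMS} that the authors ``expect that this can be achieved by adapting the methods that we are using in our forthcoming non-relativistic work \cite{lAjS20XX} to the equations of Theorem~\ref{T:GEOMETRICWAVETRANSPORTDIVCURLFORMULATION},'' and the explicit $1D$ construction in Theorem~\ref{T:MAINTHEOREM1DSINGULARBOUNDARYANDCREASE}, which serves as the model picture.

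With that caveat, your outline is a reasonable and largely plausible research program, consistent with the paper's hints. A few remarks on where it does and does not track the paper. First, you premise the construction on having already proved Conjectures~\ref{CON:SHOCKWITHVORTICITYANDENTROPY}--\ref{CON:SINGULARBOUNDARY}; the paper explicitly says Conjecture~\ref{CON:CAUCHYHORIZON} is ``independent (though closely related)'' to Conjecture~\ref{CON:SINGULARBOUNDARY}, which suggests the authors do not view the full singular-boundary theorem as a prerequisite -- you likely need only the first-blowup time and the local structure of $\crease$ (the content of Conjecture~\ref{CON:SHOCKWITHVORTICITYANDENTROPY} plus a crease-regularity input), not the full $\mathcal{B}$. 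Second, your choice to encode $\cauchyhor$ as a level set of a second eikonal function $\underline{\eik}$ is a sensible idea, and the paper does cite Christodoulou's shock-development monograph \cite{dC2019} as using a pair of eikonal functions; but note that the $1D$ construction in Theorem~\ref{T:MAINTHEOREM1DSINGULARBOUNDARYANDCREASE} proceeds more directly, by solving the ODE \eqref{E:INTEGRALCURVEOFNEWULODE}--\eqref{E:INTEGRALCURVEOFNEWULINITIALCONDITION} for the $\newuL$-integral curve emanating from $\crease$ and Taylor-expanding $\timeisafunctionofeikonalcauchyhorizon$ and $\upmu$ along it via \eqref{E:SECONDUDERIVATIVEFORTCOMPONENTOFMUULUNITINTEGRALCURVE}--\eqref{E:THIRDUDERIVATIVEFORTCOMPONENTOFMUULUNITINTEGRALCURVE}; the multi-$D$ analog might similarly work directly with the family of $\newuL$-null generators rather than with a second eikonal function, and that is closer to what the $1D$ proof does. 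Third, your phrase ``regularity is preserved trivially at the transport level'' understates the issue you yourself identify two sentences later: near $\crease$ the transport characteristics of $\Transport$ enter a region where $\upmu \downarrow 0$, and the energy hierarchy of Sect.\,\ref{SSS:WAVEENERGYESTIMATEHIERARCHY} shows that top-order energies are permitted to blow up there at a $\upmu_\star^{-A}$ rate; one must show this blowup is nonetheless compatible with smooth extension of the low-to-mid order solution across $\cauchyhor\setminus\crease$, and this is the genuinely delicate part -- it is not something that follows ``trivially.'' None of these points invalidates your program, but since the paper itself provides no proof (and defers to an unpublished reference), your proposal is best characterized as a plausible plan of attack, not a verified argument.
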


Conjecture~\ref{CON:SINGULARBOUNDARY} is strictly harder than
Conjecture~\ref{CON:SHOCKWITHVORTICITYANDENTROPY},
while Conjecture~\ref{CON:CAUCHYHORIZON} is independent (though closely related).

\subsection{Almost Riemann invariants}
\label{SS:ALMOSTRIEMANNINVARIANTS}
To study general (asymmetric) perturbations of simple isentropic plane-symmetric solutions, it is convenient
to use analogs of the Riemann invariants from Def.\,\ref{D:RIEMANNINVARIANTS}.

\begin{definition}[Almost Riemann invariants]	
	\label{D:ALMOSTRIEMANNINVARIANTS}
	Let $\RIfunction = \RIfunction(\Enth,\Ent)$ be the solution to
	the following transport equation initial value problem:
\begin{align} \label{E:NOSYMMETRYODEFORRIEMANNINVARIANT}
	\frac{\partial}{\partial \Enth}
	\RIfunction(\Enth,\Ent)
	& = \frac{1}{\Enth \speed(\Enth,\Ent)},
	&
	\RIfunction(\overline{\Enth},\Ent)
	= 0,
\end{align}	
where $\overline{\Enth} > 0$ is the constant
\eqref{E:FIXEDPOSITIVEENTHALPYCONSTANT},
and in \eqref{E:NOSYMMETRYODEFORRIEMANNINVARIANT}, we are viewing the speed of
sound $\speed$ as a function of $\Enth$ and $\Ent$.
	
	We define the \emph{almost Riemann invariants} $\RRiemann$ and $\LRiemann$ as follows:
	\begin{subequations} 
	\begin{align} \label{E:ALMOSTRRIEMANNPS}
		\RRiemann
		& := 
				\RIfunction(\Enth,\Ent)
				+
				\frac{1}{2} \ln \left( \frac{1 + \frac{\fourvelocity^1}{\sqrt{1 + (\fourvelocity^1)^2}}}{1 - \frac{\fourvelocity^1}{\sqrt{1 + (\fourvelocity^1)^2}}} \right),
			\\
		\LRiemann
		& := 
			\RIfunction(\Enth,\Ent)
			-
			\frac{1}{2} \ln \left(\frac{1 + \frac{\fourvelocity^1}{\sqrt{1 + (\fourvelocity^1)^2}}}{1 - \frac{\fourvelocity^1}{\sqrt{1 + (\fourvelocity^1)^2}}} \right).
		\label{E:ALMOSTLRIEMANNPS}
	\end{align}
	\end{subequations}
\end{definition}
Note that for plane-symmetric solutions with $\Ent \equiv 0$, the quantities 
defined by \eqref{E:ALMOSTRRIEMANNPS}--\eqref{E:ALMOSTLRIEMANNPS}
coincide with the ones defined in \eqref{E:RRIEMANNPS}--\eqref{E:LRIEMANNPS}.

We view $\RRiemann$ and $\LRiemann$ as replacements for $\Enth$ and $\fourvelocity^1$ 
that are convenient for studying perturbations of simple isentropic plane-symmetric solutions.
In particular, for values of
$
(\Enth,\Ent)
$
near
$
(\overline{\Enth},0)
$,
the factor $\frac{1}{\Enth \speed(\Enth,\Ent)}$ in
\eqref{E:NOSYMMETRYODEFORRIEMANNINVARIANT} is positive. 
The implicit function theorem then allows one to solve for $\Enth$ as a smooth function of 
$F$ and $\Ent$.
We can then use the relations \eqref{E:ALMOSTRRIEMANNPS}--\eqref{E:ALMOSTLRIEMANNPS}
to express $(\Enth,\fourvelocity^1)$ as smooth functions of $(\RRiemann,\LRiemann,\Ent)$.
Then, under the algebraic relation \eqref{E:AGAINFLUIDFOURVELOCITYNORMALIZED}, 
a complete set of state-space variables for the $3D$ relativistic Euler equations is given by:
\begin{align} \label{E:WAVEARRAYWITHALMOSTRIEMANNINVARIANTS}
	\wavearray
	& := (\RRiemann,\LRiemann,\fourvelocity^2,\fourvelocity^3,\Ent).
\end{align}
Although we have previously used the symbol $\wavearray$ to denote the array
$\left(\Lnenth,\fourvelocity^0,\fourvelocity^1,\fourvelocity^2,\fourvelocity^3,\Ent \right)$, 
in the rest of Sect.\,\ref{S:SHOCKFORMATIONAWAYFROMSYMMETRY},
we use $\wavearray$ to denote the array in \eqref{E:WAVEARRAYWITHALMOSTRIEMANNINVARIANTS}.

Away from isentropic plane-symmetry, 
$
\RRiemann
$
and
$
\LRiemann
$
no longer solve the homogeneous transport equations 
\eqref{E:RRIEMANNPSTRANSPORT}--\eqref{E:LRIEMANNPSTRANSPORT}.
However, from \eqref{E:WAVE},
the chain rule, 
and Def.\,\ref{D:ALMOSTRIEMANNINVARIANTS},
it follows that for general solutions,
$
\RRiemann
$
and
$
\LRiemann
$
satisfy geometric wave equations that have the same schematic form
as \eqref{E:WAVE}. In the rest of Sect.\,\ref{S:SHOCKFORMATIONAWAYFROMSYMMETRY}, we use this fact without
always explicitly mentioning it.

\subsection{Nonlinear geometric optics}
\label{SS:NONLINEARGEOMETRICOPTICS}
As in the $1D$ case treated in Sect.\,\ref{S:1DMAXIMALDEVELOPMENT},
the study of shock formation in $3D$ relies on nonlinear geometric optics,
i.e., multi-dimensional 
analogs of the constructions from Sect.\,\ref{SS:1DSIMPLEPLANESYMMETRYANDACOUSTICGEOMETRYSETUP}.
The main object behind the construction is an eikonal function.
The use of eikonal functions to study the \emph{global} properties of multi-dimensional quasilinear hyperbolic
PDE solutions originated Christodoulou--Klainerman's proof
\cite{dCsK1993} of the stability of Minkowski spacetime.
See also Sect.\,\ref{SS:CONSTRUCTIVEMULTIDIMENSIONAL} for a discussion of nonlinear geometric optics in the context
of proofs of multi-dimensional shock formation.

\begin{quote}
As we discussed at the beginning of Sect.\,\ref{SS:1DSIMPLEPLANESYMMETRYANDACOUSTICGEOMETRYSETUP}, 
for simple isentropic plane-symmetric solutions, the role of nonlinear geometric optics/geometric coordinates 
is to yield a framework/differential structure in which the solution remains smooth all the way up to the shock, 
thereby allowing one to reduce the problem of shock formation
to a more traditional problem in which one studies long-time existence. 
The same remarks hold in the present context of three spatial dimensions,
although many additional geometric constructions are required. In particular,
all of the geometric quantities we introduce in Sects.\,\ref{SS:NONLINEARGEOMETRICOPTICS} and \ref{SS:MULTIPLIERMETHOD}, 
including the acoustic
eikonal function, the geometric coordinates, various geometric vectorfields, 
and various geometric energies
are introduced in order 
regularize the problem. However, unlike in the simple isentropic plane-symmetric case, 
in the present multi-dimensional context, one needs to derive energy estimates, and it turns out that
a remnant of the shock singularity survives, even in the ``good'' geometric coordinates. The remnant manifests
as singular estimates for the high-order geometric energies, though it is crucial for our approach 
that the mid-order-and-below geometric energies remain bounded; see Sect.\,\ref{SSS:WAVEENERGYESTIMATEHIERARCHY}
for further discussion.
\end{quote}

\begin{definition}[Acoustic eikonal function]
\label{D:GENERALEIKONALFUNCTION}
An acoustic eikonal function (eikonal function for short) is a solution
to the eikonal equation, which is the following fully nonlinear hyperbolic PDE, where
$\hfour$ is the acoustical metric from Def.\,\ref{D:ACOUSTICALMETRIC}:
\begin{align} \label{E:EIKONALEQUATION}
	(\hfour^{-1})^{\kappa \lambda}(\wavearray) \partial_{\kappa} \eik \partial_{\lambda} \eik 
	& = 0,
	&
	\partial_t & \eik > 0.
\end{align}
\end{definition}

When studying shocks close to plane-symmetry, it is convenient to assume the following
initial condition for $\eik$, which is the exact same initial condition \eqref{E:1DEIKONALDATA}
that we used in our study of isentropic plane-symmetric solutions:
\begin{align} \label{E:EIKONALDATAAWAYFROMSYMMETRY}
	\eik|_{t=0} 
	& = - x^1.
\end{align}
We refer to the level sets of $\eik$ as \emph{the characteristics}.
One can show that for isentropic plane-symmetric solutions, the solution ``$\eik$'' to
\eqref{E:1DEIKONALTRANSPORTEQUATION}--\eqref{E:1DEIKONALDATA}
coincides with the solution to \eqref{E:EIKONALEQUATION}--\eqref{E:EIKONALDATAAWAYFROMSYMMETRY}.

\begin{definition}[Inverse foliation density]
\label{D:INVERSEFOLIATIONDENSITY}
We define the inverse foliation density of the characteristics to be the following scalar function:
\begin{align} \label{E:INVERSEFOLIATIONDENSITY}
	\upmu
	& := - \frac{1}{(\hfour^{-1})^{\kappa \lambda}(\wavearray) \partial_{\kappa} t \partial_{\lambda} \eik}
		=
		\frac{1}{\Sigmatnormal \eik},
\end{align}
where the second equality in \eqref{E:INVERSEFOLIATIONDENSITY} follows from \eqref{E:SIGMATNORMAL}.
\end{definition}

One can show that for isentropic plane-symmetric solutions, the 
quantity ``$\upmu$'' defined by \eqref{E:INVERSEFOLIATIONDENSITYINPLANESYMMETRY}
coincides with the one defined in \eqref{E:INVERSEFOLIATIONDENSITY}.
As in Sect.\,\ref{S:1DMAXIMALDEVELOPMENT}, the formation of a shock occurs when the density of the characteristics becomes
infinite, that is, when $\upmu \downarrow 0$.

\subsection{Geometric coordinates and vectorfield frames tied the eikonal function}
\label{SS:GOODVECTORFIELDFRAME}
In this section, we introduce geometric coordinates and related vectorfields
that are analogs of quantities we used in our study isentropic plane-symmetric solutions.

\subsubsection{Geometric coordinates}
\label{SSS:GEOMETRICCOORDIANTES}

\begin{definition}[Geometric coordinates and the corresponding partial derivative vectorfields]
\label{D:3DGEOMETRICCOORDINATES}
We refer to $\lbrace t, \eik, x^2, x^3 \rbrace$ as the \emph{geometric coordinates} on spacetime,
where $\eik$ is the eikonal function and
$t$, $x^2$, and $x^3$ are the Minkowski-rectangular coordinates, with $t := x^0$.
We denote the corresponding partial derivatives by $\lbrace \geop{t}, \geop{\eik}, \geop{x^2}, \geop{x^3} \rbrace$
(which are not to be confused with the partial derivatives $\partial_{\alpha}$ in the $(t,x^1,x^2,x^3)$-coordinate system).
\end{definition}

\begin{definition}[Some important submanifolds of geometric coordinate space]
\label{D:3DSUBMANIFOLDSSOFGEOMETRICCOORDINATESPACE}
\ \\

\begin{itemize}
	\item We denote the level set $\lbrace \eik = \eik' \rbrace$ by $\nullhyparg{\eik'}$, and we refer to these
		level sets as \emph{the characteristics}; see Fig.\,\ref{F:3DFRAME}.
	\item We define $\Sigma_{t'}$ to be the hypersurface
	$\lbrace t = t'\rbrace$, where $t$ is the Minkowski time function.
	\item We define the two-dimensional surfaces $\ell_{t',\eik'} := \Sigma_{t'} \cap \nullhyparg{\eik'}$; 
		see Fig.\,\ref{F:3DFRAME}.
\end{itemize}
\end{definition}

The surfaces $\nullhyparg{\eik}$ are characteristic for the wave operator
$\square_{\hfour}$ on LHS~\eqref{E:WAVE}, or equivalently,
null with respect to the acoustical metric $\hfour$ ($\hfour$-null for short).

As we mentioned in Sect.\,\ref{S:1DMAXIMALDEVELOPMENT} and just above Def.\,\ref{D:GENERALEIKONALFUNCTION},
a key strategy behind the proof of shock formation is to show that the solution remains
smooth with respect to the geometric coordinates. The blowup of the solution's gradient in $(t,x^1,x^2,x^3)$-coordinates,
i.e., the blowup of $\pmb{\partial} \wavearray$, occurs when $\upmu$ vanishes.  
One can prove (see e.g.\ \cite{jLjS2021}*{Lemmas~2.22 and 2.23})
the following relationship between the partial derivatives in 
the two coordinate systems, where the $a_{\alpha}^{\beta}$ are smooth (solution-dependent) functions of the geometric coordinates:
\begin{align} \label{E:SCHEMATICALLYDEGENERACYBETWEENCOORDINATES}
	\partial_{\alpha}
	& = 
			a_{\alpha}^0
			\frac{1}{\upmu} \geop{\eik} 
			+
			a_{\alpha}^1
			\geop{t}
			+
			a_{\alpha}^2
			\geop{x^2}
			+
			a_{\alpha}^3
			\geop{x^3}.
\end{align}
The first product on RHS~\eqref{E:SCHEMATICALLYDEGENERACYBETWEENCOORDINATES} shows in particular why
$\pmb{\partial} \wavearray$ can blow up as $\upmu \downarrow 0$, even when the derivatives of $\wavearray$ 
with respect to elements of $\lbrace \geop{t}, \geop{\eik}, \geop{x^2}, \geop{x^3} \rbrace$
remain bounded. The use of geometric coordinates allows one to study the problem of shock formation 
by using ideas that have traditionally been used in long-time existence problems.
However, different from the $1D$ case, in $3D$, some difficult degeneracies can occur
in the energy high-order energy estimates; we discuss this difficulty in more detail in
Sect.\,\ref{SSS:WAVEENERGYESTIMATEHIERARCHY}.

\subsubsection{Vectorfield frames}
\label{SSS:VECTORFIELDFRAMES}
Experience has shown (e.g., \cite{dC2007} and the related works cited in Sect.\,\ref{SS:CONSTRUCTIVEMULTIDIMENSIONAL}) 
that in the study of shocks,
to avoid loss of derivatives in commutator estimates and other difficulties,
it is advantageous to use vectorfield frames adapted to the eikonal function.
In particular, once one has constructed $\eik$,
one can construct the following vectorfield frames, depicted in Fig.\,\ref{F:3DFRAME}:
\begin{align} \label{E:VECTORFIELDFRAME}
	\Fullset
	& :=
	\lbrace
		\Lunit, \muX, \Yvf{2}, \Yvf{3}
	\rbrace,
	&
	\Tanset
	& :=
	\lbrace
		\Lunit, \Yvf{2}, \Yvf{3}
	\rbrace,
	&
	\Angularset
	& := 
	\lbrace
		\Yvf{2}, \Yvf{3}
	\rbrace,
\end{align}
which we describe below.
The frame $\Fullset$ spans the tangent space of geometric coordinate space,
the subset $\Tanset$ spans the tangent space of the characteristics $\nullhyparg{\eik}$,
and the subset $\Angularset$ spans the tangent space of the two-dimensional surfaces $\ell_{t,\eik}$,
i.e., the span of $\Angularset$ is equal to that of $\lbrace \geop{x^2}, \geop{x^3} \rbrace$.

\begin{center}
\begin{overpic}[scale=.25,grid = false, tics=5, trim=0 0 0 -1cm, clip]{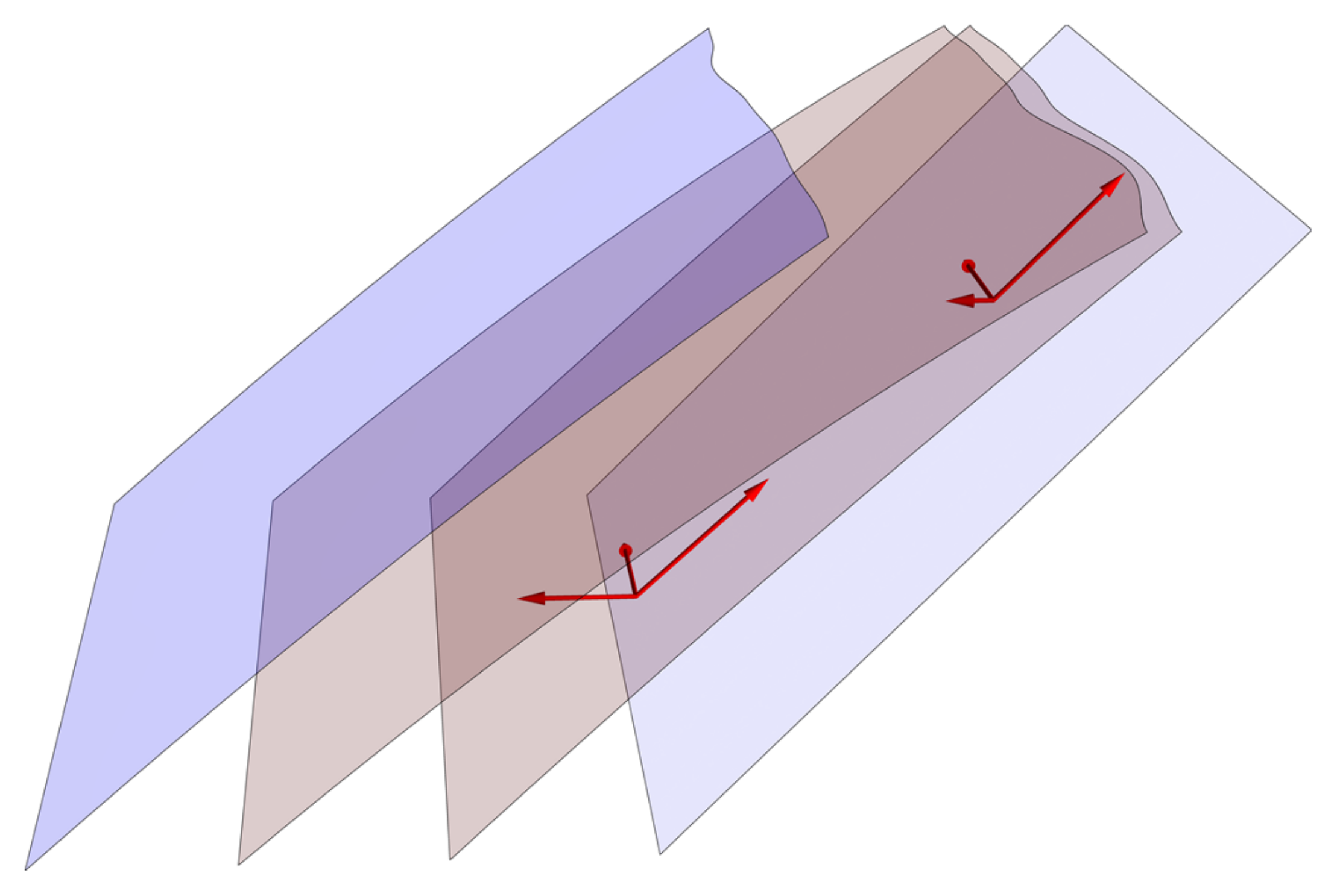} 
\put (82.7,49.7) {\large$\displaystyle \Lunit$}
\put (72,40.2) {\large$\displaystyle \muX$}
\put (70.5,48) {\large$\displaystyle \Yvf{2}$}
\put (54,25) {\large$\displaystyle \Lunit$}
\put (38.5,17) {\large$\displaystyle \muX$}
\put (45,27) {\large$\displaystyle \Yvf{2}$}
\put (49.5,10) {\large$\nullhyparg{-\rightu}$}
\put (35,10) {\large$\nullhyparg{\eik}$}
\put (5,10) {\large$\nullhyparg{\leftu}$}
\put (22,26) {\large$\displaystyle \upmu \approx 1$}
\put (66,67) {\large$\displaystyle \upmu \ \mbox{\upshape is small}$}
\put (55,60.5) {\large$\displaystyle \ell_{t,\leftu}$}
\put (-4,15) {\large$\displaystyle \ell_{0,\leftu}$}
\end{overpic}
\captionof{figure}{The frame $\Fullset$ at two distinct points on $\nullhyparg{\eik}$
with the $x^3$-direction suppressed}
\label{F:3DFRAME}
\end{center}

We now briefly describe how to construct the vectorfields. First, $\Lunit$ is defined by
$\Lunit^{\alpha} := - \upmu (\hfour^{-1})^{\alpha \kappa}(\wavearray) \partial_{\kappa} \eik$.
$\Lunit$ is clearly $\hfour$-orthogonal to the characteristics $\nullhyparg{\eik}$, 
and by \eqref{E:EIKONALEQUATION} and \eqref{E:INVERSEFOLIATIONDENSITY},
we see that $\hfour(\Lunit,\Lunit) = 0$ and $\Lunit t = 1$.
In particular, since $\Lunit$ is $\hfour$-orthogonal to $\nullhyparg{\eik}$ and $\hfour$-null,
$\nullhyparg{\eik}$ is a $\hfour$-null hypersurface.
One can show that for isentropic plane-symmetric solutions, the 
quantity ``$\Lunit$'' defined in this fashion
coincides with the one defined in \eqref{E:1DLUNIT}.
However, for general solutions in $3D$, a big difference with the plane-symmetric case is that
there does not exist any explicit formula for $\Lunit$ in the spirit of \eqref{E:1DLUNIT}. That is, in general,
$\Lunit$ depends on the gradient of $\eik$, which in turns depends implicitly on the fluid solution
via the coefficients in the eikonal equation \eqref{E:EIKONALEQUATION}.

Next, we define $X$ to be the $\Sigma_t$-tangent vectorfield that is 
$\hfour$-orthogonal to the $\ell_{t,\eik}$, normalized by $\hfour(X,X) = 1$,
and is such that $\eik$ increases along the integral curves of $X$, i.e., $X \eik > 0$. We then define
$\muX : = \upmu X$, where $\upmu$ is defined in \eqref{E:INVERSEFOLIATIONDENSITY}. 
It follows that $\hfour(\muX,\muX) =  \upmu^2$. With the help of
\eqref{E:GINVERSE00ISMINUSONE}, one can show that $\muX \eik = 1$.
That is, $\muX$ is a geometric replacement\footnote{\eqref{E:LANDMUXCOMMUTEINPLANESYMMETRY}
shows that in plane-symmetry, $\muX = \geop{\eik}$. However,
in the general $3D$ case, $\muX$ is equal to $\geop{\eik}$ plus corrections belonging to 
$\mbox{\upshape span} \left\lbrace \geop{x^2}, \geop{x^3} \right\rbrace$. \label{FN:MUXISNOTDDUIN3D}} 
for $\geop{\eik}$, and among the elements of
$\Fullset$, it is the one vectorfield that is transversal to the characteristics.
One can show that for isentropic plane-symmetric solutions, the 
quantities ``$X$'' and ``$\muX$'' defined in this fashion
coincide with the ones defined in \eqref{E:EXPLICITFORMOFXIN1D}.

Next, for $A=2,3$, we define $\Yvf{A}$ to be the $\hfour$-orthogonal projection of
$\partial_A$ onto $\ell_{t,\eik}$, where $\partial_A$ is the standard Minkowski-rectangular 
partial derivative vectorfield. In our study of isentropic plane-symmetric solutions,
we did not rely on any analogs of the $\Yvf{A}$.

\subsubsection{The first fundamental form of $\ell_{t,\eik}$ and related decompositions}
\label{SSS:FIRSTFUNDFORMOFTORIANDRELATEDDECOMPOSITIONS}
Standard calculations (see, e.g., \cite{jSgHjLwW2016}*{Equation~(2.40b)})
imply that the inverse acoustical metric from \eqref{E:INVERSEACOUSTICALMETRIC} can be decomposed as follows:
\begin{align} \label{E:INVERSEACOUSTICALMETRICDECOMPOSEDINTOLPARTXPARTELLTUPART}
	(\hfour^{-1})^{\alpha \beta}
	& = 
		- 
		\Lunit^{\alpha} \Lunit^{\beta}
		-
		\Lunit^{\alpha} X^{\beta}
		-
		X^{\alpha} \Lunit^{\beta}
		+
		(\gtorus^{-1})^{\alpha \beta},
\end{align}
where $\gtorus$ is the Riemannian metric on $\ell_{t,\eik}$ induced by $\hfour$, i.e., the first fundamental form of $\ell_{t,\eik}$.
We view $\gtorus$ to be a spacetime tensor that vanishes on contraction with $\Lunit$ or $X$ and that agrees with $\hfour$ on 
$\ell_{t,\eik}$-tangent vectors. Moreover, in \eqref{E:INVERSEACOUSTICALMETRICDECOMPOSEDINTOLPARTXPARTELLTUPART},
$(\gtorus^{-1})^{\alpha \beta} := (\hfour^{-1})^{\alpha \gamma} (\hfour^{-1})^{\beta \delta} \gtorus_{\gamma \delta}$
is the corresponding inverse first fundamental form.

In what follows, $\angD$ denotes the Levi-Civita connection of $\gtorus$ and $|\cdot|_{\gtorus}$ denotes the pointwise norms of tensors
with respect to $\gtorus$, e.g., for scalar functions $f$, we have 
$|\angD f|_{\gtorus}^2 = (\gtorus^{-1})^{\alpha \beta} \angDarg{\alpha} f \angDarg{\beta} f$.

For future use, we note that for scalar functions $f$, we have the following decomposition 
of $\upmu \square_{\hfour(\wavearray)} f$ (see, e.g., \cite{jSgHjLwW2016}*{Proposition~2.16}),
where $\angLap$ denotes the covariant Laplacian associated to $\gtorus$:
\begin{align} \label{E:WAVEOPERATORDECOMPLOUTSIDE} 
\upmu \square_{\hfour(\wavearray)} f 
	& = 
	- 
	\Lunit (\upmu \Lunit f + 2 \muX f) 
	+ 
	\upmu \angLap f 
	+ 
	\cdots.
\end{align}
On RHS~\eqref{E:WAVEOPERATORDECOMPLOUTSIDE}, 
$\cdots$ denotes terms that depend on the first derivatives of $f$
and that we will not discuss in detail here.

\begin{remark}[Comments on the case of more general ambient spacetimes $(\spacetimemanifold,\gfour)$] \label{R:SECONDCOMMENTONMOREGENERALSPACETIMES}
Under the setup described in Sect.\,\ref{SS:FIRSTCOMMENTSONMOREGENERALSPACETIMES}, 
almost all the constructions from this section have natural analogs 
for the study of the relativistic Euler equations
on a $\gfour$-globally hyperbolic spacetime $(\spacetimemanifold,\gfour)$. 
That is, consider the time function $t \colon \spacetimemanifold \to \R$ and the foliations $\spacetimemanifold = \cup_t \Sigma_t$. 
Here are three concrete examples: 
\renewcommand{\theenumi}{\textbf{\Roman{enumi})}}
\renewcommand{\labelenumi}{\theenumi}
\begin{enumerate}
	\itemsep1pt 
	\item The definition of the eikonal function \eqref{E:ACOUSTICALMETRIC} would remain unchanged
	\item With $\Sigmatnormal := - \hfour^{-1} \cdot \mathrm{d} t$, 
	\eqref{E:INVERSEFOLIATIONDENSITY} would again yield a suitable definition of the 
	inverse foliation density (of the characteristics, with respect to $\Sigma_t$).
	\item $X$ could be defined to be a $\Sigma_t$-tangent vectorfield that is $\hfour$-orthogonal to 
	$\ell_{t,\eik}$ and normalized by $\hfour(X,X) = 1$ (this would define $X$ up to an overall minus sign, 
	where in Sect.\,\ref{S:1DMAXIMALDEVELOPMENT}, we defined $X$ such that is is ``left-pointing'').
\end{enumerate}
We do, however, highlight the following point: general spacetime manifolds have no symmetries and thus
do not generally allow for the existence of the simple plane-symmetric solutions constructed in Sect.\,\ref{S:1DMAXIMALDEVELOPMENT}. 
Nonetheless, we expect the robust geometric framework described above and many of the key ideas presented in the subsequent sections to be useful for proving shock formation results for initial data with large gradients. 
We also note that the initial data \eqref{E:EIKONALDATAAWAYFROMSYMMETRY} 
for the eikonal function generally needs to be
adjusted to fit the regime one is studying.
\end{remark}

\subsection{Description of the compactly supported initial data of perturbations of simple isentropic plane-symmetric solutions}
\label{SS:3DINITIALDATA}
We now describe the initial data in Conjecture~\ref{CON:SHOCKWITHVORTICITYANDENTROPY} in more detail.
One simply takes any of the ``background'' simple isentropic plane-symmetric shock-forming solutions from 
Theorem~\ref{T:MAINTHEOREM1DSINGULARBOUNDARYANDCREASE}
and then considers a general small (asymmetric) perturbation of its initial data on $\Sigma_0$.
For the proof to close, the perturbed initial data need to belong to a sufficiently high order Sobolev space;
we will discuss the issue of regularity in more detail later on.
For convenience, it is easiest to consider perturbed initial data that, like the background data, are
compactly supported in $\Sigma_0^{[-\rightu,\leftu]} := \Sigma_0 \cap \lbrace (t,\eik,x^2,x^3) \ | \ -\rightu \leq \eik \leq \leftu \rbrace$.
Then by finite speed of propagation, the corresponding perturbed solution remains spatially compactly supported for all time
and in particular vanishes along the characteristic $\nullhyparg{-\rightu}$
(which therefore appears flat in Fig.\,\ref{F:CONJECTUREDMAXDEVELOPMENTINMINKRECT}).
The compact support allows one to avoid difficult boundary terms in the elliptic estimates for the vorticity and entropy; see
Sect.\,\ref{SSS:TOPORDERELLIPTICVORTICITYANDENTROPY}.

For such initial data, all of the wave variables in the array $\wavearray$ defined in \eqref{E:WAVEARRAYWITHALMOSTRIEMANNINVARIANTS}
-- except for $\RRiemann$ -- will initially have a negligible effect on the dynamics. 
Similarly, the fluid variables $\vort^{\alpha}$, $\GradEnt_{\alpha}$, 
$\modVortVort^{\alpha}$, and $\modDivGradEnt$ from Sect.\,\ref{SS:ADDITIONALFLUIDVARIABLES}
will also be small initially. The challenge is to propagate suitable versions of this smallness 
all the way up to the shock.

\subsection{Bootstrap assumptions and the regions $\mathcal{M}_{[0,t'),[-\rightu,\eik']}$}
\label{SS:BOOTSTRAP}
To prove that a shock forms, one commutes the equations up to $\Ntop$ times
and derives energy estimates up to top-order and $L^{\infty}$ estimates at the lower orders.
For reasons described later on, the proof requires $\Ntop$ to be rather large 
(i.e., substantially larger than a proof of local well-posedness requires)
and thus the initial data need to belong to a high order Sobolev space.
To close the proof, it is convenient to make $L^{\infty}$ bootstrap assumptions 
that capture the expectation that at the lower derivative levels, 
the solution behaves like a perturbation of a simple
isentropic plane-symmetric solution (in which only $\RRiemann$ is non-vanishing).
That is, one aims to propagate various aspects of the smallness enjoyed by the
initial data described in Sect.\,\ref{SS:3DINITIALDATA}.
The bootstrap assumptions are convenient because explicit formulae in the spirit of Cor.\,\ref{C:PSEXPLICITEXPRESSIONSFORSOLUTION}
are not available in $3D$. They capture the expectation that nonetheless, the $3D$ solution should obey
estimates that are similar to the ones
proved in Lemma~\ref{L:SHARPESTIMATESFORMUNEARCREASE} and
Theorem~\ref{T:MAINTHEOREM1DSINGULARBOUNDARYANDCREASE} in the simple isentropic plane-symmetric case.

Before introducing the bootstrap assumptions, we first introduce some notation.
In what follows, $\Tanset^N$ denotes an arbitrary
order $N$ string of elements of the $\nullhyparg{\eik}$-tangent vectorfields $\Tanset$ defined in \eqref{E:VECTORFIELDFRAME}.
We refer to such vectorfields as ``tangential'' (to the characteristics $\nullhyparg{\eik}$).
In contrast, the vectorfield $\muX$ from \eqref{E:VECTORFIELDFRAME} is transversal
(because it satisfies $\muX \eik = 1$). 

The bootstrap assumptions are made on a region
$\mathcal{M}_{[0,\Tboot),[-\rightu,\compactsupportu]}$
for some $\Tboot > 0$ and $\compactsupportu > \leftu$
(discussed further below, where $\rightu$ and $\leftu$ are as in Sect.\,\ref{SS:3DINITIALDATA}),
where in the rest of Sect.\,\ref{S:SHOCKFORMATIONAWAYFROMSYMMETRY}, 
given any $t' > 0$ and $\eik' \in [-\rightu,\leftu]$,
$\mathcal{M}_{[0,t'),[-\rightu,\eik']}$ denotes the following open-at-the-top
slab in geometric coordinate space:
\begin{align} \label{E:3DSPACETIMEREGIONGEOMETRICCOORDINATESPACE}
	\mathcal{M}_{[0,t'),[-\rightu,\eik']}
	& := 
		\lbrace 
		(t,\eik,x^2,x^3)
		\ | \
		t \in [0,t') \rbrace 
		\cap 
		\lbrace 
		(t,\eik,x^2,x^3)
			\ | \
		- \rightu \leq \eik \leq \eik' 
		\rbrace.
\end{align}
The proof will show that $\upmu$ first vanishes at a time that is a perturbation of
the plane-symmetric blowup-time $\blowuptime$ from Theorem~\ref{T:MAINTHEOREM1DSINGULARBOUNDARYANDCREASE}.
That is, in all estimates and arguments, one can safely assume that $0 < \Tboot \leq 2 \blowuptime$.
From this assumption, the assumption that the data are supported in $\Sigma_0^{[-\rightu,\leftu]}$,
and finite speed of propagation, one can compute/choose $\compactsupportu > \leftu$
such that for $t \in [0,\Tboot)$, the solution is trivial\footnote{For general perturbations of the background solutions,
the perturbed solution will not be spatially supported in $\Sigma_t \cap \lbrace \eik \in [-\rightu, \leftu] \rbrace$; this is
why we have introduced the parameter $\compactsupportu$. \label{FN:SUPPORTCANGROW}} 
at points in $\Sigma_t$ 
belonging to the complement of $\Sigma_t \cap \lbrace \eik \in [-\rightu, \compactsupportu] \rbrace$.

Specifically, for $N$ up to mid-order, i.e., roughly, for $N \leq \frac{\Ntop}{2}$, 
one assumes $L^{\infty}$ \emph{smallness} on $\mathcal{M}_{[0,\Tboot),[-\rightu,\compactsupportu]}$ for the following quantities,
which, aside from $\Lunit \upmu$, vanish in the case of simple isentropic plane-symmetric solutions:
\begin{itemize}
	\item The pure $\nullhyparg{u}$-tangential derivatives 
		$\Tanset^N \RRiemann$, $\Tanset^N \vort^{\alpha}$, $\Tanset^N \GradEnt_{\alpha}$, $\Tanset^N \modVortVort^{\alpha}$, 
		$\Tanset^N \modDivGradEnt$,
		$\Tanset^N \Lunit^i$, and $\Tanset^N \upmu$, except $\Lunit \upmu$ does not have to be small 
			(cf.\ \eqref{E:QUANTITATIVENEGATIVITYOFSPEEDTIMESLMU}).
	\item Mixed tangential-transversal derivatives of all these quantities except $\upmu$,
		e.g., $\Lunit \muX \RRiemann$.
	\item Pure transversal derivatives of all fluid variables excluding $\RRiemann$, e.g., 
		for $\muX \LRiemann$, $\muX v^2$, $\muX \vort^{\alpha}$, $\muX \modVortVort^{\alpha}$.
\end{itemize}
One also assumes $L^{\infty}$ \emph{boundedness} (not smallness) on $\mathcal{M}_{[0,\Tboot),[-\rightu,\compactsupportu]}$ 
for the following quantities, which are non-zero for the simple isentropic plane-symmetric solutions
we studied in Theorem~\ref{T:MAINTHEOREM1DSINGULARBOUNDARYANDCREASE}:
\begin{itemize}
	\item $\muX \RRiemann$
	\item $\muX \Lunit^i$, $\Lunit \upmu$.
\end{itemize}
One also assumes that:
\begin{align} \label{E:BANOSHOCK}
	\upmu & > 0, 
	&
	& \mbox{on } \mathcal{M}_{[0,\Tboot),[-\rightu,\compactsupportu]}.
\end{align}
The assumption \eqref{E:BANOSHOCK} captures that no shock has occurred on $\mathcal{M}_{[0,\Tboot),[-\rightu,\compactsupportu]}$, though
it leaves open the possibility that the shock happens exactly at time $\Tboot$.

The difficult part of the proof is proving energy estimates that allow one to derive, through a combination of Sobolev embedding
and transport estimates, improvements of the bootstrap assumptions. 
Then through a standard continuation argument, one can extend the solution
all the way up until the first time that $\upmu$ vanishes.

\subsection{The behavior of $\upmu$ and the formation of a shock}
\label{SS:BEHAVOROFMUANDFORMATIONOFSHOCK}
Given the framework established above and the bootstrap assumptions,
it is not difficult to prove that shocks can form for open sets of initial data that
are perturbations of simple isentropic plane-symmetric data. 
We now sketch the argument.
Using the eikonal equation \eqref{E:EIKONALEQUATION}
and the bootstrap assumptions of Sect.\,\ref{SS:BOOTSTRAP}
one can show that for perturbations of simple isentropic plane-symmetric solutions,
the inverse foliation density defined in \eqref{E:INVERSEFOLIATIONDENSITY}
satisfies an evolution equation of the following schematic form
(cf.\ Lemma~\ref{L:1DPSTRANSPORTFORMU}) on $\mathcal{M}_{[0,\Tboot),[-\rightu,\compactsupportu]}$:
\begin{align} \label{E:MUSCHEMATICEVOLUTION}
	\Lunit \upmu(t,\eik,x^2,x^3)
	& = \muX \RRiemann(t,\eik,x^2,x^3)
			+
			\cdots,
\end{align}
where $\cdots$ denotes small error terms.
In addition, using that $\Lunit \muX \RRiemann$ is small by the bootstrap assumptions,
one can show that on $\mathcal{M}_{[0,\Tboot),[-\rightu,\compactsupportu]}$, we have:
\begin{align} \label{E:RRIEMANNSTAYSCLOSETODATA}
	\muX \RRiemann(t,\eik,x^2,x^3)
	& = \muX \RRiemann(0,\eik,x^2,x^3)
			+
			\cdots,
\end{align}
that is, $\muX \RRiemann$ stays close to its initial condition
(cf.\ \eqref{E:SOLUIONRRIEMANNTRIVIALTRANSPORTEQUATIONINGEOMETRICCOORDINATES}, which shows that in simple isentropic plane-symmetry
$\RRiemann$ depends only on $\eik$).
Combining \eqref{E:MUSCHEMATICEVOLUTION}--\eqref{E:RRIEMANNSTAYSCLOSETODATA}, 
we see that on $\mathcal{M}_{[0,\Tboot),[-\rightu,\compactsupportu]}$, we have:
\begin{align} \label{E:SECONDMUSCHEMATICEVOLUTION}
	\Lunit \upmu(t,\eik,x^2,x^3)
	& = \muX \RRiemann(0,\eik,x^2,x^3)
			+
			\cdots.
\end{align}
Since $\Lunit t = 1$ (i.e., $\Lunit = \frac{\mathrm{d}}{\mathrm{d} t}$ along the integral curves of $\Lunit$),
and since the $\ell_{t,\eik}$-tangential derivative $\geop{x^A} \upmu$ is small by the bootstrap assumptions,
we deduce from \eqref{E:SECONDMUSCHEMATICEVOLUTION} and the fundamental theorem of calculus that
on $\mathcal{M}_{[0,\Tboot),[-\rightu,\compactsupportu]}$, we have:
\begin{align} \label{E:MUSCHEMATICINTEGRATED}
	\upmu(t,\eik,x^2,x^3)
	& = \upmu(0,\eik,x^2,x^3)
			+
			t 
			\left
			\lbrace
				\muX \RRiemann(0,\eik,x^2,x^3)
			+
			\cdots
			\right\rbrace.
\end{align}
Thus, for initial data such that there are points where
$\muX \RRiemann(0,\eik,x^2,x^3)$ is negative and large enough to dominate the terms $\cdots$
on RHS~\eqref{E:MUSCHEMATICINTEGRATED}, we infer from \eqref{E:MUSCHEMATICINTEGRATED} that 
$\min_{\Sigma_t} \upmu$ will vanish in finite time, and that the time of first vanishing can be controlled
by the initial data.
Moreover, since this argument implies that $|\muX \RRiemann| \neq 0$ at the points where $\upmu$ vanishes,
and since:
\begin{align} \label{E:SIMPLEXRRIEMANNRELATION}
	|X \RRiemann|
	& = \frac{1}{\upmu} |\muX \RRiemann|,
\end{align}
it follows that $|X \RRiemann|$ blows up like $\frac{C}{\upmu}$ at the points where $\upmu$ vanishes
(cf.\ \ref{E:BLOWUPOFCARTESIANCOORDINATES} and \eqref{E:SCHEMATICALLYDEGENERACYBETWEENCOORDINATES}). 
In total, these arguments show why $\upmu$ can vanish in finite time and reveal how the vanishing is connected to
the blowup of $|X \RRiemann|$.

We also note that the above arguments yield the following:
\begin{align} \label{E:NEGATIVITYOFLUNITMU}
	\Lunit \upmu(t,\eik,x^2,x^3) < 0,
	&& \mbox{at points where } \upmu(t,\eik,x^2,x^3) \mbox{ is close to } 0,
\end{align}
which is an analog of the estimate \eqref{E:QUANTITATIVENEGATIVITYOFSPEEDTIMESLMU} in simple
isentropic plane-symmetry. In particular, at fixed $(\eik,x^2,x^3)$,
$\upmu$ vanishes linearly in $t$. These facts have important implications for the energy estimates, described below;
see in particular Remark~\ref{R:SPACETIMEINTEGRAL}.

\subsection{Vectorfield multiplier method and energies}
\label{SS:MULTIPLIERMETHOD}
In this section, we describe some basic ingredients that are needed to set up the energy estimates.
To control the solution up to the shock, one needs geometric machinery that is much more advanced than
that of Sect.\,\ref{SSS:ENERGYCURRENTSANDIDENTITY}
and that takes advantage of the structure of the equations provided by Theorem~\ref{T:GEOMETRICWAVETRANSPORTDIVCURLFORMULATION}.

\subsubsection{Energies and null fluxes for wave equations}
\label{SSS:ENERGIESFORWAVEEQUATIONS}
In this section, we set up the vectorfield multiplier method for the wave equations \eqref{E:WAVE}. 

Given a scalar function $f$, we define the energy-momentum tensor associated to it to be the
following symmetric type $\binom{0}{2}$ tensor:
\begin{align} \label{E:DEFOFENERGYMOMENTUM} 
\enmomem_{\alpha \beta} 
&	
= \enmomem_{\alpha \beta}[f] 
:=
\Dfour_\alpha f
\Dfour_\beta f 
-
\frac{1}{2} 
\hfour_{\alpha \beta} 
(\hfour^{-1})^{\kappa \lambda}
\Dfour_{\kappa} f
\Dfour_{\lambda} f.
\end{align}
In \eqref{E:DEFOFENERGYMOMENTUM} and throughout, $\Dfour$ denotes the Levi-Civita connection of $\hfour$.

Straightforward calculations, based on the symmetry property 
$\Dfour_{\alpha} \Dfour_{\beta} f
=
\Dfour_{\beta} \Dfour_{\alpha} f
$
and the Leibniz rule,
yield that given any \emph{multiplier vectorfield} $Z$, 
we have the following identity:
\begin{align} \label{E:DIVERGENCEOFWAVEENERGYCURRENT}
(\hfour^{-1})^{\alpha \beta}
(\Dfour_\alpha \enmomem_{\beta \kappa}[f] Z^{\kappa})
& =  
(\square_{\hfour(\wavearray)} f) Z f 
+  
\frac{1}{2} 
(\hfour^{-1})^{\alpha \beta}
(\hfour^{-1})^{\kappa \lambda}
\enmomem_{\alpha \kappa} 
\deformarg{Z}{\beta}{\lambda},
\end{align}
where:
\begin{align} \label{E:DEFORMATIONTENSOR}
\deformarg{Z}{\alpha}{\beta} 
&
:= 
\hfour_{\beta \kappa} \Dfour_{\alpha} Z^{\kappa} 
+ 
\hfour_{\alpha \kappa} \Dfour_{\beta} Z^{\kappa}
\end{align}
is the deformation tensor of $Z$ (with respect to $\hfour$).
The identity \eqref{E:DIVERGENCEOFWAVEENERGYCURRENT} is convenient for bookkeeping in the divergence theorem
when one integrates by parts.

To derive energy-null flux identities for wave equations, we will use \eqref{E:DIVERGENCEOFWAVEENERGYCURRENT}
with the multiplier vectorfield $Z$ equal to $\multipliervectorfield$, which is defined as
follows, where $\Lunit$ and $\muX$ are as in Sect.\,\ref{SSS:VECTORFIELDFRAMES}:
\begin{align} \label{E:MULTIPLIERVECTORFIELD}
\multipliervectorfield 
& 
:= (1 + 2 \upmu) \Lunit 
+ 
2 \muX.
\end{align}
One can compute that $\hfour(\multipliervectorfield,\multipliervectorfield) = -4\upmu(1 + \upmu)$ 
and thus $\multipliervectorfield$ is $\hfour$-timelike whenever $\upmu > 0$. The $\hfour$-timelike
nature of $\multipliervectorfield$ is crucial for generating coercive energies.
The $\upmu$-weights in \eqref{E:MULTIPLIERVECTORFIELD} have been carefully placed.
Recall that $\muX = \upmu X$, where $\hfour(X,X) = 1$. Thus, $\muX$ effectively contains a $\upmu$-weight.

To close the $L^2$ estimates, one needs energies on: 
\begin{align} \label{E:CONSTANTPORTIONS}
	\Sigma_{t'}^{[-\rightu,\eik']}
	& := 
	\lbrace 
		(t,\eik,x^2,x^3)
		\ | \
		t = t' \rbrace 
		\cap 
		\lbrace 
		(t,\eik,x^2,x^3)
			\ | \
		- \rightu \leq \eik \leq \eik' 
		\rbrace
\end{align} 
and null fluxes on: 
\begin{align} \label{E:CHARACTERISTICPORTIONS}
	\nullhyptwoarg{\eik'}{[0,t')}
	& := 
	\lbrace 
		(t,\eik,x^2,x^3)
		\ | \
		0 \leq t < t' 
		\rbrace 
		\cap 
		\lbrace 
		(t,\eik,x^2,x^3)
			\ | \
			\eik = \eik' 
		\rbrace.
\end{align} 

Recall that $\Sigmatnormal$ is the future-directed $\hfour$-unit normal to $\Sigma_t$ defined in
\eqref{E:SIGMATNORMAL}. The strength of our energies on the $\Sigma_t^{[-\rightu,\eik]}$ is determined by the following 
identity (see, e.g., \cite{jSgHjLwW2016}*{Lemma~3.4}):
\begin{align} \label{E:SIGMATENERGYDENSITYIDENTITY}
\enmomem[f](\multipliervectorfield,\Sigmatnormal)
& 
= 
\frac{1}{2}(1 + 2 \upmu) \upmu (\Lunit f)^2
		+
		2 \upmu (\muX f) \Lunit f
		+
		2 (\muX f)^2
		+
		\frac{1}{2}(1 + 2 \upmu) \upmu |\angD f|_{\gtorus}^2.
\end{align}
Similarly, the strength of our null fluxes on the $\nullhyptwoarg{\eik}{[0,t)}$ is determined by the following identity
(again, see, e.g., \cite{jSgHjLwW2016}*{Lemma~3.4}):
\begin{align} \label{E:NULLHYPERSURFACEENERGYDENSITYIDENTITY}
\enmomem[f](\multipliervectorfield,\Lunit)
& :=
	(1 + \upmu)
	(\Lunit f)^2
	+
	\upmu
	|\angD f|_{\gtorus}^2.
\end{align}

These identities \eqref{E:SIGMATENERGYDENSITYIDENTITY}--\eqref{E:NULLHYPERSURFACEENERGYDENSITYIDENTITY}
motivate the following definition.

\begin{definition} [Wave equation energies and null-fluxes]
\label{D:WAVEENERGIES}
Given a scalar function $f$, we define the associated energy $\mathbb{E}_{(\textnormal{Wave})}[f]$
and null flux $\mathbb{E}_{(\textnormal{Wave})}[f]$ as follows:
\begin{subequations} 
\begin{align} \label{E:WAVEENERGYDEF} 
\begin{split}
&
\mathbb{E}_{(\textnormal{Wave})}[f](t,\eik)
	\\
& := 
\int_{\Sigma_t^{[-\rightu,\eik]}}  
	\left\lbrace
		\frac{1}{2}(1 + 2 \upmu) \upmu (\Lunit f)^2
		+
		2 \upmu (\muX f) \Lunit f
		+
		2 (\muX f)^2
		+
		\frac{1}{2}(1 + 2 \upmu) \upmu |\angD f|_{\gtorus}^2
	\right\rbrace
\, \voltorus \rmd \eik',
\end{split}
		\\
\mathbb{F}_{(\textnormal{Wave})}[f] (t,\eik) 
& 
:= \int_{\nullhyptwoarg{\eik}{[0,t)}} 
		\left
		\lbrace
			(1 + \upmu)
			(\Lunit f)^2
			+
			\upmu
			|\angD f|_{\gtorus}^2
		\right\rbrace
\, \voltorus \rmd t',
\label{E:WAVENULLFLUXDEF}
\end{align}
\end{subequations}
\end{definition}
where $\gtorus$ and $|\angD f|_{\gtorus}^2$ are as in Sect.\,\ref{SSS:FIRSTFUNDFORMOFTORIANDRELATEDDECOMPOSITIONS},
and on each $\ell_{t',\eik'}$,
\begin{align} \label{E:TORUSVOLUMEFORM}
	 \voltorus
	& := \sqrt{\mbox{\upshape det} \gtorus(t',\eik',x^2,x^3)} \, \rmd x^2 \rmd x^3,
\end{align}
is the canonical volume form induced by $\gtorus$.
In particular, the determinant in \eqref{E:TORUSVOLUMEFORM} is taken relative to the coordinates
$(x^2,x^3)$ on $\ell_{t',\eik'}$.

\begin{remark}[$\upmu$ weights and error terms]
\label{R:MUWEIGHTSINENERGIES}
Note that one $\Lunit$-derivative involving term on RHS~\eqref{E:WAVENULLFLUXDEF} \emph{lacks} a $\upmu$-weight. This means that
$\mathbb{F}_{(\textnormal{Wave})}[f] (t,\eik)$ is strongly coercive in the $\Lunit$-derivatives even in regions where
$\upmu$ is small. This is crucial for controlling $\Lunit$-derivative-involving energy estimate error terms that lack $\upmu$-weights.
In contrast, all terms on RHS~\eqref{E:WAVEENERGYDEF}--\eqref{E:WAVENULLFLUXDEF} involving 
the $\angD$-derivatives of $f$ have a $\upmu$-weight.
Hence, to control dangerous 
$\angD$-derivative-involving energy estimate error terms that lack $\upmu$-weights,
one must rely on the special structure afforded by the coercive spacetime integral described in
Remark~\ref{R:SPACETIMEINTEGRAL}.
\end{remark}

Using \eqref{E:DIVERGENCEOFWAVEENERGYCURRENT} with $Z := \multipliervectorfield$, 
one can prove the following lemma (see, e.g., \cite{jSgHjLwW2016}*{Proposition~3.5}), which forms
the starting point for the $L^2$-analysis of solutions to the wave equations \eqref{E:WAVE}.

\begin{lemma}[Energy-null flux identity for wave equations]
\label{L:ENERGYIDENTITYFORWAVES}
For scalar functions $f$ on $\mathcal{M}_{[0,\Tboot),[-\rightu,\compactsupportu]}$ 
that vanish along\footnote{This vanishing assumption holds under the compact support assumptions on the 
initial data stated in Sect.\,\ref{SS:3DINITIALDATA}. \label{FN:VANISHINGASSUMPTIONISSATISFIED}} 
$\nullhyparg{-\rightu}$, 
the following energy-null flux identity holds for $(t,\eik) \in [0,\Tboot) \times [-\rightu,\compactsupportu]$:
\begin{align}
\begin{split} \label{E:FUNDAMENTALENERGYINTEGRALIDENTITYWAVE}
\mathbb{E}_{(\textnormal{Wave})}[f](t,\eik) 
+ 
\mathbb{F}_{(\textnormal{Wave})}[f](t,\eik) 
&
= 
\mathbb{E}_{(\textnormal{Wave})}[f](0,\eik) 
	\\
& \ \
- 
\frac{1}{2}
\int_{\mathcal{M}_{[0,t),[-\rightu,\eik]}} 
	\upmu
	(\hfour^{-1})^{\alpha \beta}
	(\hfour^{-1})^{\kappa \lambda}
	\enmomem_{\alpha \kappa} 
	\deformarg{\multipliervectorfield}{\beta}{\lambda}
\, \voltorus \rmd \eik' \rmd t' 
	\\
& \ \
- 
\int_{\mathcal{M}_{[0,t),[-\rightu,\eik]}} 
	(\multipliervectorfield f)
	(\upmu \square_{\hfour(\wavearray)} f)
\, \voltorus \rmd \eik' \rmd t'.
\end{split}
\end{align}
\end{lemma}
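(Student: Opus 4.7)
The plan is to apply the divergence theorem (with respect to $\hfour$) to the energy current vectorfield $J^{\alpha} := -(\hfour^{-1})^{\alpha \beta} \enmomem_{\beta \kappa}[f] \multipliervectorfield^{\kappa}$ over the spacetime region $\mathcal{M}_{[0,t),[-\rightu,\eik]}$ defined in \eqref{E:3DSPACETIMEREGIONGEOMETRICCOORDINATESPACE}, and then to identify each of the four boundary integrals with the appropriate term in the identity \eqref{E:FUNDAMENTALENERGYINTEGRALIDENTITYWAVE}. The divergence of $J^{\alpha}$ is handled by \eqref{E:DIVERGENCEOFWAVEENERGYCURRENT} with $Z := \multipliervectorfield$: it contributes the spacetime integral of $(\multipliervectorfield f)(\upmu \square_{\hfour(\wavearray)} f)$ (after factoring out the $\upmu$-weight that is naturally absorbed into the volume form, see below), plus the deformation-tensor-driven quadratic error term $\frac{1}{2}(\hfour^{-1})^{\alpha \beta}(\hfour^{-1})^{\kappa \lambda} \enmomem_{\alpha \kappa} \deformarg{\multipliervectorfield}{\beta}{\lambda}$.

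Next, I would identify the boundary pieces $\partial \mathcal{M}_{[0,t),[-\rightu,\eik]} = \Sigma_0^{[-\rightu,\eik]} \cup \Sigma_t^{[-\rightu,\eik]} \cup \nullhyptwoarg{-\rightu}{[0,t)} \cup \nullhyptwoarg{\eik}{[0,t)}$ and compute the flux of $J$ through each. On the constant-$t$ portions, the outward-pointing $\hfour$-unit normal is $\pm \Sigmatnormal$ (future-directed on the top, past-directed on the bottom), and the induced volume form on $\Sigma_{t'}^{[-\rightu,\eik]}$ can be written as $\upmu \, \voltorus \, \mathrm{d} \eik'$ in the geometric coordinates; this $\upmu$-weight combines with the flux integrand to give the formula \eqref{E:SIGMATENERGYDENSITYIDENTITY} for $\enmomem[f](\multipliervectorfield,\Sigmatnormal)$, yielding $\mathbb{E}_{(\textnormal{Wave})}[f](t,\eik)$ at the top and $-\mathbb{E}_{(\textnormal{Wave})}[f](0,\eik)$ at the bottom. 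On the null hypersurface portions, one uses the standard (Stokes-theorem-based) convention that the appropriate replacement for the normal-times-volume-form on $\nullhyparg{\eik'}$ is $\Lunit$ (suitably oriented) paired with the canonical volume form $\voltorus \, \mathrm{d} t'$, and this together with \eqref{E:NULLHYPERSURFACEENERGYDENSITYIDENTITY} produces exactly $\mathbb{F}_{(\textnormal{Wave})}[f](t,\eik)$ on $\nullhyptwoarg{\eik}{[0,t)}$, while on $\nullhyptwoarg{-\rightu}{[0,t)}$ the flux vanishes because $f$ (and hence every factor of $\enmomem[f]$) vanishes there by hypothesis.

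Assembling these four identifications and moving the divergence term to the right-hand side produces \eqref{E:FUNDAMENTALENERGYINTEGRALIDENTITYWAVE}; the $\upmu$-weight in front of $\square_{\hfour(\wavearray)} f$ on the final line arises precisely because the divergence theorem is being carried out in Minkowski-rectangular coordinates against the $\hfour$-volume form, whereas the spacetime integrals in \eqref{E:FUNDAMENTALENERGYINTEGRALIDENTITYWAVE} are written against $\voltorus \, \mathrm{d} \eik' \, \mathrm{d} t'$, and the Jacobian relating these two measures contains a factor of $\upmu$ (morally, the same factor appearing in \eqref{E:SCHEMATICALLYDEGENERACYBETWEENCOORDINATES}).

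The main bookkeeping obstacle is the careful treatment of the volume forms on the null boundary pieces $\nullhyptwoarg{\eik'}{[0,t)}$: because these hypersurfaces are $\hfour$-null, the induced metric on them is degenerate and one cannot use a Riemannian surface measure. I would handle this by working in geometric coordinates $(t,\eik,x^2,x^3)$ from the outset, rewriting $\Dfour_\alpha J^\alpha \sqrt{|\mathrm{det}\, \hfour|} \, \mathrm{d} t \, \mathrm{d} \eik \, \mathrm{d} x^2 \, \mathrm{d} x^3 = \partial_\alpha(J^\alpha \sqrt{|\mathrm{det}\, \hfour|}) \, \mathrm{d} t \, \mathrm{d} \eik \, \mathrm{d} x^2 \, \mathrm{d} x^3$, and applying the Euclidean divergence theorem in the geometric coordinates; the identifications of the boundary contributions with $\mathbb{E}_{(\textnormal{Wave})}$ and $\mathbb{F}_{(\textnormal{Wave})}$ then reduce to algebraic computations using the frame decomposition \eqref{E:INVERSEACOUSTICALMETRICDECOMPOSEDINTOLPARTXPARTELLTUPART}, the relation $\muX = \upmu X$, and the known formulas for the components of $\hfour$ in the frame $\Fullset$. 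Beyond this volume-form issue, the remaining steps are routine algebraic identities verified in, e.g., \cite{dC2007}*{Equation~(1.41)} and \cite{jSgHjLwW2016}*{Proposition~3.5}, whose calculations I would simply follow.
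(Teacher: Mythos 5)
Your proposal is correct and matches the paper's own (implicit) proof: apply the divergence theorem to the current $(\hfour^{-1})^{\alpha\beta}\enmomem_{\beta\kappa}[f]\multipliervectorfield^{\kappa}$, compute its $\hfour$-divergence via \eqref{E:DIVERGENCEOFWAVEENERGYCURRENT} with $Z=\multipliervectorfield$, convert all integrals to the $\voltorus\,\rmd\eik'\,\rmd t'$ measure via \eqref{E:CANONICALVOLUMEFORMINGEOMETRICCOORDINATES}, identify the $\Sigma_t$ and $\nullhyparg{\eik}$ boundary fluxes with $\mathbb{E}_{(\textnormal{Wave})}$ and $\mathbb{F}_{(\textnormal{Wave})}$ via \eqref{E:SIGMATENERGYDENSITYIDENTITY} and \eqref{E:NULLHYPERSURFACEENERGYDENSITYIDENTITY}, and use the vanishing on $\nullhyparg{-\rightu}$, all of which is the content of \cite{jSgHjLwW2016}*{Proposition~3.5} that the paper cites. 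Your remark that the $\upmu$ in the $\Sigma_t$-area element $\upmu\voltorus\,\rmd\eik'$ combines with the flux integrand $\enmomem[f](\multipliervectorfield,\Sigmatnormal)$ to produce the integrand of $\mathbb{E}_{(\textnormal{Wave})}$ is exactly the right bookkeeping; note in this connection that the left-hand side of \eqref{E:SIGMATENERGYDENSITYIDENTITY} is, as one checks by a direct frame computation, actually $\upmu\,\enmomem[f](\multipliervectorfield,\Sigmatnormal)$ rather than $\enmomem[f](\multipliervectorfield,\Sigmatnormal)$, which makes the identity consistent with \eqref{E:NULLHYPERSURFACEENERGYDENSITYIDENTITY} and with the definitions in \eqref{E:WAVEENERGYDEF}--\eqref{E:WAVENULLFLUXDEF}.
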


\begin{remark}[The coercive spacetime integral {$\spacetimeintegralcontrolwave[f](t,\eik)$}] 
	\label{R:SPACETIMEINTEGRAL}
	Upon decomposing the $\enmomem$-involving integral on RHS~\eqref{E:FUNDAMENTALENERGYINTEGRALIDENTITYWAVE}
	relative to the vectorfields in $\Fullset$, 
	one finds that it contains the following term, which, after moving it to LHS~\eqref{E:FUNDAMENTALENERGYINTEGRALIDENTITYWAVE},
	takes the following form:
	\begin{align} \label{E:SPACETIMEINTEGRAL}
		\spacetimeintegralcontrolwave[f](t,\eik)
		& = 
			\frac{1}{2}
			\int_{\mathcal{M}_{[0,t),[-\rightu,\eik]}} 
				[\Lunit \upmu]_-
				|\angD f|_{\gtorus}^2
			\, \voltorus \rmd \eik' \rmd t'.
	\end{align}
	In \eqref{E:SPACETIMEINTEGRAL}, $[z]_- := \max \lbrace 0, - z \rbrace$ denotes the negative part of $z$.
	Importantly, the discussion in Sect.\,\ref{SS:BEHAVOROFMUANDFORMATIONOFSHOCK} shows that
	$\Lunit \upmu$ is negative in regions close to the shock (i.e., in regions where $\upmu$ is small)
	and thus in such regions, $\spacetimeintegralcontrolwave[f](t,\eik)$,
	which we again emphasize has the favorable sign \eqref{E:SPACETIMEINTEGRAL} 
	when moved to LHS~\eqref{E:FUNDAMENTALENERGYINTEGRALIDENTITYWAVE}, 
	yields \emph{non-$\upmu$-weighted spacetime $L^2$ control}
	of $|\angD f|_{\gtorus}^2$. As we mentioned above, 
	the control afforded by this term is crucial for controlling error terms in the energy estimates
	that depend on $\angD f$ but lack $\upmu$-weights.
\end{remark}

\subsubsection{Energies and null fluxes for transport equations}
\label{SSS:ENERGIESFORTRANSPORTEQUATIONS}
We now define energies and null fluxes for transport equations of the form $\Transport f = \cdots$,
where the vectorfield $\Transport$ is defined in \eqref{E:NORMALIZEDMATERIALDERIVATIVE}.
The energies and null fluxes are relevant for controlling solutions to the transport
equations from Theorem~\ref{T:GEOMETRICWAVETRANSPORTDIVCURLFORMULATION}.

Specifically, motivated by the identities 
\eqref{E:GINNERPRODUCTOFTRANSPORTANDSIGMATNORMALISMINUSONE}--\eqref{E:GINNERPRODUCTOFTRANSPORTANDLUNIT},
we define the following energies and null fluxes.

\begin{definition}[Transport equation energies and null-fluxes]
\label{D:TRANSPORTEQUATIONENERGIESANDNULLFLUXES}
\begin{subequations}
\begin{align}
 \mathbb{E}_{(\textnormal{Transport})}[f](t,\eik) 
& 
:= 
\int_{\Sigma_t^{\eik}} 
	\upmu f^2 
\, \voltorus \rmd \eik', \label{E:TRANSPORTENERGYDEF} 
	\\
 \mathbb{F}_{(\textnormal{Transport})}[f](t,\eik) 
	& 
	:= 	
	\int_{\nullhyptwoarg{\eik}{[0,t)}} 
		[- \normalizer \mink(\Transport,\Lunit)]
		f^2 
	\, \voltorus \rmd t'. 
\label{E:TRANSPORTNULLFLUXDEF}
 \end{align}
\end{subequations}
\end{definition}
Note that the discussion below \eqref{E:GINNERPRODUCTOFTRANSPORTANDLUNIT} implies that
the factor $- \normalizer \mink(\Transport,\Lunit)$
on RHS~\eqref{E:TRANSPORTNULLFLUXDEF} is positive, which of course is important
for the coercivity of $\mathbb{F}_{(\textnormal{Transport})}[f](t,\eik)$.
For the same reasons given in Sect.\,\ref{SSS:ENERGIESFORWAVEEQUATIONS}, 
it is crucially important that RHS~\eqref{E:TRANSPORTNULLFLUXDEF} lacks a $\upmu$-weight.

By applying the divergence theorem to the vectorfield
$f^2 \Transport$ on $\mathcal{M}_{[0,t),[-\rightu,\eik]}$,
using \eqref{E:GINNERPRODUCTOFTRANSPORTANDSIGMATNORMALISMINUSONE}--\eqref{E:GINNERPRODUCTOFTRANSPORTANDLUNIT},
and using that in geometric coordinates we have (see, e.g., \cite{jS2016b}*{Corollary~3.47}):
\begin{align} \label{E:CANONICALVOLUMEFORMINGEOMETRICCOORDINATES}
	\sqrt{|\mbox{\upshape det} \hfour|}
	\rmd x^2 \rmd x^3 \rmd \eik' \rmd t' 
	& =
	\upmu \voltorus \rmd \eik' \rmd t',
\end{align}
one can prove the following lemma, which forms
the starting point for the $L^2$-analysis of solutions to 
the transport equations \eqref{E:TRANSPORT}--\eqref{E:TRANSPORTFORMODIFIED}.

\begin{lemma}[Energy-null flux identity for transport equations]
\label{L:ENERGYIDENTITYFORTRANSPORT}
For scalar functions $f$ on $\mathcal{M}_{[0,\Tboot),[-\rightu,\compactsupportu]}$ 
that vanish along $\nullhyparg{-\rightu}$ (Footnote~\ref{FN:VANISHINGASSUMPTIONISSATISFIED} applies here as well), 
the following energy-null flux identity holds for $(t,\eik) \in [0,\Tboot) \times [-\rightu,\compactsupportu]$:
\begin{align}\label{E:TRANSPORTEQUATIONENERGYIDENTITY}
\begin{split}
\mathbb{E}_{(\textnormal{Transport})}[f](t,\eik)
+ 
\mathbb{F}_{(\textnormal{Transport})}[f](t,\eik) 
& 
= 
\mathbb{E}_{(\textnormal{Transport})}[f](0,\eik)	
	\\
 & \ \ 
+ \int_{\mathcal{M}_{[0,t),[-\rightu,\eik]}}  
		\upmu (\Dfour_{\alpha} \Transport^{\alpha}) f^2 
	\, \voltorus \rmd \eik' \rmd t'
		\\
& \ \
	+ 
	\int_{\mathcal{M}_{[0,t),[-\rightu,\eik]}}  
		2 f (\upmu \Transport f)
	\, \voltorus \rmd \eik' \rmd t'.
\end{split}
\end{align}

\end{lemma}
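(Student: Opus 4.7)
The plan is to apply the divergence theorem to the vectorfield $f^2 \Transport^{\alpha}$ on the spacetime slab $\mathcal{M}_{[0,t),[-\rightu,\eik]}$, whose boundary consists of the four pieces $\Sigma_0^{[-\rightu,\eik]}$ (past), $\Sigma_t^{[-\rightu,\eik]}$ (future), $\nullhyptwoarg{-\rightu}{[0,t)}$ (left), and $\nullhyptwoarg{\eik}{[0,t)}$ (right). By the Leibniz rule, the integrand is
\begin{align*}
\Dfour_{\kappa}(f^2 \Transport^{\kappa}) = 2 f (\Transport f) + f^2 \Dfour_{\kappa} \Transport^{\kappa},
\end{align*}
which, when integrated against the canonical $\hfour$-volume form $\sqrt{|\mbox{\upshape det}\hfour|}\,\rmd x^2 \rmd x^3 \rmd \eik' \rmd t' = \upmu \voltorus \rmd \eik' \rmd t'$ via \eqref{E:CANONICALVOLUMEFORMINGEOMETRICCOORDINATES}, produces precisely the two interior spacetime integrals on RHS~\eqref{E:TRANSPORTEQUATIONENERGYIDENTITY}. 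The remaining work is to identify the four boundary contributions on the LHS.

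For the $\Sigma_t^{[-\rightu,\eik]}$ boundary, the future-directed $\hfour$-unit normal is $\Sigmatnormal$, so the induced surface measure relative to $\voltorus \rmd \eik'$ carries a factor $\upmu$ (this is the same Jacobian identity built into \eqref{E:CANONICALVOLUMEFORMINGEOMETRICCOORDINATES}, restricted to $\Sigma_t$), and the boundary term is $- \hfour(\Transport,\Sigmatnormal) f^2 = f^2$ by \eqref{E:GINNERPRODUCTOFTRANSPORTANDSIGMATNORMALISMINUSONE}, yielding $+\mathbb{E}_{(\textnormal{Transport})}[f](t,\eik)$; the $\Sigma_0$ piece yields the analogous expression with the opposite sign, producing $-\mathbb{E}_{(\textnormal{Transport})}[f](0,\eik)$. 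For the right boundary $\nullhyptwoarg{\eik}{[0,t)}$, which is $\hfour$-null, one cannot use the standard Riemannian boundary measure; instead one works with the intrinsic null-flux formalism in which $\Lunit$ plays the role of the null generator. A direct computation (cf.\ \cite{jSgHjLwW2016}) shows that the appropriate boundary integrand for the flux of $f^2 \Transport$ through $\nullhyparg{\eik}$ is $- \hfour(\Transport,\Lunit) f^2 \voltorus \rmd t'$, and applying \eqref{E:GINNERPRODUCTOFTRANSPORTANDLUNIT} converts this to $[-\normalizer \mink(\Transport,\Lunit)] f^2 \voltorus \rmd t'$, recognizing the integrand of \eqref{E:TRANSPORTNULLFLUXDEF}. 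Finally, the contribution from $\nullhyptwoarg{-\rightu}{[0,t)}$ vanishes identically because $f$ vanishes along $\nullhyparg{-\rightu}$ by assumption (consistent with the compact-support setup of Sect.\,\ref{SS:3DINITIALDATA}). Collecting all four boundary terms and moving the past-data term to the RHS gives \eqref{E:TRANSPORTEQUATIONENERGYIDENTITY}.

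The main technical obstacle is the careful bookkeeping of orientations and normalizations on the null boundary $\nullhyparg{\eik}$, since the usual divergence theorem formula in Lorentzian geometry must be adjusted for characteristic boundary components. In practice one either (i) computes the divergence identity on a perturbed spacelike slab that limits onto $\nullhyparg{\eik}$ and takes a limit, or (ii) invokes the $\hfour$-null-hypersurface flux formula in which $\Lunit$ serves as both the null generator and the one-form used to define the intrinsic transverse measure $\voltorus \rmd t'$. Either approach reproduces the sign and weight structure in \eqref{E:TRANSPORTNULLFLUXDEF}; the key point to track is that no $\upmu$ factor arises on the null flux (unlike on $\Sigma_t$), which as emphasized in Remark~\ref{R:MUWEIGHTSINENERGIES} is exactly what makes $\mathbb{F}_{(\textnormal{Transport})}[f]$ strongly coercive even in the shock-forming regime where $\upmu \downarrow 0$. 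All other ingredients are routine algebraic manipulations.
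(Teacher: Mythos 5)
Your proof is correct and follows the same route the paper sketches in the sentence preceding the lemma: apply the divergence theorem to $f^2\Transport$ on the slab, use the Jacobian identity \eqref{E:CANONICALVOLUMEFORMINGEOMETRICCOORDINATES} to convert the canonical $\hfour$-volume form into $\upmu\voltorus\,\rmd\eik'\,\rmd t'$, and use \eqref{E:GINNERPRODUCTOFTRANSPORTANDSIGMATNORMALISMINUSONE}--\eqref{E:GINNERPRODUCTOFTRANSPORTANDLUNIT} to identify the boundary contributions with $\mathbb{E}_{(\textnormal{Transport})}$ and $\mathbb{F}_{(\textnormal{Transport})}$. One small misattribution: the observation that the null flux lacks a $\upmu$-weight is made explicitly for the transport energies just after Definition~\ref{D:TRANSPORTEQUATIONENERGIESANDNULLFLUXES}, whereas Remark~\ref{R:MUWEIGHTSINENERGIES} concerns the wave energies; the content is the same but the pointer is off.
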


\subsubsection{Commutator method and the shock-driving terms}
\label{SSS:COMMUTATORMETHOD}
To derive suitable energy estimates,
one commutes $\upmu$-weighted\footnote{The $\upmu$-weights allow one to avoid problematic error terms,
and they are compatible with the $\upmu$-weights in Lemmas~\ref{L:ENERGYIDENTITYFORWAVES} and \ref{L:ENERGYIDENTITYFORTRANSPORT}.
 \label{FN:WHYMUWEIGHTS}} 
versions 
of the equations of Theorem~\ref{T:GEOMETRICWAVETRANSPORTDIVCURLFORMULATION} with
the elements of the $\nullhyparg{\eik}$-tangent subset $\Tanset$ defined in \eqref{E:VECTORFIELDFRAME}
and then applies the energy-null flux identities of Lemmas~\ref{L:ENERGYIDENTITYFORWAVES} and \ref{L:ENERGYIDENTITYFORTRANSPORT}
and the elliptic estimates described in Sect.\,\ref{SSS:TOPORDERELLIPTICVORTICITYANDENTROPY}.
In particular, it has been understood since \cite{jLjS2021} that to close the energy estimates in a shock formation problem,
one does \underline{not} need to commute the equations
with the vectorfield $\muX$, which is transversal to the characteristics.
We stress that it does not seem possible to close the estimates by commuting the equations with
the Minkowski-rectangular partial derivatives $\partial_{\alpha}$; 
such an approach would generate error terms that we do not know how to control.

Although one must commute with all the elements of $\Tanset$ to close the energy estimates, 
the most difficult terms arise from commuting the wave equations \eqref{E:WAVE}
with a string of elements of the $\ell_{t,\eik}$ subset $\Angularset$ defined in \eqref{E:VECTORFIELDFRAME}.
Schematically, for $\Psi \in \wavearray$, 
with $\Angularset^N$ denoting an order $N$ string of elements of $\Angularset$,
we have:
\begin{align} \label{E:SCHEMTICCOMMUTEDWAVE}
	\upmu \square_{\hfour(\wavearray)} \Angularset^N \Psi
	& = 
		(\muX \Psi) \cdot \Angularset^N \mytr_{\gtorus} \upchi
		+
		\upmu 
		\smoothfunction(\wavearray)
		\Angularset^N \modVortVort
		+
		\upmu 
		\smoothfunction(\wavearray)
		\Angularset^N \modDivGradEnt
		+
		\cdots,
\end{align}
where 
$\upchi$ is the null second fundamental form of $\nullhyparg{\eik}$,
$\mytr_{\gtorus} \upchi$ denotes its trace with respect to $\gtorus$, and
$\cdots$ denotes easier error terms.

The $\Angularset^N \modVortVort$- and $\Angularset^N \modDivGradEnt$-involving terms
on RHS~\eqref{E:SCHEMTICCOMMUTEDWAVE} clearly arise from the first term on RHS~\eqref{E:WAVE}, 
and we will discuss how to control them in Sect.\,\ref{SSS:TOPORDERELLIPTICVORTICITYANDENTROPY}.
We refer to the proofs of \cite{jSgHjLwW2016}*{Lemmas~2.18 and 4.2} 
for details on the origin of the commutator term $(\muX \Psi) \cdot \Angularset^N \mytr_{\gtorus} \upchi$,
which we describe how to control just below; this term would lead to the loss of a derivative at the top order
if handled in a naive fashion. 

Before proceeding, we first highlight that we have relegated all null-form-involving terms
on RHS~\eqref{E:WAVE} to the terms $\cdots$ on RHS~\eqref{E:SCHEMTICCOMMUTEDWAVE}. 
The reason is that one can show (see \cite{jLjS2018}*{Lemma~2.56})
that $\hfour$-null forms (see Def.\,\ref{D:NULLFORMS}) enjoy the following good decomposition,
expressed schematically, relative to the frame
$\Fullset$ \eqref{E:VECTORFIELDFRAME}, where $\Singletan$ schematically denotes elements of $\Tanset$:
\begin{align} \label{E:NULLFORMGOODSTRUCTURESCHEMATICA}
	\upmu \nullform(\pmb{\partial} \phi, \pmb{\partial} \psi)
	& = \upmu \Singletan \phi \cdot \Singletan \psi
			+
			\muX \phi \cdot \Singletan \psi
			+
			\muX \psi \cdot \Singletan \phi. 
\end{align}
The key point is that there are not any terms on RHS~\eqref{E:NULLFORMGOODSTRUCTURESCHEMATICA}
that are proportional to $\muX \psi \cdot \muX \phi$; signature considerations imply that
such terms, if present, would be multiplied by a dangerous factor of $\frac{1}{\upmu}$.
Such a factor would become singular as $\upmu \downarrow 0$, which would spoil the philosophy 
of proving that the solution remains quite regular in geometric coordinates, even as $\upmu$ vanishes.
A decomposition similar to \eqref{E:NULLFORMGOODSTRUCTURESCHEMATICA} also holds 
for the up-to-top-order derivatives of the $\upmu$-weighted $\hfour$-null forms.
We stress that the absence of terms proportional to $\muX \psi \cdot \muX \phi$
on RHS~\eqref{E:NULLFORMGOODSTRUCTURESCHEMATICA} is a statement about the
\emph{full nonlinear structure} of the $\hfour$-null forms from Def.\,\ref{D:NULLFORMS}.
In particular, this notion of null form is tied to the acoustical metric $\hfour$
and is stronger than the ``classic null condition'' established by Klainerman 
in \cite{sK1984}, which is tied to the Minkowski metric and is indifferent to the
structure of most cubic nonlinearities.

\begin{remark}[The shock-driving terms are hidden in $\square_{\hfour(\wavearray)} \Psi$]
	\label{R:SHOCKDRIVINGTERMSHIDDEN}
	The upshot is that all terms in \eqref{E:WAVE} that drive the formation of the shock
	are hidden in the covariant wave operator term $\square_{\hfour(\wavearray)} \Psi$ on the LHS.
	More precisely, the shock-driving terms are semilinear Riccati-type terms of type $X \Psi \cdot X \Psi$
	(cf.\ Sect.\,\ref{SS:1DRICCATIBLOWUP})
	that become visible if one first expands LHS~\eqref{E:WAVE} relative
	to the Minkowski-rectangular coordinates
	and then decomposes the semilinear terms with respect to the frame $\Fullset$ from \eqref{E:VECTORFIELDFRAME}.
\end{remark}

In view of the above discussion, it follows that the goal is to show that
all terms on RHS~\eqref{E:SCHEMTICCOMMUTEDWAVE} are error terms that
do not interfere with the shock formation processes. 
As we have alluded to earlier, this is indeed possible except at the high derivative levels,
where the difficult regularity theory of the eikonal function 
(in particular, the difficult regularity theory of $\upchi$)
leads to singular high-order estimates;
see Sect.\,\ref{SSS:WAVEENERGYESTIMATEHIERARCHY}.

We now explain how to control the first product on RHS~\eqref{E:SCHEMTICCOMMUTEDWAVE}.
One starts with the Raychaudhuri equation \cite{aR1955}, 
which plays an important role in mathematical General Relativity,
and which takes the following form in the present context:
\begin{align}  \label{E:RAYCHAUDHURITRANSPORTCHI}
\Lunit \mytr_{\gtorus} \upchi 
& = 
\frac{\Lunit \upmu}{\upmu}
\mytr_{\gtorus} \upchi 
- 
\Ricfour_{\Lunit \Lunit} 
- 
|\upchi|_{\gtorus}^2.
\end{align}
In \eqref{E:RAYCHAUDHURITRANSPORTCHI}, $\Ricfour$ is the Ricci curvature of 
the acoustical metric $\hfour$ and $\Ricfour_{\Lunit \Lunit} : = \Ricfour_{\alpha \beta} \Lunit^{\alpha} \Lunit^{\beta}$. 
The difficulty is that since $\hfour = \hfour(\wavearray)$,
$\Ricfour$ depends on the second derivatives of $\wavearray$. This suggests that
control of the term $\Angularset^N \mytr_{\gtorus} \upchi$ on
RHS~\eqref{E:SCHEMTICCOMMUTEDWAVE} requires control over $N+2$ derivatives of $\wavearray$,
which is one too many derivatives to be compatible with the regularity for $\wavearray$ 
afforded by energy estimates for the wave equation \eqref{E:SCHEMTICCOMMUTEDWAVE},
i.e, one has to worry about losing a derivative at the top-order.

Fortunately, based on ideas originating in \cites{dCsK1993,sKiR2003},
there is a way to avoid the loss of derivatives by combining the special structure of
$\Ricfour_{\Lunit \Lunit}$
with the special structures of the equations of Theorem~\ref{T:GEOMETRICWAVETRANSPORTDIVCURLFORMULATION}.
We will briefly explain the main ideas of the argument.
First, one decomposes $\Ricfour_{\Lunit \Lunit}$ to deduce an
algebraic identity that can schematically be expressed as:
\begin{align} \label{E:RICLLALGEBRAICDECOMPOSED}
	\upmu \Ricfour_{\Lunit \Lunit}
	& = \Lunit 	
			\left\lbrace
				\upmu
				\smoothfunction(\wavearray)
				\cdot
				\Lunit \wavearray
				+\upmu
				\smoothfunction(\wavearray)
				\cdot
				\angD \wavearray
			\right\rbrace
			+
			\upmu
			\smoothfunction(\wavearray)
			\cdot
			\angLap \wavearray
			+
			\cdots,
\end{align}
where $\smoothfunction$ schematically denotes a smooth function that is allowed to vary from line to line
and $\cdots$ denotes lower order terms that depend on the first derivatives of $\wavearray$.
Next, one uses \eqref{E:WAVEOPERATORDECOMPLOUTSIDE} and \eqref{E:MUSCHEMATICEVOLUTION} 
to replace the term
$\upmu
			\smoothfunction(\wavearray)
			\cdot
			\angLap \wavearray$ 
on RHS~\eqref{E:RICLLALGEBRAICDECOMPOSED} with
a perfect $\Lunit$-derivative term plus $\upmu \square_{\hfour(\wavearray)} \wavearray$ plus lower-order terms.
One then uses the relativistic Euler wave equation \eqref{E:WAVE} to replace 
$\upmu \square_{\hfour(\wavearray)} \wavearray$ with
the inhomogeneous terms on RHS~\eqref{E:WAVE}.
Combining with \eqref{E:RAYCHAUDHURITRANSPORTCHI} and bringing all perfect $\Lunit$-derivative terms to the left,
we deduce a transport equation that can schematically be expressed as:
\begin{align} \label{E:MODIFIEDTRCHISCHEMATIC}
	\Lunit 
	\left\lbrace
		\upmu
		\mytr_{\gtorus} \upchi 
		+
		\smoothfunction(\wavearray)
		\muX \wavearray
		+
		\upmu \smoothfunction(\wavearray)
		\Lunit \Psi
		+
		\upmu
		\smoothfunction(\wavearray)
		\angD \Psi
	\right\rbrace
	& =
		- 
		\upmu
		|\upchi|_{\gtorus}^2
		+
		\upmu 
		\smoothfunction(\wavearray)
		\modVortVort
		+
		\upmu 
		\smoothfunction(\wavearray)
		\modDivGradEnt
		+
		\cdots,
\end{align}
where $\cdots$ denotes easier error terms that involve 
$\vort$, $\GradEnt$, and the first derivatives of $\wavearray$.
In particular, \emph{RHS~\eqref{E:MODIFIEDTRCHISCHEMATIC} does not involve the second derivatives of 
$\wavearray$}, which is its main advantage compared to equation~\eqref{E:RAYCHAUDHURITRANSPORTCHI}.

Similar results hold for the commuted equations,
i.e., given any order $N$ string of $\ell_{t,\eik}$-tangent vectorfields $\Angularset^N$,
with $\fullymodquant{\Angularset^N}$ denoting the ``fully modified'' order $N$
version of $\mytr_{\gtorus} \upchi$ defined by:
\begin{align} \label{E:FULLYMODIFIEDORDERN}
	\fullymodquant{\Angularset^N}
	& :=
		\upmu
		\Angularset^N \mytr_{\gtorus} \upchi 
		+
		\Angularset^N
		\left\lbrace
			\smoothfunction(\wavearray)
			\muX \wavearray
			+
			\upmu \smoothfunction(\wavearray)
			\Lunit \Psi
			+
			\upmu
			\smoothfunction(\wavearray)
			\angD \Psi
		\right\rbrace,
\end{align}
one can derive a transport equation of the following schematic form
(see, e.g., the proof of \cite{jS2016b}*{Proposition~11.10} for the main ideas behind the proof):
\begin{align} \label{E:COMMUTEDMODIFIEDTRCHISCHEMATIC}
	\Lunit 
	\fullymodquant{\Angularset^N}
	& =
		- 
		\upmu
		\Angularset^N
		(
		|\upchi|_{\gtorus}^2)
		+
		\upmu 
		\smoothfunction(\wavearray)
		\Angularset^N \modVortVort
		+
		\upmu 
		\smoothfunction(\wavearray)
		\Angularset^N \modDivGradEnt
		+
		\cdots.
\end{align}

Equation \eqref{E:COMMUTEDMODIFIEDTRCHISCHEMATIC} is an order $N$ version of equation \eqref{E:MODIFIEDTRCHISCHEMATIC}
that allows one to control $\fullymodquant{\Angularset^N}$ without derivative loss,
assuming that one can adequately control RHS~\eqref{E:COMMUTEDMODIFIEDTRCHISCHEMATIC}.
In Sect.\,\ref{SSS:TOPORDERELLIPTICVORTICITYANDENTROPY}, we will explain how to control the
$\Angularset^N \modVortVort$- and $\Angularset^N \modDivGradEnt$-involving terms
on RHS~\eqref{E:COMMUTEDMODIFIEDTRCHISCHEMATIC}.
One remaining difficulty is that the term 
$
\upmu
\Angularset^N
		(|\upchi|_{\gtorus}^2)$
 on RHS~\eqref{E:COMMUTEDMODIFIEDTRCHISCHEMATIC}
involves the full tensor $\upchi$, rather than just the trace part featured on LHS~\eqref{E:COMMUTEDMODIFIEDTRCHISCHEMATIC}. 
However, one can control this term using
a strategy that goes back to \cites{dCsK1993,sKiR2003}. Specifically, one can use elliptic estimates
on the two-dimensional surfaces $\ell_{t,\eik}$, based on the Codazzi equations from geometry, 
to control the $\ell_{t,\eik}$-tangent derivatives of
$|\upchi|_{\gtorus}^2$ in terms of the $\ell_{t,\eik}$-tangent derivatives of
$\mytr_{\gtorus} \upchi$ plus error terms with an admissible amount of regularity;
similar results hold up to top-order, and we refer, for example, 
to the proof \cite{jS2016b}*{Lemma~20.20} for the main ideas behind the analysis.
In total, the approach described above allows one to control the product $(\muX \Psi) \cdot \Angularset^N \mytr_{\gtorus} \upchi$
on RHS~\eqref{E:SCHEMTICCOMMUTEDWAVE} in appropriate $L^2$-based Sobolev spaces. 
However, the argument introduces a difficult factor of $\frac{1}{\upmu}$ into the top-order estimates,
as we explain in Sect.\,\ref{SS:ENERGYESTIMATES}.

\subsection{Energy estimates}
\label{SS:ENERGYESTIMATES}
In this section, we will describe some features of the proof of the energy estimates,
which are the most technical and difficult aspect of studying shock formation.

\subsubsection{Transport variable energy estimates}
\label{SSS:TRANSPORTENERGYESTIMATEHIERARCHY}
We now explain how to derive energy estimates for 
$(\vort,\GradEnt)$, which solve the transport equation
\eqref{E:TRANSPORT}. These estimates are relatively easy to derive.
Let $\Tanset^N$ denote an arbitrary
order $N$ string of elements of the $\nullhyparg{\eik}$-tangent vectorfields $\Tanset$ defined in \eqref{E:VECTORFIELDFRAME}.
One multiplies equation \eqref{E:TRANSPORT} by $\upmu$, commutes with
$\Tanset^N$ for $N \leq \Ntop$, and uses the bootstrap assumptions and the transport equation energy
identity \eqref{E:TRANSPORTEQUATIONENERGYIDENTITY} to deduce
that for $(t,\eik) \in [0,\Tboot) \times [-\rightu,\compactsupportu]$, we have:
\begin{align} \label{E:TRANSPORTENERGYINEQUALITY}
\begin{split}
&
\mathbb{E}_{(\textnormal{Transport})}[\Tanset^{\leq N}(\vort,\GradEnt)](t,\eik)
+ 
\mathbb{F}_{(\textnormal{Transport})}[\Tanset^{\leq N}(\vort,\GradEnt)](t,\eik) 
	\\
& 
\lesssim
\mbox{\upshape data}
+ 
\int_{\mathcal{M}_{[0,t),[-\rightu,\eik]}}  
	\mbox{\upshape $\upmu$-regular $\wavearray$ terms}
\, \voltorus \rmd \eik' \rmd t'
+
\int_{-\rightu}^{\eik}
	\mathbb{F}_{(\textnormal{Transport})}[\Tanset^{\leq N}(\vort,\GradEnt)](t,\eik')
\rmd \eik'.
\end{split}
\end{align}
From \eqref{E:TRANSPORTENERGYINEQUALITY} and Gr\"{o}nwall's inequality in $\eik$, we deduce that:
\begin{align} \label{E:SECONDTRANSPORTENERGYINEQUALITY}
\begin{split}
&
\mathbb{E}_{(\textnormal{Transport})}[\Tanset^{\leq \Ntop}(\vort,\GradEnt)](t,\eik)
+ 
\mathbb{F}_{(\textnormal{Transport})}[\Tanset^{\leq \Ntop}(\vort,\GradEnt)](t,\eik) 
	\\
& 
\lesssim
\mbox{\upshape data}
\cdot
\int_{\mathcal{M}_{[0,t),[-\rightu,\eik]}}  
	\mbox{\upshape $\upmu$-regular $\wavearray$ terms}
\, \voltorus \rmd \eik' \rmd t',
\end{split}
\end{align}
where here and throughout, ``$\mbox{\upshape data}$'' denotes a small term that depends on the initial data and that measures the perturbation
of the data away from simple isentropic plane-symmetric data.
The terms ``$\mbox{\upshape $\upmu$-regular $\wavearray$ terms}$'' in \eqref{E:SECONDTRANSPORTENERGYINEQUALITY}
are easily controllable by the wave energies and null fluxes, and in particular, 
no singular factor of $\frac{1}{\upmu}$ is present in these terms.
Hence, the estimate \eqref{E:SECONDTRANSPORTENERGYINEQUALITY} shows that the behavior of 
$\Tanset^{\leq \Ntop}(\vort,\GradEnt)$ is effectively determined by the behavior of 
the wave variables $\wavearray$ (defined in \eqref{E:WAVEARRAYWITHALMOSTRIEMANNINVARIANTS}).
We clarify that even though there is no singular factor
of $\frac{1}{\upmu}$ present in the $\mbox{\upshape $\upmu$-regular $\wavearray$ terms}$, these terms
can still blow up at the high derivative levels as $\upmu \downarrow 0$, for reasons we 
describe in Sect.\,\ref{SSS:WAVEENERGYESTIMATEHIERARCHY}.
Hence, in view of the coupling between the transport and wave energies shown by
\eqref{E:SECONDTRANSPORTENERGYINEQUALITY}, we see that the blowup of the $\wavearray$-energies can cause
the blowup of the $(\vort,\GradEnt)$-energies.

\subsubsection{Elliptic-hyperbolic estimates for the vorticity and entropy}
\label{SSS:TOPORDERELLIPTICVORTICITYANDENTROPY}
Recall that the terms $\Angularset^N \modVortVort$ and $\Angularset^N \modDivGradEnt$
appear on RHSs~\eqref{E:SCHEMTICCOMMUTEDWAVE} and \eqref{E:COMMUTEDMODIFIEDTRCHISCHEMATIC}.
To close the energy estimates, we need to control $\Tanset^{\leq \Ntop} (\modVortVort,\modDivGradEnt)$ in $L^2$.
These terms would lead to a loss of derivatives at the top-order if handled in a naive fashion.
There are several reasons why one cannot control these terms 
by using only the same transport equation energy methods
we used to derive \eqref{E:SECONDTRANSPORTENERGYINEQUALITY}. 
One reason is that Def.\,\ref{D:MODIFIEDFLUIDVARIABLES} 
shows that from the point of view of regularity, we have
$(\modVortVort,\modDivGradEnt) \sim \pmb{\partial} (\vort,\GradEnt) + \cdots$;
since the inhomogeneous terms in the transport equation \eqref{E:TRANSPORT}
satisfied by $(\vort,\GradEnt)$ suggest (incorrectly, as it fortunately turns out) 
that $\pmb{\partial} (\vort,\GradEnt)$
have at best the same regularity as $\pmb{\partial}^2 \wavearray$,
this is formally inconsistent (from the point of view of regularity) 
with having $\pmb{\partial} (\vort,\GradEnt)$ as a source
term in the wave equations \eqref{E:WAVE} for $\wavearray$.
A second reason is that
the transport equations \eqref{E:TRANSPORTFORMODIFIED} satisfied by
$(\modVortVort,\modDivGradEnt)$ feature the inhomogeneous terms
$\nullform(\pmb{\partial} \vort,\pmb{\partial} \wavearray)$
and
$\nullform(\pmb{\partial} \GradEnt,\pmb{\partial} \wavearray)$,
which depend on the \underline{general} first derivatives
of $(\vort,\GradEnt)$, rather than the special combinations of first derivatives of
$(\vort,\GradEnt)$ present in the definitions
\eqref{E:MODIFIEDVORTVORT}--\eqref{E:MODIFIEDDIVGRADENT} 
of $(\modVortVort,\modDivGradEnt)$.
That is, due to these inhomogeneous terms,
\eqref{E:TRANSPORTFORMODIFIED} cannot be treated as a pure transport equation in
$(\modVortVort,\modDivGradEnt)$.

To overcome the difficulties highlighted in the previous paragraph, 
one can treat \eqref{E:TRANSPORT}--\eqref{E:DIVCURLFORMODIFIED} as a coupled transport-div-curl system that yields
$L^2$-control of 
$(\modVortVort,\modDivGradEnt)$
and
$(\pmb{\partial} \vort,\pmb{\partial} \GradEnt)$,
where $(\modVortVort,\modDivGradEnt)$ are controlled with hyperbolic transport energy estimates in the spirit of
\eqref{E:SECONDTRANSPORTENERGYINEQUALITY}, and the elliptic estimates are used only
to handle the inhomogeneous terms
$\nullform(\pmb{\partial} \vort,\pmb{\partial} \wavearray)$
and
$\nullform(\pmb{\partial} \GradEnt,\pmb{\partial} \wavearray)$
on RHS~\eqref{E:TRANSPORTFORMODIFIED}.
Similar results hold up to top-order and yield $L^2$ control over
$\pmb{\partial} \Tanset^{\leq \Ntop} (\vort,\GradEnt)$
and
$\Tanset^{\leq \Ntop} (\modVortVort,\modDivGradEnt)$.
One key difficulty in this argument is that to obtain elliptic estimates on the spacelike hypersurfaces
$\Sigma_t$, one needs to extract a \emph{spatial} div-curl subsystem from
\eqref{E:TRANSPORT}--\eqref{E:DIVCURLFORMODIFIED}; the difficulty is that
equations~\eqref{E:TRANSPORT}--\eqref{E:DIVCURLFORMODIFIED}
appear to involve \emph{spacetime} div-curl equations.
Nevertheless, by splitting various derivative operators into a $\Transport$-parallel part
and a $\Sigma_t$-tangent part, one can extract the desired
spatial div-curl subsystem; we refer to the proof of \cite{mDjS2019}*{Lemma~9.21} for details on this extraction
in the context of a local well-posedness argument.
We also refer to \cite{jLjS2021}*{Sections~11.2 and 11.3} for similar but simpler elliptic estimates
in the context of shock formation for the non-relativistic $3D$ compressible Euler equations.

The argument described in the previous paragraph applies when 
the initial data of the vorticity and entropy are compactly supported, which is the case for the initial data described in
Sect.\,\ref{SS:3DINITIALDATA}.
The key point is that under compact support,
one can avoid \emph{boundary terms} in the elliptic estimates (the boundary terms vanish thanks to the compact support). 
To treat solutions that are not (spatially) compactly supported,
one would need to handle the boundary terms that arise in the elliptic estimates. 
In the case of non-relativistic $3D$ compressible Euler equations,
based on the special structures found in the formulation of the flow derived in \cite{jS2019c},
localized spacetime ``elliptic-hyperbolic'' integral identities for the vorticity and entropy were derived 
in \cite{lAjS2020}. The identities of \cite{lAjS2020} allow one, 
in the non-relativistic case, to handle the boundary terms. In particular, in \cites{lAjS2022,lAjS20XX}, 
we used specialized versions of those identities to study the structure of $\hfour$-MGHD for the
non-relativistic $3D$ compressible Euler equations.
To extend these results to $3D$ relativistic Euler solutions without compact support,
one would need to derive relativistic analogs of the integral identities from \cite{lAjS2020}.

\subsubsection{The wave energy estimate hierarchy}
\label{SSS:WAVEENERGYESTIMATEHIERARCHY}
We now discuss the energy estimates for
the wave variables $\wavearray$, defined in \eqref{E:WAVEARRAYWITHALMOSTRIEMANNINVARIANTS}.
As before, let $\Tanset^N$ denote an arbitrary
order $N$ string of elements of the $\nullhyparg{\eik}$-tangent vectorfields $\Tanset$ defined in \eqref{E:VECTORFIELDFRAME}.
The main difficulty in closing the energy estimates in multi-dimensions is that the best estimates we know
how to derive allow for the possibility that the top-order energies
blow up as $\upmu \downarrow 0$. 
Before proceeding, for notational convenience, we define:
\begin{align} \label{E:WAVEENERGIESMAXOVERPSI}
\begin{split}
\mathbb{E}_{(\textnormal{Wave})}[\Tanset^N \wavearray](t,\eik) 
& := 
\max_{\Psi \in \wavearray} 
\mathbb{E}_{(\textnormal{Wave})}[\Tanset^N \Psi](t,\eik),
	\\
\mathbb{F}_{(\textnormal{Wave})}[\Tanset^N \wavearray](t,\eik) 
& := 
	\max_{\Psi \in \wavearray} 
	\mathbb{F}_{(\textnormal{Wave})}[\Tanset^N \Psi](t,\eik),
	\\
\spacetimeintegralcontrolwave[\Tanset^N \wavearray](t,\eik)
& :=
\max_{\Psi \in \wavearray} 
\spacetimeintegralcontrolwave[\Tanset^N \Psi](t,\eik),
\end{split}
\end{align}
where $\mathbb{E}_{(\textnormal{Wave})}$ and $\mathbb{F}_{(\textnormal{Wave})}$ are as in Def.\,\ref{D:WAVEENERGIES}
and $\spacetimeintegralcontrolwave$ is as in Remark~\ref{R:SPACETIMEINTEGRAL}.

The estimates for the wave energies and null fluxes take the following hierarchical form
on $(t,\eik) \in [0,\Tboot) \times [-\rightu,\compactsupportu]$
(see, e.g., \cite{jLjS2018}*{Proposition~14.1} for complete proofs in the case of
the $2D$ non-relativistic compressible Euler equations with vorticity):
\begin{subequations}
\begin{align}
\begin{split} \label{E:TOPORDERENERGYESTIMATE}
& \mathbb{E}_{(\textnormal{Wave})}[\Tanset^{\Ntop} \wavearray](t,\eik) 
+ 
\mathbb{F}_{(\textnormal{Wave})}[\Tanset^{\Ntop} \wavearray](t,\eik)
+
\spacetimeintegralcontrolwave[\Tanset^{\Ntop} \wavearray](t,\eik) 
	\\
&
\leq
\mbox{\upshape data}
\cdot
\upmu_{\star}^{-A}(t,\eik),
\end{split}
	\\
\begin{split} \label{E:JUSTBELOWTOPORDERENERGYESTIMATE}
& \mathbb{E}_{(\textnormal{Wave})}[\Tanset^{\Ntop-1} \wavearray](t,\eik) 
+ 
\mathbb{F}_{(\textnormal{Wave})}[\Tanset^{\Ntop-1} \wavearray](t,\eik)
+
\spacetimeintegralcontrolwave[\Tanset^{\Ntop-1} \wavearray](t,\eik) 
	\\
&
\leq
\mbox{\upshape data}
\cdot
\upmu_{\star}^{-(A-2)}(t,\eik),
\end{split}
	\\
\begin{split} \label{E:TWOBELOWTOPORDERENERGYESTIMATE}
& \mathbb{E}_{(\textnormal{Wave})}[\Tanset^{\Ntop-2} \wavearray](t,\eik) 
+ 
\mathbb{F}_{(\textnormal{Wave})}[\Tanset^{\Ntop-2} \wavearray](t,\eik)
+
\spacetimeintegralcontrolwave[\Tanset^{\Ntop-2} \wavearray](t,\eik) 
	\\
&
\leq
\mbox{\upshape data}
\cdot
\upmu_{\star}^{-(A-4)}(t,\eik),
\end{split}
	\\
& \vdots
	\notag
	\\
\begin{split} \label{E:BOUNDEDENERGYESTIMATE}
& \mathbb{E}_{(\textnormal{Wave})}[\Tanset^{[1,\frac{A}{2}]} \wavearray](t,\eik) 
+ 
\mathbb{F}_{(\textnormal{Wave})}[\Tanset^{[1,\frac{A}{2}]} \wavearray](t,\eik)
+
\spacetimeintegralcontrolwave[\Tanset^{[1,\frac{A}{2}]} \wavearray](t,\eik) 
	\\
&
\leq
\mbox{\upshape data},
\end{split}
\end{align}
\end{subequations}
where as before,
``$\mbox{\upshape data}$'' denotes a small term that depends on the initial data and that measures the perturbation
of the data away from simple isentropic plane-symmetric data,
\begin{align} \label{E:MUSTARDEF}
	\upmu_{\star}(t,\eik)
	& := 
		\min
		\lbrace
			1,
			\min_{\Sigma_t^{[-\rightu,\eik]}} \upmu
		\rbrace,
\end{align}
$\Tanset^{[1,\frac{A}{2}]}$ denotes 
an arbitrary string of elements of the $\nullhyparg{\eik}$-tangent vectorfields $\Tanset$
of order in between\footnote{One can close the proof without deriving energy estimates in the case $N=0$. This is convenient
because the order $0$ wave energies can initially be large, stemming from the initial largeness of the data for $\muX \RRiemann$
(largeness that is present even for 
the simple isentropic plane-symmetric solutions treated in Theorem~\ref{T:MAINTHEOREM1DSINGULARBOUNDARYANDCREASE}). 
In contrast, the energies in \eqref{E:TOPORDERENERGYESTIMATE}--\eqref{E:BOUNDEDENERGYESTIMATE}
are initially small (in fact, they vanish for the solutions in Theorem~\ref{T:MAINTHEOREM1DSINGULARBOUNDARYANDCREASE}).
\label{FN:NORDER0WAVEENERGIES}} 
$1$ and $\frac{A}{2}$,
and for reasons described below,
$A \gg 1$ is a universal constant (independent of $\Ntop$, the initial data, and the equation of state).

Energy estimates in the spirit of
\eqref{E:TOPORDERENERGYESTIMATE}--\eqref{E:BOUNDEDENERGYESTIMATE},
which are singular at the high derivative levels, are the only kinds of
energy estimates that are known in the context of multi-dimensional shock formation. 
One might be concerned that the high-order energies are allowed to blow up when $\upmu$ vanishes, 
as this seems to be in conflict with the philosophy
described in Sect.\,\ref{SS:NONLINEARGEOMETRICOPTICS}, namely that the solution should 
look regular in geometric coordinates, all the way up to the shock. However, the full hierarchy  
\eqref{E:TOPORDERENERGYESTIMATE}--\eqref{E:BOUNDEDENERGYESTIMATE}
shows that the wave energies become less singular by two powers of $\upmu_{\star}^{-1}$ with each 
level of descent below the top, until one reaches the level \eqref{E:BOUNDEDENERGYESTIMATE}
at which the energies remain bounded. In particular, the boundedness of the mid-order and
below geometric energies, as shown by the estimate \eqref{E:BOUNDEDENERGYESTIMATE},
capture the sense in which the solution remains regular relative to the geometric coordinates.
Due to coupling (see, e.g., \eqref{E:SECONDTRANSPORTENERGYINEQUALITY}), 
these estimates imply that the fluid variables $\modVortVort, \modDivGradEnt, \vort, \GradEnt$
obey a similar energy hierarchy 
(omitted here for brevity, but see \cite{jLjS2021} for a detailed statement and proof of 
the hierarchy in the case of the non-relativistic $3D$ compressible Euler equations) 
featuring related but distinct blowup-rates, which are nonetheless compatible
with proving \eqref{E:TOPORDERENERGYESTIMATE}--\eqref{E:BOUNDEDENERGYESTIMATE}.
We highlight the following key point:

\begin{quote}
The non-singular estimates \eqref{E:BOUNDEDENERGYESTIMATE} are what allow one to improve,
through Sobolev embedding, a smallness assumption on ``\mbox{\upshape data}'', 
and derivative-losing transport-equation-type estimates,
the $L^{\infty}$ bootstrap assumptions described in Sect.\,\ref{SS:BOOTSTRAP}.
\end{quote}

The singular top-order wave equation energy estimate \eqref{E:TOPORDERENERGYESTIMATE} 
stems from the following integral inequality, 
whose proof we describe in Sect.\,\ref{SSS:PROOFOFSINGULARHIGHORDERENERGYINEQUALITY}:
\begin{align}
\begin{split} \label{E:HIGHORDERENERGYINTEGRALIDENTITYWAVEWITHDIFFICULTTERM}
& \mathbb{E}_{(\textnormal{Wave})}[\Tanset^{\Ntop} \wavearray](t,\eik) 
+ 
\mathbb{F}_{(\textnormal{Wave})}[\Tanset^{\Ntop} \wavearray](t,\eik)
+
\spacetimeintegralcontrolwave[\Tanset^{\Ntop} \wavearray](t,\eik) 
	\\
&
\leq
\mbox{\upshape data}
+
A
\int_{t'=0}^t  
	\left\|
		\frac{\Lunit \upmu}{\upmu} 
	\right\|_{L^{\infty}(\Sigma_{t'}^{[-\rightu,\eik]})}
	\mathbb{E}_{(\textnormal{Wave})}[\Tanset^{\Ntop} \wavearray](t',\eik)
\, \rmd t'
+
\cdots.
\end{split}
\end{align}
In \eqref{E:HIGHORDERENERGYINTEGRALIDENTITYWAVEWITHDIFFICULTTERM},
$A \gg 1$ is the universal positive constant mentioned above
and $\cdots$ denotes similar or easier\footnote{In reality, some of the ``easier'' error terms 
also require substantial effort to treat. For example, to handle some of the error terms
coming from the $\deformarg{\multipliervectorfield}{\beta}{\lambda}$-involving
integral on RHS~\eqref{E:FUNDAMENTALENERGYINTEGRALIDENTITYWAVE},
one needs sharp information about the behavior of $\upmu$ and its derivatives in regions where $\upmu$ is small;
see, for example, \cite{jLjS2018}*{Lemma~14.10}.
\label{FN:EASIERERRORTERMSARESTILLDIFFICULT}} 
error terms.
From \eqref{E:HIGHORDERENERGYINTEGRALIDENTITYWAVEWITHDIFFICULTTERM} and Gr\"{o}nwall's inequality,
we deduce that:
\begin{align}
\begin{split} \label{E:HIGHORDERENERGYESTIMATE}
& \mathbb{E}_{(\textnormal{Wave})}[\Tanset^{\Ntop} \Psi](t,\eik) 
+ 
\mathbb{F}_{(\textnormal{Wave})}[\Tanset^{\Ntop} \Psi](t,\eik)
+
\spacetimeintegralcontrolwave[\Tanset^{\Ntop} \Psi](t,\eik) 
	\\
& \leq
\mbox{\upshape data}
\cdot
\exp \left( 
A
\int_{t'=0}^t  
	\left\|
		\frac{\Lunit \upmu}{\upmu} 
	\right\|_{L^{\infty}(\Sigma_{t'}^{[-\rightu,\eik]})}
\, \rmd t'
\right)
+
\cdots
\end{split}
\end{align}
An argument based on precise, refined versions of
\eqref{E:SECONDMUSCHEMATICEVOLUTION}--\eqref{E:MUSCHEMATICINTEGRATED}
yields the following crucial bound, which can be proved using the same arguments given in
\cite{jSgHjLwW2016}*{Section~10} 
(the proof is trivial if one ignores the terms $\cdots$ in \eqref{E:SECONDMUSCHEMATICEVOLUTION}--\eqref{E:MUSCHEMATICINTEGRATED}):
\begin{align} \label{E:ESTIMATEFORDIFFICULTGRONWALLFACTOR}
	\exp \left( 
	\int_{t'=0}^t  
	\left\|
		\frac{\Lunit \upmu}{\upmu} 
	\right\|_{L^{\infty}(\Sigma_{t'}^{[-\rightu,\eik]})}
\, \rmd t'
\right)
& \leq
	C
	\upmu_{\star}^{-1}(t,\eik)	
	+
	C,
\end{align}
where $\upmu_{\star}$ is defined in \eqref{E:MUSTARDEF}.
Combining 
\eqref{E:HIGHORDERENERGYESTIMATE}
and \eqref{E:ESTIMATEFORDIFFICULTGRONWALLFACTOR},
we conclude \eqref{E:TOPORDERENERGYESTIMATE}.

The less singular below-top-order estimate \eqref{E:JUSTBELOWTOPORDERENERGYESTIMATE}
stems from the following integral inequality, 
whose proof we describe in Sect.\,\ref{SSS:PROOFOFJUSTBELOWTOPSINGULARHIGHORDERENERGYINEQUALITY}:
\begin{align}
\begin{split} \label{E:JUSTBELOWTOPORDERENERGYINTEGRALIDENTITYWAVEWITHDIFFICULTTERM}
& \mathbb{E}_{(\textnormal{Wave})}[\Tanset^{\Ntop-1} \wavearray](t,\eik) 
+ 
\mathbb{F}_{(\textnormal{Wave})}[\Tanset^{\Ntop-1} \wavearray](t,\eik)
+
\spacetimeintegralcontrolwave[\Tanset^{\Ntop-1} \wavearray](t,\eik) 
	\\
&
\leq
\mbox{\upshape data}
+
C
\int_{t'=0}^t  
	\frac{1}{\upmu_{\star}^{1/2}(t',\eik)}
	\mathbb{E}_{(\textnormal{Wave})}^{1/2}[\Tanset^{\Ntop-1} \wavearray](t',\eik)
	\int_{t''=0}^{t'}
		\frac{1}{\upmu_{\star}^{1/2}(t'',\eik)}
		\mathbb{E}_{(\textnormal{Wave})}^{1/2}[\Tanset^{\Ntop} \wavearray](t'',\eik)
	\, \rmd t''
\, \rmd t'
	\\
& 
+
\cdots.
\end{split}
\end{align}
Note that RHS~\eqref{E:JUSTBELOWTOPORDERENERGYINTEGRALIDENTITYWAVEWITHDIFFICULTTERM} 
involves a coupling between the top-order energies and the just-below-top-order energies.
The actual proof of \eqref{E:JUSTBELOWTOPORDERENERGYESTIMATE} involves a delicate Gr\"{o}nwall argument
that is coupled to the proof of the top-order estimate \eqref{E:TOPORDERENERGYESTIMATE}
as well as the following estimates, which hold for numbers $B > 1$ and which
can be proved by using arguments similar to the ones needed to prove \eqref{E:ESTIMATEFORDIFFICULTGRONWALLFACTOR}:
\begin{align} \label{E:ESTIMATEFOR1OVERMUSTARTOPOWER}
	\int_{t'=0}^t  
		\upmu_{\star}^{-B}(t',\eik)
\, \rmd t'
& \leq
	C
	\upmu_{\star}^{1-B}(t,\eik),
			\\
\int_{t'=0}^t  
		\upmu_{\star}^{-\frac{9}{10}}(t',\eik)
\, \rmd t'
& \leq
	C.
	\label{E:REGULARESTIMATEFOR1OVERMUSTARTOPOWER}
\end{align}
We stress that \eqref{E:ESTIMATEFOR1OVERMUSTARTOPOWER} is a ``quasilinear version'' of the following simple estimate,
which is valid for $t \in [0,1)$:
$
	\int_{t'=0}^t  
		(1-t')^{-B}
\, \rmd t'
\leq
	C
	(1-t)^{1-B}
$.
Similarly,
\eqref{E:REGULARESTIMATEFOR1OVERMUSTARTOPOWER} is
is a quasilinear version of the estimate
$
	\int_{t'=0}^t  
		(1-t')^{-\frac{9}{10}}
\, \rmd t'
\leq
	C
$.
The proof of \eqref{E:ESTIMATEFOR1OVERMUSTARTOPOWER}--\eqref{E:REGULARESTIMATEFOR1OVERMUSTARTOPOWER}
fundamentally relies on the fact that $\upmu$ vanishes linearly in $t$, as we described at
the end of Sect.\,\ref{SS:BEHAVOROFMUANDFORMATIONOFSHOCK}.
Here, we will
just use \eqref{E:JUSTBELOWTOPORDERENERGYINTEGRALIDENTITYWAVEWITHDIFFICULTTERM} 
and \eqref{E:ESTIMATEFOR1OVERMUSTARTOPOWER}
to explain why the desired estimates
\eqref{E:TOPORDERENERGYESTIMATE} and \eqref{E:JUSTBELOWTOPORDERENERGYESTIMATE} are \emph{consistent}
with respect to powers of $\upmu_{\star}^{-1}(t,\eik)$.
To confirm the consistency, we plug the estimates \eqref{E:TOPORDERENERGYESTIMATE} and \eqref{E:JUSTBELOWTOPORDERENERGYESTIMATE}
into the integrals on RHS~\eqref{E:JUSTBELOWTOPORDERENERGYINTEGRALIDENTITYWAVEWITHDIFFICULTTERM}
to deduce:
\begin{align}
\begin{split} \label{E:PROOFSTEPJUSTBELOWTOPORDERENERGYINTEGRALIDENTITYWAVEWITHDIFFICULTTERM}
& \mathbb{E}_{(\textnormal{Wave})}[\Tanset^{\Ntop-1} \wavearray](t,\eik) 
+ 
\mathbb{F}_{(\textnormal{Wave})}[\Tanset^{\Ntop-1} \wavearray](t,\eik)
+
\spacetimeintegralcontrolwave[\Tanset^{\Ntop-1} \wavearray](t,\eik) 
	\\
&
\leq
\mbox{\upshape data}
+
\mbox{\upshape data}
\cdot
\int_{t'=0}^t  
	\upmu_{\star}^{- \frac{A}{2} + \frac{1}{2}}(t',\eik)
	\int_{t''=0}^{t'}
		\upmu_{\star}^{-\frac{A}{2} - \frac{1}{2}}(t'',\eik)
	\, \rmd t''
\, \rmd t'
+
\cdots.
\end{split}
\end{align}
Using \eqref{E:PROOFSTEPJUSTBELOWTOPORDERENERGYINTEGRALIDENTITYWAVEWITHDIFFICULTTERM}
and twice using \eqref{E:ESTIMATEFOR1OVERMUSTARTOPOWER},
we deduce that:
\begin{align}
\begin{split} \label{E:SECONDPROOFSTEPJUSTBELOWTOPORDERENERGYINTEGRALIDENTITYWAVEWITHDIFFICULTTERM}
& \mathbb{E}_{(\textnormal{Wave})}[\Tanset^{\Ntop-1} \wavearray](t,\eik) 
+ 
\mathbb{F}_{(\textnormal{Wave})}[\Tanset^{\Ntop-1} \wavearray](t,\eik)
+
\spacetimeintegralcontrolwave[\Tanset^{\Ntop-1} \wavearray](t,\eik) 
	\\
&
\leq
\mbox{\upshape data}
\cdot
\upmu_{\star}^{-(A-2)}(t,\eik),
\end{split}
\end{align}
which is indeed consistent with the desired estimate \eqref{E:JUSTBELOWTOPORDERENERGYESTIMATE}.
This concludes our proof sketch of \eqref{E:JUSTBELOWTOPORDERENERGYESTIMATE}.

Using the same strategy, one can continue the descent in the energy estimate hierarchy, gaining two powers
of $\upmu_{\star}$ with each level of descent until one finally
one can use \eqref{E:REGULARESTIMATEFOR1OVERMUSTARTOPOWER} to reach the level \eqref{E:BOUNDEDENERGYESTIMATE},
where the energies remain uniformly bounded.

\subsubsection{Discussion of the proof of the top-order inequality \eqref{E:HIGHORDERENERGYINTEGRALIDENTITYWAVEWITHDIFFICULTTERM}}
\label{SSS:PROOFOFSINGULARHIGHORDERENERGYINEQUALITY}
We now sketch some key ideas behind the proof of \eqref{E:HIGHORDERENERGYINTEGRALIDENTITYWAVEWITHDIFFICULTTERM}.
We focus on the most difficult case
in which the operator $\Tanset^{\Ntop}$ in \eqref{E:HIGHORDERENERGYINTEGRALIDENTITYWAVEWITHDIFFICULTTERM}
is of the form\footnote{The case in which $\Tanset^{\Ntop}$ contains an $\Lunit$-differentiation is much easier because
in that case, one can directly use equation \eqref{E:RAYCHAUDHURITRANSPORTCHI} to estimate the $\Lunit$-derivatives
of $\mytr_{\gtorus} \upchi$. \label{FN:LUNITDERIVATIVESAREEASIER}} 
$\Tanset^{\Ntop} = \Angularset^{\Ntop}$,
where $\Angularset^{\Ntop}$ is a string of elements of the $\ell_{t,\eik}$-tangent subset $\Angularset$ defined in \eqref{E:VECTORFIELDFRAME}.
Using the wave equation energy-null flux identity \eqref{E:FUNDAMENTALENERGYINTEGRALIDENTITYWAVE},
the commuted wave equation \eqref{E:SCHEMTICCOMMUTEDWAVE},
the transport/elliptic estimates for the vorticity and entropy 
described in Sects.\,\ref{SSS:TRANSPORTENERGYESTIMATEHIERARCHY}--\ref{SSS:TOPORDERELLIPTICVORTICITYANDENTROPY},
and the coerciveness of the spacetime integral from \eqref{E:SPACETIMEINTEGRAL},
one can show that for $\Psi \in \wavearray$ (which is defined in \eqref{E:WAVEARRAYWITHALMOSTRIEMANNINVARIANTS}), 
the following identity holds for $(t,\eik) \in [0,\Tboot) \times [-\rightu,\compactsupportu]$:
\begin{align}
\begin{split} \label{E:HIGHORDERENERGYINTEGRALIDENTITYWAVE}
& \mathbb{E}_{(\textnormal{Wave})}[\Angularset^{\Ntop} \Psi](t,\eik) 
+ 
\mathbb{F}_{(\textnormal{Wave})}[\Angularset^{\Ntop} \Psi](t,\eik)
+
\spacetimeintegralcontrolwave[\Angularset^{\Ntop} \Psi](t,\eik) 
	\\
&
= 
\mbox{\upshape data}
+
\int_{\mathcal{M}_{[0,t),[-\rightu,\eik]}}  
	(\multipliervectorfield \Angularset^{\Ntop} \Psi)
	\cdot
	(\muX \Psi) 
	\cdot 
	\Angularset^{\Ntop} \mytr_{\gtorus} \upchi
\, \voltorus \rmd \eik' \rmd t'
	\\
& \ \
+
\int_{\mathcal{M}_{[0,t),[-\rightu,\eik]}}  
	\upmu
	(\multipliervectorfield \Angularset^{\Ntop} \Psi)
	\cdot
	\Angularset^{\Ntop} \modVortVort
\, \voltorus \rmd \eik' \rmd t'
	\\
& \ \
+
\int_{\mathcal{M}_{[0,t),[-\rightu,\eik]}}  
	\upmu
	(\multipliervectorfield \Angularset^{\Ntop} \Psi)
	\cdot
	\Angularset^{\Ntop} \modDivGradEnt
\, \voltorus \rmd \eik' \rmd t'
+
\cdots,
\end{split}
\end{align}
where $\cdots$ denotes easier error terms.

We first consider the difficult case $\Psi = \RRiemann$.
We recall the definition \eqref{E:MULTIPLIERVECTORFIELD} of $\multipliervectorfield$.
We consider the part of the integral 
$
\int_{\mathcal{M}_{[0,t),[-\rightu,\eik]}}  
	(\multipliervectorfield \Angularset^{\Ntop} \Psi)
	\cdot
	(\muX \Psi) 
	\cdot 
	\Angularset^{\Ntop} \mytr_{\gtorus} \upchi
\, \voltorus \rmd \eik' \rmd t'
$
on RHS~\eqref{E:HIGHORDERENERGYINTEGRALIDENTITYWAVE}
that is generated by the $2 \muX$ term in \eqref{E:MULTIPLIERVECTORFIELD}.
Using \eqref{E:FULLYMODIFIEDORDERN}, 
and using \eqref{E:MUSCHEMATICEVOLUTION} to replace $\muX \RRiemann$ with $\Lunit \upmu$
up to error terms,
we can express the $2 \muX$-involving
portion of the integral under consideration as follows, where we highlight the 
crucial factors of $\frac{1}{\upmu}$ on RHS~\eqref{E:REEXPRESSINGDIFFICULTERRORINTEGRAL},
and for simplicity, we have ignored the factors of $\smoothfunction(\wavearray)$ on RHS~\eqref{E:FULLYMODIFIEDORDERN}:
\begin{align} \label{E:REEXPRESSINGDIFFICULTERRORINTEGRAL}
\begin{split}
& 
	2
	\int_{\mathcal{M}_{[0,t),[-\rightu,\eik]}}  
	(\muX \Angularset^{\Ntop} \RRiemann)
	\cdot
	(\muX \RRiemann) 
	\cdot 
	\Angularset^{\Ntop} \mytr_{\gtorus} \upchi
\, \voltorus \rmd \eik' \rmd t'
	\\
& = 
-
2
\int_{\mathcal{M}_{[0,t),[-\rightu,\eik]}} 
	\frac{\Lunit \upmu}{\upmu}
	\cdot
	(\muX \Angularset^{\Ntop} \RRiemann)
	\cdot 
	(\muX \Angularset^{\Ntop} \RRiemann)
\, \voltorus \rmd \eik' \rmd t'
	\\
&
	\ \
+
2 
\int_{\mathcal{M}_{[0,t),[-\rightu,\eik]}} 
	\frac{\Lunit \upmu}{\upmu}
	\cdot
	(\muX \Angularset^{\Ntop} \RRiemann)
	 \cdot 
	\fullymodquant{\Angularset^{\Ntop}}
\, \voltorus \rmd \eik' \rmd t'
+
\cdots
\end{split}
\end{align}
From \eqref{E:WAVEENERGYDEF}, it follows that 
the first integral on RHS~\eqref{E:REEXPRESSINGDIFFICULTERRORINTEGRAL}
is bounded by the $A$-multiplied integral on RHS~\eqref{E:HIGHORDERENERGYINTEGRALIDENTITYWAVEWITHDIFFICULTTERM}
(this integral contributes a ``portion of'' the ``$A$'').
The $\fullymodquant{\Angularset^{\Ntop}}$-involving integral on RHS~\eqref{E:REEXPRESSINGDIFFICULTERRORINTEGRAL}
can be bounded by a similar error term involving a double time-integral, where one of the time integrations
comes from integrating \eqref{E:COMMUTEDMODIFIEDTRCHISCHEMATIC}; 
we refer to the proof of \cite{jSgHjLwW2016}*{Proposition~14.2} for the details.
The part of the first integral on RHS~\eqref{E:HIGHORDERENERGYINTEGRALIDENTITYWAVE}
that is generated by the $(1 + 2 \upmu) \Lunit$ term in \eqref{E:MULTIPLIERVECTORFIELD}
can be handled through an argument involving integration by parts in $\Lunit$, which leads to difficult
critical-strength boundary terms that make a similar contribution to the blowup of the top-order energies 
as the $A$-multiplied integral on RHS~\eqref{E:HIGHORDERENERGYINTEGRALIDENTITYWAVEWITHDIFFICULTTERM}; the same arguments
given in the proof of \cite{jSgHjLwW2016}*{Proposition~14.2} can be used to handle these terms.
The $\Angularset^{\Ntop} \modVortVort$- and $\Angularset^{\Ntop} \modDivGradEnt$-involving integrals
on RHS~\eqref{E:HIGHORDERENERGYINTEGRALIDENTITYWAVE} are easy to handle, thanks to the
helpful factor of $\upmu$ in the integrand and the
strategy for bounding $\Angularset^{\Ntop} \modVortVort$ and $\Angularset^{\Ntop} \modDivGradEnt$ via
transport-div-curl estimates that we described in Sect.\,\ref{SSS:TOPORDERELLIPTICVORTICITYANDENTROPY}.

We now consider the remaining cases, namely $\Psi \in \lbrace \LRiemann, \Ent, \fourvelocity^2, \fourvelocity^3 \rbrace$.
The identity \eqref{E:HIGHORDERENERGYINTEGRALIDENTITYWAVE} holds in these cases too. However, the factor of
$\muX \Psi$ in the identity is now very small in $L^{\infty}$ in these cases, since we are considering perturbations of simple
isentropic plane symmetric solutions (for which these quantities all vanish). Hence, due to the smallness,
all error terms on RHS~\eqref{E:HIGHORDERENERGYINTEGRALIDENTITYWAVE} are much easier to treat and can in fact
be relegated to the error terms ``$\cdots$'' on RHS~\eqref{E:HIGHORDERENERGYINTEGRALIDENTITYWAVEWITHDIFFICULTTERM};
see \cite{jLjS2018}*{page~154} for further details.

\subsubsection{Discussion of the proof of the just-below-top-order inequality \eqref{E:JUSTBELOWTOPORDERENERGYINTEGRALIDENTITYWAVEWITHDIFFICULTTERM}}
\label{SSS:PROOFOFJUSTBELOWTOPSINGULARHIGHORDERENERGYINEQUALITY}
We now sketch some key ideas behind the proof of
the just-below-top-order inequality \eqref{E:JUSTBELOWTOPORDERENERGYINTEGRALIDENTITYWAVEWITHDIFFICULTTERM}.
First, one uses the identity \eqref{E:HIGHORDERENERGYINTEGRALIDENTITYWAVE},
which, for any $\Psi \in \wavearray$, holds with $\Tanset^{\Ntop-1}$ in the role of $\Angularset^{\Ntop}$. 
The key step, which is the one that is different compared to the proof of
\eqref{E:HIGHORDERENERGYINTEGRALIDENTITYWAVEWITHDIFFICULTTERM}, is that 
one bounds the error integral on LHS~\eqref{E:REEXPRESSINGDIFFICULTERRORINTEGRAL}, namely
\begin{align} \label{E:BELOWTOPORDERDIFFICULTERRORINTEGRAL}
2
\int_{\mathcal{M}_{[0,t),[-\rightu,\eik]}}  
	(\multipliervectorfield \Tanset^{\Ntop-1} \Psi)
	\cdot
	(\muX \Psi) 
	\cdot 
	\Tanset^{\Ntop-1} \mytr_{\gtorus} \upchi
\, \voltorus \rmd \eik' \rmd t',
\end{align}
in a different way. First, one uses the bootstrap assumptions to bound the factor
$|\muX \Psi|$ in \eqref{E:BELOWTOPORDERDIFFICULTERRORINTEGRAL} by $\lesssim 1$. 
Next, one uses \eqref{E:MULTIPLIERVECTORFIELD},
the Cauchy--Schwarz inequality, and \eqref{E:WAVEENERGYDEF} 
to bound the integral in \eqref{E:BELOWTOPORDERDIFFICULTERRORINTEGRAL} 
by:
\begin{align} \label{E:FIRSTPROOFSTEPJUSTBELOWTOPORDERENERGYINTEGRALIDENTITYWAVEWITHDIFFICULTTERM}
	&
	\lesssim 
	\int_0^t
		\frac{1}{\upmu_{\star}^{1/2}(t',\eik)}
		\mathbb{E}_{(\textnormal{Wave})}^{1/2}[\Tanset^{\Ntop-1} \Psi](t',\eik) 
		\left\| \Tanset^{\Ntop-1} \mytr_{\gtorus} \upchi \right\|_{L^2(\Sigma_{t'}^{[-\rightu,\eik]})}
	\, \rmd t'
	+
	\cdots,
\end{align}
where we have used that the $\nullhyparg{\eik}$-tangent derivative terms in the energy \eqref{E:WAVEENERGYDEF} contain a $\upmu$-weight,
while there is an $\Lunit$ factor in the definition of $\multipliervectorfield$ that does \emph{not} contain a $\upmu$ weight.
Next, to bound the factor 
$\| \Angularset^{\Ntop-1} \mytr_{\gtorus} \upchi \|_{L^2(\Sigma_{t'}^{[-\rightu,\eik]})}$,
one commutes the Raychaudhuri equation \eqref{E:RAYCHAUDHURITRANSPORTCHI} with
$\Tanset^{\Ntop-1}$ and uses the schematic relation 
$\Ricfour_{\Lunit \Lunit} = \Tanset^2 \wavearray + \cdots$
to deduce the following evolution equation, schematically depicted: 
$\Lunit \Angularset^{\Ntop-1} \mytr_{\gtorus} \upchi = \Tanset^{\Ntop+1} \wavearray + \cdots$,
where $\cdots$ denotes lower order error terms. 
This equation represents a loss of one derivative in the estimates and leads to the coupling between 
the different order energies described below \eqref{E:JUSTBELOWTOPORDERENERGYINTEGRALIDENTITYWAVEWITHDIFFICULTTERM}.
However, such a loss is permissible below the top-order.
Since $\Lunit t = 1$ and $\Lunit \eik = 0$, we can integrate this evolution equation 
and use \eqref{E:WAVEENERGYDEF} to deduce that:
\begin{align}  
\begin{split} \label{E:THIRDPROOFSTEPJUSTBELOWTOPORDERENERGYINTEGRALIDENTITYWAVEWITHDIFFICULTTERM}
\left\| \Tanset^{\Ntop-1} \mytr_{\gtorus} \upchi \right\|_{L^2(\Sigma_{t'}^{[-\rightu,\eik]})}
& \lesssim 
\mbox{\upshape data}
+
\int_0^{t'}
	\| \Tanset^{\Ntop+1} \wavearray \|_{L^2(\Sigma_{t''}^{[-\rightu,\eik]})}
\, \rmd t''
+
\cdots
	\\
&
\lesssim 
\mbox{\upshape data}
+
\int_0^{t'}
	\frac{1}{\upmu_{\star}^{1/2}(t'',\eik)}
	\mathbb{E}_{(\textnormal{Wave})}^{1/2}[\Tanset^{\Ntop} \Psi](t'',\eik) 
\, \rmd t''
+
\cdots,
\end{split}
\end{align}
where to obtain the last line of \eqref{E:THIRDPROOFSTEPJUSTBELOWTOPORDERENERGYINTEGRALIDENTITYWAVEWITHDIFFICULTTERM},
we have again used that the $\nullhyparg{\eik}$-tangent derivative terms in the energy \eqref{E:WAVEENERGYDEF} contain a $\upmu$-weight.
Inserting \eqref{E:THIRDPROOFSTEPJUSTBELOWTOPORDERENERGYINTEGRALIDENTITYWAVEWITHDIFFICULTTERM}
into \eqref{E:FIRSTPROOFSTEPJUSTBELOWTOPORDERENERGYINTEGRALIDENTITYWAVEWITHDIFFICULTTERM},
we conclude \eqref{E:JUSTBELOWTOPORDERENERGYINTEGRALIDENTITYWAVEWITHDIFFICULTTERM}.

\section{Open problems}
\label{S:OPENPROBLEMS}
In this section, we describe various open problems tied to shocks.

\begin{enumerate}
	\item \textbf{Prove Conjecture~\ref{CON:SHOCKWITHVORTICITYANDENTROPY}}. 
	In Sect.\,\ref{S:SHOCKFORMATIONAWAYFROMSYMMETRY}, we outlined how to achieve this. 
	We are confident that the conjecture can be proved, especially since
	the formulation of relativistic Euler flow
	provided by Theorem~\ref{T:GEOMETRICWAVETRANSPORTDIVCURLFORMULATION} 
	is qualitatively similar to the formulation 
	of $3D$ non-relativistic compressible Euler flow derived in \cite{jS2019c},
	a system for which shock formation results have been derived
	(see Sect.\,\ref{S:PRIORWORKSSHOCKFORMATION}).
	Nonetheless, there are many non-trivial details that have to be checked,
	and this would be a good project for someone who wants to learn the field.
	\item \textbf{Prove Conjecture~\ref{CON:SINGULARBOUNDARY}}. 
	This problem is much more technically demanding than proving Conjecture~\ref{CON:SHOCKWITHVORTICITYANDENTROPY}.
	However, for the same reasons
	mentioned above, we expect that
	this can be achieved by adapting the methods used in the non-relativistic work \cite{lAjS2022}
	to the equations of Theorem~\ref{T:GEOMETRICWAVETRANSPORTDIVCURLFORMULATION}.
	\item \textbf{Prove Conjecture~\ref{CON:CAUCHYHORIZON}}. 
	This problem is also much more technically demanding than proving Conjecture~\ref{CON:SHOCKWITHVORTICITYANDENTROPY}.
	For the same reasons
	mentioned above, we expect that
	this can be achieved by adapting the methods that we are using in our forthcoming 
	non-relativistic work \cite{lAjS20XX} to the equations of Theorem~\ref{T:GEOMETRICWAVETRANSPORTDIVCURLFORMULATION}.
	\item \textbf{The shock development problem}. 
		Recall that in \cite{dC2019}, Christodoulou solved  
		the restricted shock development problem for the relativistic Euler equations
		and the non-relativistic compressible Euler equations
		in an arbitrary number of spatial dimensions; see Sect.\,\ref{SS:SHOCKDEVELOPMENT}. 
		The general problem, i.e., the shock development problem
		with vorticity and entropy, remains an outstanding open problem for both systems in two or more spatial dimensions.
	\item \textbf{The global behavior of solutions with shocks}. 
		After the shock development problem (which is local-in-time) is solved,
		a particularly compelling problem will be to understand the \emph{global}-in-time-and-space 
		behavior of the corresponding weak solutions,
		at least in a perturbative regime. Among the many outstanding 
		challenges in this problem is that of understanding the long-time behavior 
		of the vorticity and entropy; even in the context of smooth incompressible non-relativistic flows, 
		the long-time behavior of the vorticity is not well understood. We do, however, mention 
		that in the breakthrough work \cite{jCtH2022}, for axisymmetric \emph{incompressible} non-relativistic flows in $3D$
		with a boundary, smooth initial conditions were identified such that the 
		solution (including the vorticity) blows up in finite time in an approximately self-similar fashion;
		such vorticity blowup, if present in multi-dimensional relativistic Euler flow, 
		would be a serious obstacle to finding a meaningful way to continue the solution weakly past the singularity.
	\item \textbf{Extending the results to the coupled Einstein--Euler system}.
	It is of great interest to extend the above results to the Einstein--Euler system in multiple spatial dimensions.
	The expectation is that while the fluid will exhibit shock wave phenomena (as in the uncoupled problem),
	the gravitational metric will exhibit less singular behavior
	As of present, the only known singularity formation result for the coupled system 
	is in the plane symmetric case \cite{aRfS2008}, in which the dynamics are described by $1+1$-dimensional hyperbolic PDEs.
	In \cite{aRfS2008}, it was shown that for many equations of state and a large class of plane-symmetric initial data, the 
	Einstein-Euler solution breaks down in finite time. Although \cite{aRfS2008} provided heuristic arguments
	suggesting that the singularity is of shock-type,
	a precise description of the singularity was not given. 
	The multi-dimensional problem is a difficult PDE problem because surfaces that are
	null or ``barely spacelike'' with respect to the acoustical metric are in fact timelike with respect to the gravitational
	metric; the reason is that speed of sound is slower than the speed of propagation of gravitational waves
	(at least when the speed of sound is less than unity). A corresponding 
	key difficulty that arises in the context of the PDE estimates is that
	one must control the spacetime metric on various gravitationally timelike surfaces on which the fluid is singular; this is
	difficult at the top derivative level because generally, energy estimates for the spacetime metric 
	are not available on surfaces that are timelike with respect to the spacetime metric.
	In contrast, in multiple wave speed systems such that the shock forms in the \emph{fastest} wave,
	it is possible to close the top-order energy estimates and thus prove stable shock formation 
	for the coupled problem \cite{jS2018b}.
	\item \textbf{Extending the results to more complicated multiple speed systems}. It is also of great physical and mathematical
	interest to extend the above results to other multiple speed systems, such as the equations of 
	compressible magnetohydrodynamics, the GRMHD equations (i.e., the coupling of Einstein's equations to the equations 
	of relativistic magnetohydrodynamics), the equations of elasticity, the equations of nonlinear electromagnetism,
	and the equations of crystal optics. A key difficulty of these systems is that their principal symbols
	are more complicated than wave operators and transport operators. Relatedly, their corresponding geometry is more
	complicated than Lorentzian geometry, and the characteristics comprise multiple sheets, 
	which can be singular even at the tangent space level. 
	These difficulties are a serious obstacle to implementing the kind of sharp version of
	nonlinear geometric optics (i.e., eikonal functions) that has been so successfully employed 
	to study shocks for wave equations and fluid equations.
	\item \textbf{Implosion singularities}. In the breakthrough works \cites{fMpRiRjS2022a,fMpRiRjS2022b}, 
	the authors proved implosion
		singularity formation for some initially $C^{\infty}$ spherically symmetric solutions to the compressible Euler equations
		and Navier--Stokes equations under adiabatic equations of state $p = \uprho^{\upgamma}$ for $\upgamma > 1$, outside of a 
		countable set of $\upgamma$-values. 
		In \cite{tBlcGjGS2022}, the results were extended to allow for all $\upgamma > 1$.
		Implosion singularities are much more severe than shocks in the sense that the density and velocity themselves
		blow up (in shock singularities, it is their gradient that blows up)
		in finite time at the center of symmetry. The proof depends on the detailed structure of the equations
		and in particular relies on a careful analysis of a phase portrait for self-similar solutions.
		Hence, it is of interest to decide whether a similar result holds for the relativistic Euler equations.
		We highlight the important related work \cite{yGmHjJ2023}, in which the authors studied
		spherically symmetric solutions
		the Einstein--Euler equations under adiabatic equations of state $p = \varepsilon \uprho$ for $0 < \varepsilon \ll 1$ sufficiently small
		and proved the existence of self-similar solutions that form a naked singularity.
		See also \cite{yGmHjJ2022}, in which the authors studied solutions
		to the non-relativistic Euler--Poisson system under adiabatic equations of state $p = \uprho^{\upgamma}$ for $\upgamma \in (1,\frac{4}{3})$
		and showed the existence of spherically symmetric 
		self-similar imploding solutions modeling gravitational collapse, i.e., 
		initially smooth solutions such that the density blows up in finite time.
	\item \textbf{Inviscid limits}. In \cite{sCcG2023}, the authors studied the inviscid limit of solutions to the $1D$ 
		viscous Burgers' equation all the way up to the time of first shock formation in the inviscid solution.
		They decomposed the viscous solution into a singular piece and a smoother piece and proved that the viscous solution converges
		to the singular piece in $L^{\infty}$ as the viscosity vanishes, where the $L^{\infty}$ norm is taken over the entire
		slab of classical existence of the inviscid solution. This is the first result of its type that extends all the way to the time
		of first singularity formation. Important open problems include extending this result to the $1D$ compressible Euler equations
		(where the corresponding viscous equations are the Navier--Stokes equations) and, after that, to multi-dimensions.
		It would also be of interest to extend the result to the relativistic Euler equations, but as of present, 
		it is not clear if there exist any well-posed relativistic viscous fluid models that suppress the singularity formation while
		retaining physically desirable features such as causality.
	\item \textbf{Rarefaction waves}. In Sect.\,\ref{SS:RAREFACTION}, we described the recent works \cites{tWLpY2023a,tWLpY2023b}	
		on irrotational rarefaction wave solutions to the $2D$ compressible Euler equations. It is of interest to 
			eliminate the irrotationality assumption and to
			extend these results to the relativistic Euler equations, and perhaps even the Einstein--Euler equations.
\end{enumerate}

\bibliographystyle{amsalpha}
\bibliography{JBib}

\end{document}